\documentclass[11pt]{amsart}
\usepackage{amsmath, amscd, amssymb}
\usepackage[enableskew,vcentermath]{youngtab}
\usepackage{ytableau}
\usepackage[mathscr]{eucal}
\usepackage[all,cmtip]{xy}
\usepackage{tikz-cd}
\usepackage{color}
\usepackage{amscd}
\usepackage{xfrac}
\usepackage{mathrsfs}

\usepackage{geometry}
\usepackage{amsfonts}
\usepackage[colorlinks,linkcolor=blue,citecolor=blue, pdfstartview=FitH]{hyperref}
\usepackage{graphicx}
\usepackage{cite}

\usepackage{MnSymbol}
\usepackage{mathtools}
\mathtoolsset{showonlyrefs=true}

\numberwithin{equation}{section}

\theoremstyle{definition}
\newtheorem{thm}{Theorem}[section]
\newtheorem{theorem}[thm]{Theorem}

\newtheorem{lemma}[thm]{Lemma}
\newtheorem{corollary}[thm]{Corollary}
\newtheorem{proposition}[thm]{Proposition}

\newtheorem{remark}[thm]{Remark}

\newtheorem{definition}[thm]{Definition}

\newtheorem{example}[thm]{Example}

\newtheorem{defn-thm}[thm]{Definition-Theorem}

\newenvironment{observe}{\noindent\textit{Observation}.}{\hfill $\blacktriangleleft$\par}
\newtheorem*{theorem*}{Theorem}
\newtheorem*{proposition*}{Proposition}

\newcommand{\sC}{{\mathcal C}}

\newcommand{\sF}{{\mathcal F}}

\newcommand{\sH}{{\mathcal H}}

\newcommand{\sL}{{\mathcal L}}

\newcommand{\sN}{{\mathcal N}}
\newcommand{\sO}{{\mathcal O}}

\newcommand{\CC}{{\mathbb C}}

\newcommand{\Q}{{\mathbb Q}}

\newcommand{\Z}{{\mathbb Z}}

\newcommand{\SL}{\operatorname{SL}}

\newcommand{\Hom}{\operatorname{Hom}}

\renewcommand{\Im}{\operatorname{Im}}

\newcommand{\Nm}{\operatorname{Nm}}

\newcommand{\Pic}{\operatorname{Pic}}

\newcommand{\td}{\operatorname{td}}

\newcommand{\tr}{{\operatorname{tr}}}

\newcommand{\id}{{\mathrm{id}}}

\newcommand{\Supp}{{\operatorname{Supp}}}

\newcommand{\codim}{{\operatorname{codim}}}
\newcommand{\ch}{{\operatorname{ch}}}

\newcommand{\CH}{{\operatorname{CH}}}

\newcommand{\btheorem}{\begin{theorem}}
\newcommand{\etheorem}{\end{theorem}}
\newcommand{\bproposition}{\begin{proposition}}
\newcommand{\eproposition}{\end{proposition}}
\newcommand{\bdefinition}{\begin{definition}}
\newcommand{\edefinition}{\end{definition}}
\newcommand{\bcorollary}{\begin{corollary}}
\newcommand{\ecorollary}{\end{corollary}}
\newcommand{\bproof}{\begin{proof}}
\newcommand{\eproof}{\end{proof}}
\newcommand{\bremark}{\begin{remark}}
\newcommand{\eremark}{\end{remark}}
\newcommand{\eexample}{\end{example}}
\newcommand{\bexample}{\begin{example}}
\newcommand{\elemma}{\end{lemma}}
\newcommand{\blemma}{\begin{lemma}}
\newcommand{\bobserve}{\begin{observe}}
\newcommand{\eobserve}{\end{observe}}

\renewcommand{\phi}{\varphi}
\newcommand{\ee}{\end{eqnarray*}}
\newcommand{\be}{\begin{eqnarray*}}

\newcommand{\beq}{\begin{equation}}
\newcommand{\eeq}{\end{equation}}
\newcommand{\bd}{\begin{enumerate}}
\newcommand{\ed}{\end{enumerate}}
\newcommand{\bti}{\begin{tikzcd}}
\newcommand{\eti}{\end{tikzcd}}

\renewcommand{\tilde}{\widetilde}

\newcommand{\Coh}{{\operatorname{Coh}}}

\renewcommand{\bf}[1]{\mathbf{#1}}

\newcommand{\rk}{\operatorname{rk}}

\title[Multiplicativity of the perverse filtration in the $\SL_n$ case]{Multiplicativity of the perverse filtration in the $\SL_n$ case}

\author{Xueqing Wen}
\address{Mathematical Science Research Center, Chongqing University of Technology, No. 69, Hongguang Avenue, Banan District, Chongqing, 400054, China.}
\email{wenxq@cqut.edu.cn}

\date{}

\begin{document}

\begin{abstract}
We prove that the perverse filtration of the canonical $\mathrm{SL}_n$ Hitchin system is multiplicative when the rank and degree are coprime. Over the elliptic locus of an even-degree twisted Hitchin system, we compare the Fourier projectors for compactified Pryms with the endoscopic correspondence. We then use the square-zero property of the variant cohomology and a
parabolic Higgs moduli reduction to pass to the full canonical moduli space.
\end{abstract}

\maketitle
\tableofcontents

\section{Introduction}

\subsection{The \texorpdfstring{$P=W$}{P=W} conjecture}

Let $\Sigma$ be a smooth projective complex curve of genus $g\geq2$, and fix coprime integers $n\geq2$ and $e$. Let $\mathcal M_{\mathrm{Dol}}^{\mathrm{GL}_n,e}$ denote the moduli space of stable Higgs bundles of rank $n$ and degree $e$ on $\Sigma$, and let $\mathcal M_{\mathrm B}^{\mathrm{GL}_n,e}$ be the corresponding twisted character variety, with central monodromy $\exp(2\pi\sqrt{-1}e/n)\operatorname{id}$ around a puncture. Nonabelian Hodge theory gives a real-analytic diffeomorphism between these spaces \cite{Hit87a,Don87,Cor88,Sim92}, and hence an identification of their cohomology rings. The Hitchin map \cite{Hit87S}
\[
  h_{\mathrm{GL}_n}:\mathcal M_{\mathrm{Dol}}^{\mathrm{GL}_n,e} \longrightarrow \bigoplus_{i=1}^{n}H^0(\Sigma,K_\Sigma^{\otimes i})
\]
defines a perverse filtration $P_\bullet$ on the Dolbeault side, while the mixed Hodge structure defines a weight filtration $W_\bullet$ on the Betti side. We use complex coefficients throughout, extending the weight filtration from rational cohomology.

We use the indexing convention of \cite[Subsection~1.3.3]{MSY23}. For a proper morphism $f:X\to S$ with equidimensional base $S$, the perverse filtration is
\begin{equation}
  \label{eq:perverse-filtration-convention} P_kH^m(X,\mathbb C) := \operatorname{Im}\left\{ H^m\!\left(S,{}^p\tau_{\leq k+\dim S}Rf_*\mathbb C_X\right) \longrightarrow H^m(X,\mathbb C) \right\},
\end{equation}
where ${}^p\tau_{\leq\bullet}$ denotes truncation for the middle perverse $t$-structure.

In their study of the mixed Hodge polynomials of character varieties, Hausel and Rodriguez-Villegas found a curious Poincar\'e duality and formulated the curious hard Lefschetz conjecture, which they proved in rank two \cite{HRV08}. The Lefschetz operator is cup product by a cohomology class of degree two and weight four. On the Dolbeault side, a Lefschetz symmetry for the perverse filtration is supplied by relative hard Lefschetz, since the Hitchin map is projective. This suggests a relation between the two filtrations. De Cataldo, Hausel, and Migliorini proposed the $P=W$ conjecture in \cite{dCHM12}. Under the nonabelian Hodge identification, it asserts that
\begin{equation}
  \label{eq:intro-PW} P_kH^j(\mathcal M_{\mathrm{Dol}}^{\mathrm{GL}_n,e}) =W_{2k}H^j(\mathcal M_{\mathrm B}^{\mathrm{GL}_n,e}) =W_{2k+1}H^j(\mathcal M_{\mathrm B}^{\mathrm{GL}_n,e}).
\end{equation}
They proved the conjecture for $\mathrm{GL}_2$, $\mathrm{SL}_2$, and $\mathrm{PGL}_2$. Mellit later proved curious hard Lefschetz in arbitrary rank \cite{Mel25}.

The two filtrations have different general properties. Relative hard Lefschetz holds for projective morphisms from smooth varieties, whereas the weight filtration on the cohomology of any complex algebraic variety is compatible with cup product. The $P=W$ conjecture relates these properties: it interprets curious hard Lefschetz as relative hard Lefschetz for the Hitchin map, and predicts multiplicativity of the perverse filtration \cite{dCHM12}. The latter does not hold for a general projective morphism.

The $P=W$ conjecture for $\mathrm{GL}_n$ has been proved by different methods. Maulik--Shen \cite{MS24} use vanishing cycles and a parabolic support theorem in global Springer theory; Hausel--Mellit--Minets--Schiffmann \cite{HMMS} use algebras of Hecke operators; and Maulik--Shen--Yin \cite{MSY23} use Fourier--Mukai transforms for compactified Jacobian fibrations. All three use Markman's theorem that the cohomology ring is generated by the K\"unneth components of the Chern classes of a universal bundle \cite{Mar02}. The normalized Chern character defines the Chern filtration $C_\bullet$: each K\"unneth component of $\operatorname{ch}_k$ is assigned Chern degree $k$, and $C_jH^*$ is the span of products whose total Chern degree is at most $j$. Shende's calculation of the weights of these classes identifies $C_k$ with $W_{2k}$ on the Betti side \cite{She17}. Thus $P=W$ is equivalent to $P=C$ on the Dolbeault side.

De Cataldo--Maulik--Shen proved that the universal classes lie in the perverse steps predicted by $P=W$ \cite[Theorems~0.4--0.5]{CMS1}. Together with Mellit's curious hard Lefschetz theorem, this reduces $P=W$ to the multiplicativity of $P_\bullet$ \cite[Theorem~0.6]{CMS1}. Maulik--Shen--Yin \cite{MSY23} prove this multiplicativity over the locus of integral spectral curves by Fourier methods, and use the parabolic reduction of Hausel--Mellit--Minets--Schiffmann \cite[Section~8]{HMMS} to obtain $P=C$ on the full moduli space.

\subsection{The \texorpdfstring{$\mathrm{SL}_n$}{SLn} Hitchin system}

We consider the corresponding multiplicativity problem for $\mathrm{SL}_n$. Fix a line bundle $L$ on $\Sigma$ with $\gcd(n,\deg L)=1$. Let $\mathbf M_{n,L}^{K_\Sigma}$ be the moduli space of stable Higgs bundles $(E,\theta)$ of rank $n$ with $\det E\simeq L$ and $\operatorname{tr}\theta=0$. It is smooth, and its Hitchin map is
\[
  h^{K_\Sigma}:\mathbf M_{n,L}^{K_\Sigma} \longrightarrow \mathbf B^{K_\Sigma} :=\bigoplus_{i=2}^{n}H^0(\Sigma,K_\Sigma^{\otimes i}).
\]
The group $\Gamma=\operatorname{Pic}^0(\Sigma)[n]$ acts by tensor product. Write $\widehat\Gamma=\operatorname{Hom}(\Gamma,\mathbb C^*)$. The universal classes generate $H^*(\mathbf M_{n,L}^{K_\Sigma})^\Gamma$, but not the variant part, namely the sum of the nontrivial character summands \cite{CMS2,MS21}. In prime rank, de Cataldo--Maulik--Shen proved $P=W$ on the variant part \cite{CMS2}; together with the $\mathrm{GL}_n$ theorem, this gives $P=W$ for $\mathrm{SL}_n$ in prime rank.

We prove multiplicativity in arbitrary rank.

\begin{theorem}[Theorem~\ref{main theorem}]\label{thm:intro-multiplicativity}
Let $\Sigma$ be a smooth projective complex curve of genus $g\geq2$, let $n\geq2$, and let $L$ be a line bundle with $\gcd(n,\deg L)=1$. The perverse filtration associated with the canonical $\mathrm{SL}_n$ Hitchin map is multiplicative:
\[
  P_iH^a(\mathbf M_{n,L}^{K_\Sigma}) \cup P_jH^b(\mathbf M_{n,L}^{K_\Sigma}) \subseteq P_{i+j}H^{a+b}(\mathbf M_{n,L}^{K_\Sigma})
\]
for all $i,j,a,b$.
\end{theorem}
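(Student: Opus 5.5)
The plan is to split $H^*(\mathbf M_{n,L}^{K_\Sigma})=\bigoplus_{\kappa\in\widehat\Gamma}H^*_\kappa$ into character summands. The $\Gamma$-action commutes with $h^{K_\Sigma}$, so the perverse filtration is compatible with this splitting: $P_kH^*=\bigoplus_\kappa P_kH^*_\kappa$. Cup product is $\Gamma$-equivariant, hence $H^*_\kappa\cup H^*_{\kappa'}\subseteq H^*_{\kappa\kappa'}$. It therefore suffices to treat three kinds of products.

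\emph{Invariant times invariant.} The invariant part $H^*(\mathbf M_{n,L}^{K_\Sigma})^\Gamma$ is the image of $H^*(\mathcal M_{\mathrm{Dol}}^{\mathrm{GL}_n,e})$ under restriction, via the standard quotient $(\mathbf M_{n,L}\times T^*\mathrm{Pic}^0)/\Gamma\simeq \mathcal M^{\mathrm{GL}_n}$. Its perverse filtration is identified with the $\mathrm{GL}_n$ one, up to the abelian factor, whose perverse filtration is multiplicative for trivial reasons. So this case follows from the $\mathrm{GL}_n$ multiplicativity of Maulik--Shen--Yin \cite{MSY23}, or equivalently from $P=C$.

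\emph{Invariant times variant.} The variant summand $H^*_\kappa$ is computed by the endoscopic decomposition (Ngô, as used in \cite{CMS2,MS21}). By the support theorem, $(Rh_*\mathbb C)_\kappa$ is supported on the image $\mathbf B_\kappa$ of the endoscopic Hitchin base of the cyclic cover $\pi_\kappa:\Sigma_\kappa\to\Sigma$. There it is a shifted/Tate-twisted copy of the $\kappa$-variant pushforward for the endoscopic moduli space, which is essentially $\mathrm{GL}_{n/m}$ Higgs bundles on $\Sigma_\kappa$ with a Prym-type constraint. I would show that cup product by an invariant class $\alpha\in P_iH^*$ acts on $H^*_\kappa$ through its restriction to the endoscopic locus, shifting perversity by at most $i$. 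The tautological classes pull back to tautological classes on the endoscopic side under the correspondence, and their perversities there are controlled by the $\mathrm{GL}$ result together with the codimension shift. This is where the Fourier comparison enters. Over the elliptic locus, and after passing to an even-degree twisted ($D$-twisted, $\deg D$ large) Hitchin system so that the fibers are compactified Pryms and the MSY Fourier transform applies, I would compare the Fourier projectors $\mathfrak p_k$ cutting out the perverse pieces with the endoscopic correspondence. The aim is to show that the correspondence intertwines the decomposition on both sides, with perversity shifted by the codimension $r_\kappa$ of $\mathbf B_\kappa$. Multiplicativity on the endoscopic side, given by the MSY Fourier argument over the integral locus of $\Sigma_\kappa$, then transfers.

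\emph{Variant times variant.} The abstract signals the key input: the square-zero property. Products of variant classes should vanish, $H^*_\kappa\cup H^*_{\kappa'}=0$ for nontrivial $\kappa,\kappa'$, or at least land in a piece where the inclusion is automatic. I would prove this by degree/support reasons: $H^*_\kappa$ is concentrated in top perversity range on a positive-codimension support, and the product of two such classes lands in degrees above the relevant range. If square-zero holds only when $\kappa\kappa'\neq1$ fails, I would then check the remaining case $\kappa'=\kappa^{-1}$ separately.

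Finally I pass from the elliptic locus of the twisted system to the full canonical space by the parabolic Higgs reduction of \cite[Section~8]{HMMS}, as in \cite{MSY23}. Cohomology of the canonical moduli space embeds compatibly with perverse filtrations into that of a parabolic/twisted space where the elliptic-locus statement suffices, using a support theorem to show that no supports outside the elliptic locus appear. I expect the main obstacle to be the invariant-times-variant step. Matching the Fourier projectors on compactified Pryms, which are not Jacobians and whose fibers over $\mathbf B_\kappa$ acquire extra components, with the endoscopic correspondence requires careful control of shifts and of which components carry the $\kappa$-isotypic part. I would first attempt it fiberwise over the locus where $\Sigma_\kappa$-spectral curves are integral, then extend using the support theorem.
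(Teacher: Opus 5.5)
Your case split, the $\mathrm{GL}_n$ input for the stable part, and the square-zero property of the variant part are what the paper uses. Square-zero is a statement about cohomological degree, not perversity: $H^m_{\kappa_\gamma}=0$ for $m<2a_\gamma$, $H^m=0$ for $m>2g_P$ by the core retraction, and $2a_{\gamma_1}+2a_{\gamma_2}\ge n^2d>2g_P$. This also covers $\kappa'=\kappa^{-1}$, so no separate check is needed.

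The genuine gap is the last step. The restrictions $(\iota^{\mathrm{spar}})^*$ and $\iota^*$ are strict ring homomorphisms, so multiplicativity on the weakly parabolic elliptic locus does reach the strongly parabolic space. But $\mathbf M_{n,L}^{K_\Sigma}$ is related to that space only by the correspondences $B=(\iota_0)_*f^*$ and $A=f_*\iota_0^*$, and these are not ring maps. Indeed, $B\alpha\cup B\beta=(\iota_0)_*\bigl(c_{n(n-1)}(N_{\iota_0})\cup f^*(\alpha\cup\beta)\bigr)$ sits $4n(n-1)$ degrees above $\alpha\cup\beta$, whereas $B(\alpha\cup\beta)$ sits only $2n(n-1)$ above. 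So a filtered embedding of $H^*(\mathbf M_{n,L}^{K_\Sigma})$ into parabolic cohomology does not transport cup products. What $A$ and $B$ do commute with is multiplication by the normalized tautological classes, because $\iota_0^*c_k(\eta)=f^*c_k(\eta)$.

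The missing idea is to use square-zero, together with the fact that $H^*(\mathbf M_{n,L}^{K_\Sigma})_{\mathrm{st}}$ is generated by the $c_k(\eta)$ with $P$ equal to the Chern filtration. This reduces the whole theorem to the single bound $c_k(\eta)\cup P_j\subseteq P_{j+k}$ on all of $H^*$, variant summands included. That bound alone passes through $B$, then the two restrictions, then the elliptic multiplicativity together with $c_k(\eta)\in P_k$ there, and back through $A$ using $AB=\pm(n!)^2$. Invariant$\times$variant products on the canonical space are never analyzed directly.

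Second, your mechanism over the elliptic locus, transporting multiplicativity from the endoscopic side through $[\mathscr C]_\#$, would not work as stated. An invariant$\times$variant product pairs a summand whose perverse constituents have full support with one supported on $\mathbf B_\gamma$. Restricting the former to $\mathbf B_\gamma$ does not respect perverse truncations. Moreover, $[\mathscr C]_\#$ is a cycle correspondence of degree $-2a_\gamma$ and is not known to intertwine cup product by invariant classes with any product on the endoscopic space.

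In the paper, multiplicativity of all of $Rh_*\CC_{\mathbf M}$ comes at once from Fourier vanishing for $\check P$ against its stacky dual $[\check P/\Gamma]$. The hypotheses of Maulik--Shen--Yin fail here, so this needs sectorwise Riemann--Roch on the inertia stack with age shifts, and Arinkin-type codimension bounds for the convolution and Adams kernels. The endoscopic correspondence enters only summand by summand, via $[\mathscr C_{\mathbb V}]_{\#,\kappa_\gamma}\circ\mathfrak p_{k,\gamma}=\mathfrak p_{\mathbb V,k-a_\gamma}(\gamma)\circ[\mathscr C_{\mathbb V}]_{\#,\kappa_\gamma}$. Its role is to show that the ambient Fourier projectors cut out the perverse truncations on the variant summands. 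This is needed because those summands vanish over the smooth-spectral locus, which is where Maulik--Shen--Yin identify the projector images using full support.
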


In contrast to $\mathrm{GL}_n$, multiplicativity alone does not imply $P=W$ on the variant cohomology, since the generators of this part are not yet understood.

\subsection{Fourier transforms and endoscopy}

We first work over the elliptic locus of the $D$-twisted Hitchin system, where $D$ is effective of even degree greater than $2g-2$. Write $h:\mathbf M\to\mathbf B$ for this restriction. The spectral curves are integral, and the Hitchin fibers are compactified Pryms. We use the Fourier construction of Maulik--Shen--Yin for dualizable abelian fibrations \cite{MSY23}, but their hypotheses do not hold directly in this setting.

Although $\mathbf M$ is smooth, the coarse moduli space on the $\mathrm{PGL}_n$ side need not be. We therefore work with quotient stacks. After an \'etale base change admitting a smooth section of the spectral family, the degree and norm corrections identify the pulled-back Hitchin fibration with a norm-zero compactified Prym fibration $\check P$. The Poincar\'e kernel gives a Fourier--Mukai equivalence between $\check P$ and the smooth quotient stack $[\check P/\Gamma]$ by \cite[Theorem~4.7]{GS22}. We use the Riemann--Roch transformation of Krishna--Sreedhar \cite[Definition~10.8 and Theorem~10.11]{KS20}, together with the fixed-locus formula of Edidin--Graham \cite[Theorem~5.5]{EG08}, to convert these kernels into correspondences involving the inertia sectors. The normalization and its compatibility with composition are established in Subsection~\ref{subsec:cohomological-functors}. Following Arinkin's support arguments \cite{Ari13}, we prove codimension estimates for the relevant convolution kernels. Together with Adams operations, these estimates give Fourier vanishing and show that the filtration defined by the Fourier projectors is multiplicative. Using the multisection construction of \cite[Section~4]{MSY23}, we then construct kernels on auxiliary $\mu_n$-gerbes and obtain global projectors $\mathfrak p_k^L$ on $Rh_*\mathbb C_{\mathbf M}$.

The main difficulty is the failure of full support. In \cite[Section~2.5.2]{MSY23}, full support allows the images of the Fourier projectors to be identified with perverse truncations by restricting to the locus of smooth spectral curves. For $\mathrm{SL}_n$, the $\Gamma$-action gives a decomposition
\[
  Rh_*\mathbb C_{\mathbf M} =\bigoplus_{\kappa\in\widehat\Gamma} (Rh_*\mathbb C_{\mathbf M})_\kappa,
\]
and the nontrivial character summands have zero restriction to that locus. Thus restricting to smooth spectral curves gives no information about the projectors on these summands. We instead compare Fourier projectors along each endoscopic support.

For $\gamma\in\Gamma$, let $\kappa_\gamma(\delta)=\langle\delta,\gamma\rangle_\Gamma^{-1}$ be the character determined by the Weil pairing, and let $\mathbf B_\gamma\subset\mathbf B$ be the corresponding endoscopic locus. The support theorem \cite{Ngo10,deC17,MS21} identifies $\mathbf B_\gamma$ as the unique strict support of the nonzero perverse cohomology sheaves of $(Rh_*\mathbb C_{\mathbf M})_{\kappa_\gamma}$. Set $a_\gamma=\operatorname{codim}_{\mathbf B}\mathbf B_\gamma$; this is also the age of the $\gamma$-sector. After passing to a suitable \'etale cover $\mathbb V$ of a dense nodal open subset of $\mathbf B_\gamma$, the spectral curves admit a simultaneous normalization. Let $h(\gamma):\check{\mathbb P}(\gamma)\to\mathbb V$ be its norm-zero Prym fibration. This is a smooth proper group scheme, not necessarily connected. Under the compatible degree and norm corrections in \eqref{identifications}, the endoscopic correspondence of Maulik--Shen \cite[Section~3]{MS21} gives an isomorphism
\[
  [\mathscr C_{\mathbb V}]_{\#,\kappa_\gamma}: \bigl((Rh_*\mathbb C_{\mathbf M})|_{\mathbb V}\bigr)_{\kappa_\gamma} \xrightarrow{\ \simeq\ } \bigl(Rh(\gamma)_*\mathbb C_{\check{\mathbb P}(\gamma)}\bigr)_{\kappa_\gamma} [-2a_\gamma].
\]
Let $\mathfrak p_{k,\gamma}$ and $\mathfrak p_{\mathbb V,k}(\gamma)$ denote the Fourier projectors for the ambient norm-zero Prym and the Prym of the normalization, respectively. We prove the following compatibility.

\begin{theorem}[Proposition~\ref{prop:comparison-of-Fourier-projectors}]\label{thm:intro-endoscopic-projectors}
For every $k$, we have
\begin{equation}
  \label{eq:intro-endoscopic-projectors} [\mathscr C_{\mathbb V}]_{\#,\kappa_\gamma} \circ\mathfrak p_{k,\gamma}|_{\mathbb V} = \mathfrak p_{\mathbb V,k-a_\gamma}(\gamma) \circ[\mathscr C_{\mathbb V}]_{\#,\kappa_\gamma}
\end{equation}
as morphisms on the $\kappa_\gamma$-summand of the Hitchin direct image over $\mathbb V$.
\end{theorem}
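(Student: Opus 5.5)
The approach is to characterize both families of projectors spectrally, as eigenprojectors of multiplication-by-$N$ maps, and then show that the endoscopic correspondence intertwines these maps up to a scalar that accounts for the shift by $a_\gamma$. Both $\check P|_{\mathbb V}$ and $\check{\mathbb P}(\gamma)$ carry fiberwise multiplication maps $[N]$. On $\check P$ this is the natural extension of multiplication on the Prym part. For $N\equiv 1 \pmod n$ these maps commute with the $\Gamma$-action and the degree and norm corrections of \eqref{identifications}, so they preserve the $\kappa_\gamma$-summands.

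The first step is to use the Fourier vanishing and Adams-operation compatibility established earlier for the kernels of the projectors. From these I would deduce that $\mathfrak p_{k,\gamma}$ is the projector onto the summand on which $[N]^*$ acts by $N^k$, for all $N\equiv1 \pmod n$. This is the analogue of the Beauville decomposition statement in \cite{MSY23}. The same statement for $\mathfrak p_{\mathbb V,k}(\gamma)$ is simpler, because $\check{\mathbb P}(\gamma)\to\mathbb V$ is a smooth proper group scheme. Its direct image splits as $\bigoplus_i R^ih(\gamma)_*[-i]$ with $[N]^*$ acting on $R^i$ by $N^i$; on the non-neutral components one also has to track the component group, which $[N]$ with $N\equiv 1$ permutes trivially on the $\kappa_\gamma$-part. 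In both cases the eigenvalues $N^k$ are pairwise distinct for $N\geq2$. Consequently any morphism $\phi$ with $\phi\circ[N]^*=N^{c}[N]^*\circ\phi$ carries the $N^k$-eigenpiece into the $N^{k-c}$-eigenpiece.

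The second step is the key computation: $[\mathscr C_{\mathbb V}]_{\#,\kappa_\gamma}\circ[N]^*=N^{a_\gamma}\,[N]^*\circ[\mathscr C_{\mathbb V}]_{\#,\kappa_\gamma}$. I would use the construction of \cite[Section~3]{MS21}. There the correspondence is supported on the graph of the map from the normalized Prym onto the $\gamma$-fixed locus, and the $\kappa_\gamma$-projection is realized through the $\gamma$-sector of the inertia, whose codimension is the age $a_\gamma$. Multiplication by $N$ preserves this locus and commutes with the map. The Gysin/pushforward part of the correspondence, which has codimension $a_\gamma$ and produces the shift $[-2a_\gamma]$, intertwines pullbacks with a factor $N^{a_\gamma}$. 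This follows by comparing $[N]_*=\deg([N])\cdot([N]^*)^{-1}$ on the two group schemes, whose relative dimensions differ by $a_\gamma$. The two conventions $[N]_*$ versus $[N]^*$ must be matched with the indexing \eqref{eq:perverse-filtration-convention} so that the index comes out as $k-a_\gamma$ rather than $k+a_\gamma$. Combining this with the first step, $[\mathscr C_{\mathbb V}]_{\#,\kappa_\gamma}$ maps the image of $\mathfrak p_{k,\gamma}$ into the image of $\mathfrak p_{\mathbb V,k-a_\gamma}(\gamma)$. Since it is an isomorphism and both families are complete orthogonal systems, the equality \eqref{eq:intro-endoscopic-projectors} follows.

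The main obstacle is the second step carried out honestly at the level of kernels. The ambient projectors are built from Fourier–Mukai kernels on the quotient stack $[\check P/\Gamma]$ via the Krishna–Sreedhar Riemann–Roch transformation and the Edidin–Graham fixed-locus formula. One must check that the normalization of Subsection~\ref{subsec:cohomological-functors} (Todd-class and Euler-class corrections on the inertia sectors) is compatible with $[N]$ and contributes exactly the scalar $N^{a_\gamma}$ and not some other power. I would first verify this on the locus of $\mathbb V$ where the endoscopic spectral curve is smooth, where everything is a statement about abelian schemes and their isogenies. I would then extend to all of $\mathbb V$, using that both sides are morphisms between semisimple complexes whose perverse constituents have strict support $\mathbf B_\gamma$, by the support theorem.
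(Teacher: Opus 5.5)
Your argument depends on a multiplication-by-$N$ map on the ambient compactified Prym $\check{\mathbb P}$ over $\mathbb V$, and no such map exists. This is a genuine gap, not a technicality.

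For $\gamma\neq1$, every fiber of $\check{\mathbb P}\to\mathbb V$ is the compactified Prym of a spectral curve with exactly $a_\gamma$ nodes, and its normalization has positive genus. The family that is smooth over all of $\mathbb V$ is the normalization $\mathbf C_{\mathbb V}(\gamma)$, not $\mathbf C_{\mathbb V}$. So there is no part of $\mathbb V$ over which the ambient side is an abelian scheme, and your plan to check things first where everything is abelian does not apply.

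Multiplication by $N$ on the group of line bundles does not extend to the compactification. Already for one node, the normalization of $\overline J_{C_b}$ is a $\mathbb P^1$-bundle over $J_{\widetilde C_b}$ whose zero and infinity sections are glued after translation by $\epsilon=\sO(s^+-s^-)$. This is the finite birational map $\rho$ from $\mathbb{PPM}_{\mathbf C_{\mathbb V}}$ of Lemma~\ref{properties of rho}. Any extension of $[N]$ would lift to this normal variety, where it must cover $[N]$ and preserve both sections. It then descends only if $(N-1)\epsilon=0$, which fails for general nodal curves.

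This is exactly why $\mathfrak p_{k,\gamma}$ is defined through Fourier kernels, and why Fourier vanishing is proved with Adams operations on supported $K$-classes. These replace $[N]$, but they do not produce an operator $[N]^*$ on $R\check h_*\CC_{\check P}$. The consequences for your proposal are:
\begin{enumerate}
\item Step~1 has nothing against which to characterize $\mathfrak p_{k,\gamma}$ as an eigenprojector.
\item The intertwining identity of Step~2 cannot be formulated.
\item The comparison via $[N]_*=\deg([N])\,([N]^*)^{-1}$ needs both sides to be group schemes.
\end{enumerate}
Using the closure of the graph of $[N]$ as a correspondence would not rescue this. Proving that it acts by $N^i$ on the Fourier pieces is a kernel comparison as hard as the statement itself.

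Separately, $[\mathscr C_{\mathbb V}]$ is not the graph of $\nu$ onto the fixed locus. It is the image of $r=(\mathsf P,\rho)$, i.e.\ the closure of the graph of pullback of line bundles to the normalization. The map $\nu$ enters only on the Fourier side, where it identifies the $\gamma$-sector.

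The missing idea is to compare the kernels directly, so that $[N]$ is used only on the smooth group scheme $\check{\mathbb P}(\gamma)$. The paper proceeds as follows.
\begin{enumerate}
\item It identifies the restriction of the Poincar\'e sheaf to $\check{\mathbb P}\times_{\mathbb V}\check{\mathbb P}^{-a_\gamma}(\gamma)$ with $(\rho\times\id)_*$ of $q_1^*\mathcal V_{\mathbf C_{\mathbb V}}$ tensored with the pulled-back Poincar\'e line bundle of the normalization (Proposition~\ref{prop: comparision of Poincare family version}). It reads this on the $\gamma$-sector via \eqref{eq:normalization-identifies-fixed-sector}.
\item Riemann--Roch then gives \eqref{compo of functors used}. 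There $\mathfrak U_\gamma=\id+(\text{positive degree})$ records $\ch(\mathcal V_{\mathbf C_{\mathbb V}})\td(T_{\mathsf P})$ (Lemma~\ref{key lemma}). The shift $k\mapsto k-a_\gamma$ comes from the age shift together with the degree $-2a_\gamma$ of $[\mathscr C_{\mathbb V}]$.
\item After inverting and substituting into $\mathfrak p_{k,\gamma}$, every term carrying a positive-degree correction vanishes by the reverse Fourier vanishing \eqref{eq:relative-reverse-FV} on $\check{\mathbb P}(\gamma)$.
\end{enumerate}
That last step is the one place where your multiplication-by-$N$ argument is legitimately available.
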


We obtain \eqref{eq:intro-endoscopic-projectors} by comparing the Poincar\'e kernels of the nodal spectral curves and their normalizations, using the geometry of Franco--Hanson--Horn--Oliveira \cite{FHHO26}. The comparison introduces Riemann--Roch correction terms of positive degree, which disappear from the projector formula by relative Fourier vanishing. On the normalization side, the images of the Fourier projectors are the perverse truncations, by the decomposition for abelian schemes \cite{DM91,Ku93}, with the connected components taken into account. To pass from the local ambient projectors to the global projectors $\mathfrak p_k^L$, we use Lemma~\ref{lem:twisted-untwisted-projector-comparison}, which compares their images over the nodal open. The support theorem then determines their action on perverse cohomology over all of $\mathbf B_\gamma$. This identifies the images of $\mathfrak p_k^L$ with the perverse truncations and proves multiplicativity over the entire elliptic locus (Theorem~\ref{multiplicativity of ellptic locus}).

To pass to the full canonical Hitchin system, we first prove that the variant cohomology is a square-zero ideal under cup product (Proposition~\ref{square zero}):
\[
  \left( \bigoplus_{\kappa\neq1} H^*(\mathbf M_{n,L}^{K_\Sigma})_\kappa \right)^2=0.
\]
The invariant part is generated by universal classes, with its perverse filtration given by their Chern degrees (Proposition~\ref{prop:stable-filtered-generation}). It therefore suffices to bound multiplication by each universal class on the full cohomology. For the normalized class $c_k(\eta)$ defined in \eqref{eq:normalized-tautological-class}, where $\eta\in H^*(\Sigma)$, the required bound is
\begin{equation}
  \label{eq:intro-tautological-action} c_k(\eta)\cup: P_jH^*(\mathbf M_{n,L}^{K_\Sigma}) \longrightarrow P_{j+k}H^*(\mathbf M_{n,L}^{K_\Sigma}).
\end{equation}

We prove \eqref{eq:intro-tautological-action} by adapting the parabolic reduction of Hausel--Mellit--Minets--Schiffmann \cite[Section~8]{HMMS}, as used in \cite[Sections~5.4.2--5.4.3]{MSY23}, to fixed determinant and trace-free Higgs fields. Full flags at two distinct points $p,q$ give auxiliary moduli spaces with twist $K_\Sigma+p+q$, whose degree $2g$ is even and greater than $2g-2$. On the weakly parabolic elliptic locus, the perverse bounds for universal classes and multiplicativity give the analogue of \eqref{eq:intro-tautological-action}. Restriction in cohomology from the weakly parabolic space to this open locus is a strict filtered injection, while restriction to the strongly parabolic locus is a strict filtered isomorphism. These two maps transfer the bound to the strongly parabolic space. The zero-residue correspondences then transfer it to $\mathbf M_{n,L}^{K_\Sigma}$ (Propositions~\ref{prop:SL-parabolic-restrictions} and~\ref{prop:SL-zero-residue-splitting}). These correspondences commute with multiplication by universal classes. The square-zero property then completes the proof.

Most recently, in independent work, Larsen~\cite[Theorem~0.1]{Lar26} extends the Fourier approach of Maulik--Shen--Yin to compactified Prym fibrations with stacky duals, proving multiplicativity and motivic decomposition for good compactified Prym fibrations. Both approaches use normalization of nodal curves to compare Fourier projectors with the endoscopic correspondence. The Hitchin applications of this result concern the elliptic locus under the determinant-degree hypothesis of \cite[Proposition~2.13]{Lar26}. Theorem~\ref{thm:intro-multiplicativity} concerns the full coprime $K_\Sigma$-valued moduli space; the passage from the elliptic locus uses the square-zero property of the variant cohomology and the two-point parabolic Higgs moduli reduction.

\subsection{Organization of the paper}

Section~2 develops the Riemann--Roch formalism for quotient stacks and constructs Fourier projectors for compactified Pryms. We prove Fourier vanishing and multiplicativity of the filtration defined by these projectors, and extend the construction to twisted compactified Pryms. 

In Section~3, we compare the Fourier projectors with the endoscopic correspondence over the nodal locus. Using the support theorem, we identify their images with perverse truncations and obtain multiplicativity over the elliptic locus for twists of even degree greater than $2g-2$. 

Section~4 proves the square-zero property of the variant cohomology and completes the proof of Theorem~\ref{thm:intro-multiplicativity} by adapting the parabolic reduction of Hausel--Mellit--Minets--Schiffmann \cite[Section~8]{HMMS}, as used in \cite[Subsections~5.4.2--5.4.3]{MSY23}, to two-point parabolic Higgs moduli spaces with fixed determinant and trace-free Higgs fields.

\subsection*{Acknowledgements}
The author wants to thank Bin Wang and Yaoxiong Wen for helpful discussions.

\smallskip\noindent
\textbf{AI disclosure.}
AI tools were used to help the author work through the details of Grothendieck--Riemann--Roch theory for stacks. They were also used to assist in developing and refining mathematical arguments, identifying relevant references, and improving the exposition. All AI-assisted material included in the paper was checked by the author, who takes full responsibility for the contents of the paper.

\section{Projectors and Multiplicativity}

\subsection{Higgs moduli and compactified Pryms}

Let $\Sigma$ be a smooth irreducible projective curve of genus $g\ge2$. Fix an effective divisor $D$ such that $D=K_\Sigma$ or $d=\deg D>2g-2$, and a line bundle $L$ on $\Sigma$.

An $\mathrm{SL}_n$-Higgs bundle with determinant $L$ and Higgs field twisted by $D$ is a pair $(E, \theta)$, where $E$ is a rank-$n$ vector bundle with $\det E\simeq L$ and $\theta:E\to E\otimes\sO_\Sigma(D)$ is an $\sO_\Sigma$-linear morphism with $\tr(\theta)=0$. The pair is stable if every proper nonzero $\theta$-invariant subbundle $F$ satisfies
\[
  \frac{\deg F}{\rk F}< \frac{\deg E}{\rk E}.
\]

Let $\mathbf{M}_{n,L}^{D}$ denote the moduli space of stable $\mathrm{SL}_n$-Higgs bundles on $\Sigma$ with determinant $L$ and Higgs field twisted by $D$. We assume that $\operatorname{gcd}(n, \deg L)=1$, so $\mathbf{M}_{n,L}^{D}$ is a smooth quasi-projective variety. The associated Hitchin map is
\[
  h^{D}:\mathbf{M}_{n,L}^{D}\to \mathbf{B}^{D}:=\bigoplus_{i=2}^{n}H^0\big(\Sigma, \sO_{\Sigma}(iD)\big)
\]
which sends a Higgs bundle $(E, \theta)$ to $\big((-1)^i\tr(\wedge^i\theta)\big)_{i=2}^n$. The Hitchin map is projective and flat.

Let $\Gamma$ denote the finite group $\Pic^0(\Sigma)[n]$ of order $n^{2g}$. It acts on $\mathbf{M}_{n,L}^{D}$ by tensoring Higgs bundles with $n$-torsion line bundles. This action preserves the fibers of the Hitchin map $h^D$.

We write $\widehat\Gamma=\Hom(\Gamma,\CC^*)$ and use the Weil pairing with the convention $\kappa_\gamma(\delta)=\langle\delta,\gamma\rangle_\Gamma^{-1}$ for $\gamma,\delta\in\Gamma$. See \cite[Subsection~1.3]{MS21} for this convention.

The moduli stack of stable $D$-twisted $\operatorname{PGL}_n$-Higgs bundles of topological type $\deg L\bmod n$ is the quotient stack
\[
  [\mathbf{M}_{n,L}^{D}/\Gamma].
\]
We also regard this quotient stack as an orbifold.

The spectral equations parametrized by $\mathbf{B}^{D}$ define a universal curve in the total space of $\sO_{\Sigma}(D)$:
\[
  \mathbf{C}^D\longrightarrow \mathbf{B}^{D},
\]
which is a flat family of spectral curves, finite flat of degree $n$ over $\Sigma$. The arithmetic genus is
\begin{equation}
  \label{genus of spectural curves} g_{\mathbf C^D}=(n-1)(g-1)+\frac{n(n-1)}{2}d+g.
\end{equation}
We denote the spectral map by
\begin{equation}
  \label{the spectral curve map} \pi: \mathbf C^D\longrightarrow \Sigma\times \mathbf B^D.
\end{equation}
For any $b\in \mathbf{B}^{D}$, the fiber $\mathbf C^D_b$ is called the spectral curve associated to $b$.

Let $\mathbf{B}^{D,ell}\subset\mathbf{B}^D$ be the open locus of integral spectral curves. Restricting the Hitchin map gives
\[
  \xymatrix{ \mathbf{M}_{n,L}^{D, ell} \ar[dr]^{h^{D,ell}} & & [\mathbf{M}_{n,L}^{D, ell}/\Gamma] \ar[dl] \\ & \mathbf{B}^{D,ell}.& }
\]
Unless otherwise specified, we write
\[
  \mathbf{M}=\mathbf{M}_{n,L}^{D, ell}, \qquad [\mathbf{M}/\Gamma]=[\mathbf{M}_{n,L}^{D,ell}/\Gamma], \qquad \mathbf{B}=\mathbf{B}^{D,ell}, \qquad h=h^{D,ell}, \qquad \mathbf C=\mathbf C^D\times_{\mathbf B^D}\mathbf B.
\]

For $\gamma\in\Gamma$, let $\mathbf M_\gamma$ denote its fixed locus in $\mathbf M$, and put $\mathbf B_\gamma=h(\mathbf M_\gamma)$. Since $\Gamma$ is a finite abelian group, the inertia stack of $[\mathbf{M}/\Gamma]$ is given by
\[
  I[\mathbf{M}/\Gamma]=\bigsqcup_{\gamma\in \Gamma}[\mathbf{M}_\gamma/\Gamma].
\]
The Chen--Ruan cohomology of $[\mathbf{M}/\Gamma]$ is given by
\[
  \operatorname{H}^*_{\operatorname{CR}}([\mathbf{M}/\Gamma], \mathbb C)=\bigoplus_{\gamma\in \Gamma}\operatorname{H}^{*-2a_\gamma}([\mathbf{M}_\gamma/\Gamma], \mathbb{C}),
\]
where $a_\gamma$ is the \emph{age} of $[\mathbf{M}_\gamma/\Gamma]$.

Let $\gamma\in\Gamma$. For a point $x\in\mathbf M_\gamma$, write the eigenvalues of $\gamma$ on $T_x\mathbf M$ as
\[
  \nu_{\operatorname{ord}(\gamma)}^{b_1},\ldots,\nu_{\operatorname{ord}(\gamma)}^{b_{\dim\mathbf M}}, \qquad 0\leq b_i<\operatorname{ord}(\gamma),
\]
where $\nu_{\operatorname{ord}(\gamma)}$ is a primitive $\operatorname{ord}(\gamma)$-th root of unity. The age of the component of $\mathbf M_\gamma$ containing $x$ is
\[
  \frac{1}{\operatorname{ord}(\gamma)}\sum_{i=1}^{\dim\mathbf M}b_i.
\]
The following lemma shows that this number is independent of the component; we denote its common value by $a_\gamma$.

\begin{lemma}\label{lem:age computation}
For any $\gamma\in \Gamma$, we have
\begin{align}
  \label{computation of age} a_\gamma=\frac{n^2d}{2}\left(1-\frac{1}{\operatorname{ord}(\gamma)}\right).
\end{align}
Moreover, $2a_\gamma=\codim_{\mathbf M}(\mathbf M_\gamma)$.
\end{lemma}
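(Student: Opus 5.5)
The plan is to compute the $\gamma$-eigenspace decomposition of $T_x\mathbf M$ directly from the deformation complex of a $\gamma$-fixed Higgs bundle, and show that every nontrivial eigenvalue occurs with the same multiplicity $n^2d/\operatorname{ord}(\gamma)$. Both the age formula and the codimension formula then follow by summation. Fix $\gamma\in\Gamma$ of order $m$, let $L_\gamma$ be the corresponding $n$-torsion line bundle, and let $x=(E,\theta)\in\mathbf M_\gamma$. Then there is an isomorphism $\phi:(E,\theta)\xrightarrow{\sim}(E\otimes L_\gamma,\theta\otimes 1)$. By stability, $(E,\theta)$ is simple, so $\phi$ is unique up to scalar and we may normalize it so that the $m$-fold composite is the identity. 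The tangent space is
\[
  T_x\mathbf M=\mathbb H^1\bigl(\operatorname{End}_0E\xrightarrow{[\,\cdot\,,\theta]}\operatorname{End}_0E\otimes\sO_\Sigma(D)\bigr),
\]
and $\gamma$ acts on this complex by conjugation by $\phi$. The twist by $L_\gamma$ cancels in $\operatorname{End}$, so this is an action of $\mu_m$ on the complex of sheaves $C^\bullet$. Hence $C^\bullet=\bigoplus_{j=0}^{m-1}C^\bullet_j$, where $C^\bullet_j$ is the $\nu_m^j$-eigencomplex, and $T_x\mathbf M=\bigoplus_j\mathbb H^1(C^\bullet_j)$.

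\textbf{Step 1: ranks of the eigensheaves.} Let $\rho:\Sigma_\gamma\to\Sigma$ be the cyclic étale cover of degree $m$ attached to $L_\gamma$, with deck generator $\sigma$. The fixed-point condition gives $E\simeq\rho_*F$ for a bundle $F$ on $\Sigma_\gamma$ of rank $n/m$. Then $\rho^*E\simeq\bigoplus_{i}\sigma^{i*}F$, and $\phi$ acts by the cyclic permutation twisted by the characters. Consequently the pullback of $\operatorname{End}E$ splits into the blocks $\operatorname{Hom}(\sigma^{i*}F,\sigma^{i+j*}F)$ indexed by $j$, and $\phi$ acts on the $j$-th block through $\nu_m^j$. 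It follows that each $(\operatorname{End}E)_j$ has rank $n^2/m$. The trace part $\sO_\Sigma$ lies in $j=0$, so $C_j^\bullet=(\operatorname{End}E)_j\to(\operatorname{End}E)_j(D)$ for $j\neq0$.

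\textbf{Step 2: vanishing of $\mathbb H^0$ and $\mathbb H^2$ for $j\neq0$.} $\mathbb H^0(C^\bullet_j)$ consists of endomorphisms commuting with $\theta$. By stability these are scalars, which lie in the $j=0$ summand, so $\mathbb H^0(C^\bullet_j)=0$. By Serre duality, $\mathbb H^2(C^\bullet_j)$ is dual to $\mathbb H^0$ of the dual complex $(\operatorname{End}E)_{-j}\to(\operatorname{End}E)_{-j}\otimes K_\Sigma(-D)$ (up to shift). If $D=K_\Sigma$, this is again $\theta$-commuting endomorphisms in a nontrivial eigenspace, hence zero. If $d>2g-2$, a $\theta$-equivariant map $E\to E\otimes K_\Sigma(-D)$ between stable Higgs bundles with strictly smaller slope target vanishes. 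This is the standard argument for smoothness of $\mathbf M_{n,L}^D$.

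\textbf{Step 3: the computation.} For $j\neq0$, Steps 1 and 2 give
\[
  \dim\mathbb H^1(C^\bullet_j)=-\chi(C^\bullet_j)=\chi\bigl((\operatorname{End}E)_j(D)\bigr)-\chi\bigl((\operatorname{End}E)_j\bigr)=\tfrac{n^2}{m}d
\]
by Riemann--Roch. Therefore
\[
  \operatorname{age}=\frac1m\sum_{j=1}^{m-1}j\cdot\frac{n^2d}{m}=\frac{n^2d}{2}\Bigl(1-\frac1m\Bigr),
\]
which is independent of $x$, hence of the component. Moreover $\mathbf M_\gamma$ is smooth with tangent space the $j=0$ piece, so
\[
  \operatorname{codim}\mathbf M_\gamma=\sum_{j\neq0}\frac{n^2d}{m}=n^2d\Bigl(1-\frac1m\Bigr)=2a_\gamma .
\]

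The main obstacle is Step 1: making the eigen-decomposition of $\operatorname{End}E$ precise and showing that every nontrivial eigenvalue has rank exactly $n^2/m$. I would handle this by pulling back along the étale cover $\rho$ as described, where the decomposition becomes an explicit block decomposition. The vanishing in Step 2 is routine once the trace part has been placed in $j=0$.
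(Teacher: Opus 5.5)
Your proposal is correct and is essentially the paper's proof: you normalize $E\simeq E\otimes L_\gamma$ so that its $\operatorname{ord}(\gamma)$-th power is the identity, use the cyclic-cover structure to show each nontrivial eigensheaf of $\operatorname{End}E$ has rank $n^2/\operatorname{ord}(\gamma)$ and lies in $\operatorname{End}_0E$, kill $\mathbb H^0$ and $\mathbb H^2$ of the eigencomplexes by stability and Serre duality, and conclude by Riemann--Roch and summation. There are two harmless slips: $\rho^*\phi$ acts \emph{diagonally} on $\bigoplus_i\sigma^{i*}F$ through the characters (the cyclic permutation is the deck action), and this diagonal action is what puts the $j$-th block in the $\nu_m^j$-eigenspace; and the Serre-dual complex is $(\operatorname{End}E)_{-j}\otimes K_\Sigma(-D)\to(\operatorname{End}E)_{-j}\otimes K_\Sigma$, whose $\mathbb H^0$ is the space of Higgs morphisms $E\to E\otimes K_\Sigma(-D)$ that you then use.
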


\begin{proof}
The case \(\gamma=1\) is immediate. Fix \(x=(E,\theta)\in\mathbf M_\gamma\). Choose an isomorphism of Higgs bundles \(E\simeq E\otimes\gamma\). Its \(\operatorname{ord}(\gamma)\)-th power is scalar by stability, so, after rescaling, it gives the cyclic cover structure determined by \(\gamma\).

Let \(\nu_{\operatorname{ord}(\gamma)}\) be a primitive \(\operatorname{ord}(\gamma)\)-th root of unity. The cyclic-cover description shows that, locally on \(\Sigma\), \(E\) is the direct sum of \(\operatorname{ord}(\gamma)\) rank-\(n/\operatorname{ord}(\gamma)\) summands on which \(\gamma\) acts through the distinct characters of \(\langle\gamma\rangle\). Hence, for \(1\leq j\leq \operatorname{ord}(\gamma)-1\), the \(\nu_{\operatorname{ord}(\gamma)}^j\)-eigensubbundle \(\mathcal A_j\subset\operatorname{End}(E)\) has rank
\[
  \frac{n^2}{\operatorname{ord}(\gamma)}.
\]
Since the trace is invariant, \(\mathcal A_j\) is contained in \(\operatorname{End}_0(E)\).

The tangent space \(T_x\mathbf M\) is the first hypercohomology of the trace-free deformation complex
\[
  \operatorname{End}_0(E) \xrightarrow{[\theta,-]} \operatorname{End}_0(E)\otimes\sO_\Sigma(D).
\]
Its \(\nu_{\operatorname{ord}(\gamma)}^j\)-eigenspace is therefore the first hypercohomology of
\[
  \mathcal A_j \xrightarrow{[\theta,-]} \mathcal A_j\otimes\sO_\Sigma(D).
\]
Stability gives the vanishing of the zeroth hypercohomology. By Serre duality, the second hypercohomology is dual to the space of trace-free Higgs morphisms
\[
  (E,\theta)\longrightarrow (E,\theta)\otimes K_\Sigma(-D),
\]
which vanishes by simplicity when \(D=K_\Sigma\), and by stability and \(\deg K_\Sigma(-D)<0\) when \(d>2g-2\). Riemann--Roch consequently gives
\[
  \dim (T_x\mathbf M)_{\nu_{\operatorname{ord}(\gamma)}^j} = \frac{n^2}{\operatorname{ord}(\gamma)}d \qquad \text{for }1\leq j\leq \operatorname{ord}(\gamma)-1.
\]
Thus the age is independent of \(x\) and equals
\[
  a_\gamma = \sum_{j=1}^{\operatorname{ord}(\gamma)-1} \frac{j}{\operatorname{ord}(\gamma)}\frac{n^2d}{\operatorname{ord}(\gamma)} = \frac{n^2d}{2}\left(1-\frac{1}{\operatorname{ord}(\gamma)}\right).
\]
The tangent space of $\mathbf M_\gamma$ is the invariant subspace of $T_x\mathbf M$. Hence its normal space is the sum of the nontrivial eigenspaces computed above and has dimension
\[
  \sum_{j=1}^{\operatorname{ord}(\gamma)-1} \frac{n^2d}{\operatorname{ord}(\gamma)} =2a_\gamma.
\]
Since the fixed locus of a finite-order automorphism of a smooth variety is smooth in characteristic zero, this proves the codimension statement.
\end{proof}

Put
\begin{equation}
  \label{eq:spectral-sheaf-degree} d_L=\deg L+\frac{n(n-1)}{2}d, \qquad \Lambda_L=L_{\mathbf B}\otimes\det(\pi_*\sO_{\mathbf C})^{-1}.
\end{equation}

Fix an \'etale cover
\begin{equation}
  \label{first etale cover} B\longrightarrow \mathbf{B},
\end{equation}
so that the base change $C:=\mathbf{C}\times_{\mathbf{B}}B\rightarrow B$ admits a section $s:B\to C$ in the smooth locus of each fiber. Put $s_\Sigma=\pi\circ s$. After refining this cover, choose $\mathscr N_L\in\Pic^0(\Sigma\times B/B)$ with
\begin{equation}
  \label{eq:norm-correction-line} \mathscr N_L^{\otimes n} \simeq\Lambda_{L,B}^{-1}\otimes \sO_{\Sigma\times B}(d_L\operatorname{Im}s_\Sigma).
\end{equation}

The family $C\to B$ consists of integral projective curves with planar singularities, and the spectral map $C\to\Sigma\times B$ is finite flat of degree $n$.

Let $\overline{J}_{C}^0 \to B$ denote the compactified Jacobian of degree $0$ for the family $C \to B$. The norm defines a morphism $\Nm: \overline{J}_{C}^0 \to B \times J_\Sigma^0$, where $J_\Sigma^0$ is the Jacobian of $\Sigma$. The norm morphism is flat.

Define the compactified Prym $\check{P}:=\Nm^{-1}(\sO_{\Sigma})\subset \overline{J}_{C}^0$ as the kernel of the norm map. Tensor product defines a $\Gamma$-action on $\check P$ over $B$.

The BNR correspondence~\cite{BNR89} identifies $\mathbf M$ with $\Nm^{-1}(\Lambda_L)\subset\overline{\mathbf J}_{\mathbf C}^{d_L}$, since $\det(\pi_*\mathscr H)=\Nm(\mathscr H)\otimes\det(\pi_*\sO_{\mathbf C})$. Tensoring first by $\sO_C(-d_L\operatorname{Im}s)$ corrects the degree, and then by $\pi^*\mathscr N_L$ corrects the norm. This gives a $\Gamma$-equivariant identification
\begin{align}
  \label{moduli and prym isomorphism} \check{P}\cong M:= \bf{M}\times_{\bf B} B.
\end{align}

We write $g_P=g_{\mathbf C^D}-g$ for the relative dimension of $\check P\to B$, so that
\[
  g_P=(n-1)(g-1)+\frac{n(n-1)}{2}d.
\]
Write $j$ for the inclusion $\check{P}\hookrightarrow \overline{J}^0_{C}$.

\begin{lemma}\label{lem:H-fixed-dimensions}
Assume $d>2g-2$. Then
\begin{equation}
  \label{dimension of moduli} \dim\mathbf M=(n^2-1)d, \qquad \dim\mathbf B= \left(\frac{n(n+1)}{2}-1\right)d-(n-1)(g-1).
\end{equation}
For every $\gamma\in\Gamma$ with nonempty fixed locus,
\begin{equation}
  \label{dimension of MH} \dim\mathbf M_\gamma=\dim\mathbf M-2a_\gamma, \qquad \dim\mathbf B_\gamma=\dim\mathbf B-a_\gamma,
\end{equation}
and, for every geometric point $b\to B$,
\begin{equation}
  \label{eq:fixed-Prym-fiber-dimension} \dim(\check P_\gamma)_b\leq g_P-a_\gamma.
\end{equation}
\end{lemma}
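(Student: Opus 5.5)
The two dimension formulas in \eqref{dimension of moduli} come from Riemann--Roch, using the vanishing already proved in Lemma~\ref{lem:age computation}; the rest reduces to the cyclic-cover description of $\mathbf M_\gamma$ for $\Sigma_\gamma\to\Sigma$.

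\emph{Dimensions of $\mathbf M$ and $\mathbf B$.} The proof of Lemma~\ref{lem:age computation} shows that the deformation complex $\operatorname{End}_0(E)\to\operatorname{End}_0(E)\otimes\sO_\Sigma(D)$ has vanishing $\mathbb H^0$ and $\mathbb H^2$ when $d>2g-2$. So $\dim T_x\mathbf M=-\chi$ of this complex. Riemann--Roch applied to the rank $n^2-1$ bundle $\operatorname{End}_0(E)$ gives $-\chi=(n^2-1)d$. For the base, $\deg(iD)=id>2g-2$ for $i\ge 2$, so $h^1(\sO_\Sigma(iD))=0$ and
\[
\dim\mathbf B=\sum_{i=2}^n\bigl(id+1-g\bigr)=\Bigl(\tfrac{n(n+1)}2-1\Bigr)d-(n-1)(g-1).
\]
Here $\mathbf B$ is open in $\mathbf B^D$, and I assume it is nonempty.

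\emph{Dimension of $\mathbf M_\gamma$.} The first equality in \eqref{dimension of MH} is the codimension statement of Lemma~\ref{lem:age computation}.

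\emph{Dimension of $\mathbf B_\gamma$.} Put $m=\operatorname{ord}(\gamma)$ and $r=n/m$, and let $\Sigma_\gamma\to\Sigma$ be the cyclic étale cover determined by $\gamma$. By the cyclic-cover structure used in Lemma~\ref{lem:age computation}, a point of $\mathbf M_\gamma$ is the pushforward of a rank-$r$ Higgs bundle on $\Sigma_\gamma$, twisted by $D_\gamma$, the pullback of $D$. Its characteristic polynomial is the norm of a rank-$r$ characteristic polynomial on $\Sigma_\gamma$. Hence $\mathbf B_\gamma$ is the image, under a finite map, of the trace-zero part of
\[
\bigoplus_{i=1}^r H^0\bigl(\Sigma_\gamma,\sO(iD_\gamma)\bigr).
\]
Since $g_{\Sigma_\gamma}-1=m(g-1)$ and $\deg D_\gamma=md$, this space has dimension $\tfrac{n(r+1)}2d-n(g-1)$. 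The trace-zero condition cuts out $H^0(\Sigma,D)$, of dimension $d+1-g$. Therefore
\[
\dim\mathbf B_\gamma=\tfrac{n(r+1)}2d-d-(n-1)(g-1).
\]
A direct comparison with $\dim\mathbf B-a_\gamma$, using \eqref{computation of age}, shows the two agree.

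\emph{Fiber bound.} Consequently
\[
\dim\mathbf M_\gamma-\dim\mathbf B_\gamma=\dim\mathbf M-\dim\mathbf B-a_\gamma=g_P-a_\gamma.
\]
Via \eqref{moduli and prym isomorphism}, $(\check P_\gamma)_b$ is the fiber of $M_\gamma\to B_\gamma$, where $B_\gamma$ is the preimage of $\mathbf B_\gamma$ in $B$. So \eqref{eq:fixed-Prym-fiber-dimension} says this restricted map has no fiber of dimension larger than its generic relative dimension. The plan is to identify $M_\gamma$, up to a finite group, with a compactified Prym-type (fixed norm) fibration of rank-one torsion-free sheaves on the spectral curves of rank $r$ over $\Sigma_\gamma$, via the BNR correspondence on $\Sigma_\gamma$. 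For planar integral (or reduced) curves, compactified Jacobians are flat with equidimensional fibers. The norm map is flat, so each Prym fiber then has dimension exactly $g_P-a_\gamma$.

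The main obstacle is this last identification. One must check that every $\gamma$-fixed sheaf in $\check P_b$ really descends to such a rank-one sheaf, that the spectral curves over $\Sigma_\gamma$ are reduced with planar singularities, and that the finite-group ambiguity does not raise fiber dimension. If this is awkward, the fallback is an upper-semicontinuity argument. It would use the Prym action of the $\gamma$-invariant group $(\check P_b)^\gamma$ on the fixed locus, whose dimension is computed from the invariant part of $H^1$ of the structure sheaf as in the eigenspace count of Lemma~\ref{lem:age computation}. Together with the $\delta$-regularity inequality for the Hitchin system, this would bound the dimension of the fixed locus.
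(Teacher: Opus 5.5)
Your proposal is correct and follows the paper's route: Riemann--Roch for $\dim\mathbf M$ and $\dim\mathbf B$, Lemma~\ref{lem:age computation} for $\dim\mathbf M_\gamma$, and the realization of the fixed Hitchin fibration as a finite quotient of the flat cyclic endoscopic fibration of relative dimension $g_P-a_\gamma$. The paper simply quotes that identification from \cite[Section~3]{MS21} rather than re-verifying the points you flag, and your explicit count $\tfrac{n(r+1)}{2}d-d-(n-1)(g-1)=\dim\mathbf B-a_\gamma$ makes concrete the codimension statement it takes from the endoscopic literature (the integrality of the rank-$r$ spectral curves over $\Sigma_\gamma$ also follows from a degree count, since any extra component or nonreduced structure would map onto the integral curve $\mathbf C_b$ with total degree exceeding one).
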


\begin{proof}
The first two formulas follow from Riemann--Roch; equivalently, they are the dimension formulas in \cite[Proposition~2.4.9]{deC17}. The first formula in \eqref{dimension of MH} follows from Lemma~\ref{lem:age computation}. The endoscopic base has codimension $a_\gamma$, and the fixed Hitchin fibration is the finite quotient of the corresponding cyclic endoscopic Hitchin fibration; see \cite[Section~3]{MS21}. The latter is flat of relative dimension $g_P-a_\gamma$. This gives the second formula in \eqref{dimension of MH} and \eqref{eq:fixed-Prym-fiber-dimension}.
\end{proof}

\begin{lemma}\label{etale cover of J bar}
The natural map
\[
  \eta: J_{\Sigma}^0\times \check{P}\longrightarrow \overline{J}_{C}^0
\]
given by $\eta(L_B,\sF)=\pi^*L_B\otimes\sF$, where $L_B$ is a degree-zero line bundle on $\Sigma\times B$, is finite \'etale and surjective. Moreover,
\[
  \eta_*\sO_{J_{\Sigma}^0\times \check{P}}\cong \bigoplus_{L_i\in \Gamma} \operatorname{Nm}^*(L_{i})_B.
\]
Here $L_i\in \Gamma =J_{\Sigma}^0[n]$ is regarded as an element of $(J^0_{\Sigma})^\vee\cong J^0_{\Sigma}$ and $(L_{i})_B$ denotes its pullback to $J^0_{\Sigma}\times B$. The inclusion $j:\check P\hookrightarrow\overline J_C^0$ factors as
\[
  \check{P} \hookrightarrow J_{\Sigma}\times \check{P}\longrightarrow \overline{J}_{C}^{0}.
\]
Hence $j$ has finite Tor-dimension.
\end{lemma}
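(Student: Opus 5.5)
The plan is to exhibit $\eta$ as the quotient map of a free action of the finite group $\Gamma=J_\Sigma^0[n]$, and to read off all the claims from that.

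\emph{The free action.} Let $\Gamma$ act on $J_\Sigma^0\times\check P$ by
\[
  \gamma\cdot(L_B,\mathcal F)=(L_B\otimes\gamma^{-1},\ \pi^*\gamma\otimes\mathcal F).
\]
This is well defined on $\check P$ because $\Nm(\pi^*\gamma)=\gamma^{\otimes n}\simeq\sO_\Sigma$. The action is free already on the first factor, since translation by $\gamma^{-1}$ on $J_\Sigma^0$ has no fixed points for $\gamma\neq1$. The map $\eta$ is clearly $\Gamma$-invariant.

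\emph{$\eta$ is a $\Gamma$-torsor.} Consider the fiber product over $B$
\[
  Z:=J_\Sigma^0\times_{J_\Sigma^0}\overline J_C^0 ,
\]
where $[n]:J_\Sigma^0\to J_\Sigma^0$ is multiplication by $n$ and $\Nm:\overline J_C^0\to B\times J_\Sigma^0$ is the norm. Since $[n]$ is a finite étale $\Gamma$-torsor, so is $Z\to\overline J_C^0$. The map
\[
  (L_B,\mathcal F)\longmapsto\bigl(L_B,\ \pi^*L_B\otimes\mathcal F\bigr)
\]
sends $J_\Sigma^0\times\check P$ to $Z$, because $\Nm(\pi^*L_B\otimes\mathcal F)=L_B^{\otimes n}$. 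Its inverse is
\[
  (M,\mathcal G)\longmapsto\bigl(M,\ \pi^*M^{-1}\otimes\mathcal G\bigr),
\]
which lands in $\check P$ since $\Nm(\pi^*M^{-1}\otimes\mathcal G)=M^{-n}\otimes\Nm\mathcal G\simeq\sO$. The main point to verify is that this is an isomorphism of schemes and not only a bijection on points. I would do this functorially on $T$-points: both sides represent pairs of families of line bundles and torsion-free sheaves, and the norm is compatible with twisting by pullbacks, $\Nm(\pi^*M\otimes\mathcal G)=M^{\otimes n}\otimes\Nm\mathcal G$. Under this isomorphism $\eta$ becomes the second projection $Z\to\overline J_C^0$. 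Hence $\eta$ is finite étale of degree $n^{2g}$, and it is surjective because $[n]$ is.

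\emph{The pushforward of $\sO$.} Since $\eta$ is the base change of $[n]$ along $\Nm$, flat base change gives
\[
  \eta_*\sO\simeq\Nm^*\bigl([n]_*\sO_{J_\Sigma^0}\bigr)_B .
\]
The pushforward $[n]_*\sO$ decomposes under the $\Gamma$-action into character line bundles. These are exactly the degree-zero line bundles on $J_\Sigma^0$ killed by $n$, i.e.\ the points of $(J_\Sigma^0)^\vee[n]\simeq\Gamma$ via the principal polarization (the isogeny $\widehat{[n]}$ has kernel dual to $\ker[n]$). This yields
\[
  \eta_*\sO_{J_\Sigma^0\times\check P}\simeq\bigoplus_{L_i\in\Gamma}\Nm^*(L_i)_B .
\]

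\emph{Factorization and Tor-dimension.} The inclusion $j$ factors as $\mathcal F\mapsto(\sO,\mathcal F)$ followed by $\eta$. The first map is the base change of the point inclusion $0\to J_\Sigma^0$, which is a regular embedding since $J_\Sigma^0$ is smooth; hence it has finite Tor-dimension. The map $\eta$ is étale, hence flat. The composite $j$ therefore has finite Tor-dimension.
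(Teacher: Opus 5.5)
Your proposal is correct and follows essentially the same route as the paper. You use the norm identity to identify $J_\Sigma^0\times\check P$ with the fiber product of $[n]\times\id_B$ and $\Nm$, so that $\eta$ is a base change of $[n]$. You then read off finite \'etaleness, surjectivity and $\eta_*\sO$ from the character decomposition of $[n]_*\sO_{J_\Sigma^0}$, and factor $j$ as the zero-section regular immersion followed by the \'etale map $\eta$. The free $\Gamma$-action and the explicit inverse $(M,\mathcal G)\mapsto(M,\pi^*M^{-1}\otimes\mathcal G)$ only spell out what the paper's Cartesian square leaves implicit.
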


\begin{proof}
The norm identity $\Nm(F\otimes\pi^*L_B)=\Nm(F)\otimes L_B^{\otimes n}$ gives a Cartesian square
\[
\begin{tikzcd}[ampersand replacement=\&]
  J_\Sigma^0\times\check P \arrow[r,"\eta"]\arrow[d,"{\operatorname{pr}_1}"'] \&\overline J_C^0\arrow[d,"\Nm"]\\
  J_\Sigma^0\times B\arrow[r,"{[n]\times\id_B}"']\&J_\Sigma^0\times B.
\end{tikzcd}
\]
Thus $\eta$ is the base change of multiplication by $n$, and is finite \'etale and surjective. The character decomposition of $[n]_*\sO_{J_\Sigma^0}$ is the direct sum of the $n$-torsion line bundles on the dual abelian variety. Flat base change therefore gives the displayed formula for $\eta_*\sO$.

Finally, $j$ is the composite of the zero section $\check P\hookrightarrow J_\Sigma^0\times\check P$, a regular immersion, and the \'etale morphism $\eta$. It has finite Tor-dimension.
\end{proof}

Consequently,
\begin{align}
  [\overline{J}^0_{C}/J^0_{\Sigma}]\cong [\check{P}/\Gamma] \cong [M/\Gamma].
\end{align}
The structure morphisms are
\[
  \xymatrix{ \check{P} \ar[dr]_{\check{h}} & & [\check{P}/\Gamma] \ar[dl]^{\hat{h}} \\ & B.& }
\]
The inertia stack is
\begin{equation}
  I[\check{P}/\Gamma]=\bigsqcup_{\gamma\in \Gamma}[\check{P}_{\gamma}/\Gamma].
\end{equation}

Let $\mathscr{P}$ be the normalized Poincaré sheaf on $\overline{J}_{C}^0 \times_B \overline{J}_{C}^0$ constructed by Arinkin \cite{Ari11,Ari13}. Following \cite{GS22}, let $\mathscr P_1$ be the restriction of $\mathscr P$ to $\check P\times_B\overline J_{C}^0$. By \cite[Proposition~4.4]{GS22}, this sheaf is $J_\Sigma^0$-equivariant and descends to a sheaf $\mathscr F$ on $\check P\times_B[\check P/\Gamma]$. We also define
\begin{equation}
  \label{the sheaf R} \mathscr{R}=(j\times j)^*\mathscr{P},
\end{equation}
as in \cite[Section~5.3]{FHHO26}. The isomorphism $(\id\times q)^*\mathscr F\simeq\mathscr R$, where $q:\check P\to[\check P/\Gamma]$ is the quotient map, equips $\mathscr R$ with the required $\Gamma$-linearization on the second factor.

We use the following Fourier--Mukai equivalence.

\begin{proposition}[Theorem~4.7 of \cite{GS22}]
The Fourier--Mukai transform
\[
  \Phi_{\mathscr{F}}: D^b\mathrm{Coh}(\check{P})\longrightarrow D^b\mathrm{Coh}([\check{P}/\Gamma])
\]
is an equivalence of derived categories.
\end{proposition}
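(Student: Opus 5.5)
The plan is to deduce the equivalence from Arinkin's autoduality for the compactified Jacobian $\overline J_C^0$ \cite{Ari13}, by restricting along $j$ in the first factor and descending along the $J_\Sigma^0$-action in the second factor. Lemma~\ref{etale cover of J bar} controls both operations. Concretely, I would first rewrite $\Phi_{\mathscr F}$ after pulling back along $q:\check P\to[\check P/\Gamma]$. Since $(\id\times q)^*\mathscr F\simeq\mathscr R=(j\times j)^*\mathscr P$, we get
\[
  q^*\circ\Phi_{\mathscr F}\simeq j^*\circ\Phi_{\mathscr P}\circ j_*
\]
as functors $D^b\Coh(\check P)\to D^b\Coh(\check P)$, with the $\Gamma$-linearization on the output inherited from the $J_\Sigma^0$-equivariance of $\mathscr P_1$ \cite[Proposition~4.4]{GS22}. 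Here $j_*$ is harmless because $j$ is a closed immersion. The functor $j^*$ is bounded because $j$ has finite Tor-dimension, by Lemma~\ref{etale cover of J bar}. Hence $\Phi_{\mathscr F}$ preserves bounded coherent complexes.

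The core step is full faithfulness. I would check it on the spanning class of skyscrapers $\sO_x$, $x\in\check P$, fibrewise over $B$, using that the total space $\check P\cong M$ is smooth and $[\check P/\Gamma]$ is a smooth Deligne--Mumford stack. In that setting a Bridgeland-type criterion applies, in its version for finite-group quotient stacks. For $x,y$ in the same fibre we have
\[
  \operatorname{RHom}_{[\check P/\Gamma]}(\mathscr F_x,\mathscr F_y)
  =\operatorname{RHom}_{\check P}(\mathscr R_x,\mathscr R_y)^{\Gamma}
  =\operatorname{RHom}_{\overline J_C^0}\bigl(\mathscr P_x,\ \mathscr P_y\otimes j_*\sO_{\check P}\bigr)^{\Gamma}.
\]
Using the factorization $\check P\hookrightarrow J_\Sigma^0\times\check P\xrightarrow{\eta}\overline J_C^0$ and the formula
\[
  \eta_*\sO\cong\bigoplus_{L_i\in\Gamma}\Nm^*(L_i)_B,
\]
I would rewrite the right-hand side as a sum of terms $\operatorname{RHom}(\mathscr P_x,\mathscr P_{y}\otimes\mathscr P_{\pi^*L_i}\otimes\kappa)$, where $\kappa$ is a Koszul factor along the zero section of $J_\Sigma^0$. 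Arinkin's theorem states $\mathscr P\circ\mathscr P^\vee\simeq\sO_\Delta[-g_{\mathbf C^D}]$. Applying it term by term, the $i$-th term vanishes unless $y\otimes\pi^*L_i\simeq x$. The Koszul factor combines with the $J_\Sigma^0$-directions to reduce the shift from $g_{\mathbf C^D}$ to $g_P$. Taking $\Gamma$-invariants then leaves exactly $\operatorname{Ext}^*(\sO_x,\sO_y)$, which is the required identity.

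The main obstacle is this computation at singular points $x$, where $\mathscr P_x$ is only a maximal Cohen--Macaulay sheaf rather than a line bundle. I would handle it by working with kernels rather than points. Concretely, I would compute the convolution $\mathscr F^\vee\circ\mathscr F$ on $\check P\times_B\check P$ as $(j\times j)^*$ of Arinkin's convolution, twisted by the $\Gamma$-sum coming from $\eta_*\sO$. I would then identify it with $\sO_\Delta$ using base change along $j\times j$, which is legitimate because of finite Tor-dimension, together with Arinkin's codimension estimates. The $\Gamma$-invariants kill the off-diagonal translates $\{(x,x\otimes\gamma)\}$.

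Essential surjectivity would follow by the same method applied in the opposite order. I would show that the right adjoint kernel $\mathscr F^\vee[g_P]$ also satisfies $\mathscr F\circ\mathscr F^\vee\simeq\sO_{\Delta_{[\check P/\Gamma]}}$ on $[\check P/\Gamma]\times_B[\check P/\Gamma]$. Pulled back to $\check P\times\check P$, this is the $\Gamma$-equivariant sheaf $\bigoplus_\gamma\sO_{\Gamma_\gamma}$. Alternatively, one can note that both categories have relative Serre functors given by the shift $[g_P]$, since the relative canonical bundles are trivial. A fully faithful functor commuting with Serre functors, between indecomposable categories, is then an equivalence.
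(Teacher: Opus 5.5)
The paper gives no argument for this statement. It imports it from \cite[Theorem~4.7]{GS22}, together with the flatness and descent facts \cite[Lemma~4.3 and Proposition~4.4]{GS22} quoted around it.

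Your plan is a genuinely independent route. You reduce to Arinkin's relative autoduality on the smooth total space of $\overline J^0_C$ via $q^*\circ\Phi_{\mathscr F}\simeq Lj^*\circ\Phi_{\mathscr P}\circ j_*$, which is correct, and then apply Bridgeland's criteria on the smooth total spaces. This would make the statement self-contained modulo \cite{Ari13}, using only ingredients already in Section~2. The citation buys brevity, and it outsources the relative and stacky forms of Bridgeland's criteria, which you would have to supply.

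However, the central computation does not go through as written. Since $\Nm\circ j$ is constant, $\Nm^*(L_i)_B\otimes j_*\sO_{\check P}\simeq j_*\sO_{\check P}$. So your terms $\operatorname{RHom}(\mathscr P_x,\mathscr P_y\otimes\mathscr P_{\pi^*L_i}\otimes\kappa)$ do not depend on $i$ and cannot vanish selectively. Moreover, $\mathscr P_y\otimes\kappa$ is not the transform of a point, so Arinkin's theorem cannot be applied term by term.

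The correct bookkeeping uses $\eta_*\sO$ instead of $j_*\sO_{\check P}$. The $J^0_\Sigma$-equivariance of $\mathscr P_1$ gives $\eta^*\mathscr P_x\simeq\operatorname{pr}_2^*\mathscr R_x$ for $x\in\check P$ (compare Lemma~\ref{lem:pull back to product}). Hence
\[
  R\Gamma(J^0_\Sigma,\sO)\otimes\operatorname{RHom}_{\check P}(\mathscr R_x,\mathscr R_y)
  \simeq\operatorname{RHom}_{\overline J^0_C}(\mathscr P_x,\mathscr P_y\otimes\eta_*\sO)
  \simeq\bigoplus_{L_i\in\Gamma}\operatorname{RHom}_{\overline J^0_C}\bigl(\mathscr P_x,\mathscr P_{y\otimes\pi^*L_i}\bigr).
\]
The $\Gamma$-isotypic pieces on the left match the summands on the right, by Lemma~\ref{etale cover of J bar} and the translation formula for $\mathscr P$. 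The invariant piece is the summand $L_i=\sO_\Sigma$, and Arinkin's full faithfulness turns it into
\[
  R\Gamma(J^0_\Sigma,\sO)\otimes\operatorname{RHom}_{[\check P/\Gamma]}(\Phi_{\mathscr F}\sO_x,\Phi_{\mathscr F}\sO_y)
  \simeq\operatorname{RHom}_{\overline J^0_C}(\sO_x,\sO_y).
\]
Since $\overline J^0_C$ is smooth of dimension $\dim\check P+g$ and $R\Gamma(J^0_\Sigma,\sO)$ has Poincar\'e polynomial $(1+t)^g$, this gives exactly Bridgeland's point conditions for $\Phi_{\mathscr F}$. It is this factor $R\Gamma(J^0_\Sigma,\sO)$, not a Koszul factor, that lowers the shift from $g_{\mathbf C^D}$ to $g_P$.

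Some further points.

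\emph{Singular points.} Singular points of the fibres are not an obstacle. $\Phi_{\mathscr P}$ is fully faithful on the smooth total space, so $\operatorname{RHom}(\mathscr P_x,\mathscr P_{x'})\simeq\operatorname{RHom}(\sO_x,\sO_{x'})$ for all points. Arinkin's codimension estimates enter only through his theorem.

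\emph{Kernel version.} Base change along $j\times j$ is not Tor-independent where it matters. A translated diagonal $\{(z,z\otimes\pi^*L_i)\}$ with $L_i\in\Gamma$ meets $\check P\times_B\check P$ in dimension $\dim\check P$, not the expected $\dim\check P-g$, because the two norm conditions coincide there. Finite Tor-dimension only makes $L(j\times j)^*$ bounded. The resulting excess factor $\wedge^\bullet\CC^g$ must be matched against $R\Gamma(J^0_\Sigma,\sO)$, exactly as above.

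\emph{Essential surjectivity.} The direct computation of $\mathscr F\circ\mathscr F^{-1}$ is the safer option. The Serre-functor shortcut needs a version relative to the non-proper base $B$. It also needs a proof that $\omega_{\check P/B}$ and $\omega_{[\check P/\Gamma]/B}$ are pulled back from the same line bundle on $B$ with trivial $\Gamma$-character. You assert this without proof, and the paper does not rely on it: it keeps $\omega_{[\check P/\Gamma]/B}$ in $\mathscr F^{-1}$.
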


Set
\[
  \mathscr{F}^{-1}= R\mathcal{H}om(\mathscr{F}, p_2^*\omega_{[\check{P}/\Gamma]/B}[g_P]).
\]
The factors are interchanged after taking the derived dual, so $\mathscr F^{-1}$ is a kernel on $[\check P/\Gamma]\times_B\check P$. Then $\Phi_{\mathscr F^{-1}}$ is inverse to $\Phi_{\mathscr F}$. The sheaf $\mathscr F$ is maximal Cohen--Macaulay and flat over both factors; see \cite[Lemma~4.3 and Proposition~4.4]{GS22}. The relative product is Gorenstein, since it is the flat pullback of the diagonal of the smooth base inside the smooth absolute product. Relative duality for flat families of maximal Cohen--Macaulay sheaves, as in \cite[Lemma~2.1]{Ari13}, shows that $\mathscr F^{-1}[-g_P]$ is also a coherent sheaf flat over both factors.

\subsection{Cohomological correspondences and Riemann--Roch}\label{subsec:cohomological-functors}

Let $\mathcal V$ be a separated finite-type Deligne--Mumford quotient stack whose coarse moduli space is quasi-projective. Let
\begin{equation}
  \label{KS's tau functor} \tau^{\mathrm{orb}}: G_0(\mathcal V)_{\CC} \longrightarrow \CH_*(I\mathcal V)_{\CC} \longrightarrow H^{\mathrm{BM}}_{2*}(I\mathcal V,\CC)
\end{equation}
be the Riemann--Roch transformation of \cite[Definition~10.8]{KS20}, followed by the cycle class map. Here $G_0$ denotes the Grothendieck group of coherent sheaves. We use the equivariant convention for Chow groups; see \cite[Theorem~10.1]{KS20} and \cite[Subsection~6.4]{EG05}. Borel--Moore homology and proper pushforwards are taken in the same stack convention. By \cite[Theorem~10.11]{KS20}, for a proper morphism $u:\mathcal V\to\mathcal W$ between such stacks, with induced inertia morphism $Iu$, we have
\begin{equation}
  \label{eq:orbifold-tau-proper-covariance} (Iu)_*\tau^{\mathrm{orb}}(\alpha) =\tau^{\mathrm{orb}}(Ru_*\alpha), \qquad \alpha\in G_0(\mathcal V)_{\CC}.
\end{equation}
We will give an explicit formula for $\tau^{\mathrm{orb}}$ and prove its compatibility with composition in Proposition~\ref{prop:normalized-tau-composition}.

Let $S$ be a smooth quasi-projective variety. For $i=1,2,3$, let $f_i:X_i\to S$ be proper and flat, with $X_i$ smooth and quasi-projective, and let $G_i$ be a finite abelian group acting on $X_i$ over $S$. Set $\mathcal X_i=[X_i/G_i]$. For $g\in G_i$, let $X_{i,g}$ be its fixed locus and put $\mathcal X_{i,g}=[X_{i,g}/G_i]$, with structure morphism $h_{i,g}:\mathcal X_{i,g}\to S$. The fixed loci are smooth. We assume that every nonempty sector is equidimensional and has constant integral age, and write
\[
  d_{i,g}:=\dim\mathcal X_{i,g},\qquad a_i(g)\in\Z.
\]

Set
\[
  \mathcal W_{ij}:=\mathcal X_i\times_S\mathcal X_j, \qquad \mathcal M_{ij}:=\mathcal X_i\times\mathcal X_j,
\]
and write $j_{ij}:\mathcal W_{ij}\hookrightarrow\mathcal M_{ij}$ for the natural closed immersion. For $g\in G_i$ and $h\in G_j$, put
\[
  \mathcal W_{ij}^{g,h}:=\mathcal X_{i,g}\times_S\mathcal X_{j,h}, \qquad \mathcal M_{ij}^{g,h}:=\mathcal X_{i,g}\times\mathcal X_{j,h},
\]
with inclusions
\[
  \iota_{ij}^{g,h}:\mathcal M_{ij}^{g,h}\hookrightarrow\mathcal M_{ij}, \qquad \jmath_{ij}^{g,h}:\mathcal W_{ij}^{g,h} \hookrightarrow\mathcal M_{ij}^{g,h}.
\]
Thus $I\mathcal W_{ij}=\bigsqcup_{g,h}\mathcal W_{ij}^{g,h}$. The absolute products are smooth, whereas the relative products and their sectors may be singular. We use $\operatorname{pr}_1,\operatorname{pr}_2$ for the projections from products.

Let $N_{i,g}$ be the normal bundle of $i_{i,g}:\mathcal X_{i,g}\hookrightarrow\mathcal X_i$. Since $\langle g\rangle$ acts trivially on $X_{i,g}$, a $G_i$-equivariant coherent sheaf decomposes into eigensheaves $E=\bigoplus_{\xi\in\widehat{\langle g\rangle}}E_\xi$. Following \cite[Subsection~5.2]{EG08}, set
\[
  t_g([E]):=\sum_\xi\xi(g)[E_\xi].
\]
This operation extends to bounded complexes and supported $K$-classes. On a pair of sectors, $t_{g,h}$ weights the $(\xi,\eta)$-eigensummand by $\xi(g)\eta(h)$. Define
\[
  \lambda_{-1}(N_{i,g}^{\vee}) :=\sum_{q\geq0}(-1)^q[\wedge^qN_{i,g}^{\vee}], \qquad \Lambda_{i,g}:=t_g\bigl(\lambda_{-1}(N_{i,g}^{\vee})\bigr),
\]
and
\[
  \Theta_{i,g} :=\operatorname{td}(T_{\mathcal X_{i,g}}) \ch(\Lambda_{i,g})^{-1}.
\]
Here $\ch$ and $\operatorname{td}$ denote the ordinary equivariant Chern character and Todd class. Since $N_{i,g}$ has no eigenvalue $1$, $\ch(\Lambda_{i,g})$ has nonzero degree-zero term on each connected component. Hence $\Theta_{i,g}$ is invertible.

For a closed immersion $\mathcal Z\hookrightarrow\mathcal M$ into a smooth quotient Deligne--Mumford stack, write
\[
  \ch_{\mathcal Z}^{\mathcal M}: K_0^{\mathcal Z}(\mathcal M)_{\CC} \longrightarrow H^{\mathrm{BM}}_{2*}(\mathcal Z,\CC)
\]
for the localized Chern character with support on $\mathcal Z$, after capping its bivariant class with $[\mathcal M]$ and applying the cycle class map; see \cite[Section~18.1]{Ful98}. For quotient stacks, we apply this construction to the mixed quotients defining equivariant Chow groups in \cite{EG98}. Here $K_0^{\mathcal Z}(\mathcal M)$ consists of classes of perfect complexes on $\mathcal M$ that are exact off $\mathcal Z$. For such a complex $A^\bullet$, let $[A^\bullet]_{\mathcal Z}$ denote its class in $G_0(\mathcal Z)$ under d\'evissage. Ordinary equivariant Riemann--Roch, followed by the cycle class map and denoted by $\tau_{\mathrm{BM}}$, satisfies
\begin{equation}
  \label{eq:ordinary-RR-with-support} \tau_{\mathrm{BM}}([A^\bullet]_{\mathcal Z}) =\operatorname{td}(T_{\mathcal M}|_{\mathcal Z}) \cap\ch_{\mathcal Z}^{\mathcal M}([A^\bullet]) \in H^{\mathrm{BM}}_{2*}(\mathcal Z,\CC);
\end{equation}
see \cite[Theorem~18.3]{Ful98}, \cite{EG00}, and \cite[Subsection~2.3, Equation~(23)]{MSY23}.

For $F^\bullet\in D^b\Coh(\mathcal W_{ij})$, the complex $Rj_{ij*}F^\bullet$ is perfect on $\mathcal M_{ij}$. Its derived restriction by $\iota_{ij}^{g,h}$ is perfect on $\mathcal M_{ij}^{g,h}$ and exact off $\mathcal W_{ij}^{g,h}$. Set
\[
  \ch^{\mathrm{loc}}_{g,h}(F^\bullet) :=\ch_{\mathcal W_{ij}^{g,h}}^{\mathcal M_{ij}^{g,h}} \left(t_{g,h}\left[ L(\iota_{ij}^{g,h})^*Rj_{ij*}F^\bullet \right]\right).
\]
Using~\eqref{KS's tau functor}, define the target-normalized transformation by the first equality below. Its fixed-sector expression is
\begin{equation}
  \label{eq:normalized-tau-supported-definition}
\begin{aligned}
  \tau^{\mathrm{nor}}_S([F^\bullet])_{g,h} &:=\operatorname{pr}_2^*\Theta_{j,h}^{-1} \cap\tau^{\mathrm{orb}}([F^\bullet])_{g,h} \\
  &=\operatorname{pr}_1^*\Theta_{i,g} \cap\ch_{\mathcal W_{ij}^{g,h}}^{\mathcal M_{ij}^{g,h}} \left(t_{g,h}\left[ L(\iota_{ij}^{g,h})^*Rj_{ij*}F^\bullet \right]\right)\in H^{\mathrm{BM}}_{2*}(\mathcal W_{ij}^{g,h},\CC).
\end{aligned}
\end{equation}

Formula~\eqref{eq:normalized-tau-supported-definition} is the fixed-sector form of equivariant Lefschetz--Riemann--Roch. Let $\ell_{ij}^{g,h}:\mathcal W_{ij}^{g,h}\hookrightarrow\mathcal W_{ij}$ be the sector inclusion and let $\mathfrak m_{(g,h)}$ be the maximal ideal of $R(G_i\times G_j)_{\CC}$ obtained by evaluating characters at $(g,h)$. After localization at $\mathfrak m_{(g,h)}$, equivariant concentration implies that $(\ell_{ij}^{g,h})_*$ is an isomorphism. The self-intersection formula gives its inverse:
\begin{equation}
  \label{eq:supported-concentration-inverse} \bigl((\ell_{ij}^{g,h})_*\bigr)^{-1} [F^\bullet]_{\mathfrak m_{(g,h)}} = \frac{ \bigl[L(\iota_{ij}^{g,h})^*Rj_{ij*}F^\bullet \bigr]_{\mathcal W_{ij}^{g,h}} }{ \lambda_{-1}\!\left( \operatorname{pr}_1^*N_{i,g}^{\vee} \oplus\operatorname{pr}_2^*N_{j,h}^{\vee} \right) }.
\end{equation}
The numerator is interpreted by d\'evissage, and the denominator is the equivariant $K$-theoretic Euler class of the normal bundle. Applying $t_{g,h}$, ordinary equivariant Riemann--Roch~\eqref{eq:ordinary-RR-with-support}, and the module formula of \cite[Theorem~3.1(c)]{EG00} gives \eqref{eq:normalized-tau-supported-definition}; see also \cite[Theorem~5.5]{EG08} and \cite[Definitions~6.6, 6.8, and~10.8]{KS20}. The same calculation with three factors gives \eqref{eq:normalized-orbifold-tau-binary}.

We also use the module and isomorphism pullback formulas for the localized Chern character. For $\alpha\in K_0^{\mathcal Z}(\mathcal M)_{\CC}$, $V\in K^0(\mathcal M)$, and an isomorphism $u:\mathcal M\to\mathcal M$ preserving $\mathcal Z$ scheme-theoretically, they read
\begin{align}
  \ch_{\mathcal Z}^{\mathcal M}(V\cdot\alpha) &=\ch(V|_{\mathcal Z})\cap \ch_{\mathcal Z}^{\mathcal M}(\alpha), \label{eq:localized-ch-module}\\
  (u|_{\mathcal Z})^*\ch_{\mathcal Z}^{\mathcal M}(\alpha) &=\ch_{\mathcal Z}^{\mathcal M}(u^*\alpha). \label{eq:localized-ch-isomorphism-pullback}
\end{align}
These follow from the bivariant product and pullback properties in \cite[Section~18.1]{Ful98}, applied equivariantly.

Define the spaces of cohomological correspondences by
\begin{equation}
  \label{eq:CR-correspondence-space} \mathrm{Corr}^k_{S,\mathrm{CR}}(\mathcal X_i,\mathcal X_j) :=\bigoplus_{g\in G_i,\,h\in G_j} H^{\mathrm{BM}}_{2d_{j,h}-2k+2a_j(h)-2a_i(g)} (\mathcal W_{ij}^{g,h},\CC).
\end{equation}
For binary correspondences, set
\begin{align}
  &\mathrm{Corr}^k_{S,\mathrm{CR}} (\mathcal X_1,\mathcal X_2;\mathcal X_3) \nonumber\\
  &\quad:=\bigoplus_{g_1\in G_1,\,g_2\in G_2,\,g_3\in G_3} H^{\mathrm{BM}}_{2d_{3,g_3}-2k +2a_3(g_3)-2a_1(g_1)-2a_2(g_2)} \bigl(\mathcal X_{1,g_1}\times_S\mathcal X_{2,g_2} \times_S\mathcal X_{3,g_3},\CC\bigr). \label{eq:binary-CR-correspondence-space}
\end{align}
The associated Chen--Ruan direct image complexes are
\begin{equation}
  \label{eq:CR-direct-image-complex} \mathcal H_{\mathrm{CR}}(\mathcal X_i/S) :=\bigoplus_{g\in G_i} Rh_{i,g*}\CC_{\mathcal X_{i,g}}[-2a_i(g)].
\end{equation}
Sectorwise realization gives
\begin{align}
  \mathrm{Corr}^k_{S,\mathrm{CR}}(\mathcal X_i,\mathcal X_j) &\simeq \Hom_{D_c^b(S)}\left( \mathcal H_{\mathrm{CR}}(\mathcal X_i/S), \mathcal H_{\mathrm{CR}}(\mathcal X_j/S)[2k]\right), \label{eq:CR-realization}\\
  \mathrm{Corr}^k_{S,\mathrm{CR}} (\mathcal X_1,\mathcal X_2;\mathcal X_3) &\simeq \Hom_{D_c^b(S)}\left( \mathcal H_{\mathrm{CR}}(\mathcal X_1/S) \otimes^L\mathcal H_{\mathrm{CR}}(\mathcal X_2/S), \mathcal H_{\mathrm{CR}}(\mathcal X_3/S)[2k]\right). \label{eq:binary-CR-realization}
\end{align}
These are the realizations of \cite[Subsections~2.2.2--2.2.3]{MSY23} and \cite[Appendix~A]{Yun11}, applied equivariantly to the smooth sectors. We therefore regard cohomological correspondences as morphisms in $D^b_{\mathrm{c}}(S)$.

For a binary kernel
\[
  F^\bullet\in D^b\Coh(\mathcal X_1\times_S\mathcal X_2\times_S\mathcal X_3),
\]
the same normalization gives the binary correspondence
\begin{align}
  \label{eq:normalized-orbifold-tau-binary} \tau^{\mathrm{nor}}([F^\bullet])_{g_1,g_2,g_3} =\operatorname{pr}_3^*\Theta_{3,g_3}^{-1} \cap \tau^{\mathrm{orb}}([F^\bullet])_{g_1,g_2,g_3}\in \bigoplus_{d\in\Z}\mathrm{Corr}^d_{S,\mathrm{CR}} (\mathcal X_1,\mathcal X_2;\mathcal X_3).
\end{align}

\begin{proposition}\label{Degree and realization of composition}
Let $\alpha\in \mathrm{Corr}^p_{S,\mathrm{CR}}(\mathcal X_1,\mathcal X_2)$ and $\beta\in \mathrm{Corr}^q_{S,\mathrm{CR}}(\mathcal X_2,\mathcal X_3)$. Then
\[
  \beta\circ\alpha \in \mathrm{Corr}^{p+q}_{S,\mathrm{CR}} (\mathcal X_1,\mathcal X_3).
\]
For \(g\in G_1\) and \(k\in G_3\), its \((g,k)\)-component is
\begin{equation}
  \label{eq:CR-composition-sectorwise} (\beta\circ\alpha)_{g,k} = \sum_{h\in G_2} p_{13*}\, \delta_h^! \bigl( \alpha_{g,h}\boxtimes\beta_{h,k} \bigr),
\end{equation}
where
\[
  \delta_h: \mathcal X_{1,g}\times_S\mathcal X_{2,h}\times_S\mathcal X_{3,k} \longrightarrow (\mathcal X_{1,g}\times_S\mathcal X_{2,h}) \times (\mathcal X_{2,h}\times_S\mathcal X_{3,k})
\]
is the base change of the absolute diagonal of \(\mathcal X_{2,h}\). Under~\eqref{eq:CR-realization}, \eqref{eq:CR-composition-sectorwise} realizes the composition of morphisms in \(D_c^b(S)\).
\end{proposition}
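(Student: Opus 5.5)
The plan is to reduce the statement to the non-orbifold case of \cite[Subsections~2.2.2--2.2.3]{MSY23} and \cite[Appendix~A]{Yun11}, applied sector by sector. The point is that the composition is defined so that all sectors are handled independently, with the only interaction coming through the middle index $h\in G_2$.

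\textbf{Step 1: reduction to a single triple of sectors.} By \eqref{eq:CR-realization}, a class $\alpha$ is a matrix of morphisms
\[
  \alpha_{g,h}: Rh_{1,g*}\CC[-2a_1(g)]\to Rh_{2,h*}\CC[-2a_2(h)][2p].
\]
Composition in $D^b_c(S)$ of such matrices is matrix multiplication, so the $(g,k)$-entry of the composite is $\sum_h\beta_{h,k}\circ\alpha_{g,h}$. It therefore suffices to treat a single triple of sectors $\mathcal X_{1,g},\mathcal X_{2,h},\mathcal X_{3,k}$ and to prove two things: that $p_{13*}\delta_h^!(\alpha_{g,h}\boxtimes\beta_{h,k})$ realizes $\beta_{h,k}\circ\alpha_{g,h}$, and that it lies in the correct homological degree.

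\textbf{Step 2: the degree count.} Each sector is a smooth quotient DM stack, proper and flat over $S$. The absolute diagonal of $\mathcal X_{2,h}$ is a regular embedding of codimension $d_{2,h}$. Its base change $\delta_h$ therefore admits a refined Gysin map $\delta_h^!$ lowering Borel--Moore degree by $2d_{2,h}$, and $p_{13*}$ is proper pushforward, which preserves degree. The source degree is
\[
  \bigl(2d_{2,h}-2p+2a_2(h)-2a_1(g)\bigr)+\bigl(2d_{3,k}-2q+2a_3(k)-2a_2(h)\bigr).
\]
Subtracting $2d_{2,h}$ gives $2d_{3,k}-2(p+q)+2a_3(k)-2a_1(g)$. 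The age terms for the middle sector cancel, which is exactly why the Chen--Ruan shifts were built into \eqref{eq:CR-correspondence-space}.

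\textbf{Step 3: the realization statement.} Here I will invoke the standard identification
\[
  H^{\mathrm{BM}}_{*}(\mathcal X_{i,g}\times_S\mathcal X_{j,h})\simeq\Hom(Rh_{i,g*}\CC,Rh_{j,h*}\CC[\ast]).
\]
It comes from $\omega_{\mathcal X_{i,g}}\simeq\CC[2d_{i,g}]$ (using rational coefficients, the coarse moduli being a rationally smooth quotient) and adjunction $h_!=h_*$ for proper maps. Under this identification, composition of morphisms corresponds to convolution: pullback to the triple product via $\delta_h^!$, followed by $p_{13*}$. This is \cite[Appendix~A]{Yun11}, whose proof is formal in the six-functor formalism and applies verbatim to DM quotient stacks, or equivalently to their coarse spaces with $\CC$-coefficients. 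The only points to check are that the Chen--Ruan shifts $[-2a_i]$ are pure Tate shifts, so they commute with all the functors, and that the sign and normalization conventions agree with those used in \eqref{eq:CR-realization}.

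The main obstacle is Step 3. One must justify that the Verdier-duality and proper base change identifications used by Yun hold for stacks whose relative products $\mathcal W^{g,h}$ are singular. I would handle this by noting that only the smoothness of the middle factor $\mathcal X_{2,h}$ enters, through the fundamental class of its diagonal, and that the outer factors appear only through $\omega$ and proper pushforward. Both are fine for singular DM stacks with $\CC$-coefficients, via passage to coarse spaces.
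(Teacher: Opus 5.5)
Your proposal is correct and follows essentially the same route as the paper. The paper gives the same degree count: the diagonal of the smooth $\mathcal X_{2,h}$ is l.c.i.\ of codimension $d_{2,h}$, the middle ages cancel, and $p_{13*}$ preserves degree. It then cites the same realization (MSY23, Equation~(16), and Yun's Appendix~A), with your matrix-multiplication reduction spelling out what the paper leaves implicit.

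One small slip: the sectors need not be flat over $S$; for example, $\check P_\gamma$ lies over the endoscopic locus. Your argument never uses that flatness, so it does no harm.
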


\begin{proof}
The diagonal of the smooth Deligne--Mumford stack \(\mathcal X_{2,h}\) is l.c.i. of codimension \(d_{2,h}\). Hence \(\delta_h^!\) lowers Borel--Moore degree by \(2d_{2,h}\). The degree of the summand indexed by \(h\) is
\[
\begin{aligned}
  & \bigl( 2d_{2,h}-2p+2a_2(h)-2a_1(g) \bigr) + \bigl( 2d_{3,k}-2q+2a_3(k)-2a_2(h) \bigr) -2d_{2,h} \\
  &\qquad= 2d_{3,k} -2(p+q) +2a_3(k) -2a_1(g).
\end{aligned}
\]
Since \(p_{13}\) is proper, its pushforward preserves Borel--Moore degree. This proves the first assertion. Compatibility with realization is \cite[Equation~(16)]{MSY23}; the corresponding cohomological correspondence formalism is recalled in \cite[Appendix~A]{Yun11}.
\end{proof}

\begin{proposition}\label{prop:normalized-tau-composition}
Let $F_{12}^\bullet\in D^b\Coh(\mathcal W_{12})$ and $F_{23}^\bullet\in D^b\Coh(\mathcal W_{23})$. Write $\mathcal W_{123}:=\mathcal X_1\times_S\mathcal X_2\times_S\mathcal X_3$ and let $p_{ij}$ be its projections to $\mathcal W_{ij}$. Assume that
\[
  E_{123}^\bullet :=Lp_{12}^*F_{12}^\bullet\otimes^L Lp_{23}^*F_{23}^\bullet, \qquad F_{13}^\bullet:=Rp_{13*}E_{123}^\bullet
\]
are bounded coherent. Then
\begin{equation}
  \label{eq:normalized-tau-composition} \tau^{\mathrm{nor}}_S([F_{13}^\bullet]) =\tau^{\mathrm{nor}}_S([F_{23}^\bullet]) \circ\tau^{\mathrm{nor}}_S([F_{12}^\bullet]).
\end{equation}
\end{proposition}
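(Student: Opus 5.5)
The plan is to compare both sides sector by sector, using the equivariant concentration description \eqref{eq:supported-concentration-inverse} together with proper covariance \eqref{eq:orbifold-tau-proper-covariance} of $\tau^{\mathrm{orb}}$. Fix $g\in G_1$ and $k\in G_3$. First, apply \eqref{eq:orbifold-tau-proper-covariance} to the proper morphism $p_{13}:\mathcal W_{123}\to\mathcal W_{13}$. This gives
\[
  \tau^{\mathrm{orb}}([F_{13}^\bullet])_{g,k}=\sum_{h\in G_2}(p_{13})_*\tau^{\mathrm{orb}}([E_{123}^\bullet])_{g,h,k}.
\]
The sum over $h$ appears because $Ip_{13}$ maps $\bigsqcup_h\mathcal W_{123}^{g,h,k}$ to $\mathcal W_{13}^{g,k}$. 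Multiplying by $\operatorname{pr}_3^*\Theta_{3,k}^{-1}$ commutes with $(p_{13})_*$ by the projection formula, since this class is pulled back from the third factor. So the left side of \eqref{eq:normalized-tau-composition} equals $\sum_h(p_{13})_*\tau^{\mathrm{nor}}([E^\bullet_{123}])_{g,h,k}$, where $\tau^{\mathrm{nor}}$ is the ternary analogue of \eqref{eq:normalized-tau-supported-definition}, normalized by $\operatorname{pr}_1^*\Theta_{1,g}\operatorname{pr}_2^*\Theta_{2,h}\operatorname{pr}_3^*\Theta_{3,k}\Theta_{2,h}^{-1}\Theta_{3,k}^{-1}$. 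By Proposition~\ref{Degree and realization of composition}, it then suffices to prove the sectorwise identity
\[
  \tau^{\mathrm{nor}}([E_{123}^\bullet])_{g,h,k}=\delta_h^!\bigl(\tau^{\mathrm{nor}}_S([F_{12}^\bullet])_{g,h}\boxtimes\tau^{\mathrm{nor}}_S([F_{23}^\bullet])_{h,k}\bigr).
\]

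To prove this identity, I would write the three-factor fixed-sector formula. It is the "same calculation with three factors" mentioned before \eqref{eq:normalized-orbifold-tau-binary}:
\[
  \tau^{\mathrm{orb}}([E^\bullet_{123}])_{g,h,k}=\textstyle\prod_i\operatorname{pr}_i^*\operatorname{td}(T_{\mathcal X_{i,\bullet}})\,\ch(\Lambda_{i,\bullet})^{-1}\cap\ch^{\mathrm{loc}}_{g,h,k}(E^\bullet_{123}).
\]
After normalization the surviving factor is $\operatorname{pr}_1^*\Theta_{1,g}\operatorname{pr}_2^*\Theta_{2,h}$. Next I would express $E^\bullet_{123}$ through the absolute product $\mathcal M_{12}\times\mathcal M_{23}$. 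We have $Rj_{123*}E^\bullet_{123}\simeq L\Delta_2^*(Rj_{12*}F_{12}\boxtimes Rj_{23*}F_{23})$, where $\Delta_2:\mathcal X_1\times\mathcal X_2\times\mathcal X_3\to\mathcal M_{12}\times\mathcal M_{23}$ is the diagonal of $\mathcal X_2$; this is a regular embedding between smooth stacks. Restricting to the $(g,h,k)$-sector and applying $t_{g,h,k}$ commutes with the box product, because eigen-decompositions are multiplicative. The localized Chern character is compatible with exterior products and with refined Gysin pullback along the regular embedding $\Delta_{2,h}$ of the smooth sector $\mathcal X_{2,h}$ (\cite[Proposition~18.1, Example~18.1.3]{Ful98}, equivariantly). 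Therefore
\[
  \ch^{\mathrm{loc}}_{g,h,k}(E^\bullet_{123})=\delta_h^!\bigl(\ch^{\mathrm{loc}}_{g,h}(F_{12})\boxtimes\ch^{\mathrm{loc}}_{h,k}(F_{23})\bigr).
\]

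There is a subtlety here. The derived restriction $L\iota^*$ to $\mathcal X_{1,g}\times\mathcal X_{2,h}\times\mathcal X_{3,k}$ of $L\Delta_2^*(\cdots)$ is the same as first restricting to $\mathcal M^{g,h}_{12}\times\mathcal M^{h,k}_{23}$ and then pulling back along the sector diagonal $\Delta_{2,h}$. This holds because the square formed by $\Delta_{2,h}$, $\Delta_2$ and the sector inclusions is Cartesian up to excess. The excess bundle is the normal bundle $N_{2,h}$. Its $t_h$-weighted Euler class is exactly $\Lambda_{2,h}$, which contributes $\ch(\Lambda_{2,h})$. Together with the Todd class of $\mathcal X_{2,h}$ absorbed by $\delta_h^!$, this accounts for the extra $\Theta_{2,h}$ factor. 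Combining the pieces, the product $\operatorname{pr}_1^*\Theta_{1,g}\cdot(\operatorname{pr}_2^*\Theta_{2,h}\text{ from }F_{23})$ matches the normalization prescribed on the right.

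I expect this excess-intersection bookkeeping to be the main obstacle. One has to check that the $t$-twisted $K$-theoretic Euler class of the excess normal bundle, $t_h\lambda_{-1}(N_{2,h}^\vee)$, appears exactly once, with the correct sign of weights. One must also check that the $\Theta_{2,h}$ from the target normalization of $F_{12}$ cancels against the source normalization of $F_{23}$, leaving only $\operatorname{td}(T_{\mathcal X_{2,h}})$ inside the diagonal Gysin map. My first approach would be to work after localization at $\mathfrak m_{(g,h,k)}$ and use \eqref{eq:supported-concentration-inverse}. There the statement becomes the multiplicativity of $\lambda_{-1}$ on $\operatorname{pr}_1^*N_{1,g}^\vee\oplus\operatorname{pr}_2^*N_{2,h}^\vee\oplus\operatorname{pr}_3^*N_{3,k}^\vee$ against the two denominators. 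Those two denominators contain $N_{2,h}$ twice, and the diagonal self-intersection formula supplies the compensating factor.
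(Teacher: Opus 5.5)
Your outline is the paper's proof. It uses proper covariance for $p_{13}$, cancels the outer factors $\operatorname{pr}_1^*\Theta_{1,g}$ and $\operatorname{pr}_3^*\Theta_{3,k}$ by the projection formula, and proves the identity $\ch^{\mathrm{loc}}_{g,h,k}(E_{123}^\bullet)=\delta_h^!\bigl(\ch^{\mathrm{loc}}_{g,h}(F_{12}^\bullet)\boxtimes\ch^{\mathrm{loc}}_{h,k}(F_{23}^\bullet)\bigr)$. It then observes that the surviving middle factor $\Theta_{2,h}$ is the source normalization of $\tau^{\mathrm{nor}}_S([F_{23}^\bullet])_{h,k}$. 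Two smaller corrections are needed. First, the ternary target normalization divides only by $\operatorname{pr}_3^*\Theta_{3,k}$. Drop the stray $\Theta_{2,h}^{-1}$ in your first description; this leaves $q_1^*\Theta_{1,g}\,q_2^*\Theta_{2,h}$, as you later say. Second, the ambient isomorphism $Rj_{123*}E_{123}^\bullet\simeq L\Delta_2^*\bigl(Rj_{12*}F_{12}^\bullet\boxtimes Rj_{23*}F_{23}^\bullet\bigr)$ needs an argument, because the relative products may be singular. The paper gets it from derived base change and the projection formula. This uses that $\mathcal W_{12}\times\mathcal X_3$ and $\mathcal X_1\times\mathcal W_{23}$ are Tor-independent in $\mathcal M_{123}$: they are flat pullbacks, along $\mathcal M_{123}\to S^3$, of the two transversal partial diagonals of the smooth $S^3$.

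The ``subtlety'' paragraph is mistaken, and the obstacle you expect there does not exist. Identifying the sector restriction of $L\Delta_2^*(\cdots)$ with $L\Delta_{2,h}^*$ of the two sector restrictions involves no base change. The two composites $\mathcal M^{g,h,k}_{123}\to\mathcal M_{12}\times\mathcal M_{23}$ are the same morphism, so this is just pseudofunctoriality of derived pullback. The square is indeed Cartesian with excess bundle $N_{2,h}$, but that excess only matters when $\Delta_2^!$ is commuted past pushforward from the fixed loci. Moreover, $\Delta_{2,h}$ is a regular embedding of smooth stacks of codimension $d_{2,h}$, so $\Delta_{2,h}^![\mathcal M^{g,h}_{12}\times\mathcal M^{h,k}_{23}]=[\mathcal M^{g,h,k}_{123}]$. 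The bivariant localized Chern character commutes with this refined Gysin map. Hence neither $\ch(\Lambda_{2,h})$ nor a Todd class of $\mathcal X_{2,h}$ enters the $\ch^{\mathrm{loc}}$ identity. Inserting either would break the equality, since both sides already carry exactly $q_1^*\Theta_{1,g}\,q_2^*\Theta_{2,h}$.

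Your excess-plus-Todd bookkeeping belongs one level up, to the unnormalized classes. There one has $\tau^{\mathrm{orb}}([E_{123}^\bullet])_{g,h,k}=q_2^*\Theta_{2,h}^{-1}\cap\delta_h^!\bigl(\tau^{\mathrm{orb}}([F_{12}^\bullet])_{g,h}\boxtimes\tau^{\mathrm{orb}}([F_{23}^\bullet])_{h,k}\bigr)$. The factor $\Theta_{2,h}^{-1}=\td(T_{\mathcal X_{2,h}})^{-1}\ch(\Lambda_{2,h})$ combines the Verdier--Riemann--Roch factor of the diagonal with the moving-normal correction of your concentration route. Target-normalizing $F_{12}^\bullet$ removes exactly this factor. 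Either formalism works, but do not mix them. In the $\ch^{\mathrm{loc}}$ version, the count of $N_{2,h}$ is already balanced by the definitions of the $\Theta$-factors, and nothing is left to check.
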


\begin{proof}
Set $\mathcal M_{123}:=\mathcal X_1\times\mathcal X_2\times\mathcal X_3$, write $j_{123}:\mathcal W_{123}\hookrightarrow\mathcal M_{123}$, and let $q_{ij}$ be the projections from $\mathcal M_{123}$. The partial diagonals $s_1=s_2$ and $s_2=s_3$ in $S^3$ are Tor-independent regular embeddings. Their inverse images in $\mathcal M_{123}$ remain Tor-independent because the morphism $\mathcal M_{123}\to S^3$ is flat. Thus derived base change \cite[Tag~08IB]{Stacks} and the projection formula give
\begin{equation}
  \label{eq:ambient-relative-convolution-comparison} Lq_{12}^*Rj_{12*}F_{12}^\bullet \otimes^L Lq_{23}^*Rj_{23*}F_{23}^\bullet \simeq Rj_{123*}E_{123}^\bullet
\end{equation}
with support on $\mathcal W_{123}$.

For $(g,h,k)\in G_1\times G_2\times G_3$, put
\[
  \mathcal M_{123}^{g,h,k} :=\mathcal X_{1,g}\times\mathcal X_{2,h}\times\mathcal X_{3,k}, \qquad \mathcal W_{123}^{g,h,k} :=\mathcal X_{1,g}\times_S\mathcal X_{2,h}\times_S\mathcal X_{3,k}.
\]
Define $\ch^{\mathrm{loc}}_{g,h,k}(E_{123}^\bullet)$ by applying the eigenvalue-weighted localized Chern character to the derived restriction of $Rj_{123*}E_{123}^\bullet$ to $\mathcal M_{123}^{g,h,k}$, with support on $\mathcal W_{123}^{g,h,k}$. The operation $t_g$, defined by weighting eigensheaves, commutes with derived pullback and is multiplicative for derived tensor products; see also \cite[Proposition~5.4]{EG08}. Let $\delta_h^!$ be the refined Gysin pullback of \eqref{eq:CR-composition-sectorwise}, induced by the absolute diagonal of $\mathcal X_{2,h}$ and taking values on $\mathcal W_{123}^{g,h,k}$.

The product and pullback formulas for the bivariant localized Chern character, as in \cite[Section~18.1]{Ful98} and \cite[Tags~0FBF, 0FB3, and~0FBB]{Stacks}, applied to \eqref{eq:ambient-relative-convolution-comparison}, yield
\begin{equation}
  \label{eq:localized-chern-composition} \ch^{\mathrm{loc}}_{g,h,k}(E_{123}^\bullet) =\delta_h^!\left( \ch^{\mathrm{loc}}_{g,h}(F_{12}^\bullet) \boxtimes \ch^{\mathrm{loc}}_{h,k}(F_{23}^\bullet) \right).
\end{equation}

The map $p_{13}:\mathcal W_{123}\to\mathcal W_{13}$ is proper, being a base change of $\mathcal X_2\to S$. The relative products are separated finite quotient stacks with quasi-projective coarse moduli spaces, since their atlases are closed in the corresponding quasi-projective absolute products. Thus \eqref{eq:orbifold-tau-proper-covariance} applies to $p_{13}$. Its inertia map sends $(g,h,k)$ to $(g,k)$; using $p_{13}$ also for the induced sector morphisms, we obtain
\[
  \tau^{\mathrm{orb}}_S([F_{13}^\bullet])_{g,k} =\sum_{h\in G_2}p_{13*} \tau^{\mathrm{orb}}_S([E_{123}^\bullet])_{g,h,k}.
\]
Substitute the fixed-sector expressions and cancel the invertible outer $\Theta$-factors using the projection formula. Writing $q_2$ for the projection to $\mathcal X_{2,h}$, we obtain
\begin{equation}
  \label{eq:supported-fixed-point-pushforward} \ch^{\mathrm{loc}}_{g,k}(F_{13}^\bullet) =\sum_{h\in G_2}p_{13*}\left( q_2^*\Theta_{2,h}\cap \ch^{\mathrm{loc}}_{g,h,k}(E_{123}^\bullet) \right)
\end{equation}
in $H^{\mathrm{BM}}_{2*}(\mathcal W_{13}^{g,k},\CC)$.

Multiplying by $\operatorname{pr}_1^*\Theta_{1,g}$ and substituting \eqref{eq:localized-chern-composition}, we conclude that
\[
  \tau^{\mathrm{nor}}_S([F_{13}^\bullet])_{g,k} =\sum_{h\in G_2}p_{13*}\delta_h^!\left( \tau^{\mathrm{nor}}_S([F_{12}^\bullet])_{g,h} \boxtimes \tau^{\mathrm{nor}}_S([F_{23}^\bullet])_{h,k} \right).
\]
Indeed, the middle factor $\Theta_{2,h}$ is the source normalization of the second kernel. By \eqref{eq:CR-composition-sectorwise}, the last expression is the composition of cohomological correspondences; see \cite[Subsection~2.2.2, Equation~(16)]{MSY23}. Summing over $g,k$ proves the proposition.
\end{proof}

\begin{lemma}\label{lem:normalized-diagonal-is-identity}
Let $\Delta_{\mathcal X_i/S}: \mathcal X_i \longrightarrow \mathcal X_i\times_S\mathcal X_i$ be the relative diagonal. Then
\begin{equation}
  \label{eq:normalized-diagonal-class} \tau^{\mathrm{nor}} \bigl( [\sO_{\Delta_{\mathcal X_i/S}}] \bigr) = \mathbf 1_{\mathcal X_i}.
\end{equation}
\end{lemma}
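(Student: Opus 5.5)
The plan is to compute $\tau^{\mathrm{nor}}([\sO_{\Delta}])$ sector by sector with the fixed-sector formula~\eqref{eq:normalized-tau-supported-definition} (taking $i=j$). We then compare the result with the identity correspondence $\mathbf 1_{\mathcal X_i}$. Under~\eqref{eq:CR-realization}, $\mathbf 1_{\mathcal X_i}$ is the class whose $(g,h)$-component vanishes for $g\neq h$ and equals the fundamental class of the relative diagonal $[\Delta_{\mathcal X_{i,g}/S}]$ for $g=h$, placed in degree $k=0$. A degree check gives $2d_{i,g}$, as required.

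\textbf{Step 1: off-diagonal sectors.} The sheaf $\sO_\Delta$ is pushed forward from $\mathcal X_i$, so the relevant derived restriction is supported on $\mathcal W_{ii}^{g,h}\cap\Delta$. This set is the locus of points fixed by both $g$ and $h$. Concentration for the diagonal action, applied to~\eqref{eq:supported-concentration-inverse}, lets us compute the $(g,h)$-component. We push forward via $\Delta$ from $I\mathcal X_i$: by~\eqref{eq:orbifold-tau-proper-covariance} applied to the proper map $\Delta_{\mathcal X_i/S}$, we get $\tau^{\mathrm{orb}}(\Delta_*\sO)=(I\Delta)_*\tau^{\mathrm{orb}}(\sO_{\mathcal X_i})$. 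The inertia of $\Delta$ sends the sector $g$ to the sector $(g,g)$, so every component with $g\neq h$ vanishes. This is cleaner than computing localized Chern characters directly.

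\textbf{Step 2: diagonal sectors.} It remains to evaluate $\tau^{\mathrm{orb}}(\sO_{\mathcal X_i})_g$ and push it into $\mathcal W^{g,g}$. The Edidin--Graham/Krishna--Sreedhar fixed-sector formula for a smooth stack gives
\[
  \tau^{\mathrm{orb}}(\sO_{\mathcal X_i})_g=\operatorname{td}(T_{\mathcal X_{i,g}})\ch(\Lambda_{i,g})^{-1}\cap[\mathcal X_{i,g}]=\Theta_{i,g}\cap[\mathcal X_{i,g}].
\]
Indeed, $Li_{i,g}^*\sO=\sO$, and the denominator is $t_g\lambda_{-1}(N^\vee_{i,g})$. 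Pushing this forward along $\Delta_g:\mathcal X_{i,g}\to\mathcal W^{g,g}_{ii}$ gives $\Delta_{g*}(\Theta_{i,g}\cap[\mathcal X_{i,g}])$. By the projection formula, $\operatorname{pr}_2^*\Theta_{i,g}^{-1}$ restricts along $\Delta_g$ to $\Theta_{i,g}^{-1}$. The normalization in the first line of~\eqref{eq:normalized-tau-supported-definition} therefore cancels the factor exactly, leaving $\Delta_{g*}[\mathcal X_{i,g}]=[\Delta_{\mathcal X_{i,g}/S}]$.

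\textbf{Step 3 and the main obstacle.} The last step is to verify that the diagonal fundamental classes realize the identity morphism of $\mathcal H_{\mathrm{CR}}(\mathcal X_i/S)$ under~\eqref{eq:CR-realization}. This is standard in the correspondence formalism of \cite[Appendix~A]{Yun11} and \cite{MSY23}, applied sectorwise to the smooth proper $\mathcal X_{i,g}\to S$. The age shifts agree on both sides, so they cancel. The main obstacle is Step 2. It requires confirming that the Krishna--Sreedhar transformation $\tau^{\mathrm{orb}}$ for a smooth quotient stack reduces exactly to the stated twisted-sector formula, with $t_g$ and the Todd class conventions matching those used to define $\Theta_{i,g}$, and with no extra fundamental-class or stack-degree factors such as $|G_i|$ or $|\mathrm{Stab}|$ from the equivariant Chow convention. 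If this proves delicate, an alternative is to apply~\eqref{eq:normalized-tau-supported-definition} directly to $\Delta$ in the sector $(g,g)$. The self-intersection formula then gives $L\iota^*\Delta_*\sO=\Delta_{g*}\lambda_{-1}(N_{i,g}^\vee)$ via the excess normal bundle $N_{i,g}$, so $\ch$ of the $t$-weighted class is $\ch(\Lambda_{i,g})\operatorname{td}(N_{i,g})^{-1}$ times the localized class of $\Delta_g$. The diagonal of $\mathcal X_{i,g}$ in the smooth product contributes $\operatorname{td}(T_{\mathcal X_{i,g}})^{-1}$ by Riemann--Roch. The factor $\operatorname{pr}_1^*\Theta_{i,g}$ should then cancel all of these, and the bookkeeping of these Todd factors is where the care is needed.
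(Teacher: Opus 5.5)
Your main argument is correct and is exactly the paper's proof: proper covariance \eqref{eq:orbifold-tau-proper-covariance} along the diagonal, the inertia map sending the $g$-sector to the $(g,g)$-sector so that off-diagonal components vanish, and cancellation of $\Theta_{i,g}$ against the target normalization by the projection formula. One caveat on your fallback sketch: the supported calculation gives $\Delta_{g*}\bigl(\ch(\Lambda_{i,g})\operatorname{td}(T_{\mathcal X_{i,g}})^{-1}\bigr)$ with no extra factor $\operatorname{td}(N_{i,g})^{-1}$, because the only Riemann--Roch denominator is the normal bundle $T_{\mathcal X_{i,g}}$ of the diagonal in $\mathcal X_{i,g}\times\mathcal X_{i,g}$; only then does $\operatorname{pr}_1^*\Theta_{i,g}$ cancel everything, whereas with the $\operatorname{td}(N_{i,g})^{-1}$ you wrote a stray factor would survive.
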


\begin{proof}
The induced map on inertia sends the \(g\)-sector diagonally to the \((g,g)\)-sector. Proper covariance gives
\[
  \tau^{\mathrm{orb}} \bigl( [\sO_{\Delta_{\mathcal X_i/S}}] \bigr)_{g,g} = (\Delta_{\mathcal X_{i,g}/S})_* \left( \Theta_{i,g}\cap[\mathcal X_{i,g}] \right),
\]
and all components with unequal sector labels vanish. The target normalization and the projection formula cancel \(\Theta_{i,g}\), leaving
\[
  (\Delta_{\mathcal X_{i,g}/S})_*[\mathcal X_{i,g}].
\]
Summing over \(g\) proves~\eqref{eq:normalized-diagonal-class}.
\end{proof}

\begin{lemma}\label{lem:support-bound-normalized-tau}
Let
\[
  F^\bullet\in D^b\Coh(\mathcal X_i\times_S\mathcal X_j),
\]
and fix \(g\in G_i\) and \(h\in G_j\). Suppose that the union of the supports of the cohomology sheaves of \(L(\iota_{ij}^{g,h})^*Rj_{ij*}F^\bullet\) has dimension at most \(d_F\). Then
\begin{align}
  \tau^{\mathrm{nor}}([F^\bullet])_{g,h} \in \bigoplus_{ k\geq d_{j,h}+a_j(h)-a_i(g)-d_F} \mathrm{Corr}^k_{S,\mathrm{CR}} (\mathcal X_{i,g},\mathcal X_{j,h}). \label{eq:support-bound-pair}
\end{align}
For a binary kernel whose supported derived restriction in the smooth absolute product to the \((g_1,g_2,g_3)\)-sector has support of dimension at most \(d_F\), we have
\begin{align}
  \tau^{\mathrm{nor}}([F^\bullet])_{g_1,g_2,g_3} \in \bigoplus_{ k\geq d_{3,g_3} +a_3(g_3)-a_1(g_1)-a_2(g_2)-d_F} \mathrm{Corr}^k_{S,\mathrm{CR}} (\mathcal X_{1,g_1},\mathcal X_{2,g_2};\mathcal X_{3,g_3}). \label{eq:support-bound-binary}
\end{align}
\end{lemma}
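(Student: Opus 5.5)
The plan is to read off the degree directly from the fixed-sector formula \eqref{eq:normalized-tau-supported-definition}:
\[
  \tau^{\mathrm{nor}}([F^\bullet])_{g,h}=\operatorname{pr}_1^*\Theta_{i,g}\cap\ch^{\mathrm{loc}}_{g,h}(F^\bullet).
\]
The localized Chern character is built from the derived restriction $L(\iota_{ij}^{g,h})^*Rj_{ij*}F^\bullet$. This restriction is a perfect complex on the smooth stack $\mathcal M_{ij}^{g,h}$, which has dimension $d_{i,g}+d_{j,h}$.

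First, I will use the support hypothesis. Let $Z\subset\mathcal W_{ij}^{g,h}$ be the union of the supports of the cohomology sheaves, so $\dim Z\le d_F$. The localized Chern character with support on $\mathcal W_{ij}^{g,h}$ is the pushforward of the localized Chern character with support on $Z$ (\cite[Section~18.1]{Ful98}). The bivariant class $\ch_Z^{\mathcal M}$ capped with $[\mathcal M_{ij}^{g,h}]$ lands in $H^{\mathrm{BM}}_{2*}(Z)$. This group vanishes in degrees above $2\dim Z$, and these degrees are preserved by proper pushforward. Hence $\ch^{\mathrm{loc}}_{g,h}(F^\bullet)$ lies in $\bigoplus_{m\le d_F}H^{\mathrm{BM}}_{2m}(\mathcal W_{ij}^{g,h})$. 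Capping with the cohomology class $\operatorname{pr}_1^*\Theta_{i,g}$ only lowers Borel--Moore degree, because $\Theta_{i,g}$ has components in nonnegative even degrees. So the same bound holds for $\tau^{\mathrm{nor}}([F^\bullet])_{g,h}$. In the quotient-stack setting I will justify this vanishing through the Edidin--Graham mixed-space models. There equivariant Borel--Moore homology of $Z$ in degree $2m$ is computed by $H^{\mathrm{BM}}_{2m+2\ell}$ of the mixed space $Z\times^G U$, which has dimension $\dim Z+\ell$. This mixed-space justification is the only point needing care; beyond it the step is bookkeeping.

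Next, I will compare indices with \eqref{eq:CR-correspondence-space}. A class in $H^{\mathrm{BM}}_{2m}(\mathcal W_{ij}^{g,h})$ lies in $\mathrm{Corr}^k_{S,\mathrm{CR}}$ where $m=d_{j,h}-k+a_j(h)-a_i(g)$, that is, $k=d_{j,h}+a_j(h)-a_i(g)-m$. The condition $m\le d_F$ is then exactly $k\ge d_{j,h}+a_j(h)-a_i(g)-d_F$, which gives \eqref{eq:support-bound-pair}.

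The binary statement \eqref{eq:support-bound-binary} follows by the same argument. I will use formula \eqref{eq:normalized-orbifold-tau-binary}, written in its fixed-sector form with $\operatorname{pr}_1^*\Theta_{1,g_1}\boxtimes\operatorname{pr}_2^*\Theta_{2,g_2}$ capped against the localized Chern character on the triple sector. I will then match degrees with \eqref{eq:binary-CR-correspondence-space}: $m=d_{3,g_3}-k+a_3(g_3)-a_1(g_1)-a_2(g_2)\le d_F$.
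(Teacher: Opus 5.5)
Your proposal is correct and follows essentially the same route as the paper: the localized Chern character is a cycle class on a support of dimension at most $d_F$, capping with the nonnegative-degree class $\Theta$ cannot raise Borel--Moore degree, and comparing indices with \eqref{eq:CR-correspondence-space} and \eqref{eq:binary-CR-correspondence-space} gives the stated bounds. Your additional points only spell out what the paper's short proof leaves implicit: passing from support on $\mathcal W_{ij}^{g,h}$ to support on $Z$, the mixed-space justification, and the explicit binary fixed-sector factor $\operatorname{pr}_1^*\Theta_{1,g_1}\operatorname{pr}_2^*\Theta_{2,g_2}$.
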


\begin{proof}
The localized Chern character of a supported perfect complex is a cycle class on its support. Hence it has no Borel--Moore component of degree greater than \(2d_F\). Multiplication by an orbifold Todd class does not increase Borel--Moore degree.

A class in the \((g,h)\)-component of \(\mathrm{Corr}^d_{S,\mathrm{CR}}(\mathcal X_i,\mathcal X_j)\) has Borel--Moore degree
\[
  2d_{j,h}-2d+2a_j(h)-2a_i(g).
\]
If this component is nonzero, then
\[
  2d_{j,h}-2d+2a_j(h)-2a_i(g)\leq 2d_F,
\]
which is equivalent to~\eqref{eq:support-bound-pair}. The binary statement follows from the same calculation.
\end{proof}

\subsection{Fourier correspondences and character decomposition}\label{subsec:Fourier-character-decomposition}

We apply the preceding construction to
\[
  \check{P} \qquad \text{and} \qquad [\check{P}/\Gamma],
\]
over $B$. The inertia stacks are given by
\[
  I(\check{P}\times_B[\check{P}/\Gamma])=\bigsqcup_{\gamma\in \Gamma}\check{P}\times_B[\check{P}_\gamma/\Gamma]\qquad I[\check{P}/\Gamma]=\bigsqcup_{\gamma\in \Gamma}[\check{P}_\gamma/\Gamma].
\]

\begin{lemma}\label{lem:Poincare-sector-actual-pullback}
Use the notation above with $S=B$, $\mathcal X_1=\check P$, and $\mathcal X_2=[\check P/\Gamma]$. For $\gamma\in\Gamma$, let
\[
  \ell_\gamma: \check P\times_B[\check P_\gamma/\Gamma] \longrightarrow \check P\times_B[\check P/\Gamma]
\]
be the natural inclusion. Then there is a canonical isomorphism
\begin{equation}
  \label{eq:Poincare-sector-base-change} L(\iota_{12}^{1,\gamma})^*Rj_{12*}\mathscr F \simeq R\jmath_{12*}^{1,\gamma}L\ell_\gamma^*\mathscr F \quad\text{in }D^b\Coh(\check P\times [\check P_\gamma/\Gamma]).
\end{equation}
Consequently,
\begin{equation}
  \label{components of tau of F} \tau^{\mathrm{nor}}_{B}(\mathscr{F})_{1, \gamma}=\tau^{\mathrm{nor}}_B(L\ell_\gamma^*\mathscr F)_{1, \gamma}.
\end{equation}
\end{lemma}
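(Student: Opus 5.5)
The isomorphism \eqref{eq:Poincare-sector-base-change} is a derived base change for the Cartesian square obtained by intersecting the relative product with the sector inside the absolute product. The consequence \eqref{components of tau of F} then follows by reading off the definition \eqref{eq:normalized-tau-supported-definition}.

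Write $\mathcal M=\check P\times[\check P/\Gamma]$, $\mathcal W=\check P\times_B[\check P/\Gamma]$, $\mathcal M^{1,\gamma}=\check P\times[\check P_\gamma/\Gamma]$ and $\mathcal W^{1,\gamma}=\check P\times_B[\check P_\gamma/\Gamma]$. The square
\[
\begin{tikzcd}
\mathcal W^{1,\gamma}\arrow[r,"\jmath_{12}^{1,\gamma}"]\arrow[d,"\ell_\gamma"'] & \mathcal M^{1,\gamma}\arrow[d,"\iota_{12}^{1,\gamma}"]\\
\mathcal W\arrow[r,"j_{12}"'] & \mathcal M
\end{tikzcd}
\]
is Cartesian, since $\mathcal W\cap\mathcal M^{1,\gamma}=\check P\times_B[\check P_\gamma/\Gamma]$ scheme-theoretically. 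This holds because both $\mathcal W$ and $\mathcal W^{1,\gamma}$ are pullbacks of the diagonal $\Delta_B\subset B\times B$.

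The key step is Tor-independence of $j_{12}$ and $\iota_{12}^{1,\gamma}$. The map $j_{12}$ is the base change of the regular embedding $\Delta_B$ along the flat map $\mathcal M\to B\times B$, so locally it is cut out by a regular sequence $x_i-y_i$, of length $\dim B$, coming from coordinates on $B$. Restricting this sequence to $\mathcal M^{1,\gamma}$ gives the equations of $\mathcal W^{1,\gamma}$. Tor-independence is therefore equivalent to the restricted sequence remaining regular, which holds if $\mathcal W^{1,\gamma}\subset\mathcal M^{1,\gamma}$ has codimension $\dim B$. This reduces to flatness of $\check P_\gamma\to B$, or at least of the composite $\mathcal M^{1,\gamma}\to B\times B$: the first factor $\check P\to B$ is flat, so $\mathcal M^{1,\gamma}\to B\times B$ is flat as soon as $\check P_\gamma\to B$ is flat.

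I expect this to be the main obstacle. The fixed locus maps onto the endoscopic locus $B_\gamma$, of codimension $a_\gamma$, so $\check P_\gamma\to B$ is not flat. The fix is to use only the first factor. Factor $\mathcal M^{1,\gamma}\to B\times B$ as $\check P\times[\check P_\gamma/\Gamma]\to B\times[\check P_\gamma/\Gamma]\to B\times B$. The diagonal condition then pulls back along $B\times[\check P_\gamma/\Gamma]\to B\times B$ to the graph of $[\check P_\gamma/\Gamma]\to B$, which is a regular embedding of codimension $\dim B$ in the smooth stack $B\times[\check P_\gamma/\Gamma]$ because $\check P_\gamma$ is smooth. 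Pulling back along the flat map $\check P\to B$ keeps it regular of the same codimension. So the restricted sequence is regular, the Koszul resolution of $\sO_{\mathcal W}$ restricts to a resolution of $\sO_{\mathcal W^{1,\gamma}}$, and the square is Tor-independent.

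With Tor-independence in hand, derived base change (\cite[Tag~08IB]{Stacks}, applied to the $\Gamma$-equivariant atlases) gives $L(\iota_{12}^{1,\gamma})^*Rj_{12*}\mathscr F\simeq R\jmath^{1,\gamma}_{12*}L\ell_\gamma^*\mathscr F$. The isomorphism is canonical, being the base change morphism, and it is compatible with the $\Gamma$-linearizations, hence with the eigen-decompositions used by $t_{1,\gamma}$.

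For \eqref{components of tau of F}, regard $L\ell_\gamma^*\mathscr F$ as a kernel between $\check P$ and $[\check P_\gamma/\Gamma]$. Its own $(1,\gamma)$-localized Chern character is formed from $R\jmath^{1,\gamma}_{12*}L\ell_\gamma^*\mathscr F$ on $\mathcal M^{1,\gamma}$. The pullback of $R\jmath^{1,\gamma}_{12*}$ to the sector is the identity, since the $\gamma$-sector of $[\check P_\gamma/\Gamma]$ is itself, and the $t$-weighting is the same. By \eqref{eq:Poincare-sector-base-change}, this complex agrees with the one used in $\ch^{\mathrm{loc}}_{1,\gamma}(\mathscr F)$. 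The same $\Theta_{1,1}$ factor multiplies both, so the two sides of \eqref{eq:normalized-tau-supported-definition} agree, with the sector data taken to be those of $[\check P_\gamma/\Gamma]$ inside $[\check P/\Gamma]$.
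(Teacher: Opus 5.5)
Your proposal is correct. It differs from the paper only in which of the two closed immersions you resolve to get Tor-independence.

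\textbf{The paper's route.} The paper resolves $\sO_{\mathcal M_{12}^{1,\gamma}}$. On the quotient atlas, $\check P_\gamma\subset\check P$ is a regular immersion of smooth varieties, and its local equations, pulled back along $\operatorname{pr}_2$, cut out $\mathcal M_{12}^{1,\gamma}$ in $\mathcal M_{12}$. Since $p_2:\mathcal W_{12}\to[\check P/\Gamma]$ is flat (a base change of $\check P\to B$), these equations remain a regular sequence on $\mathcal W_{12}$. The restricted Koszul complex is therefore exact.

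\textbf{Your route.} You resolve $\sO_{\mathcal W_{12}}$ by the Koszul complex on the diagonal equations of $B$. You then show these equations stay regular on $\mathcal M_{12}^{1,\gamma}$, by factoring through the graph of $[\check P_\gamma/\Gamma]\to B$ and using flatness of the first factor $\check P\to B$. This correctly sidesteps your initial worry that $\check P_\gamma\to B$ is not flat.

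\textbf{Comparison.} Since Tor-independence is symmetric, both arguments are valid. The paper's version is slightly more economical: it is just flat pullback of a regular sequence, with no codimension or Cohen--Macaulay count. It also gives, as a byproduct, that $\ell_\gamma$ is a regular immersion. Yours gives instead that $\jmath_{12}^{1,\gamma}$ is a regular immersion of codimension $\dim B$.

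\textbf{The consequence.} Your reading of \eqref{components of tau of F} agrees with the paper's intent. In the fixed-sector form \eqref{eq:normalized-tau-supported-definition}, only the source factor $\Theta_{1,1}=\operatorname{td}(T_{\check P})$ appears, and the $t_{1,\gamma}$-weighting of the restricted complex is unchanged. The equality then follows directly from \eqref{eq:Poincare-sector-base-change}.
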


\begin{proof}
The projection $p_2:\mathcal W_{12}\to[\check P/\Gamma]$ is flat, being a base change of $\check P\to B$. On the quotient atlas, the inclusion $\check P_\gamma\hookrightarrow\check P$ is regular, since both spaces are smooth. Its local defining regular sequence remains regular after pullback to $\mathcal W_{12}$ by flatness. The corresponding Koszul resolution therefore shows that the Cartesian square
\[
\begin{tikzcd}
  \mathcal W_{12}^{1,\gamma} \arrow[r,"\ell_\gamma"] \arrow[d,"\jmath_{12}^{1,\gamma}"'] & \mathcal W_{12}\arrow[d,"j_{12}"] \\
  \mathcal M_{12}^{1,\gamma} \arrow[r,"\iota_{12}^{1,\gamma}"'] & \mathcal M_{12}
\end{tikzcd}
\]
is Tor-independent; it also shows that $\ell_\gamma$ is regular. Derived base change gives \eqref{eq:Poincare-sector-base-change}; see \cite[Tags~00LM and~08IB]{Stacks}. This construction is equivariant on the quotient atlas.
\end{proof}

We define
\begin{align}
  \mathfrak{F} =\tau^{\mathrm{nor}}([\mathscr{F}])&\in \bigoplus_{i\in \mathbb{Z}}\operatorname{Corr}^{i}_{B,\rm CR}(\check{P},[\check{P}/\Gamma]) \\
  \mathfrak{F}^{-1} =\tau^{\mathrm{nor}}([\mathscr{F}^{-1}])&\in \bigoplus_{i\in \mathbb{Z}}\operatorname{Corr}^{i}_{B,\rm CR}([\check{P}/\Gamma], \check{P}).
\end{align}
For any $\gamma\in \Gamma$ and $i\in \mathbb{Z}$, we index their components by shifting the correspondence degree by $g_P$:
\begin{align}
  \mathfrak{F}_{\gamma, i}=(\mathfrak{F})_{\gamma, i}&\in \operatorname{Corr}^{i-g_P}_{B,\rm CR}(\check{P},[\check{P}_\gamma/\Gamma]) \\
  \mathfrak{F}_{\gamma, i}^{-1}=(\mathfrak{F}^{-1})_{\gamma, i}&\in \operatorname{Corr}^{i-g_P}_{B,\rm CR}([\check{P}_\gamma/\Gamma], \check{P}).
\end{align}

Here $\gamma$ labels an inertia sector on the quotient side, and $\mathfrak F_{\gamma,i}^{-1}$ denotes the component $(\mathfrak F^{-1})_{\gamma,i}$ of the inverse transform.

We also write
\begin{equation}
  \mathfrak{F}_i=\sum_{\gamma \in \Gamma}\mathfrak{F}_{\gamma, i} \qquad \mathfrak{F}_i^{-1}=\sum_{\gamma \in \Gamma}\mathfrak{F}_{\gamma, i}^{-1}.
\end{equation}

\begin{proposition}\label{inverse to each other}
We have
\[
  \mathfrak{F}^{-1}\circ \mathfrak{F}=\mathrm{id} \qquad \mathfrak{F}\circ \mathfrak{F}^{-1}=\mathrm{id}
\]
\end{proposition}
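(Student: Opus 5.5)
The plan is to deduce both identities from the derived-category statement that $\Phi_{\mathscr F^{-1}}$ and $\Phi_{\mathscr F}$ are mutually inverse. We pass this through the normalized Riemann--Roch transformation using Proposition~\ref{prop:normalized-tau-composition} and Lemma~\ref{lem:normalized-diagonal-is-identity}.

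Concretely, take $\mathcal X_1=\mathcal X_3=\check P$ with trivial group and $\mathcal X_2=[\check P/\Gamma]$, all over $S=B$. Set $F_{12}^\bullet=\mathscr F$ and $F_{23}^\bullet=\mathscr F^{-1}$. Since $\Phi_{\mathscr F^{-1}}\circ\Phi_{\mathscr F}\simeq\id$ by the Fourier--Mukai equivalence \cite[Theorem~4.7]{GS22}, the convolution kernel
\[
  Rp_{13*}\bigl(Lp_{12}^*\mathscr F\otimes^L Lp_{23}^*\mathscr F^{-1}\bigr)
\]
is isomorphic to $\sO_{\Delta_{\check P/B}}$ on $\check P\times_B\check P$. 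Both kernels are relative over $B$, so composition of integral transforms is computed by relative convolution. The uniqueness of Fourier--Mukai kernels for equivalences, or directly the relative version of Arinkin's computation \cite{Ari13} as used in \cite{GS22}, identifies the composite kernel with the structure sheaf of the relative diagonal rather than merely giving an abstract isomorphism of functors. Boundedness and coherence of $E_{123}^\bullet$ and $F_{13}^\bullet$ follow because $\mathscr F$ and $\mathscr F^{-1}[-g_P]$ are coherent sheaves flat over both factors, so the derived tensor product is bounded, and $p_{13}$ is proper.

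Proposition~\ref{prop:normalized-tau-composition} then gives $\mathfrak F^{-1}\circ\mathfrak F=\tau^{\mathrm{nor}}_B([\sO_{\Delta_{\check P/B}}])$, and Lemma~\ref{lem:normalized-diagonal-is-identity} identifies this with $\mathbf 1_{\check P}$, that is, the identity correspondence. The other order is symmetric. Take $\mathcal X_1=\mathcal X_3=[\check P/\Gamma]$ and $\mathcal X_2=\check P$, and use $\Phi_{\mathscr F}\circ\Phi_{\mathscr F^{-1}}\simeq\id$, which identifies the composite kernel with $\sO_{\Delta_{[\check P/\Gamma]/B}}$ on $[\check P/\Gamma]\times_B[\check P/\Gamma]$. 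Lemma~\ref{lem:normalized-diagonal-is-identity} then applies sectorwise on the inertia: the diagonal sends the $\gamma$-sector to the $(\gamma,\gamma)$-sector, and the result is the identity of $\mathcal H_{\mathrm{CR}}([\check P/\Gamma]/B)$.

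The main obstacle is checking the hypotheses of Subsection~\ref{subsec:cohomological-functors} in this setting. Every nonempty sector $\check P_\gamma$ must be smooth and equidimensional with constant integral age. Smoothness holds because it is the fixed locus of a finite-order automorphism of the smooth space $\check P\cong M$. Equidimensionality and the age follow from Lemma~\ref{lem:age computation} and Lemma~\ref{lem:H-fixed-dimensions}, transported along \eqref{moduli and prym isomorphism}; the age is an integer since $n^2d/2\cdot(1-1/\operatorname{ord}\gamma)$ is integral for even $d$.

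We must also ensure that the isomorphism of kernels holds in $D^b\Coh$ of the quotient stacks, that is, $\Gamma$-equivariantly on the second or first factor. Otherwise the eigenvalue weights $t_{g,h}$ in \eqref{eq:normalized-tau-supported-definition} would not match. This requires the equivariant form of the kernel identity. I would derive it from the descent of $\mathscr P_1$ to $\mathscr F$ in \cite[Proposition~4.4]{GS22} together with the linearization of $\mathscr R$, and then invoke \eqref{eq:orbifold-tau-proper-covariance} for $p_{13}$.
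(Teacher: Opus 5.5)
Your proposal is correct and is the paper's argument: the paper's proof consists exactly of applying Proposition~\ref{prop:normalized-tau-composition} and Lemma~\ref{lem:normalized-diagonal-is-identity} to the mutually inverse kernels $\mathscr F$ and $\mathscr F^{-1}$, with the composite kernels identified with the structure sheaves of the relative diagonals. Two small points about your hypothesis checks. Take the kernel-level identity directly from the Arinkin-style computation behind \cite[Theorem~4.7]{GS22} rather than from an Orlov-type uniqueness theorem, whose hypotheses are not evidently met in this relative, stacky setting. Also, integrality of the age needs no parity assumption on $d$ (which is not assumed in Section~2), since $a_\gamma=(n/m)^2\,d\,m(m-1)/2$ with $m=\operatorname{ord}(\gamma)$.
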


\begin{proof}
Apply Proposition~\ref{prop:normalized-tau-composition} and Lemma~\ref{lem:normalized-diagonal-is-identity} to the inverse Fourier--Mukai kernels.
\end{proof}

Let $q:\check P\longrightarrow[\check P/\Gamma]$ be the quotient map. For \(\delta\in\Gamma\), let $T_\delta:\check P\longrightarrow\check P$ denote tensor product by \(\pi^*\delta\). We can also regard \(\pi^*\delta\) as a section of \(\check P\to B\) and set
\[
  \mathscr L_\delta := L(\pi^*\delta\times\id)^*\mathscr F \in \operatorname{Pic}([\check P/\Gamma]).
\]
The derived pullback is an ordinary pullback, since the Poincaré sheaf is flat over the first factor.

\begin{lemma}\label{the scalar on trivial line bundle}
The pullback \(q^*\mathscr L_\delta\) is the trivial line bundle on \(\check P\). Under the \(\Gamma\)-linearization induced by \(\mathscr F\), the action of an element \(\eta\in\Gamma\) on this trivial line bundle is multiplication by the scalar
\begin{equation}
  \label{eq:Poincare-character-line} \langle\delta,\eta\rangle_\Gamma.
\end{equation}
\end{lemma}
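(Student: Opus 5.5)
The plan is to reduce both assertions to the classical Poincaré bundle on $J^0_\Sigma\times J^0_\Sigma$, by restricting Arinkin's sheaf $\mathscr P$ along the norm, and then to read off the scalar from the descent-theoretic definition of the Weil pairing.

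\emph{Triviality of $q^*\mathscr L_\delta$.} By the isomorphism $(\id\times q)^*\mathscr F\simeq\mathscr R=(j\times j)^*\mathscr P$, we have
\[
q^*\mathscr L_\delta\simeq(\pi^*\delta\times\id)^*\mathscr R\simeq j^*\bigl(\mathscr P|_{\{\pi^*\delta\}\times_B\overline J^0_C}\bigr).
\]
Since $\pi^*\delta$ lies in the open locus of line bundles, Arinkin's construction identifies $\mathscr P|_{\{\mathcal L\}\times\overline J^0_C}$, for a line bundle $\mathcal L$, with the determinant-of-cohomology line bundle $\mathcal F\mapsto\det R\Gamma(\mathcal F\otimes\mathcal L)\otimes\det R\Gamma(\mathcal F)^{-1}\otimes(\cdots)$. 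For $\mathcal L=\pi^*\delta$, the projection formula gives $R\Gamma(C,\mathcal F\otimes\pi^*\delta)=R\Gamma(\Sigma,\pi_*\mathcal F\otimes\delta)$. Hence
\[
\mathscr P|_{\{\pi^*\delta\}\times\overline J^0_C}\simeq\Nm^*\mathcal P_{\Sigma,\delta},
\]
where $\mathcal P_{\Sigma,\delta}$ is the Poincaré line bundle of $\delta$ on $J^0_\Sigma$. I would check this fibrewise via $\det\pi_*(\mathcal F)=\Nm(\mathcal F)\otimes\det\pi_*\sO_C$ and then in families. Restricting to $\check P=\Nm^{-1}(\sO_\Sigma)$ makes $\Nm$ constant with value $\sO_\Sigma$, so $q^*\mathscr L_\delta$ is trivialized by the fibre of $\mathcal P_{\Sigma,\delta}$ at the origin, which is canonically trivial by the normalization of $\mathscr P$.

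\emph{The linearization.} The $\Gamma$-linearization on $\mathscr R$ is the restriction of the $J^0_\Sigma$-equivariance of $\mathscr P_1$ from \cite[Proposition~4.4]{GS22}, that is, of the canonical isomorphisms
\[
\mathscr P(\mathcal F,\mathcal G\otimes\pi^*M)\simeq\mathscr P(\mathcal F,\mathcal G)\otimes\mathcal P_\Sigma(\Nm\mathcal F,M)
\]
for $\mathcal F\in\check P$. For $\mathcal F=\pi^*\delta$, I would compare these with the isomorphisms $T_\eta^*\Nm^*\mathcal P_{\Sigma,\delta}\simeq\Nm^*\mathcal P_{\Sigma,\delta}$ coming from the theorem of the square. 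The point is that $\Nm\circ T_\eta=[\,\cdot\otimes\eta^{\otimes n}]\circ\Nm=\Nm$ on $\check P$. So the action of $\eta$ on the trivial bundle is the automorphism of the fibre $\mathcal P_{\Sigma,\delta}|_0$ obtained by composing two trivializations of $\mathcal P_{\Sigma,\delta}$ at the $n$-torsion translate. Equivalently, it is the scalar by which $\eta$ acts on $[n]^*\mathcal P_{\Sigma,\delta}\simeq\sO$ under the descent datum along $[n]:J^0_\Sigma\to J^0_\Sigma$. By Mumford's definition of the Weil pairing, this scalar is $e_n(\delta,\eta)$. One can also read this off from the character decomposition in Lemma~\ref{etale cover of J bar}: the summand $\Nm^*(\delta)_B$ of $\eta_*\sO$ is the $\langle\delta,-\rangle$-isotypic part.

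\emph{Main obstacle.} I expect the hardest step to be the normalization and sign bookkeeping, namely checking that the composite scalar is $\langle\delta,\eta\rangle_\Gamma$ rather than its inverse, consistently with the convention $\kappa_\gamma(\delta)=\langle\delta,\gamma\rangle_\Gamma^{-1}$ of \cite[Subsection~1.3]{MS21}. I would settle this by tracking the equivariant structure of \cite{GS22} at a single point, using a fibre $b$ with smooth spectral curve, where $\check P_b$ is an abelian variety and everything reduces to the classical Weil pairing computation. The resulting constant then extends over $B$ because the scalar is locally constant.
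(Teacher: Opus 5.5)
Your reduction is the paper's. You identify $q^*\mathscr L_\delta=\mathscr R|_{\{\pi^*\delta\}\times_B\check P}$ with the restriction of $\Nm^*$ of the Poincar\'e line of $\delta$ via the determinant-of-cohomology norm identity. You conclude triviality on $\check P=\Nm^{-1}(\sO_\Sigma)$, with the trivialization fixed by the normalization of $\mathscr P$. You then read off the linearization from descent along $[n]$ on $J^0_\Sigma$. The triviality half is complete.

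The second half stops short of the actual assertion. You identify the scalar only as ``the Weil pairing of $\delta$ and $\eta$'' and explicitly leave open whether it is $\langle\delta,\eta\rangle_\Gamma$ or $\langle\eta,\delta\rangle_\Gamma=\langle\delta,\eta\rangle_\Gamma^{-1}$. That choice is exactly what the second assertion of the lemma records. It is also what makes $\mathfrak F_{\gamma,i}$ in Lemma~\ref{sectorwise isomorphism} act on the summand $\kappa_\gamma$ rather than $\kappa_{\gamma^{-1}}$.

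The missing ingredient is the direction of the $\Gamma$-action in the quotient presentation of Lemma~\ref{etale cover of J bar}. The paper pulls the ambient slice $\Nm^*\mathscr L^\Sigma_\delta$ back along $J^0_\Sigma\times\check P\to\overline J^0_C$ to $\operatorname{pr}_1^*[n]^*\mathscr L^\Sigma_\delta$. By Cartier duality, the normalized trivialization of this line has deck character $\eta\mapsto\langle\eta,\delta\rangle_\Gamma$. The paper then uses that $\eta$ acts by $(L_B,F)\mapsto(L_B\otimes\eta^{-1},F\otimes\pi^*\eta)$, so the $J^0_\Sigma$-factor is translated by $\eta^{-1}$. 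The character on the slice over $\check P$ is therefore $\langle\eta^{-1},\delta\rangle_\Gamma=\langle\delta,\eta\rangle_\Gamma$, by bilinearity and skew-symmetry.

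Your sentence identifying the action of $\eta$ with ``the scalar by which $\eta$ acts on $[n]^*\mathcal P_{\Sigma,\delta}\simeq\sO$ under the descent datum'' is where this inversion is dropped. Read with the paper's normalization of the deck character, it gives $\langle\eta,\delta\rangle_\Gamma$, the inverse of the claimed scalar.

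Your proposed remedy, checking at a fibre with smooth spectral curve, does not address this. The ambiguity has nothing to do with the singularities of $\check P_b$ and is identical on every fibre. It is resolved only by tracking the direction in which the linearization isomorphism for $T_\eta$ is composed with the descent (or biextension) trivializations, relative to the convention for $\langle\cdot,\cdot\rangle_\Gamma$.
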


\begin{proof}
Pulling \(\mathscr L_\delta\) back along \(q\) gives
\[
  q^*\mathscr L_\delta \simeq \mathscr R \big|_{\{\pi^*\delta\}\times_B\check P}.
\]
For \(F\in\check P\), we have \(\operatorname{Nm}(F)\simeq\sO_\Sigma\). The compatibility of the determinant-of-cohomology construction with pullback and norm therefore identifies the last restriction with the trivial line bundle. The normalization of \(\mathscr P\) fixes its trivialization along the identity section.

To determine its linearization, let $\mathscr L_\delta^\Sigma$ be the normalized Poincar\'e line on $J_\Sigma^0$ corresponding to $\delta$. The determinant-of-cohomology norm identity of \cite[Proposition~4.4]{GS22} identifies the ambient slice with $\Nm^*\mathscr L_\delta^\Sigma$. Its pullback by $\eta$ is therefore the pullback from $J_\Sigma^0$ of $[n]^*\mathscr L_\delta^\Sigma$. The normalized trivialization of this last line has deck character $\eta\mapsto\langle\eta,\delta\rangle_\Gamma$, by Cartier duality for the multiplication-by-$n$ isogeny. In the quotient presentation of Lemma~\ref{etale cover of J bar}, an element $\eta\in\Gamma$ acts on $J_\Sigma^0\times\check P$ by $(L_B,F)\mapsto(L_B\otimes\eta^{-1},F\otimes\pi^*\eta)$. The character on the trivialized slice over $\check P$ is consequently $\langle\eta^{-1},\delta\rangle_\Gamma =\langle\delta,\eta\rangle_\Gamma$. This proves \eqref{eq:Poincare-character-line} for the linearization fixed by descent.
\end{proof}

\begin{lemma}\label{lem:Poincare-sector-character} \label{sectorwise isomorphism}
The normalization of the Poincaré sheaf determines an isomorphism
\begin{equation}
  \label{eq:Poincare-translation} (T_\delta\times\id)^*\mathscr F \simeq \mathscr F\otimes p_2^*\mathscr L_\delta .
\end{equation}
Consequently, for every \(\gamma\in\Gamma\) and every \(i\),
\begin{equation}
  \label{eq:Poincare-sector-eigenclass} \mathfrak F_{\gamma,i} \circ T_{\delta^{-1}}^* = \kappa_\gamma(\delta)^{-1}\mathfrak F_{\gamma,i}.
\end{equation}
It follows that the morphism realized by $\mathfrak F_{\gamma,i}$ vanishes on
\[
  \bigoplus_{\kappa\neq\kappa_\gamma} (R\check h_*\CC_{\check P})_\kappa,
\]
and the image of the morphism realized by $\mathfrak F_{\gamma,i}^{-1}$ is contained in $(R\check h_*\CC_{\check P})_{\kappa_\gamma}$.
\end{lemma}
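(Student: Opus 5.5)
I will prove \eqref{eq:Poincare-translation} at the level of the ambient Poincaré sheaf and then descend it. The starting point is the theorem of the square for Arinkin's normalized Poincaré sheaf: for a line bundle $\mathcal L$ in the smooth locus and any $\mathcal G$,
\[
  \mathscr P|_{(\mathcal L\otimes F)\times\mathcal G}\simeq\mathscr P|_{F\times\mathcal G}\otimes\mathscr P|_{\mathcal L\times\mathcal G}.
\]
This follows from the determinant-of-cohomology description of $\mathscr P$ on the locus where one factor is a line bundle, together with the extension property of maximal Cohen--Macaulay sheaves across codimension $\ge2$ (as in \cite{Ari13}). The normalization along the identity section makes the isomorphism canonical. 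Restricting to $\check P\times_B\check P$ with $\mathcal L=\pi^*\delta$ gives
\[
  (T_\delta\times\id)^*\mathscr R\simeq\mathscr R\otimes p_2^*q^*\mathscr L_\delta .
\]
I then check that this isomorphism respects the $\Gamma$-linearizations on the second factor. Both sides carry linearizations induced from $\mathscr F$, and any two isomorphisms differ by a unit on an integral base that is trivial along the identity section. Hence the isomorphism descends to \eqref{eq:Poincare-translation} on $\check P\times_B[\check P/\Gamma]$.

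To pass to correspondences, I apply $\tau^{\mathrm{nor}}$ and restrict to the $(1,\gamma)$ sector using Lemma~\ref{lem:Poincare-sector-actual-pullback}. The left side corresponds to $\mathfrak F\circ T_\delta^*$, or equivalently $\mathfrak F\circ T_{\delta^{-1}*}$, which is the same as composing with the graph of $T_\delta$ via Proposition~\ref{prop:normalized-tau-composition}. The right side is tensoring by $p_2^*\mathscr L_\delta$. By the module formula \eqref{eq:localized-ch-module}, this multiplies the sector class by $\ch(t_\gamma[\mathscr L_\delta|_{[\check P_\gamma/\Gamma]}])$. By Lemma~\ref{the scalar on trivial line bundle}, $\mathscr L_\delta$ pulls back to the trivial bundle with $\Gamma$ acting by the character $\eta\mapsto\langle\delta,\eta\rangle_\Gamma$. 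So on the $\gamma$-sector the weighting $t_\gamma$ replaces its class by the scalar $\langle\delta,\gamma\rangle_\Gamma=\kappa_\gamma(\delta)^{-1}$ times the trivial class. The Chern character therefore contributes exactly this scalar, with no higher-degree terms because the underlying bundle is trivial after pullback. Comparing degrees $i$ then yields \eqref{eq:Poincare-sector-eigenclass}, once the convention $T_{\delta^{-1}}^*=T_{\delta*}$ is matched. I will track signs carefully so that the inverse lands in the correct place.

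The vanishing statements follow formally. On $(R\check h_*\CC)_\kappa$ the operator $T_{\delta^{-1}}^*$ acts by $\kappa(\delta)^{\pm1}$, depending on the convention fixed in the decomposition. Then \eqref{eq:Poincare-sector-eigenclass} forces $\kappa(\delta)^{\pm1}\mathfrak F_{\gamma,i}=\kappa_\gamma(\delta)^{-1}\mathfrak F_{\gamma,i}$ for all $\delta$, so $\mathfrak F_{\gamma,i}$ vanishes unless $\kappa=\kappa_\gamma$.

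For $\mathfrak F^{-1}_{\gamma,i}$ I run the same argument on $\mathscr F^{-1}$. Dualizing \eqref{eq:Poincare-translation} gives $(\id\times T_\delta)^*\mathscr F^{-1}\simeq\mathscr F^{-1}\otimes p_1^*\mathscr L_\delta^{-1}$, and hence $T_{\delta}^{*}\circ\mathfrak F^{-1}_{\gamma,i}=\kappa_\gamma(\delta)^{\pm1}\mathfrak F^{-1}_{\gamma,i}$. This places the image of $\mathfrak F^{-1}_{\gamma,i}$ in the $\kappa_\gamma$-isotypic part. Alternatively, the image statement follows from Proposition~\ref{inverse to each other} together with the decomposition already obtained for $\mathfrak F$.

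The main obstacle is the descent and sign bookkeeping. One must verify that the canonical isomorphism of the theorem of the square is $\Gamma$-equivariant for the descended linearization, and that the character seen by $t_\gamma$ is exactly $\kappa_\gamma(\delta)^{-1}$ rather than its inverse.
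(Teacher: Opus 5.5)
Your outline follows the paper's proof. You use the theorem of the square for the normalized Poincar\'e sheaf, descend it to $\check P\times_B[\check P/\Gamma]$, and combine the module formula \eqref{eq:localized-ch-module} with Lemma~\ref{the scalar on trivial line bundle} to get the scalar $\langle\delta,\gamma\rangle_\Gamma=\kappa_\gamma(\delta)^{-1}$ on the $(1,\gamma)$-sector. Your descent step is a valid variant. You argue by rigidity: automorphisms of $\mathscr R$ are units pulled back from $B$, and they are pinned to $1$ along $\{e\}\times\check P$, where the linearization is trivial by the case $\delta=1$ of Lemma~\ref{the scalar on trivial line bundle}. The paper instead checks compatibility of the determinant-of-cohomology isomorphisms on the locus where one factor is a line bundle, and extends across the codimension-two complement by uniqueness of maximal Cohen--Macaulay extensions.

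There is one concrete error, in the realization step, and it is exactly the sign the lemma asserts. Pulling the kernel back along $T_\delta\times\id$ does not realize $\mathfrak F\circ T_\delta^*$. Write $\phi=T\times\id$. Then $\operatorname{pr}_2\circ\phi^{-1}=\operatorname{pr}_2$ and $\operatorname{pr}_1\circ\phi^{-1}=T^{-1}\circ\operatorname{pr}_1$, so the projection formula gives
\[
  \operatorname{pr}_{2*}\bigl(\operatorname{pr}_1^*\alpha\cap\phi^*c\bigr) =\operatorname{pr}_{2*}\bigl(\operatorname{pr}_1^*(T^{-1})^*\alpha\cap c\bigr).
\]
Hence the pulled-back class realizes $\mathfrak F\circ T_{\delta^{-1}}^*=\mathfrak F\circ T_{\delta*}$, not $T_\delta^*=T_{\delta^{-1}*}$. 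Equivalently, $(T_\delta\times\id)^*\mathscr F\simeq\mathscr F\circ\sO_{\{(x,T_\delta x)\}}$, and this graph realizes $T_{\delta*}$. With the corrected identification, the scalar gives \eqref{eq:Poincare-sector-eigenclass} directly, with no convention left to match.

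Your hedge ``$\kappa(\delta)^{\pm1}$'' is also not free. The lemma holds for the convention in which $T_\delta^*$ acts on the $\kappa$-summand by $\kappa(\delta)$; the paper's projector is $e_\kappa=|\Gamma|^{-1}\sum_\delta\kappa(\delta)^{-1}T_\delta^*$. With your uncorrected identification and that convention, you would conclude that $\mathfrak F_{\gamma,i}$ lives on the $\kappa_\gamma^{-1}$-summand.

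For the inverse, your dual-kernel route works once the same bookkeeping is done. Pulling back along $\id\times T_\delta$ realizes postcomposition by $T_\delta^*$, with no inverse, since $T_\delta$ now acts on the target. This gives $T_\delta^*\circ\mathfrak F^{-1}_{\gamma,i}=\kappa_\gamma(\delta)\,\mathfrak F^{-1}_{\gamma,i}$. The paper takes your ``alternative'' route. It first shows $q_\gamma\mathfrak F=\mathfrak F e_{\kappa_\gamma}$, which needs nondegeneracy of the Weil pairing to kill $\mathfrak F_{\gamma'}e_{\kappa_\gamma}$ for $\gamma'\neq\gamma$. It then deduces $\mathfrak F^{-1}q_\gamma=e_{\kappa_\gamma}\mathfrak F^{-1}$ from Proposition~\ref{inverse to each other}.
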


\begin{proof}
Arinkin's theorem of the square \cite[Lemma~6.5]{Ari13}, applied to the line bundle \(\pi^*\delta\), gives
\[
  (T_\delta\times\id)^*\mathscr P_1 \simeq \mathscr P_1 \otimes p_2^* L(\pi^*\delta\times\id)^*\mathscr P .
\]
The \(J_\Sigma^0\)-equivariant structure on \(\mathscr P_1\) is constructed from the determinant-of-cohomology isomorphism of \cite[Proposition~4.4]{GS22}. To check compatibility with this equivariant structure, consider the union of the loci where at least one factor represents a line bundle. The determinant-of-cohomology construction gives the compatible isomorphism on this union, as in \cite[proof of Proposition~4.4]{GS22}. Its complement has codimension at least two: each non-locally-free locus has fiberwise codimension at least one, and the relative products are flat over $B$. Uniqueness of the maximal Cohen--Macaulay extension gives the equivariant isomorphism on the whole product. It descends to \eqref{eq:Poincare-translation}. Moreover, $T_\delta$ is a $B$-automorphism of the first factor, so $T_\delta\times\id$ commutes with $\ell_\gamma$. Applying $L\ell_\gamma^*$ to \eqref{eq:Poincare-translation} gives the corresponding sectorwise translation formula, since $\mathscr L_\delta$ is a line bundle.

For an equivariant bundle $E$, write $\widetilde{\ch}(E)_\gamma=\ch(t_\gamma([E|_{[\check P_\gamma/\Gamma]}]))$. Here $t_\gamma$ is the eigenvalue-weighting operation defined in Subsection~\ref{subsec:cohomological-functors}. By Lemma~\ref{the scalar on trivial line bundle}, the inertial Chern character of \(\mathscr L_\delta\) on the \(\gamma\)-sector is
\[
  \widetilde{\ch}(\mathscr L_\delta)_\gamma = \langle\delta,\gamma\rangle_\Gamma.
\]

The automorphism \(T_\delta\times\id\) extends to the smooth ambient product and preserves the relative product scheme-theoretically. Equations~\eqref{eq:localized-ch-isomorphism-pullback} and \eqref{eq:localized-ch-module}, together with \eqref{eq:Poincare-translation}, give
\[
\begin{aligned}
  (T_\delta\times\id)^* \ch^{\mathrm{loc}}_{1,\gamma}(\mathscr F) &= \ch^{\mathrm{loc}}_{1,\gamma} \bigl((T_\delta\times\id)^*\mathscr F\bigr)\\
  &= \operatorname{pr}_2^* \widetilde{\ch}(\mathscr L_\delta)_\gamma \cap \ch^{\mathrm{loc}}_{1,\gamma}(\mathscr F)\\
  &= \langle\delta,\gamma\rangle_\Gamma \ch^{\mathrm{loc}}_{1,\gamma}(\mathscr F).
\end{aligned}
\]
Since \(T_\delta^*T_{\check P}\simeq T_{\check P}\), the source Todd factor in the definition of \(\tau^{\mathrm{nor}}\) is invariant under \(T_\delta\). Hence
\[
  (T_\delta\times\id)^* \tau^{\mathrm{nor}}([\mathscr F])_\gamma = \langle\delta,\gamma\rangle_\Gamma \tau^{\mathrm{nor}}([\mathscr F])_\gamma= \kappa_\gamma(\delta)^{-1} \tau^{\mathrm{nor}}([\mathscr F])_\gamma.
\]
Under the realization of cohomological correspondences, pullback along \(T_\delta\times\id\) corresponds to precomposition by \(T_{\delta^{-1}}^*\), which proves \eqref{eq:Poincare-sector-eigenclass}.

For $\kappa\in\widehat\Gamma$, let
\[
  e_\kappa= \frac{1}{|\Gamma|}\sum_{\delta\in\Gamma} \kappa(\delta)^{-1}T_\delta^*
\]
be the projector onto $(R\check h_*\CC_{\check P})_\kappa$. Equation~\eqref{eq:Poincare-sector-eigenclass} gives
\[
  \mathfrak F_{\gamma,i}\circ e_\kappa =
\begin{cases}
  \mathfrak F_{\gamma,i},&\kappa=\kappa_\gamma,\\
  0,&\kappa\neq\kappa_\gamma.
\end{cases}
\]
Let $q_\gamma$ be the projector onto the $\gamma$-sector of $\mathcal H_{\mathrm{CR}}([\check P/\Gamma]/B)$. By the nondegeneracy of the Weil pairing, $q_\gamma\mathfrak F=\mathfrak F e_{\kappa_\gamma}$. Proposition~\ref{inverse to each other} then gives
\[
  \mathfrak F^{-1}q_\gamma =e_{\kappa_\gamma}\mathfrak F^{-1}.
\]
Taking homogeneous components proves the assertion for $\mathfrak F_{\gamma,i}^{-1}$.
\end{proof}

\subsection{Codimension estimates}\label{subsec:Prym-codimension-estimates}

From this point to the end of this section, assume $d>2g-2$.

For an integral spectral curve $\mathbf C_b^D$, let $\tilde{\mathbf C_b^D}$ be its normalization and set
\[
  \delta(b) = g_a(\mathbf C_b^D) - g(\tilde{\mathbf C_b^D}).
\]

For an irreducible subvariety $Z$ of the integral locus, let $\delta_Z$ be $\delta(b)$ for general $b\in Z$. For $d>2g-2$, \cite[Proposition~4.5.1]{deC17} gives the Severi inequality
\begin{equation}
  \label{delta inequality} \operatorname{codim}_{\mathbf B^D}( Z) \ge \delta_{ Z},
\end{equation}
for every such $Z$.

The inequality is unchanged after the \'etale base change $B\to\mathbf B$.

\begin{lemma}\label{lem:pull back to product}
For a geometric point $F$ of $[\check{P}/\Gamma]$, let $b$ be its image in $B$. Then the restriction $\mathscr{F}_F$ to $\check{P}_b$, pulled back along
\[
  p_2: J_\Sigma^0\times \check{P}_b\longrightarrow \check{P}_b,
\]
is isomorphic to the pullback of $\mathscr{P}_{j(F^\prime)}$ along
\[
  \eta_b: J_\Sigma^0\times \check{P}_b\longrightarrow \overline{J}_{C_b}^0,
\]
where $F^\prime\in\check P_b$ is any lift of $F$ along $\check P_b\to[\check P_b/\Gamma]$.
\end{lemma}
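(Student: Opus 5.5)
The proof unwinds the descent that defines $\mathscr F$. By construction, $\mathscr P_1=\mathscr P|_{\check P\times_B\overline J_C^0}$ is $J_\Sigma^0$-equivariant along the second factor (\cite[Proposition~4.4]{GS22}). It descends along $\overline J_C^0\to[\overline J_C^0/J_\Sigma^0]\cong[\check P/\Gamma]$ to $\mathscr F$. In the other presentation, $(\id\times q)^*\mathscr F\simeq\mathscr R=(j\times j)^*\mathscr P$. So for a geometric point $F$ with lift $F'\in\check P_b$, the slice of $\mathscr F$ at $F$ is $\mathscr P_1$ restricted to $\check P_b\times\{j(F')\}$. The dependence on the lift is through the $\Gamma$-linearization, which is harmless at the level of isomorphism classes of sheaves. (Reading the statement literally, the slice is over the first factor $\check P_b$, and $F$ sits in the second factor.)

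Step one: identify $\mathscr F_F$ with $\mathscr R|_{\check P_b\times\{F'\}}=j_b^*\mathscr P_{j(F')}$, where $\mathscr P_{j(F')}$ is the restriction of $\mathscr P$ to $\overline J_{C_b}^0\times\{j(F')\}$. This follows from base change: $\mathscr F$ is flat over both factors, so restriction to fibers commutes with the derived pullbacks involved.

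Step two, the main one: compare $p_2^*j_b^*\mathscr P_{j(F')}$ with $\eta_b^*\mathscr P_{j(F')}$ on $J_\Sigma^0\times\check P_b$. These differ by translation. Indeed, $\eta_b(L,G)=\pi^*L\otimes G$ while $j_b\circ p_2(L,G)=G$. Using that $\mathscr P$ is symmetric, apply Arinkin's theorem of the square \cite[Lemma~6.5]{Ari13} in the family over $J_\Sigma^0$, with the line bundle $\pi^*L$ acting on the first factor. This gives
\[
  \eta_b^*\mathscr P_{j(F')}\simeq p_2^*j_b^*\mathscr P_{j(F')}\otimes p_1^*\mathscr Q,
\]
where $\mathscr Q$ on $J_\Sigma^0$ is the line bundle $L\mapsto\mathscr P_{(\pi^*L,\,j(F'))}$. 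By the determinant-of-cohomology norm identity (the one used in Lemma~\ref{the scalar on trivial line bundle}), $\mathscr Q\simeq$ the Poincar\'e line of $J_\Sigma^0$ at $\Nm(F')$. Since $F'\in\check P$, we have $\Nm(F')\simeq\sO_\Sigma$, so $\mathscr Q$ is trivial, and its normalization along the identity makes the trivialization canonical. This yields the claimed isomorphism.

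The obstacle is that $\mathscr P_{j(F')}$ need not be a line bundle when $F'$ is not locally free, so the theorem of the square must be used in Arinkin's form for one line-bundle entry. I would first verify the isomorphism on the open locus where $F'$ or the varying point is invertible, where everything is given by determinant of cohomology. I would then extend it by the maximal Cohen--Macaulay uniqueness argument already used in the proof of Lemma~\ref{lem:Poincare-sector-character}: both sides are maximal Cohen--Macaulay on $J_\Sigma^0\times\check P_b$, the pullback along the \'etale map $\eta_b$ preserves this property, and the complement of the open locus has codimension at least two.
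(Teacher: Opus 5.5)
Your argument is correct and is essentially the paper's proof. The paper transports the $J_\Sigma^0$-equivariant structure of $\mathscr P_1$ by symmetry to $\mathscr P_2=\mathscr P|_{\overline J_C^0\times_B\check P}$, pulls it back along $\eta\times\id$, and restricts to the lift $F'$. That equivariant structure is exactly your theorem-of-the-square isomorphism, with the correction line $\mathscr Q$ trivialized by the norm identity; the paper also notes, via the cocycle condition, that the resulting identification is independent of the lift.

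Your fallback is not needed, and as written it would fail. It is not needed because Arinkin's lemma already holds on all of $\overline J_C^0\times_B\overline J_C^0$, with only the translating entry $\pi^*L$ required to be invertible. It would fail because, for a fixed non-locally-free $F'$, the complement of the invertible locus in $J_\Sigma^0\times\check P_b$ is the boundary divisor, which has codimension one, not two. A maximal Cohen--Macaulay extension must therefore be carried out with both factors varying over $\check P\times_B\check P$, as in the proof of Lemma~\ref{lem:Poincare-sector-character}, and only then restricted to $F'$.
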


\begin{proof}
Let \(\mathscr P_2\) be the restriction of the normalized Poincaré sheaf to \(\overline J_C^0\times_B\check P\). Symmetry of the normalized Poincaré sheaf, together with the equivariant structure used in its descent, equips \(\mathscr P_2\) with a \(J_\Sigma^0\)-equivariant structure for tensor product on the first factor. Pulling this equivariant sheaf back along \(\eta\times\id\) and restricting the second factor to a lift \(F'\in\check P_b\) gives
\[
  p_2^*(\mathscr F_F) \simeq \eta_b^*(\mathscr P_{j(F')}).
\]
Changing \(F'\) by an element of \(\Gamma\) changes both sides by the same linearization isomorphism. The cocycle condition therefore makes the identification independent of the chosen lift.
\end{proof}

Following \cite[Subsection~3.3]{MSY23}, write $[\check P/\Gamma]_B^k$ for the relative $k$-fold product over $B$. Let $\hat p_{1,\ldots,k}$ be the projection from $[\check P/\Gamma]_B^k\times_B\check P$ to the first $k$ factors, and let $\hat p_{i,k+1}$ be the projection to $[\check P/\Gamma]\times_B\check P$, in that order. Write $\hat p_{k+1,i}$ for the same projection with the two factors interchanged. Define
\[
  \mathscr K_k:=R\hat p_{1,\ldots,k,*} \left(\bigotimes_{i=1}^k{}^L\hat p_{k+1,i}^*\mathscr F\right).
\]
We also use mixed versions in which any factor $L\hat p_{k+1,i}^*\mathscr F$ is replaced by $L\hat p_{i,k+1}^*\mathscr F^{-1}$. All tensor products are derived. Since the factors are pulled back from separate copies of $[\check P/\Gamma]$, these are relative external tensor products over the common $\check P$-factor. The flatness of $\mathscr F$ and $\mathscr F^{-1}[-g_P]$ over that factor makes these products bounded coherent, and the projection $\hat p_{1,\ldots,k}$ is proper. In the fiberwise calculations below, auxiliary tensor products on a singular fiber are taken in $D^-\Coh$.

As in \cite[Subsection~3.3]{MSY23}, define
\[
  \mathscr K :=\mathscr F\circ\sO_{\Delta^{\mathrm{sm}}_{\check P/B}} \circ(\mathscr F^{-1}\boxtimes\mathscr F^{-1}) \in D^b\Coh\bigl([\check P/\Gamma] \times_B[\check P/\Gamma]\times_B[\check P/\Gamma]\bigr).
\]
Here $\Delta^{\mathrm{sm}}_{\check P/B}$ is the small relative diagonal in $\check P\times_B\check P\times_B\check P$. Thus $\mathscr K$ transports the tensor product on $\check P$ to the dual side. Let $p_{123}$, $p_{14}$, $p_{24}$, and $p_{43}$ denote the indicated projections from $[\check P/\Gamma]^3\times_B\check P$, with the order of the factors retained in the subscripts. Expanding the convolution gives
\begin{equation}
  \label{eq:convolution-kernel-four-factor-form} \mathscr K\simeq Rp_{123*}\left( Lp_{14}^*\mathscr F^{-1}\otimes^L Lp_{24}^*\mathscr F^{-1}\otimes^L Lp_{43}^*\mathscr F\right).
\end{equation}

\begin{proposition}\label{prop:codimQ}
For $\gamma_1,\ldots,\gamma_k\in\Gamma$, the restriction of $\mathscr K_k$ or any of its mixed versions to
\[
  [\check P_{\gamma_1}/\Gamma]\times_B\cdots \times_B[\check P_{\gamma_k}/\Gamma]
\]
has support of dimension at most
\[
  \dim B+(k-1)g_P-\sum_{i=1}^k a_{\gamma_i}.
\]
The same bound holds for the union of the supports of the cohomology sheaves of the corresponding derived restriction in the smooth absolute product.
\end{proposition}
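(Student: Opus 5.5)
We follow Arinkin's support argument \cite{Ari13} as adapted in \cite[Subsection~3.3]{MSY23}: reduce to a fiberwise vanishing statement over geometric points of $B$, and bound the locus of nonvanishing by a stratification using the $\delta$-invariant and the Severi inequality~\eqref{delta inequality}. Since $\mathscr F$ and $\mathscr F^{-1}[-g_P]$ are flat over both factors and $\hat p_{1,\ldots,k}$ is proper, base change shows that a point $(F_1,\ldots,F_k)$ over $b\in B$ lies in the support of $\mathscr K_k$ (or a mixed version) only if
\[
  R\Gamma\Bigl(\check P_b,\ \mathscr F_{F_1}\otimes^L\cdots\otimes^L\mathscr F_{F_k}\Bigr)\neq0,
\]
with some $\mathscr F_{F_i}$ replaced by the dual sheaves in the mixed case. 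By Lemma~\ref{lem:pull back to product}, after pulling back along the finite \'etale surjection $p_2:J^0_\Sigma\times\check P_b\to\check P_b$ this becomes a cohomology computation for $\eta_b^*\bigl(\mathscr P_{j(F_1')}\otimes\cdots\otimes\mathscr P_{j(F_k')}\bigr)$ on $J_\Sigma^0\times\check P_b$, hence, pushing forward along the finite \'etale $\eta_b$ and using Lemma~\ref{etale cover of J bar}, a computation on $\overline J^0_{C_b}$ twisted by the characters $\Nm^*(L_i)$. Arinkin's results \cite{Ari13} then say that this cohomology vanishes unless $\bigotimes_i j(F_i')\otimes\Nm^*(L_i)$ (with inverses for the dual factors) lies in the locus where the Poincar\'e convolution is supported. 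In particular, the tuple is constrained to satisfy a product relation up to an element of the finite group $\Gamma$ and up to the non-locally-free directions.

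The dimension count proceeds by stratifying $B$ by the $\delta$-invariant. Over an irreducible stratum $Z$ with invariant $\delta_Z$, Arinkin's estimate gives that the support of the fiberwise convolution of $k$ Poincar\'e sheaves on $\overline J^0_{C_b}$ has dimension at most $(k-1)g_P+\delta_Z$. The $(k-1)$ free factors contribute $(k-1)g_P$, while the last one is determined up to the finite group and up to a $\delta_Z$-dimensional family coming from the torsion-free degenerations. Adding $\dim Z\le\dim B-\delta_Z$ from \eqref{delta inequality}, the total dimension over $Z$ is at most $\dim B+(k-1)g_P$. This proves the case where all $\gamma_i=1$.

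For general $\gamma_i$, we intersect with the sector $[\check P_{\gamma_1}/\Gamma]\times_B\cdots\times_B[\check P_{\gamma_k}/\Gamma]$. The fibers of $\check P_{\gamma_i}$ have dimension at most $g_P-a_{\gamma_i}$ by \eqref{eq:fixed-Prym-fiber-dimension}. This is the main obstacle, and we do not expect the bound to hold simply by intersecting: we must exploit the fact that each factor is restricted to a fixed locus of codimension $a_{\gamma_i}$ inside its fiber. The first attempt is a refined count. Over $b$, and for each choice of which factor is "determined" by the support condition, bound the contribution by the product of the fixed-fiber dimensions of the free factors, namely $\sum_{i\ne i_0}(g_P-a_{\gamma_i})$. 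Then absorb the deficit $a_{\gamma_{i_0}}$ for the determined factor by using that $\gamma_{i_0}$-fixed points only lie over $\mathbf B_{\gamma_{i_0}}$, whose codimension is $a_{\gamma_{i_0}}$ by \eqref{dimension of MH}. Equivalently, one can combine the Severi inequality on the endoscopic locus with the codimension identity of Lemma~\ref{lem:age computation}.

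One must handle the interaction between $\delta_Z$ and the endoscopic codimension when several $\gamma_i$ are nontrivial. For this we use that the group generated by $\gamma_1,\ldots,\gamma_k$ has a common endoscopic locus. On that locus, the fixed Prym is a finite quotient of the endoscopic Prym, whose $\delta$-defect is controlled as in \cite[Section~3]{MS21}.

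Finally, the statement for the derived restriction in the smooth absolute product follows from the Tor-independence argument of Lemma~\ref{lem:Poincare-sector-actual-pullback}. The Koszul resolution identifies the derived restriction with the pushforward of the derived restriction to the relative sector product, so its cohomology sheaves are supported inside the set-theoretic support already bounded above.
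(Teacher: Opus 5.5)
Your reduction to fibrewise nonvanishing, the passage through $\eta_b$ to Arinkin's relation on $C_b^{\mathrm{reg}}$, and the count for $\gamma_i=1$ are fine. The gap is the sector case, which is the real content of the proposition.

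\textbf{Why your sector count fails.} You bound the contribution over $Z$ by
\[
  \dim Z+\sum_{i\neq i_0}(g_P-a_{\gamma_i})+\delta_Z,
\]
and then try to recover the missing $a_{\gamma_{i_0}}$ from the base, via $Z\subset\mathbf B_{\gamma_{i_0}}$. This cannot work, because along the endoscopic locus the Severi inequality is already an equality. At a general point of $\mathbf B_\gamma$ the normalization is the endoscopic spectral curve, whose Prym has dimension $g_P-a_\gamma$, so $\delta=a_\gamma$. Also $\operatorname{codim}\mathbf B_\gamma=a_\gamma$. The inequality you need, $\dim Z\le\dim B-a_{\gamma_{i_0}}-\delta_Z$, is therefore false for $Z=\mathbf B_{\gamma_{i_0}}$: the codimension $a_{\gamma_{i_0}}$ cannot be spent twice.

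A concrete test case: take $k=2$, $\gamma_1=\gamma_2=\gamma\neq1$, and the mixed kernel $\mathscr F\circ\mathscr F^{-1}$, which is the relative diagonal of $[\check P/\Gamma]$. Its $(\gamma,\gamma)$-restriction contains the diagonal of $[\check P_\gamma/\Gamma]$, of dimension $\dim B+g_P-2a_\gamma$, so the bound is sharp. Your count over $\mathbf B_\gamma$ gives $(\dim B-a_\gamma)+(g_P-a_\gamma)+a_\gamma$, which is too large by $a_\gamma$. Appealing to a common endoscopic locus of $\langle\gamma_1,\ldots,\gamma_k\rangle$ or to the $\delta$-defect in \cite[Section~3]{MS21} does not supply the missing inequality.

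\textbf{The missing idea.} The saving comes from the fibre, not the base: inside its fixed locus, the determined factor has at most $\delta(b)-a_{\gamma_{i_0}}$ degrees of freedom, not $\delta(b)$. The paper obtains this by running Arinkin's codimension argument inside $X_b=\prod_i[\check P_{\gamma_i}/\Gamma]_b$ rather than inside the full fibre product.
\begin{itemize}
\item The group $\check P_b^\circ$ of line bundles has dimension $g_P$. It preserves each fixed locus, since tensoring by a line bundle commutes with $T_\gamma$.
\item Hence $(\tilde L;F_1,\ldots,F_k)\mapsto(\tilde L\otimes F_1,F_2,\ldots,F_k)$ is a flat map $\check P_b^\circ\times X_b\to X_b$ with $g_P$-dimensional fibres.
\item For fixed $(F_i)$, Arinkin's relation confines $\tilde L$ to countably many translates of the affine part, which has dimension at most $\delta(b)$.
\item This gives $\dim S_b+g_P\le\dim X_b+\delta(b)$, i.e.\ $\operatorname{codim}_{X_b}S_b\ge g(\widetilde C_b)-g=g_P-\delta(b)$.
\end{itemize}
Now combine this with $\dim X_b\le\sum_i(g_P-a_{\gamma_i})$ from \eqref{eq:fixed-Prym-fiber-dimension} and the ordinary Severi inequality $\dim Z\le\dim B-\delta(b)$. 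The two $\delta$'s cancel, and the bound follows for arbitrary sector labels at once.

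Minor point on your last step: no Tor-independence is needed for the final assertion, since a derived restriction cannot enlarge the set-theoretic support.
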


\begin{proof}
It is enough to estimate the union of the supports of the cohomology sheaves of the restriction of \(\mathscr K_k\). Let \(S\) be an irreducible component of this support, and let \(Z\subset B\) be the closure of its image. We work over a general point \(b\in Z\).

Let \(\widetilde C_b\) be the normalization of \(C_b\). If \((F_1,\ldots,F_k)\in S_b\), proper base change and the definitions of \(\mathscr K_k\) and its mixed versions give
\[
  \mathbb H^*\left( \check P_b, \mathscr F_{F_1}^{\epsilon_1}\otimes^L\cdots\otimes^L\mathscr F_{F_k}^{\epsilon_k} \right)\neq0,
\]
for suitable \(\epsilon_i\in\{1,-1\}\). Pulling back to $J_\Sigma^0\times\check P_b$ and pushing forward along the finite étale map
\[
  \eta_b:J_\Sigma^0\times\check P_b\longrightarrow\overline J_{C_b}^0
\]
and using Lemmas~\ref{etale cover of J bar} and~\ref{lem:pull back to product}, we obtain lifts \(F_i'\) and a line bundle \(L\in\Gamma\) such that
\[
  \mathbb H^*\left( \overline J_{C_b}^0, \mathscr P_{F_1'}^{\epsilon_1}\otimes^L\cdots\otimes^L \mathscr P_{F_k'}^{\epsilon_k}\otimes\Nm^*L \right)\neq0
\]
Here $\Nm^*L$ is a line bundle on the compactified Jacobian; its associated line bundle on $C_b$ is $\pi_b^*L$, where $\pi_b:C_b\to\Sigma$ is the spectral map. The argument of \cite[Corollary~7.3 and Proposition~7.4]{Ari13} therefore gives
\begin{equation}
  \label{eq:mixed-Poincare-relation-on-regular-locus} \bigotimes_{i=1}^k \left(F_i'|_{C_b^{\mathrm{reg}}}\right)^{\epsilon_i} \simeq \pi_b^*L^{-1}|_{C_b^{\mathrm{reg}}}.
\end{equation}

Let $\check{P}_b^\circ$ be the open subset parametrizing line bundles on $C_b$. Tensor product gives an action $\mu: \check{P}_b^\circ\times [\check{P}_{\gamma_1}/\Gamma]_b \longrightarrow [\check{P}_{\gamma_1}/\Gamma]_b$, and hence a map
\[
  \mu \times \mathrm{id}:\check{P}_b^\circ\times \bigl([\check{P}_{\gamma_1}/\Gamma]_b\times \cdots \times [\check{P}_{\gamma_k}/\Gamma]_b\bigr) \longrightarrow [\check{P}_{\gamma_1}/\Gamma]_b\times \cdots \times [\check{P}_{\gamma_k}/\Gamma]_b
\]
Since $\mu\times\mathrm{id}$ is flat, to bound $\dim S_b$ it suffices to consider the dimension of its inverse image
\begin{equation}
  \label{inverse image} (\mu \times \mathrm{id})^{-1}\Bigl(S\cap \bigl([\check{P}_{\gamma_1}/\Gamma]_b\times \cdots \times [\check{P}_{\gamma_k}/\Gamma]_b\bigr)\Bigr),
\end{equation}
whose points are tuples $(\tilde{L}; F_1, \cdots, F_k)$ satisfying, by the preceding argument,
\[
  \tilde{L}^{\epsilon_1}\otimes\bigotimes_{i=1}^k \left(F_i'|_{C_b^{\mathrm{reg}}}\right)^{\epsilon_i} \simeq \pi_b^*L^{-1}|_{C_b^{\mathrm{reg}}}.
\]
For fixed $F_1,\ldots,F_k$, the pullback of $\tilde L^{\epsilon_1}$ to $\tilde{C_b}$ differs from a fixed line bundle by $\sO_{\tilde{C_b}}(E)$, where $E$ is supported over the singular points of $C_b$. Thus $\tilde L$ lies in a countable union of translates of the affine part of $\check{P}_b^\circ$, which has dimension at most $g_a(C_b)-g(\tilde{C_b})$. This also bounds the dimension of each fiber of the projection from~\eqref{inverse image} to $[\check{P}_{\gamma_1}/\Gamma]_b\times \cdots \times [\check{P}_{\gamma_k}/\Gamma]_b$, and $\dim \check{P}_b^\circ=g_a(C_b)-g$. Thus
\begin{equation}
  \label{eq:prop-codimQ-fiber-bound} \operatorname{codim}(S_b) \geq g(\widetilde C_b)-g.
\end{equation}
The same calculation is valid after the finite quotient by \(\Gamma\).

By Lemma~\ref{lem:H-fixed-dimensions}, the fiber of the indicated product has dimension at most $\sum_{i=1}^k(g_P-a_{\gamma_i})$. Combining this with \eqref{eq:prop-codimQ-fiber-bound} and the Severi inequality on $B$ gives
\begin{align*}
  \dim S &=\dim Z+\dim S_b\\
  &\leq\dim Z+\sum_{i=1}^k(g_P-a_{\gamma_i}) -\bigl(g(\widetilde C_b)-g\bigr)\\
  &\leq\dim B-\delta(b)+kg_P-\sum_{i=1}^k a_{\gamma_i} -\bigl(g_P-\delta(b)\bigr)\\
  &=\dim B+(k-1)g_P-\sum_{i=1}^k a_{\gamma_i}.
\end{align*}
Here $g(\widetilde C_b)-g=g_P-\delta(b)$ by definition. Derived pullback to a sector cannot enlarge the set-theoretic intersection of the support with that sector, so the same estimate holds for the supported derived restriction in the smooth absolute product.
\end{proof}

For Fourier vanishing (FV), we also use the following analogue of \cite[Corollary 3.5]{MSY23}.

Consider the closed embedding $\iota: [\check{P}/\Gamma] \times_B \check{P} \hookrightarrow [\check{P}/\Gamma] \times \check{P}$. For a positive integer $N$, the complex
\[
  (\iota_*\mathscr{F}^{-1})^{\otimes^L N}\in D^b\operatorname{Coh}([\check{P}/\Gamma] \times \check{P})
\]
is bounded and exact off $[\check{P}/\Gamma] \times_B \check{P} \subset [\check{P}/\Gamma] \times \check{P}$. Define
\[
  \tilde{\mathscr{K}}(N)=\mathscr{F}\circ (\iota_*\mathscr{F}^{-1})^{\otimes^L N} \in D^b\operatorname{Coh}([\check{P}/\Gamma]\times [\check{P}/\Gamma]).
\]
Here convolution uses the supported perfect complexes on the smooth absolute products. The projection on this support is a base change of the proper map $\check P\to B$, so the resulting convolution is bounded coherent.

\begin{proposition}\label{codim for FV}
For every positive integer $N$ and $\gamma_1,\gamma_2\in\Gamma$, the restriction of $\widetilde{\mathscr K}(N)$ to $[\check P_{\gamma_1}/\Gamma]\times[\check P_{\gamma_2}/\Gamma]$ has support of dimension at most
\[
  \dim B+g_P-a_{\gamma_1}-a_{\gamma_2}.
\]
Its support is contained in the corresponding relative product over $B$.
\end{proposition}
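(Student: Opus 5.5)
We follow the model of \cite[Corollary~3.5]{MSY23} and reduce the estimate to the fiberwise support argument already used in Proposition~\ref{prop:codimQ}. The containment in the relative product is immediate. Each factor $\iota_*\mathscr F^{-1}$ is supported on $[\check P/\Gamma]\times_B\check P$, so the derived tensor power is supported there as well. In the convolution with $\mathscr F$, which is supported on $\check P\times_B[\check P/\Gamma]$, a point $(F_1,F_2)$ of the support must have both factors lying over the same point $b\in B$. This gives the second assertion.

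For the dimension bound, we first compute the derived self-intersection. Because $\mathscr F^{-1}[-g_P]$ is a coherent sheaf flat over both factors, and the relative product is the pullback of the diagonal $B\hookrightarrow B\times B$ along a flat map, the Koszul resolution of the diagonal gives
\[
  (\iota_*\mathscr F^{-1})^{\otimes^L N}\simeq \iota_*\Bigl((\mathscr F^{-1})^{\otimes^L N}\otimes \wedge^\bullet (N^\vee_{B/B\times B})^{\oplus(N-1)}\Bigr),
\]
up to shifts. Here the conormal pieces are pulled back from $B$ and are locally free; compare \cite[Corollary~3.5]{MSY23}. The cohomology sheaves of $\widetilde{\mathscr K}(N)$ therefore have support contained in the union of the supports of the convolutions
\[
  \mathscr F\circ\bigl((\mathscr F^{-1})^{\otimes^L N}\otimes V\bigr),
\]
where $V$ is a vector bundle pulled back from $B$. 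Since tensoring with $V$ does not enlarge supports, we may drop it. The remaining object is exactly a mixed version of $\mathscr K_k$ with $k=2$: it is a pushforward along $\check P$ of a product of one $\mathscr F$ factor and one factor $(\mathscr F^{-1})^{\otimes N}$ on the fiber. Fiberwise, on $\check P_b$, the nonvanishing of hypercohomology of $\mathscr F_{F_1}^{-1,\otimes N}\otimes\mathscr F_{F_2}$ again yields, via Lemma~\ref{lem:pull back to product}, Lemma~\ref{etale cover of J bar}, and Arinkin's \cite[Proposition~7.4]{Ari13}, a relation of the form
\[
  (F_1')^{\otimes N}|_{C_b^{\mathrm{reg}}}\simeq F_2'\otimes\pi_b^*L^{-1}|_{C_b^{\mathrm{reg}}}.
\]
The precise exponents are inessential for what follows.

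We then repeat the translation-action argument of Proposition~\ref{prop:codimQ}. We act by $\check P_b^\circ$ on the second factor, which is flat, and count the solutions $\tilde L$ of the relation. These form a countable union of translates of the affine part, of dimension at most $\delta(b)$. Hence the fiber $S_b$ has codimension at least $g(\widetilde C_b)-g=g_P-\delta(b)$ inside $[\check P_{\gamma_1}/\Gamma]_b\times[\check P_{\gamma_2}/\Gamma]_b$. By \eqref{eq:fixed-Prym-fiber-dimension}, this product has dimension at most $2g_P-a_{\gamma_1}-a_{\gamma_2}$, and the Severi inequality \eqref{delta inequality} bounds $\dim Z\le\dim B-\delta(b)$. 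Together these give $\dim S\le\dim B+g_P-a_{\gamma_1}-a_{\gamma_2}$. As before, passing to the derived restriction in the absolute sector product does not enlarge the support.

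The main obstacle is the first step. The derived tensor power of $\iota_*\mathscr F^{-1}$ is taken in the absolute product, so its sector restriction must be shown to have support controlled by the relative convolution. The Koszul and base-change comparison must therefore hold equivariantly on the quotient atlas. We would check this exactly as in Lemma~\ref{lem:Poincare-sector-actual-pullback}, using the flatness of $\mathscr F^{-1}[-g_P]$ over $\check P$ to make the relevant squares Tor-independent.
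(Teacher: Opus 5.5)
Your proposal is correct and follows the paper's proof in all essentials: a Koszul filtration reduces the question to nonvanishing of $\mathbb H^*\bigl(\check P_b,(\mathscr F^{-1})_{F_1}^{\otimes^L N}\otimes^L\mathscr F_{F_2}\bigr)$, Arinkin's central-extension argument yields the relation on $C_b^{\mathrm{reg}}$, and the translation argument of Proposition~\ref{prop:codimQ} combined with \eqref{eq:fixed-Prym-fiber-dimension} and \eqref{delta inequality} gives the bound. The differences are minor: the paper does the Koszul step fiberwise for $\iota_b:\check P_b\hookrightarrow\check P$ (trivial normal bundle) and uses only a finite filtration, which is all you need in place of the direct-sum splitting you assert; acting on the second factor, where the exponent is one, spares you the paper's appeal to finiteness of multiplication by $N$ on the Prym of $\widetilde C_b$; and the reduced object is a partial-diagonal restriction of a mixed $\mathscr K_{N+1}$, not a mixed $\mathscr K_2$, which is why you genuinely need to rerun the fiberwise argument, as you do.
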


\begin{proof}
Let $S$ be an irreducible component of the support of the restriction of $\widetilde{\mathscr K}(N)$ to the $(\gamma_1,\gamma_2)$-sector, and let $Z\subset B$ be the closure of its image. Fix a general point $b\in Z$ and $(F_1,F_2)\in S_b$. Write $\iota_b:\check P_b\hookrightarrow\check P$ for the fiber inclusion. Proper base change gives
\[
  \mathbb H^*\left( \check P_b, \bigl(L\iota_b^*\iota_{b*}(\mathscr F^{-1})_{F_1}\bigr)^{\otimes^L N} \otimes^L\mathscr F_{F_2} \right)\neq0.
\]
The immersion $\iota_b$ is regular, since $\check P\to B$ is flat and $B$ is smooth. Its normal bundle is trivial. The Koszul calculation, or \cite[proof of Proposition~3.3, Step~3]{MSY23}, therefore gives a finite truncation filtration of $L\iota_b^*\iota_{b*}(\mathscr F^{-1})_{F_1}$ whose graded pieces are finite sums of shifts of $(\mathscr F^{-1})_{F_1}$. Tensor this filtration $N$ times in $D^-\Coh(\check P_b)$ and then with $\mathscr F_{F_2}$. If all hypercohomology of $(\mathscr F^{-1})_{F_1}^{\otimes^L N}\otimes^L\mathscr F_{F_2}$ vanished, the finite sequence of distinguished triangles would force the preceding hypercohomology to vanish. Hence
\[
  \mathbb H^q\left( \check P_b, (\mathscr F^{-1})_{F_1}^{\otimes^L N} \otimes^L\mathscr F_{F_2} \right)\neq0
\]
for some $q$. This group is finite-dimensional: the complex is bounded above with coherent cohomology and $\check P_b$ is proper.

Pull back to $J_\Sigma^0\times\check P_b$ and push forward along $\eta_b$. Lemmas~\ref{etale cover of J bar} and~\ref{lem:pull back to product}, also applied to the dual kernel, give lifts $F_1',F_2'$ and $L\in\Gamma$ with nonzero hypercohomology on $\overline J_{C_b}^0$. The central-extension argument of \cite[Proposition~7.2 and Corollary~7.3]{Ari13} applies to a nonzero finite-dimensional hypercohomology group and gives
\[
  \left(F_1'|_{C_b^{\mathrm{reg}}}\right)^{-N} \otimes F_2'|_{C_b^{\mathrm{reg}}} \simeq\pi_b^*L|_{C_b^{\mathrm{reg}}}.
\]
Multiplication by $N$ on the Prym of $\widetilde C_b$ is finite onto its image. The action argument in the proof of Proposition~\ref{prop:codimQ} consequently gives $\operatorname{codim}(S_b)\geq g(\widetilde C_b)-g$. Lemma~\ref{lem:H-fixed-dimensions} and \eqref{delta inequality} then yield
\begin{align*}
  \dim S &=\dim Z+\dim S_b\\
  &\leq\dim B-\delta(b)+2g_P-a_{\gamma_1}-a_{\gamma_2} -\bigl(g(\widetilde C_b)-g\bigr)\\
  &=\dim B+g_P-a_{\gamma_1}-a_{\gamma_2}.
\end{align*}
The support is contained in the relative product by the definition of the convolution.
\end{proof}

We identify supported $K$-theory on the smooth absolute product with $G_0$ of the relative product by dévissage. Convolution with $\mathscr F$ is defined on this $G_0$-group: flatness of $\mathscr F$ over the intermediate factor makes convolution with each coherent sheaf bounded, and the projection on its support is proper. Equation~\eqref{eq:ambient-relative-convolution-comparison} identifies it with ambient convolution. Hence Proposition~\ref{prop:normalized-tau-composition} extends by additivity to these supported classes, including the Adams classes below; compare \cite[Subsection~3.5.3]{MSY23}.

Let
\[
  \psi^N: K_{[\check P/\Gamma]\times_B\check P} \bigl([\check P/\Gamma]\times\check P\bigr) \longrightarrow K_{[\check P/\Gamma]\times_B\check P} \bigl([\check P/\Gamma]\times\check P\bigr)
\]
be the localized Adams operation, and set
\[
  \mathscr A_N := [\mathscr F]\circ \psi^N\bigl([\iota_*\mathscr F^{-1}]\bigr).
\]

\begin{lemma}\label{lem:Adams-support-for-FV}
For $\gamma_1,\gamma_2\in\Gamma$, the restriction of $\mathscr A_N$ to the $(\gamma_1,\gamma_2)$-sector belongs to the subgroup of supported $K$-theory generated by classes with support of dimension at most $\dim B+g_P-a_{\gamma_1}-a_{\gamma_2}$.
\end{lemma}

\begin{proof}
Represent $\iota_*\mathscr F^{-1}$ by a supported perfect complex. Over $\CC$, the antisymmetrizing idempotent realizes every derived exterior power as a direct summand of the corresponding derived tensor power. Thus each product of exterior powers of total degree $N$ is a direct summand of its $N$-fold derived tensor power. Convolution with $\mathscr F$ and derived restriction to a sector preserve these direct summands. Proposition~\ref{codim for FV} therefore gives the required support bound for each such summand. The Newton identities express the Adams class as an integral linear combination of their classes, which proves the lemma.
\end{proof}

\subsection{Fourier vanishing and multiplicative projectors}\label{multiplicity}

We use the preceding codimension estimates to prove Fourier vanishing by adapting the argument of \cite[Subsection~3.5.3]{MSY23}.

\begin{proposition}[Fourier vanishing]\label{Prop:FV vanishing}
For any $\gamma_1$ and $\gamma_2$ in $\Gamma$, we have
\begin{equation}
  \label{FV vanishing for gamma factor} \mathfrak{F}_{\gamma_1, i}\circ \mathfrak{F}^{-1}_{\gamma_2, j}=0 \qquad \text{for all } i+j< 2g_P.
\end{equation}
Summing over $\gamma_1,\gamma_2\in\Gamma$ gives
\begin{equation}
  \label{FV vanishing} \mathfrak{F}_{i}\circ \mathfrak{F}^{-1}_{j}=0 \qquad \text{for all } i+j< 2g_P.
\end{equation}
\end{proposition}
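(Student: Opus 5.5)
We follow the Adams-operation argument of \cite[Subsection~3.5.3]{MSY23}, applied sector by sector. Fix $\gamma_1,\gamma_2$. By Proposition~\ref{prop:normalized-tau-composition}, extended to supported classes as explained before Lemma~\ref{lem:Adams-support-for-FV}, we have
\[
  \tau^{\mathrm{nor}}(\mathscr A_N) =\mathfrak F\circ\tau^{\mathrm{nor}}\bigl(\psi^N[\iota_*\mathscr F^{-1}]\bigr).
\]
The first step is to compute $\tau^{\mathrm{nor}}(\psi^N[\iota_*\mathscr F^{-1}])$ in terms of the graded pieces of $\mathfrak F^{-1}$. On each sector, the localized Chern character of a supported Adams class is $\ch^{\mathrm{loc}}(\psi^N\alpha)=\sum_k N^k\ch^{\mathrm{loc}}_k(\alpha)$, where $k$ is the cohomological (codimension) degree in the smooth absolute product. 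For the $t_{g,h}$-weighted version, we take $N\equiv1$ modulo $|\Gamma|$, so that the eigenvalue weights $\xi(g)$ are unchanged by $\psi^N$. The normalizing factor $\Theta$ is independent of $N$. Written in the shifted indexing $\mathfrak F^{-1}_{\gamma_2,j}$, this gives
\[
  \tau^{\mathrm{nor}}(\psi^N[\iota_*\mathscr F^{-1}])_{\gamma_2}=\sum_j N^{j+c}\,\Theta'\cdot(\text{graded piece}),
\]
for a fixed shift $c$ coming from the codimension of $[\check P/\Gamma]\times_B\check P$ in the absolute product. The Todd-type factors complicate this formula. To handle them, we follow MSY: write the graded pieces before the source $\Theta$ is applied, and use that multiplying by a class of positive cohomological degree raises the Corr-degree. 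This yields a triangular relation of the form
\[
  \tau^{\mathrm{nor}}(\mathscr A_N)_{\gamma_1,\gamma_2}=\sum_{i,j}N^{j+c}\,\mathfrak F_{\gamma_1,i}\circ(\mathfrak F^{-1}_{\gamma_2,j}+\text{higher-degree corrections}).
\]

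The second step is the support bound. By Lemma~\ref{lem:Adams-support-for-FV} combined with Lemma~\ref{lem:support-bound-normalized-tau}, taking $d_F=\dim B+g_P-a_{\gamma_1}-a_{\gamma_2}$ and the target $[\check P_{\gamma_1}/\Gamma]$ (of dimension $\dim B+g_P-a_{\gamma_1}$ by Lemma~\ref{lem:H-fixed-dimensions}) with source age $a_{\gamma_2}$, every component of $\tau^{\mathrm{nor}}(\mathscr A_N)_{\gamma_1,\gamma_2}$ lies in Corr-degree at least $0$. The ages cancel exactly, which is why Proposition~\ref{codim for FV} was stated with $-a_{\gamma_1}-a_{\gamma_2}$. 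In the shifted indexing, a composite $\mathfrak F_{\gamma_1,i}\circ\mathfrak F^{-1}_{\gamma_2,j}$ has Corr-degree $i+j-2g_P$. Hence the component of $\tau^{\mathrm{nor}}(\mathscr A_N)_{\gamma_1,\gamma_2}$ in each fixed negative degree $m=i+j-2g_P<0$ vanishes for all admissible $N$.

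The final step extracts the individual composites. In degree $m$, the vanishing reads $\sum_{i+j=2g_P+m}N^{j+c}\,\mathfrak F_{\gamma_1,i}\circ\mathfrak F^{-1}_{\gamma_2,j}$ plus correction terms. We argue by induction on $m$, starting from the most negative degree, so that the corrections involve composites already known to vanish. These correction terms have lower total $(i+j)$, because a positive-degree Todd factor composed in shifts degree. For fixed $m$ the sum has finitely many $j$. Letting $N$ range over infinitely many integers $\equiv1\pmod{|\Gamma|}$, a Vandermonde argument then gives each $\mathfrak F_{\gamma_1,i}\circ\mathfrak F^{-1}_{\gamma_2,j}=0$. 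Summing over $\gamma_1,\gamma_2$ yields \eqref{FV vanishing}.

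The main obstacle is the first step. We must verify that the localized Adams operation interacts with the eigenvalue-weighted localized Chern character and with the orbifold normalization $\Theta$ exactly as a grading operator, up to triangular corrections. The congruence $N\equiv1\pmod{|\Gamma|}$ is our proposed fix for the weights, and the triangularity needs checking against the Todd and $\ch(\Lambda)^{-1}$ factors.
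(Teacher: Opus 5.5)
Your proposal is correct and is essentially the paper's argument: Adams operations with $N\equiv 1$ so that the sector eigenvalues are fixed, the support bound of Lemma~\ref{lem:Adams-support-for-FV} forcing nonnegative correspondence degree, and then Vandermonde. The paper differs only in two points: it first reduces to $\gamma_1=\gamma_2$ via Lemma~\ref{sectorwise isomorphism}, and it strips off the normalization by precomposing with its inverse and expanding $\mathfrak F^{-1}_{\gamma,j}=\sum_{\ell\geq0}\widehat{\mathfrak F}^{-1}_{\gamma,j-\ell}\circ\mathsf L_{\gamma,\ell}$, whereas your induction on the degree handles the same triangularity equally well.

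One slip: by Lemma~\ref{lem:H-fixed-dimensions} the sector $[\check P_{\gamma_1}/\Gamma]$ has dimension $\dim B+g_P-2a_{\gamma_1}$. The quantity equal to $\dim B+g_P-a_{\gamma_1}$ is $d_{j,h}+a_j(h)$ in Lemma~\ref{lem:support-bound-normalized-tau}, so your lower bound $0$ is correct, but only once the target age is included.
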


\begin{proof}
By Lemma~\ref{sectorwise isomorphism},
\[
  \mathfrak F_{\gamma_1,i} =\mathfrak F_{\gamma_1,i}e_{\kappa_{\gamma_1}}, \qquad \mathfrak F_{\gamma_2,j}^{-1} =e_{\kappa_{\gamma_2}}\mathfrak F_{\gamma_2,j}^{-1}.
\]
The nondegeneracy of the Weil pairing shows that the composition vanishes unless \(\gamma_1=\gamma_2\). We may therefore write \(\gamma_1=\gamma_2=\gamma\) and \(a=a_\gamma\).

Let \(N\equiv1\pmod n\). On the $\gamma$-sector, Adams operations replace an eigenvalue $\zeta$ by $\zeta^N$. Since $\zeta^n=1$, this congruence preserves every eigenvalue, and therefore $t_\gamma\psi^N=\psi^N t_\gamma$. The support estimate of Lemma~\ref{lem:Adams-support-for-FV}, together with Lemma~\ref{lem:support-bound-normalized-tau}, implies that the \((\gamma,\gamma)\)-sector of \(\tau^{\mathrm{nor}}(\mathscr A_N)\) has no component of negative correspondence degree.

Let \(\widehat{\mathfrak F}_{\gamma,j}^{-1}\) be the component of Chern codimension \(j+\dim B-a\) in the eigenvalue-weighted localized Chern character of \(\iota_*\mathscr F^{-1}\), before multiplication by the fixed invertible normalization class in \eqref{eq:normalized-tau-supported-definition}. Compatibility of localized Chern characters with Adams operations gives
\begin{equation}
  \label{eq:Adams-scaling-for-FV-completion} \widehat{\mathfrak F}_{\gamma,j}^{-1,(N)} =N^{j+\dim B-a}\widehat{\mathfrak F}_{\gamma,j}^{-1}.
\end{equation}
Apply the supported $K$-theory form of Proposition~\ref{prop:normalized-tau-composition} established above to $\mathscr A_N$. The inverse of the fixed normalization has only nonnegative correspondence degrees. Precomposing with it therefore preserves the absence of negative-degree components and gives, for every $m<2g_P$,
\[
  0= \sum_{i+j=m} N^{j+\dim B-a} \mathfrak F_{\gamma,i} \circ \widehat{\mathfrak F}_{\gamma,j}^{-1}
\]
for all positive \(N\equiv1\pmod n\). The powers of \(N\) are distinct as \(j\) varies. The Vandermonde argument therefore gives
\[
  \mathfrak F_{\gamma,i} \circ \widehat{\mathfrak F}_{\gamma,j}^{-1}=0 \qquad\text{for }i+j<2g_P.
\]

Finally, let $\mathsf L_{\gamma,\ell}$ be the diagonal correspondence given by the degree-$2\ell$ part of the fixed normalization class multiplying the localized Chern character in \eqref{eq:normalized-tau-supported-definition}. Then
\[
  \mathfrak F_{\gamma,j}^{-1} = \sum_{\ell\geq0} \widehat{\mathfrak F}_{\gamma,j-\ell}^{-1} \circ\mathsf L_{\gamma,\ell}.
\]
If \(i+j<2g_P\), then \(i+j-\ell<2g_P\) for every \(\ell\geq0\), so each term vanishes. This proves~\eqref{FV vanishing for gamma factor}; summing over the sectors gives~\eqref{FV vanishing}.
\end{proof}

\begin{proposition}\label{convolution kenel}
Set $\mathfrak C=\tau^{\mathrm{nor}}_B([\mathscr K])$. Then
\begin{equation}
  \label{convolution kernel map} \mathfrak C\in\bigoplus_{i\geq-g_P} \operatorname{Corr}^{i}_{B,\mathrm{CR}} \bigl([\check P/\Gamma],[\check P/\Gamma];[\check P/\Gamma]\bigr).
\end{equation}
Moreover,
\begin{equation}
  \label{to get cup product} \mathfrak F^{-1}\circ\mathfrak C\circ (\mathfrak F\otimes\mathfrak F) = [\Delta^{\mathrm{sm}}_{\check P/B}] \in\operatorname{Corr}^{0}_{B}(\check P,\check P;\check P).
\end{equation}
The right-hand side realizes the ordinary cup product on $R\check h_*\CC_{\check P}$.
\end{proposition}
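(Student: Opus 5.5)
The statement has two parts, a degree bound \eqref{convolution kernel map} and the identity \eqref{to get cup product}. I treat them separately, following \cite[Subsection~3.3]{MSY23}.

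\emph{Degree bound.} Fix sector labels $(\gamma_1,\gamma_2,\gamma_3)$ and apply the binary form \eqref{eq:support-bound-binary} of Lemma~\ref{lem:support-bound-normalized-tau}. This needs a bound on the dimension $d_F$ of the support of the derived restriction of $\mathscr K$ to $[\check P_{\gamma_1}/\Gamma]\times[\check P_{\gamma_2}/\Gamma]\times[\check P_{\gamma_3}/\Gamma]$. By \eqref{eq:convolution-kernel-four-factor-form}, $\mathscr K$ is a mixed version of $\mathscr K_3$, with two factors $\mathscr F^{-1}$ and one factor $\mathscr F$. Proposition~\ref{prop:codimQ} with $k=3$ therefore gives
\[
  d_F\le \dim B+2g_P-a_{\gamma_1}-a_{\gamma_2}-a_{\gamma_3}.
\]
Lemma~\ref{lem:H-fixed-dimensions} gives $d_{3,\gamma_3}=\dim B+g_P-a_{\gamma_3}$. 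Substituting, the lower bound on correspondence degree becomes
\[
  k\ge (\dim B+g_P-a_{\gamma_3})+a_{\gamma_3}-a_{\gamma_1}-a_{\gamma_2}-d_F\ \ge\ -g_P.
\]
Here the ages cancel exactly, because the $\operatorname{CR}$ grading shifts by the ages and the support bound subtracts the same ages. Summing over sectors gives \eqref{convolution kernel map}.

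\emph{Identity.} I work at the level of kernels. The binary analogue of Proposition~\ref{prop:normalized-tau-composition}, obtained by the same three-factor calculation via \eqref{eq:normalized-orbifold-tau-binary}, shows that $\tau^{\mathrm{nor}}$ turns binary convolution of kernels into composition of binary correspondences. By definition,
\[
  \mathscr F^{-1}\circ\mathscr K\circ(\mathscr F\boxtimes\mathscr F)=\mathscr F^{-1}\circ\mathscr F\circ\sO_{\Delta^{\mathrm{sm}}}\circ(\mathscr F^{-1}\boxtimes\mathscr F^{-1})\circ(\mathscr F\boxtimes\mathscr F).
\]
Since $\Phi_{\mathscr F^{-1}}$ is inverse to $\Phi_{\mathscr F}$, we have $\mathscr F^{-1}\circ\mathscr F\simeq\sO_{\Delta}$ and $\mathscr F\circ\mathscr F^{-1}\simeq\sO_\Delta$ (in the appropriate order on each side). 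The whole expression therefore collapses to $\sO_{\Delta^{\mathrm{sm}}_{\check P/B}}$.

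Applying $\tau^{\mathrm{nor}}$ produces a class on $\check P\times_B\check P\times_B\check P$. The group $G$ is trivial on this side, so there is only the trivial sector. The class is then the binary analogue of Lemma~\ref{lem:normalized-diagonal-is-identity}: proper covariance along $\check P\to\check P^3_B$ followed by target normalization gives $[\Delta^{\mathrm{sm}}]$. Here $\check P$ is smooth, so the Todd factors cancel by the projection formula. By \eqref{eq:binary-CR-realization}, the small diagonal realizes cup product.

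I expect the main obstacle to be justifying that the composition formula applies to the binary and mixed convolutions. Each intermediate convolution must be bounded coherent, and the Chern-character composition must hold for the tensor factor $\mathfrak F\otimes\mathfrak F$, which is an external product over $B$ and not an ordinary composite. I would handle this with the supported $K$-theory and dévissage setup described before Lemma~\ref{lem:Adams-support-for-FV}: use flatness of $\mathscr F$ and $\mathscr F^{-1}[-g_P]$ over each factor to guarantee boundedness, and use the Tor-independence \eqref{eq:ambient-relative-convolution-comparison} to compare with ambient convolution on smooth absolute products. There the bivariant product formula gives the multiplicativity of $\ch^{\mathrm{loc}}$ under $\boxtimes$ relative to $B$.
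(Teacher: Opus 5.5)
Your proposal is correct and follows the paper's proof. The degree bound comes from the mixed $k=3$ case of Proposition~\ref{prop:codimQ} fed into the binary estimate \eqref{eq:support-bound-binary}. The identity comes from collapsing $\mathscr F^{-1}\circ\mathscr K\circ(\mathscr F\boxtimes\mathscr F)$ to $\sO_{\Delta^{\mathrm{sm}}_{\check P/B}}$, then applying the binary form of Proposition~\ref{prop:normalized-tau-composition} and the diagonal computation of Lemma~\ref{lem:normalized-diagonal-is-identity}.

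One slip to fix: since $\dim\mathbf M_\gamma=\dim\mathbf M-2a_\gamma$, Lemma~\ref{lem:H-fixed-dimensions} gives $\dim[\check P_{\gamma_3}/\Gamma]=\dim B+g_P-2a_{\gamma_3}$, not $\dim B+g_P-a_{\gamma_3}$. With the correct value the ages cancel exactly and the bound is $-g_P$. Your value would instead produce $-g_P+a_{\gamma_3}$, so your ``cancel exactly'' remark only holds after this correction.
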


\begin{proof}
Applying the mixed-kernel assertion of Proposition~\ref{prop:codimQ} to the four-factor expression \eqref{eq:convolution-kernel-four-factor-form}, we obtain
\[
  \dim\bigl(\Supp(\mathscr K)\cap ([\check P_{\gamma_1}/\Gamma]\times_B [\check P_{\gamma_2}/\Gamma]\times_B [\check P_{\gamma_3}/\Gamma])\bigr) \leq\dim B+2g_P-a_{\gamma_1}-a_{\gamma_2}-a_{\gamma_3}.
\]
The same bound applies to the supported ambient restriction $L(\iota_{123}^{\gamma_1,\gamma_2,\gamma_3})^* Rj_{123*}\mathscr K$ used to define $\tau^{\mathrm{nor}}$.

Since $\dim[\check P_{\gamma_3}/\Gamma] =\dim B+g_P-2a_{\gamma_3}$, the binary estimate in Lemma~\ref{lem:support-bound-normalized-tau} gives the bound
\[
  \dim[\check P_{\gamma_3}/\Gamma]+a_{\gamma_3} -a_{\gamma_1}-a_{\gamma_2} -\bigl(\dim B+2g_P -a_{\gamma_1}-a_{\gamma_2}-a_{\gamma_3}\bigr)=-g_P
\]
This proves \eqref{convolution kernel map} on every sector.

Finally, the identities for the inverse Fourier--Mukai kernels and associativity of convolution give
\[
  \mathscr F^{-1}\circ\mathscr K\circ (\mathscr F\boxtimes\mathscr F) \simeq\sO_{\Delta^{\mathrm{sm}}_{\check P/B}}.
\]
Apply the binary version of Proposition~\ref{prop:normalized-tau-composition}. The calculation in the proof of Lemma~\ref{lem:normalized-diagonal-is-identity}, applied to the small diagonal of the scheme $\check P$, gives \eqref{to get cup product}.
\end{proof}

For every $k$ and $\gamma\in\Gamma$, define
\begin{align*}
  \mathfrak{p}_{k} & =\sum_{i\leq k}\mathfrak{F}^{-1}_{ i}\circ \mathfrak{F}_{ 2g_P-i}\in \operatorname{Corr}^{0}_{B,\rm CR}(\check{P}, \check{P}) \qquad \mathfrak{q}_{k+1} & =\sum_{i\geq k+1}\mathfrak{F}^{-1}_{ i}\circ \mathfrak{F}_{ 2g_P-i} \in \operatorname{Corr}^{0}_{B,\rm CR}(\check{P}, \check{P}). \\
  \mathfrak{p}_{k, \gamma} & =\sum_{i\leq k}\mathfrak{F}^{-1}_{\gamma,i}\circ \mathfrak{F}_{\gamma,2g_P-i}\in \operatorname{Corr}^{0}_{B,\rm CR}(\check{P}, \check{P}).
\end{align*}
These are endomorphisms of $R\check h_*\CC_{\check P}$. Then
\[
  \mathfrak{p}_{k}=\sum_{\gamma\in \Gamma}\mathfrak{p}_{k, \gamma}
\]
By Proposition~\ref{inverse to each other}, $\mathfrak p_k+\mathfrak q_{k+1}$ is the identity morphism.

\begin{proposition}\label{prop:local-multiplicative-projectors}
\begin{enumerate}
\item[\textup{(i)}] The morphisms $\mathfrak p_k$ and $\mathfrak q_{k+1}$ are orthogonal projectors:
\[
  \mathfrak p_k^2=\mathfrak p_k, \qquad \mathfrak q_{k+1}^2=\mathfrak q_{k+1}, \qquad \mathfrak p_k\circ\mathfrak q_{k+1} = \mathfrak q_{k+1}\circ\mathfrak p_k = 0.
\]
\item[\textup{(ii)}] For any $k, l \geq 0$,
\[
  \mathfrak q_{k+l+1} \circ [\Delta^{\mathrm{sm}}_{\check P/B}] \circ (\mathfrak p_k\otimes\mathfrak p_l) = 0 .
\]
Equivalently, the cup product restricts to a morphism
\[
  \cup: \Im \mathfrak{p}_k \otimes \Im \mathfrak{p}_l\longrightarrow \Im \mathfrak{p}_{k+l}.
\]
\end{enumerate}
\end{proposition}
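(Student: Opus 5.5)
The plan is to follow the formal argument of \cite[Subsection~3.5]{MSY23}, using only three inputs: Fourier vanishing (Proposition~\ref{Prop:FV vanishing}), the inversion identities (Proposition~\ref{inverse to each other}), and the degree bound for the convolution kernel (Proposition~\ref{convolution kenel}).

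\textbf{Part (i).} I would first expand the identity $\mathfrak F\circ\mathfrak F^{-1}=\mathrm{id}$ from Proposition~\ref{inverse to each other} into homogeneous pieces. Since $\mathfrak F_i$ has correspondence degree $i-g_P$, this gives
\[
\sum_{i+j=2g_P}\mathfrak F_i\circ\mathfrak F_j^{-1}=\mathrm{id}\quad\text{on }\mathcal H_{\mathrm{CR}}([\check P/\Gamma]/B),
\]
and $\sum_{i+j=m}\mathfrak F_i\circ\mathfrak F^{-1}_j=0$ for $m\neq 2g_P$. Combining the case $m>2g_P$ with Fourier vanishing \eqref{FV vanishing}, I would show that $\mathfrak F_{2g_P-i}\circ\mathfrak F^{-1}_{i'}=0$ whenever $i'<i$. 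The case $i'>i$ should follow from the same expansion: if $i'>i$, then $(2g_P-i)+i'>2g_P$, and I expect to isolate the individual term from the graded identity exactly as in \cite[Proposition~3.6]{MSY23}. The diagonal terms $\mathfrak F_{2g_P-i}\circ\mathfrak F^{-1}_i$ should then be the idempotents that project onto the graded pieces.

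Concretely, set $\pi_i=\mathfrak F^{-1}_i\circ\mathfrak F_{2g_P-i}$. Then $\pi_i\pi_{i'}=\mathfrak F^{-1}_i\circ(\mathfrak F_{2g_P-i}\circ\mathfrak F^{-1}_{i'})\circ\mathfrak F_{2g_P-i'}$. This vanishes for $i'<i$ by \eqref{FV vanishing}. For $i'>i$, I would argue by a triangularity induction: $\sum_i\pi_i=\mathrm{id}$, and the $\pi_i\pi_{i'}$ vanish below the diagonal, so $\pi_i=\pi_i\sum_{i'}\pi_{i'}$. This is the standard route from \cite{MSY23}, which yields that $\mathfrak p_k=\sum_{i\le k}\pi_i$ is idempotent and $\mathfrak q_{k+1}\mathfrak p_k=0$. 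The identities $\mathfrak p_k\mathfrak q_{k+1}=0$ and $\mathfrak q_{k+1}^2=\mathfrak q_{k+1}$ then follow from $\mathfrak p_k+\mathfrak q_{k+1}=\mathrm{id}$.

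\textbf{Part (ii).} By \eqref{to get cup product}, the cup product equals $\mathfrak F^{-1}\circ\mathfrak C\circ(\mathfrak F\otimes\mathfrak F)$. I would compose on the right with $\mathfrak p_k\otimes\mathfrak p_l$. By part (i), $\mathfrak F_{2g_P-a}\circ\mathfrak p_k=0$ unless $a\le k$, so only inputs from $\mathfrak F_{2g_P-a}\otimes\mathfrak F_{2g_P-b}$ with $a\le k$ and $b\le l$ survive. Their total correspondence degree is $2g_P-a-b$.

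On the left, $\mathfrak q_{k+l+1}\circ\mathfrak F^{-1}$ retains only the terms $\mathfrak F^{-1}_c$ with $c\ge k+l+1$. These enter through $\mathfrak F_{2g_P-c}$ applied after the product. Here I will use that $\mathfrak q_{k+l+1}\mathfrak F^{-1}_{c'}=0$ for $c'\le k+l$, which again comes from part (i).

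Total degree is zero, and $\mathfrak C$ has degree $\ge -g_P$ by \eqref{convolution kernel map}. The degree of the $\mathfrak C$-component must therefore be $(c-g_P)$ minus $\bigl((2g_P-a-g_P)+(2g_P-b-g_P)\bigr)$, up to sign conventions, and it must be at least $-g_P$. Writing this out forces $c\le a+b\le k+l$, which contradicts $c\ge k+l+1$. The composite therefore vanishes.

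\textbf{Main obstacle.} I expect the difficulty to be the bookkeeping in part (i) for the off-diagonal terms with $i'>i$. Fourier vanishing only controls $i+j<2g_P$, so the upper-triangular vanishing has to come from a formal argument. If the triangularity induction does not close directly, I would fall back on the MSY23 trick: use $\mathfrak F^{-1}\mathfrak F=\mathrm{id}$ graded by degree, together with \eqref{FV vanishing}, to show that $\mathfrak p_k\circ\mathfrak F^{-1}_j=\mathfrak F^{-1}_j$ for $j\le k$ and $\mathfrak p_k\circ\mathfrak F^{-1}_j=0$ for $j>k$.
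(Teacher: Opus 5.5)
Your part (ii) is the paper's argument. It rests on four degree bounds: $\mathfrak q_{k+l+1}\circ\mathfrak F^{-1}$ has correspondence degrees $\geq k+l+1-g_P$; $\mathfrak F\circ\mathfrak p_k$ has degrees $\geq g_P-k$; $\mathfrak F\circ\mathfrak p_l$ has degrees $\geq g_P-l$; and $\mathfrak C$ has degrees $\geq -g_P$. Hence the degree-zero composite vanishes. The two vanishing inputs you attribute to part (i) are in fact direct termwise consequences of Fourier vanishing. For example, $\mathfrak q_{k+l+1}\circ\mathfrak F^{-1}_{c'}$ contains only middle factors $\mathfrak F_{2g_P-r}\circ\mathfrak F^{-1}_{c'}$ with $r>c'$.

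The problem is in part (i), at the step you flag as the main obstacle. You try to show that the pieces $\pi_i=\mathfrak F^{-1}_i\circ\mathfrak F_{2g_P-i}$ are mutually orthogonal. That requires $\mathfrak F_{2g_P-i}\circ\mathfrak F^{-1}_{i'}=0$ for $i'>i$, which cannot be obtained from the available inputs:
\begin{itemize}
\item The index sum there exceeds $2g_P$, where Fourier vanishing says nothing.
\item The graded identities $\sum_{i+j=m}\mathfrak F_i\circ\mathfrak F^{-1}_j=0$ for $m>2g_P$ constrain only sums, so no single term can be isolated.
\item The ``triangularity induction'' is false as a formal statement once there are three pieces.
\end{itemize}
For the last point, take idempotents $p_1,p_2$ on $\CC^3$ with $p_2p_1=p_1$ but $p_1p_2\neq p_1$. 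For instance, let $p_2=\operatorname{diag}(1,1,0)$ and let $p_1$ be the projection onto $\CC e_1$ with kernel spanned by $e_2$ and $e_3-e_1$. Set $\pi_1=p_1$, $\pi_2=p_2-p_1$, $\pi_3=1-p_2$. Then $\sum\pi_i=1$ and $\pi_i\pi_{i'}=0$ for $i'<i$, yet $\pi_1\pi_2\neq0$. Your fallback claim $\mathfrak p_k\circ\mathfrak F^{-1}_j=0$ for $j>k$ has the same defect.

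None of this is needed. The statement involves only the two blocks $\mathfrak p_k$ and $\mathfrak q_{k+1}$, and for two blocks the triangular argument does close. Take the degree-zero part of $\mathfrak F^{-1}\circ\mathfrak F=\mathrm{id}$, not of $\mathfrak F\circ\mathfrak F^{-1}$, which acts on the dual side. This gives $\mathfrak p_k+\mathfrak q_{k+1}=\mathrm{id}$ on $R\check h_*\CC_{\check P}$. Next,
\[
\mathfrak q_{k+1}\circ\mathfrak p_k=\sum_{r\geq k+1,\ i\leq k}\mathfrak F^{-1}_r\circ\bigl(\mathfrak F_{2g_P-r}\circ\mathfrak F^{-1}_i\bigr)\circ\mathfrak F_{2g_P-i}=0,
\]
since every middle factor has index sum $(2g_P-r)+i<2g_P$, so only the below-diagonal products occur. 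It then follows that
\begin{itemize}
\item $\mathfrak p_k=(\mathfrak p_k+\mathfrak q_{k+1})\mathfrak p_k=\mathfrak p_k^2$;
\item $\mathfrak q_{k+1}=\mathfrak q_{k+1}(\mathfrak p_k+\mathfrak q_{k+1})=\mathfrak q_{k+1}^2$;
\item $\mathfrak p_k\mathfrak q_{k+1}=\mathfrak p_k-\mathfrak p_k^2=0$.
\end{itemize}
This is exactly the paper's proof of (i). With it in place of the $\pi_i$-orthogonality discussion, your proposal is complete.
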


\begin{proof}
The degree-zero part of the inverse identity gives $\mathfrak p_k+\mathfrak q_{k+1}=1$. Moreover,
\[
  \mathfrak q_{k+1}\circ\mathfrak p_k =\sum_{\substack{r\geq k+1\\i\leq k}} \mathfrak F_r^{-1}\circ\mathfrak F_{2g_P-r} \circ\mathfrak F_i^{-1}\circ\mathfrak F_{2g_P-i}=0,
\]
since $(2g_P-r)+i<2g_P$ and \eqref{FV vanishing} applies to every middle factor. Since $\mathfrak p_k+\mathfrak q_{k+1}=1$, this also gives idempotence and the other orthogonality relation.

For (ii), \eqref{to get cup product} gives
\begin{equation}
  \label{to be zero} \mathfrak q_{k+l+1}\circ[\Delta^{\mathrm{sm}}_{\check P/B}]\circ(\mathfrak p_k\otimes\mathfrak p_l)=\mathfrak q_{k+l+1}\circ \mathfrak{F}^{-1}\circ \mathfrak{C}\circ \bigl( (\mathfrak{F}\circ \mathfrak{p}_k)\otimes (\mathfrak{F}\circ \mathfrak{p}_l)\bigr).
\end{equation}
Using \eqref{FV vanishing} again, we obtain
\begin{align*}
  \mathfrak q_{k+l+1}\circ \mathfrak{F}^{-1}& =\bigl(\sum_{i\geq k+l+1}\mathfrak{F}^{-1}_{ i}\circ \mathfrak{F}_{ 2g_P-i}\bigr)\circ \bigl(\sum_{j}\mathfrak{F}^{-1}_j\bigr) \\
  & =\bigl(\sum_{i\geq k+l+1}\mathfrak{F}^{-1}_{ i}\circ \mathfrak{F}_{ 2g_P-i}\bigr)\circ \bigl(\sum_{j\geq k+l+1}\mathfrak{F}^{-1}_j\bigr) \in \bigoplus_{i\geq k+l+1-g_P}\operatorname{Corr}^{i}_{B,\rm CR}([\check{P}/\Gamma], \check{P}).
\end{align*}
Similarly,
\[
  \mathfrak{F}\circ \mathfrak{p}_k\in \bigoplus_{i\geq g_P-k}\operatorname{Corr}^{i}_{B,\rm CR}(\check{P}, [\check{P}/\Gamma]) \qquad \mathfrak{F}\circ \mathfrak{p}_l\in \bigoplus_{i\geq g_P-l}\operatorname{Corr}^{i}_{B,\rm CR}(\check{P}, [\check{P}/\Gamma])
\]
By~\eqref{convolution kernel map} and Proposition~\ref{Degree and realization of composition}, the right-hand side of~\eqref{to be zero} has correspondence degrees
\[
  \geq g_P-k+g_P-l-g_P+k+l+1-g_P=1.
\]
The left-hand side has degree $0$, so it vanishes.
\end{proof}

\subsection{Twisted compactified Prym}\label{subsec:twisted-compactified-prym}

We return to $\mathbf B$ and follow the multisection construction of \cite[Section~4]{MSY23}. Fix $x\in\Sigma$ and put $\mathbf T=\pi^{-1}(\{x\}\times\mathbf B)$, which is a finite flat multisection of degree $n$. Retain $d_L$ and $\Lambda_L$ from \eqref{eq:spectral-sheaf-degree}. The BNR correspondence identifies $\check{\mathbf P}^{L}:=\Nm^{-1}(\Lambda_L) \subset\overline{\mathbf J}_{\mathbf C}^{d_L}$ with $\mathbf M$. We denote its structure map by $\check h^L$. We also consider $\widehat{\mathbf P}^{\,0}:= [\overline{\mathbf J}_{\mathbf C}^{0}/J_\Sigma^0]$, with structure map $\widehat h^0$.

For $a\in\{0,d_L\}$, let $\overline{\mathcal J}_{\mathbf C}^{a}$ be the stack of pairs $(\mathscr H,\alpha)$, where, over a $\mathbf B$-scheme $S$, $\mathscr H$ is an $S$-flat family of rank-one torsion-free sheaves of degree $a$ on $\mathbf C_S$, and $\alpha:\det p_{S*}(\mathscr H|_{\mathbf T_S})\simeq\sO_S$; here $p_S:\mathbf T_S\to S$ is the projection. By \cite[Proposition~4.1]{MSY23}, this is a $\mu_n$-gerbe over $\overline{\mathbf J}_{\mathbf C}^{a}$, carrying a universal sheaf. Define
\[
  \check{\mathcal P}^{L} :=\check{\mathbf P}^{L} \times_{\overline{\mathbf J}_{\mathbf C}^{d_L}} \overline{\mathcal J}_{\mathbf C}^{d_L}, \qquad \widehat{\mathcal P}^{\,0} :=[\overline{\mathcal J}_{\mathbf C}^{0}/J_\Sigma^0].
\]
We use line bundles rigidified at $x$ to define the $J_\Sigma^0$-action; tensoring by their pullbacks preserves $\alpha$. The gerbe projections are denoted by
\[
  \check{\varpi}:\check{\mathcal P}^{L}\to\check{\mathbf P}^{L}, \qquad \hat{\varpi}:\widehat{\mathcal P}^{\,0}\to\widehat{\mathbf P}^{\,0}.
\]
Both are $\mu_n$-gerbes. Rigidification quotients each automorphism group by the central $\mu_n$. Applying the tensor product construction of Lemma~\ref{etale cover of J bar} to the degree-zero norm fiber gives a finite $\Gamma$-quotient presentation of $\widehat{\mathcal P}^{\,0}$. Thus both stacks are nonsingular separated Deligne--Mumford stacks, proper and flat over $\mathbf B$, with quasi-projective coarse spaces. The quotient-stack argument of \cite[Subsection~4.4]{MSY23}, followed by the finite $\Gamma$-quotient, applies also to their relative products.

Write $\mathscr U_{\mathbf C}^{L}$ for the universal sheaf on $\mathbf C\times_{\mathbf B}\check{\mathcal P}^{L}$. Retain the section $s:B\to C$ and the line bundle $\mathscr N_L$ fixed in \eqref{first etale cover} and \eqref{eq:norm-correction-line}.

\begin{proposition}\label{prop:twisted-prym-etale-comparison}
The degree and norm correction defines a $\Gamma$-equivariant morphism
\[
  \check\sigma_L:\check{\mathcal P}^{L}_B\longrightarrow\check P, \qquad (\mathscr H,\alpha)\longmapsto [\mathscr H\otimes\sO_C(-d_L\operatorname{Im}s) \otimes\pi^*\mathscr N_L].
\]
There is a line bundle $\mathscr L_L$ on $\check{\mathcal P}^{L}_B$ such that
\begin{equation}
  \label{eq:twisted-universal-sheaf-comparison}
\begin{split}
  \mathscr U_{\mathbf C}^{L}|_B\simeq{}& (\id_C\times\check\sigma_L)^*\check{\mathscr U}_C \otimes p_C^*\bigl(\sO_C(d_L\operatorname{Im}s) \otimes\pi^*\mathscr N_L^{-1}\bigr) \otimes p_{\check{\mathcal P}}^*\mathscr L_L,
\end{split}
\end{equation}
where the $p$'s are the projections and $\check{\mathscr U}_C$ is normalized along $s$. For every geometric point $b\to B$,
\begin{equation}
  \label{eq:twisting-line-homologically-trivial} c_1(\mathscr L_L|_{\check{\mathcal P}^{L}_b})=0 \quad\text{in }H^2(\check{\mathcal P}^{L}_b,\Q).
\end{equation}
\end{proposition}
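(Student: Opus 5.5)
The plan is to build $\check\sigma_L$ first on the level of sheaves, then check equivariance, and finally compare universal sheaves by the seesaw principle.

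\textbf{The morphism.} Let $(\mathscr H,\alpha)$ be an $S$-point of $\check{\mathcal P}^L_B$. Then $\mathscr H$ is a flat family of rank-one torsion-free sheaves of degree $d_L$ on $C_S$ with $\Nm(\mathscr H)\simeq\Lambda_L$. The sheaf $\mathscr H\otimes\sO_C(-d_L\operatorname{Im}s)$ has degree $0$. By the norm identity $\Nm(\mathscr G\otimes\pi^*\mathscr N)=\Nm(\mathscr G)\otimes\mathscr N^{\otimes n}$ and the choice \eqref{eq:norm-correction-line}, tensoring further with $\pi^*\mathscr N_L$ gives norm isomorphic to $\sO_\Sigma$, up to a line bundle pulled back from $S$.

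The isomorphism class on $C_S$ (modulo $\Pic(S)$) defines an $S$-point of $\overline J^0_C$ lying in $\check P$. This is exactly the isomorphism \eqref{moduli and prym isomorphism}, precomposed with the gerbe projection $\check\varpi$. Forgetting $\alpha$ means the map factors through the rigidification, so it is a morphism of stacks.

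Equivariance for $\Gamma$ is immediate. Tensoring by $\pi^*\gamma$, with $\gamma$ rigidified at $x$, commutes with tensoring by the fixed line bundles $\sO_C(-d_L\operatorname{Im}s)$ and $\pi^*\mathscr N_L$. It also preserves $\alpha$, because $\gamma|_x$ is trivialized.

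\textbf{Comparison of universal sheaves.} Consider on $C\times_B\check{\mathcal P}^L_B$ the sheaf
\[
\mathscr E:=\mathscr U^L_{\mathbf C}|_B\otimes\Bigl((\id_C\times\check\sigma_L)^*\check{\mathscr U}_C\otimes p_C^*\bigl(\sO_C(d_L\operatorname{Im}s)\otimes\pi^*\mathscr N_L^{-1}\bigr)\Bigr)^{\vee}.
\]
More precisely, one compares the two families fiberwise. Over each point of $\check{\mathcal P}^L_B$, the two sheaves on $C_b$ are isomorphic by the definition of $\check\sigma_L$. Both families are flat families of simple sheaves, since rank-one torsion-free sheaves on integral curves have endomorphisms $\CC$.

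The standard seesaw argument for simple sheaves then applies. One sets $\mathscr L_L:=p_{\check{\mathcal P}*}\mathcal Hom(\mathscr A,\mathscr U^L_{\mathbf C}|_B)$, where $\mathscr A$ denotes the bracketed comparison family. This is a line bundle by cohomology and base change, since $\Hom$ is one-dimensional on each fiber. The evaluation map is then an isomorphism, which gives \eqref{eq:twisted-universal-sheaf-comparison}. Concretely, restricting along $s$ gives $\mathscr L_L\simeq(s\times\id)^*\mathscr U^L_{\mathbf C}|_B$, using the normalization of $\check{\mathscr U}_C$ along $s$ and the fact that $s$ lies in the smooth locus. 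One point needs care: $\mathscr U^L$ has $\mu_n$-weight one, so $\mathscr L_L$ also has weight one. It is therefore a genuine line bundle on the gerbe, not on the coarse space.

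\textbf{Homological triviality.} This is the step I expect to be the main obstacle. We have $\mathscr L_L|_{\check{\mathcal P}^L_b}\simeq(s_b\times\id)^*\mathscr U^L$, and $\mathscr U^L$ is normalized by $\alpha$ along $\mathbf T_b=\pi^{-1}(x)$, so $\det$ of its restriction to $\mathbf T_b$ is trivial.

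I plan to use that $s(b)$ and the points of $\mathbf T_b$ both lie in the connected curve $C_b$, together with $\check{\mathcal P}^L_b$. The class $c_1((t\times\id)^*\mathscr U^L)$ varies continuously with $t$ in the smooth locus of $C_b$, which is connected because $C_b$ is integral. Hence it is constant in $H^2(\check{\mathcal P}^L_b,\Q)$.

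It then remains to handle the normalization along $\mathbf T_b$, which may meet singular points. Averaging over $\mathbf T_b$ gives
\[
n\,c_1\bigl((s_b\times\id)^*\mathscr U^L\bigr)=c_1\bigl(\det p_*(\mathscr U^L|_{\mathbf T_b})\bigr)=0.
\]
This uses deformation of the degree-$n$ divisor $\mathbf T_b$ to $n$ copies of a point in the smooth locus, and $c_1$ of the determinant is additive in flat families. Since we work with $\Q$-coefficients, this gives \eqref{eq:twisting-line-homologically-trivial}.

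Making this rigorous requires a family of Cartier divisors on $C_b$ connecting $\mathbf T_b$ to $n[s(b)]$, together with flatness of $\mathscr U^L$ over $C$, so that the determinants vary in a flat family. If $\mathbf T_b$ avoids the singularities, this is routine. Otherwise I would move $x$ slightly; the class is locally constant in $x$.
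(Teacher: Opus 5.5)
Your proposal is correct and follows essentially the paper's route. The paper obtains the universal-sheaf comparison by citing the normalization construction of \cite[Proposition~4.4(i)]{MSY23}, which your seesaw step reproduces, and it proves homological triviality, as you do, by averaging the evaluation lines over a fiber of $\pi$ and using connectedness of $C_b^{\mathrm{reg}}$. The paper makes your ``move $x$'' fallback the main step: $\det(\pi_*\mathscr U_{\mathbf C}^{L}|_b)$ is a line bundle on $\Sigma\times\check{\mathcal P}^L_b$ that is trivial along $x$, so it has zero $c_1$ along every slice $\{y\}\times\check{\mathcal P}^L_b$; choosing $y$ with $n$ distinct smooth preimages then avoids any degeneration to the non-reduced divisor $n[s(b)]$.
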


\begin{proof}
By \eqref{eq:norm-correction-line}, the corrected family has degree zero and trivial relative norm. Applying the normalization construction of \cite[Proposition~4.4(i)]{MSY23} along $s$, and restricting to the norm fiber, gives \eqref{eq:twisted-universal-sheaf-comparison}. The construction commutes with tensoring by $\pi^*\gamma$.

On nonsingular spectral fibers, the last assertion is \cite[Proposition~4.4(ii)]{MSY23}. For an arbitrary integral fiber $C_b$, the BNR pushforward of $\mathscr U_{\mathbf C}^{L}|_b$ is a vector bundle on $\Sigma\times\check{\mathcal P}^{L}_b$ whose determinant is trivial along $x$. Its evaluation at any $y\in\Sigma$ therefore has zero first Chern class. Choose $y$ so that $\pi_b^{-1}(y)=\{z_1,\ldots,z_n\}$ consists of distinct smooth points. Since $C_b^{\mathrm{reg}}$ is connected, the evaluation line bundles of the universal sheaf have the same first Chern class there. Hence
\[
  0=\sum_{j=1}^n c_1\!\left( \mathscr U_{\mathbf C}^{L}|_{\{z_j\}\times\check{\mathcal P}^{L}_b} \right) =n\,c_1\!\left( \mathscr U_{\mathbf C}^{L}|_{\{s(b)\}\times\check{\mathcal P}^{L}_b} \right).
\]
Evaluating \eqref{eq:twisted-universal-sheaf-comparison} at $s(b)$ proves \eqref{eq:twisting-line-homologically-trivial}.
\end{proof}

We descend the Fourier construction from $B$ to $\mathbf B$ as follows.

\smallskip
\noindent\textbf{Step 1. The Fourier--Mukai kernels and equivalence.}

For a $\mu_n$-gerbe $\mathcal X$, we write $D^b\Coh(\mathcal X)_{(w)}$ for the subcategory on which its central stabilizer acts by $\lambda\mapsto\lambda^w$, where $w$ is taken modulo $n$. On a product, the weights refer to the two central stabilizers separately. Consider the mixed Poincar\'e sheaf $\mathscr P_{0,d_L}$ on $\overline{\mathcal J}_{\mathbf C}^{d_L} \times_{\mathbf B}\overline{\mathcal J}_{\mathbf C}^{0}$ constructed as in \cite[Subsection~4.2]{MSY23}; its subscripts record the central weights $(0,d_L)$.

Choose an $x$-rigidified line bundle $\mathscr N_x\in\Pic^0(\Sigma)$ with
\begin{equation}
  \label{eq:global-norm-correction-for-kernel} \mathscr N_x^{\otimes n} \simeq L^{-1}\otimes \sO_\Sigma\bigl(d_Lx-\tfrac{n(n-1)}2D\bigr).
\end{equation}
It exists because the line on the right has degree zero. This is the norm correction at the fixed point $x$, whereas $\mathscr N_L$ is the correction at $s_\Sigma$: writing $\mathscr N_x$ also for its pullback to $\Sigma\times B$,
\[
  (\mathscr N_L\otimes\mathscr N_x^{-1})^{\otimes n} \simeq\sO_{\Sigma\times B} \bigl(d_L(\operatorname{Im}s_\Sigma-\{x\}\times B)\bigr).
\]
Since $\det\pi_*\sO_{\mathbf C} =\sO_\Sigma(-\tfrac{n(n-1)}2D)$, pulled back to $\Sigma\times\mathbf B$, the family $\mathscr U_{\mathbf C}^{L}\otimes\pi^*\mathscr N_x$ has relative norm $\sO_\Sigma(d_Lx)$ and degree $d_L$.

Pull back $\mathscr P_{0,d_L}$ along $(\mathscr H_1,\mathscr H_2)\mapsto (\mathscr H_1\otimes\pi^*\mathscr N_x,\mathscr H_2)$, and restrict the first factor to $\check{\mathcal P}^{L}$. Equivalently, we apply the determinant-of-cohomology construction to the corrected first family. By \cite[Proposition~4.4]{GS22}, tensoring the second family by $\pi^*N$, for $N\in J_\Sigma^0$, changes this kernel by $N|_x^{\otimes d_L}$. The $x$-rigidification trivializes this line compatibly with tensor product, giving a $J_\Sigma^0$-linearization. Descent and the relative-adjoint construction give
\begin{equation}
  \label{eq:global-twisted-Prym-kernel}
\begin{split}
  \mathscr F_{L,0}&\in D^b\Coh(\check{\mathcal P}^{L}\times_{\mathbf B} \widehat{\mathcal P}^{\,0})_{(0,d_L)},\\
  \mathscr F_{L,0}^{-1}&\in D^b\Coh(\widehat{\mathcal P}^{\,0}\times_{\mathbf B} \check{\mathcal P}^{L})_{(-d_L,0)}.
\end{split}
\end{equation}

Let
\[
  \widehat\sigma_0:\widehat{\mathcal P}^{\,0}_B \xrightarrow{\ \hat\varpi\ } \widehat{\mathbf P}^{\,0}_B \xrightarrow{\ \sim\ } [\check P/\Gamma]
\]
be the composite of the gerbe projection with the degree-zero quotient identification. Applying the determinant calculation of \cite[Corollary~4.5]{MSY23} to \eqref{eq:twisted-universal-sheaf-comparison} gives
\begin{equation}
  \label{eq:twisted-Prym-kernel-local-form}
\begin{split}
  \mathscr F_{L,0}|_B &\simeq(\check\sigma_L\times\widehat\sigma_0)^*\mathscr F \otimes p_2^*\mathscr M_0,\\
  \mathscr F_{L,0}^{-1}|_B &\simeq(\widehat\sigma_0\times\check\sigma_L)^*\mathscr F^{-1} \otimes p_1^*\mathscr M_0^{-1},
\end{split}
\end{equation}
for a line bundle $\mathscr M_0$ of central weight $d_L$ on $\widehat{\mathcal P}^{\,0}_B$. Its restriction to every geometric fiber satisfies
\begin{equation}
  \label{eq:external-kernel-line-fiberwise-trivial} c_1(\mathscr M_0|_{\widehat{\mathcal P}^{\,0}_b})=0 \quad\text{in }H^2(\widehat{\mathcal P}^{\,0}_b,\Q).
\end{equation}
On a geometric fiber, pull back to the norm-zero gerbe atlas. The determinant calculation expresses $\mathscr M_0|_b$, up to a constant line, as the pairing of the universal degree-zero spectral sheaf with
\[
  \sO_{C_b}(d_Ls(b))\otimes \pi_b^*\bigl((\mathscr N_L|_{\Sigma\times\{b\}})^{-1} \otimes\mathscr N_x\bigr).
\]
This line has degree $d_L$. The evaluation argument in Proposition~\ref{prop:twisted-prym-etale-comparison} applies also to the degree-zero norm gerbe: its universal evaluation lines at smooth points of $C_b$ all have the same first Chern class, and the determinant rigidification makes $n$ times that class zero. They therefore have zero rational first Chern class. Representing the displayed line by a divisor supported in $C_b^{\mathrm{reg}}$ expresses its determinant pairing as a tensor product of such evaluation lines and their inverses. This proves \eqref{eq:external-kernel-line-fiberwise-trivial} on the atlas; the finite quotient trace proves it on $\widehat{\mathcal P}^{\,0}_b$.

Weight-zero descent identifies $D^b\Coh(\check{\mathcal P}^{L})_{(0)}$ with $D^b\Coh(\check{\mathbf P}^{L})$. Over $B$, tensoring by $\mathscr M_0^{-1}$ similarly identifies the dual weight-$d_L$ category with the weight-zero category. Thus \eqref{eq:twisted-Prym-kernel-local-form} reduces the adjunction morphisms to those of $\mathscr F$ and $\mathscr F^{-1}$. The cones of these adjunction morphisms vanish after the faithfully flat base change $B\to\mathbf B$, and therefore vanish globally. Consequently,
\begin{equation}
  \label{eq:twisted-Prym-derived-equivalence} \Phi_{\mathscr F_{L,0}}: D^b\Coh(\check{\mathcal P}^{L})_{(0)} \xrightarrow{\ \simeq\ } D^b\Coh(\widehat{\mathcal P}^{\,0})_{(d_L)}
\end{equation}
has inverse $\Phi_{\mathscr F_{L,0}^{-1}}$, as in \cite[Theorem~4.7]{GS22} and \cite[Proposition~4.3]{MSY23}.

\smallskip
\noindent\textbf{Step 2. Borel--Moore homology and coefficient systems.}

Gerbe pullback gives an isomorphism
\[
  \check{\varpi}^*:H_*^{\mathrm{BM}}(\check{\mathbf P}^{L},\CC) \xrightarrow{\ \simeq\ } H_*^{\mathrm{BM}}(\check{\mathcal P}^{L},\CC),
\]
whose inverse, in our stack-cycle convention, is $n\check{\varpi}_*$. On the dual side, we compare the inertia stacks. Write $\widehat{\mathbf P}^{\,0}_\gamma$ for the $\gamma$-sector of $I\widehat{\mathbf P}^{\,0}$, with structure map $\widehat h^0_\gamma$, and let $\mathcal I_\gamma$ be its inverse image under $I\hat{\varpi}$. By \cite[Construction~2.8]{GWZ20}, this map factors as
\begin{equation}
  \label{eq:twisted-inertia-lift-torsor} \mathcal I_\gamma \xrightarrow{\ \rho_\gamma\ } \mathcal T_\gamma \xrightarrow{\ q_\gamma\ } \widehat{\mathbf P}^{\,0}_\gamma,
\end{equation}
where $\rho_\gamma$ rigidifies the central $\mu_n$ and $q_\gamma$ is a representable finite \'etale $\mu_n$-torsor. Over a point of $\widehat{\mathbf P}^{\,0}_\gamma$, the torsor parametrizes the lifts of $\gamma$ to the gerbe.

For $w\in\Z/n$, let $\mathcal L_{0,\gamma}^{(w)}$ be the local system of functions on this torsor satisfying $f(\lambda t)=\lambda^w f(t)$. Then
\begin{equation}
  \label{eq:twisted-character-coefficients} (q_\gamma)_*\CC_{\mathcal T_\gamma} =\bigoplus_{w\in\Z/n}\mathcal L_{0,\gamma}^{(w)}, \qquad \mathcal L_{0,\gamma}^{(w)}\otimes \mathcal L_{0,\gamma}^{(-w)}\simeq\CC.
\end{equation}
Let $(w)$ denote the eigenspace of character $\lambda\mapsto\lambda^w$ for the pullback action of $\mu_n$ on the lifts. Then
\[
  H_*^{\mathrm{BM}}(\mathcal I_\gamma,\CC)_{(w)} \xrightarrow[\ \simeq\ ]{\ n(\rho_\gamma)_*\ } H_*^{\mathrm{BM}}(\mathcal T_\gamma,\CC)_{(w)} \simeq H_*^{\mathrm{BM}} (\widehat{\mathbf P}^{\,0}_\gamma,\mathcal L_{0,\gamma}^{(w)}).
\]
For a kernel of central weight $w$, changing an inertia lift gives $t_{\lambda\widetilde\gamma}([K]) =\lambda^w t_{\widetilde\gamma}([K])$. Its inertial class therefore has coefficients in $\mathcal L_{0,\gamma}^{(w)}$. In particular, after the age shifts, the Fourier kernel and its inverse give correspondences between
\begin{equation}
  \label{eq:twisted-sector-target} R\check h^L_*\CC_{\check{\mathbf P}^{L}}, \qquad \bigoplus_{\gamma\in\Gamma} R\widehat h^0_{\gamma*} \mathcal L_{0,\gamma}^{(d_L)}[-2a_\gamma].
\end{equation}
The inverse carries the dual local systems on its source.

\smallskip
\noindent\textbf{Step 3. Comparison with the previous Riemann--Roch construction.}

We use \eqref{KS's tau functor} and the target normalization of Subsection~\ref{subsec:cohomological-functors}. The classes $\Theta_{i,g}$ in \eqref{eq:normalized-tau-supported-definition} are pulled back from the rigidifications because a $\mu_n$-gerbe has zero relative tangent complex. In particular, the ages remain $a_\gamma$. Compatibility with proper pushforward is given by \eqref{eq:orbifold-tau-proper-covariance}; see \cite[Theorem~10.11]{KS20}.

For a relative product containing $m$ gerbe factors, let $\boldsymbol\gamma$ be the tuple of sector labels on all factors, with entry $1$ on an unquotiented Prym factor, and let $\widetilde{\boldsymbol\gamma}$ be the tuple obtained by choosing the tautological lift on each gerbe factor. Write $\rho^{(m)}$ for the product of the $m$ central rigidification maps. Under the sectorwise isomorphisms of Step~2, define
\begin{equation}
  \label{eq:rigidified-normalized-tau} \tau^{\mathrm{nor}}_{\Gamma,\mathrm{rig}}([K])_{\boldsymbol\gamma} :=n^m\rho^{(m)}_* \tau^{\mathrm{nor}}([K])_{\widetilde{\boldsymbol\gamma}}.
\end{equation}
The factor $n^m$ makes this inverse to pullback along the product of the $m$ gerbes: for a class $\beta$ on the rigidification, $\rho^{(m)}_*(\rho^{(m)})^*\beta=n^{-m}\beta$, since each $\mu_n$-gerbe has stack-theoretic degree $1/n$.

Let $\varpi:\mathcal W\to\mathcal W_{\mathrm{rig}}$ be a product of auxiliary gerbe projections on a relative product. If the central $\mu_n$ of each gerbe factor acts trivially on $K$, then weight-zero descent gives
\begin{equation}
  \label{eq:gerbe-weight-zero-descent} \overline K:=R\varpi_*K, \qquad L\varpi^*\overline K\simeq K;
\end{equation}
see \cite[Subsection~4.4, Step~1]{MSY23}. The ambient derived restriction and its eigensummands are then pulled back from $\mathcal W_{\mathrm{rig}}$. Flat pullback for the localized Chern character and the equality of the tangent factors therefore give
\begin{equation}
  \label{eq:gerbe-weight-zero-tau} \tau^{\mathrm{nor}}_{\Gamma,\mathrm{rig}}([K]) =\tau^{\mathrm{nor}}([R\varpi_*K]).
\end{equation}
This also applies when only some gerbe factors are rigidified, retaining the remaining coefficient systems.

For the Fourier kernels, use \eqref{eq:twisted-Prym-kernel-local-form}. Let $\ch^{\mathrm{rig}}_\gamma(\mathscr M_0)$ denote the class in $H^{2*}([\check P_\gamma/\Gamma], \mathcal L_{0,\gamma}^{(d_L)}|_B)$ whose pullback to the torsor $\mathcal T_\gamma$ in \eqref{eq:twisted-inertia-lift-torsor} is $\operatorname{Tr}(\widetilde\gamma\mid\mathscr M_0) \exp(c_1(\mathscr M_0))$. Its inverse is $\ch^{\mathrm{rig}}_\gamma(\mathscr M_0^{-1})$, with dual coefficients. Under $\check\sigma_L$ and $\widehat\sigma_0$, the module formula gives
\begin{equation}
  \label{eq:twisted-Prym-tau-local-form}
\begin{split}
  \left.\tau^{\mathrm{nor}}_{\Gamma,\mathrm{rig}} ([\mathscr F_{L,0}])_{1,\gamma}\right|_B &=p_2^*\ch^{\mathrm{rig}}_\gamma(\mathscr M_0) \cap\tau^{\mathrm{nor}}_B([\mathscr F])_{1,\gamma},\\
  \left.\tau^{\mathrm{nor}}_{\Gamma,\mathrm{rig}} ([\mathscr F_{L,0}^{-1}])_{\gamma,1}\right|_B &=p_1^*\ch^{\mathrm{rig}}_\gamma(\mathscr M_0^{-1}) \cap\tau^{\mathrm{nor}}_B([\mathscr F^{-1}])_{\gamma,1}.
\end{split}
\end{equation}
Indeed, after tensoring with the inverse external line, each equality reduces to \eqref{eq:gerbe-weight-zero-tau}.

In each convolution, the central weights on the middle factor are opposite, so the tensor integrand descends through the middle gerbe. Pairing the lift characters contributes $\frac{1}n\sum_{\lambda\in\mu_n}\lambda^w\lambda^{-w}=1$. The proof of Proposition~\ref{prop:normalized-tau-composition}, with the same middle $\Theta$-factor, consequently gives
\begin{equation}
  \label{eq:rigidified-tau-composition} \tau^{\mathrm{nor}}_{\Gamma,\mathrm{rig}} ([K_{23}\circ K_{12}]) =\tau^{\mathrm{nor}}_{\Gamma,\mathrm{rig}}([K_{23}]) \circ\tau^{\mathrm{nor}}_{\Gamma,\mathrm{rig}}([K_{12}]).
\end{equation}
The diagonal kernels representing the identity on the weight categories have opposite central weights on their two factors; after the normalization in \eqref{eq:rigidified-normalized-tau}, their classes are the identity on the local systems in \eqref{eq:twisted-sector-target}. On the weight-zero source, the small diagonal realizes the ordinary cup product.

\smallskip
\noindent\textbf{Step 4. Tensor kernels and codimension estimates.}

On the fourfold relative product ordered as $\widehat{\mathcal P}^{\,0},\widehat{\mathcal P}^{\,0}, \check{\mathcal P}^{L},\widehat{\mathcal P}^{\,0}$, define
\begin{equation}
  \label{eq:twisted-multiplication-kernel} \mathscr K_{L,0}:= Rp_{124*}\left( Lp_{13}^*\mathscr F_{L,0}^{-1} \otimes^L Lp_{23}^*\mathscr F_{L,0}^{-1} \otimes^L Lp_{34}^*\mathscr F_{L,0}\right).
\end{equation}
The central weight of the tensor integrand along the middle $\check{\mathcal P}^{L}$-factor is zero. By \eqref{eq:twisted-Prym-kernel-local-form}, its pullback to $B$ is the pullback of $\mathscr K$ tensored by $\mathscr M_0^{-1}\boxtimes\mathscr M_0^{-1}\boxtimes\mathscr M_0$. These external factors do not change the support. The estimate in Proposition~\ref{convolution kenel} therefore gives
\begin{equation}
  \label{eq:twisted-ternary-support} \dim\operatorname{Supp} (\mathscr K_{L,0}|_{\gamma_1,\gamma_2,\gamma_3}) \leq\dim\mathbf B+2g_P-a_{\gamma_1}-a_{\gamma_2}-a_{\gamma_3},
\end{equation}
where sector restrictions are taken in the smooth ambient product with support, as before. Thus $\mathfrak C^L:=\tau^{\mathrm{nor}}_{\Gamma,\mathrm{rig}} ([\mathscr K_{L,0}])$ has correspondence degrees at least $-g_P$ and coefficients $p_1^*\mathcal L_{0,\gamma_1}^{(-d_L)}\otimes p_2^*\mathcal L_{0,\gamma_2}^{(-d_L)}\otimes p_3^*\mathcal L_{0,\gamma_3}^{(d_L)}$.

\smallskip
\noindent\textbf{Step 5. Adams operations.}

Let $\iota$ embed $\widehat{\mathcal P}^{\,0}\times_{\mathbf B}\check{\mathcal P}^{L}$ in its smooth absolute product, and set
\begin{equation}
  \label{eq:twisted-Adams-class} \mathscr A_N^L:=[\mathscr F_{L,0}]\circ \psi^N([R\iota_*\mathscr F_{L,0}^{-1}]).
\end{equation}
Supported Adams operations are available on these quotient stacks as in \cite[Subsections~3.5.2 and~4.4]{MSY23}. The central weight of the convolution integrand along the middle $\check{\mathcal P}^{L}$-factor remains zero. Over $B$, each Newton monomial acquires the external factor $\mathscr M_0^{-N}\boxtimes\mathscr M_0$. Hence Proposition~\ref{codim for FV} and Lemma~\ref{lem:Adams-support-for-FV} show that the $(\gamma_1,\gamma_2)$-restriction of $\mathscr A_N^L$ lies in the subgroup generated by classes supported in dimension at most
\begin{equation}
  \label{eq:twisted-Adams-support} \dim\mathbf B+g_P-a_{\gamma_1}-a_{\gamma_2}.
\end{equation}
Its normalized class has no negative correspondence degrees.

For Fourier vanishing, take
\begin{equation}
  \label{eq:twisted-Adams-congruence} N\equiv1\pmod{n^2}.
\end{equation}
If $\gamma$ has order $m\mid n$, every lift has order dividing $mn$, since $\widetilde\gamma^m\in\mu_n$. Thus this congruence preserves both the lift eigenvalues and the coefficient systems, and
\begin{equation}
  \label{eq:twisted-Adams-scaling} \ch_r^{\mathrm{loc}} \bigl(t_{\widetilde{\boldsymbol\gamma}}\psi^N(\alpha)\bigr) =N^r\ch_r^{\mathrm{loc}} \bigl(t_{\widetilde{\boldsymbol\gamma}}\alpha\bigr),
\end{equation}
where $r$ is Chern codimension before normalization. We therefore apply the Vandermonde argument and the normalization calculation in the proof of Proposition~\ref{Prop:FV vanishing}.

Set
\begin{equation}
  \label{eq:global-rigidified-Fourier-classes} \mathfrak F^L:=\tau^{\mathrm{nor}}_{\Gamma,\mathrm{rig}} ([\mathscr F_{L,0}]), \qquad \mathfrak F^{L,-1}:=\tau^{\mathrm{nor}}_{\Gamma,\mathrm{rig}} ([\mathscr F_{L,0}^{-1}]).
\end{equation}
Write their sector components as $\mathfrak F^L_{\gamma,i}$ and $\mathfrak F^{L,-1}_{\gamma,i}$, of correspondence degree $i-g_P$, and put $\mathfrak F_i^L=\sum_\gamma\mathfrak F^L_{\gamma,i}$ and $\mathfrak F_i^{L,-1}=\sum_\gamma\mathfrak F^{L,-1}_{\gamma,i}$. Set
\begin{equation}
  \label{eq:global-twisted-sector-projectors} \mathfrak p_{k,\gamma}^L :=\sum_{i\leq k} \mathfrak F_{\gamma,i}^{L,-1} \circ\mathfrak F_{\gamma,2g_P-i}^L.
\end{equation}
For $\delta\in\Gamma$, let $\mathscr L_\delta$ also denote the weight-zero line on $\widehat{\mathcal P}^{\,0}$ obtained by descending the normalized determinant pairing with $\pi^*\delta$. Its pullback to $B$ is the line used in Lemma~\ref{the scalar on trivial line bundle}. The determinant-of-cohomology construction gives
\[
  (T_\delta\times\mathrm{id})^*\mathscr F_{L,0} \simeq\mathscr F_{L,0}\otimes p_2^*\mathscr L_\delta.
\]
As in Lemma~\ref{sectorwise isomorphism}, this identity implies that $\mathfrak F^L_{\gamma,i}$ vanishes on all character summands other than $\kappa_\gamma$, while the image of $\mathfrak F^{L,-1}_{\gamma,i}$ lies in the $\kappa_\gamma$-summand. Hence $\mathfrak p_{k,\gamma}^L$ acts only on that summand.

\begin{theorem}\label{prop:global-twisted-Prym-projectors}\label{coro:multi for twisted}
The normalized Fourier correspondences are mutually inverse and satisfy
\begin{equation}
  \label{eq:global-twisted-FV} \mathfrak F^L_{\gamma_2,i}\circ \mathfrak F^{L,-1}_{\gamma_1,j}=0 \qquad(i+j<2g_P).
\end{equation}
The correspondences
\begin{equation}
  \label{eq:global-twisted-projectors} \mathfrak p_k^L =\sum_{i\leq k}\mathfrak F_i^{L,-1}\circ\mathfrak F_{2g_P-i}^L,\qquad \mathfrak q_{k+1}^L =\sum_{i\geq k+1}\mathfrak F_i^{L,-1}\circ\mathfrak F_{2g_P-i}^L
\end{equation}
are complementary orthogonal projectors on $R\check h^L_*\CC_{\check{\mathbf P}^{L}}$.

Under $\check{\mathbf P}^{L}\simeq\mathbf M$, cup product restricts to
\begin{equation}
  \label{eq:moduli-projector-multiplicativity} \cup:\Im\mathfrak p_k^L\otimes^L\Im\mathfrak p_l^L \longrightarrow\Im\mathfrak p_{k+l}^L \qquad(k,l\geq0)
\end{equation}
in $D_c^b(\mathbf B)$, with the images in $Rh_*\CC_{\mathbf M}$.
\end{theorem}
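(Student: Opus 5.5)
The plan is to transport the local argument of Subsections~\ref{subsec:Fourier-character-decomposition}--\ref{multiplicity} to $\mathbf B$, using the rigidified formalism of Steps~1--5. Identities between correspondences in $D^b_c(\mathbf B)$ can be checked after the étale surjection $B\to\mathbf B$. For inversion this is not even needed: we argue directly with \eqref{eq:rigidified-tau-composition}.

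\textbf{Step 1: mutual inverse.} By \eqref{eq:twisted-Prym-derived-equivalence}, the convolutions $\mathscr F_{L,0}^{-1}\circ\mathscr F_{L,0}$ and $\mathscr F_{L,0}\circ\mathscr F_{L,0}^{-1}$ are the diagonal kernels of the two weight categories. Applying \eqref{eq:rigidified-tau-composition}, together with the remark after it that these diagonal kernels realize the identity on the local systems \eqref{eq:twisted-sector-target}, gives $\mathfrak F^{L,-1}\circ\mathfrak F^L=\mathrm{id}$ and $\mathfrak F^L\circ\mathfrak F^{L,-1}=\mathrm{id}$. Taking degree-zero parts yields $\mathfrak p_k^L+\mathfrak q_{k+1}^L=1$.

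\textbf{Step 2: Fourier vanishing.} First, the character statement at the end of Step~5 reduces \eqref{eq:global-twisted-FV} to the case $\gamma_1=\gamma_2=\gamma$, exactly as in the proof of Proposition~\ref{Prop:FV vanishing}.

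Next, by \eqref{eq:twisted-Adams-support} and Lemma~\ref{lem:support-bound-normalized-tau}, the $(\gamma,\gamma)$-component of $\tau^{\mathrm{nor}}_{\Gamma,\mathrm{rig}}(\mathscr A_N^L)$ has no negative correspondence degree. Since $\psi^N$ is applied only to the inverse factor, \eqref{eq:rigidified-tau-composition} expands this component as
\[
  \sum_{i+j=m}N^{j+\dim\mathbf B-a_\gamma}\,\mathfrak F^L_{\gamma,i}\circ\widehat{\mathfrak F}^{L,-1}_{\gamma,j}.
\]
This uses the scaling \eqref{eq:twisted-Adams-scaling}, valid for $N\equiv1\pmod{n^2}$, which preserves both lift eigenvalues and the coefficient systems $\mathcal L^{(w)}_{0,\gamma}$. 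The expression therefore vanishes for $m<2g_P$. Letting $N$ range over infinitely many such integers, a Vandermonde argument kills each term separately. Reinserting the nonnegative-degree normalization classes $\mathsf L_{\gamma,\ell}$ then gives \eqref{eq:global-twisted-FV}.

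\textbf{Step 3: projectors.} With Steps~1--2 in hand, the algebra in Proposition~\ref{prop:local-multiplicative-projectors}(i) applies verbatim and shows that $\mathfrak p_k^L,\mathfrak q_{k+1}^L$ are complementary orthogonal idempotents.

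\textbf{Step 4: multiplicativity.} We need the analogue of \eqref{to get cup product},
\[
  \mathfrak F^{L,-1}\circ\mathfrak C^L\circ(\mathfrak F^L\otimes\mathfrak F^L)=[\Delta^{\mathrm{sm}}],
\]
which comes from the kernel identity $\mathscr F_{L,0}^{-1}\circ\mathscr K_{L,0}\circ(\mathscr F_{L,0}\boxtimes\mathscr F_{L,0})\simeq\sO_{\Delta^{\mathrm{sm}}}$. This kernel identity holds after base change to $B$ by \eqref{eq:twisted-Prym-kernel-local-form}: the external factors $\mathscr M_0^{\pm1}$ cancel because the middle weights are opposite. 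It therefore holds globally by faithfully flat descent of cones, as in Step~1 of the construction. The binary form of \eqref{eq:rigidified-tau-composition}, together with the remark that the weight-zero small diagonal realizes cup product, then gives the identity above. Finally, Step~4 of the construction shows $\mathfrak C^L$ has degrees $\ge -g_P$, and the degree count of Proposition~\ref{prop:local-multiplicative-projectors}(ii) forces $\mathfrak q_{k+l+1}^L\circ\cup\circ(\mathfrak p_k^L\otimes\mathfrak p_l^L)=0$. This is \eqref{eq:moduli-projector-multiplicativity}. Transport under BNR, $\check{\mathbf P}^L\simeq\mathbf M$, is immediate.

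\textbf{Main obstacle.} The delicate point is Step~2: the Adams and Vandermonde argument must be carried out with twisted coefficient systems on the gerbe sectors. One has to check that $\psi^N$ commutes with $t_{\widetilde\gamma}$ and with the $\mu_n$-weight decomposition. One also has to check that the Newton-monomial support bound of Lemma~\ref{lem:Adams-support-for-FV} survives the twisting by $\mathscr M_0^{-N}\boxtimes\mathscr M_0$; this relies on $c_1(\mathscr M_0)$ being fiberwise zero by \eqref{eq:external-kernel-line-fiberwise-trivial}, and on support being unaffected by line bundles. The plan is to verify both after pullback to $B$, where the situation reduces to Proposition~\ref{Prop:FV vanishing}.
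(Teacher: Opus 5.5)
Your proposal is correct and is essentially the paper's proof, spelled out in more detail: inversion from \eqref{eq:twisted-Prym-derived-equivalence} and the rigidified composition law of Step~3, Fourier vanishing from the Adams--Vandermonde argument of Step~5, the projector algebra of Proposition~\ref{prop:local-multiplicative-projectors}(i), and the same degree count $(k+l+1-g_P)-g_P+(g_P-k)+(g_P-l)=1$ for multiplicativity. One caution: your opening claim that identities of correspondences in $D^b_c(\mathbf B)$ can be checked after the \'etale surjection $B\to\mathbf B$ is false in general, because restriction of Hom groups or Borel--Moore classes to a non-finite \'etale cover need not be injective; your argument never uses it, since over $B$ you only check support bounds and coherent isomorphisms via cones.
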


\begin{proof}
The inverse identities follow from \eqref{eq:twisted-Prym-derived-equivalence} and Step~3; Fourier vanishing follows from Step~5. Taking the degree-zero part of $\mathfrak F^{L,-1}\circ\mathfrak F^L=\mathrm{id}$ gives $\mathfrak p_k^L+\mathfrak q_{k+1}^L=1$, while \eqref{eq:global-twisted-FV} gives $\mathfrak q_{k+1}^L\circ\mathfrak p_k^L=0$. Since the two endomorphisms sum to the identity, they are idempotent and orthogonal in both orders.

Steps~3 and~4 give
\[
  \mathfrak F^{L,-1}\circ\mathfrak C^L\circ (\mathfrak F^L\otimes\mathfrak F^L) =[\Delta^{\mathrm{sm}}_{\check{\mathbf P}^{L}/\mathbf B}],
\]
with $\mathfrak C^L$ of correspondence degrees at least $-g_P$. As in the proof of Proposition~\ref{prop:local-multiplicative-projectors}, Fourier vanishing implies that $\mathfrak q_{k+l+1}^L\circ [\Delta^{\mathrm{sm}}_{\check{\mathbf P}^{L}/\mathbf B}] \circ(\mathfrak p_k^L\otimes\mathfrak p_l^L)$ has degree at least $(k+l+1-g_P)-g_P+(g_P-k)+(g_P-l)=1$. It also has degree zero, so it vanishes.
\end{proof}

\begin{lemma}\label{lem:twisted-untwisted-projector-comparison}
We also denote by $\check\sigma_L:\check{\mathbf P}^{L}_B\xrightarrow{\sim}\check P$ the isomorphism induced by the morphism of Proposition~\ref{prop:twisted-prym-etale-comparison} after rigidification by the central $\mu_n$. For every geometric point $b\to B$, every $\gamma\in\Gamma$, and every $k$, the projectors $\mathfrak p_{k,\gamma}^L|_b$ and $\mathfrak p_{k,\gamma}|_b$, identified by $\check\sigma_L$, have the same image. Consequently, on an open subset on which the relevant perverse cohomology sheaves are local systems, they define the same sub-local systems.
\end{lemma}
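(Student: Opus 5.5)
Over $B$, the plan is to express the correspondences $\mathfrak F^L_{\gamma,i}$, $\mathfrak F^{L,-1}_{\gamma,i}$ through the untwisted ones $\mathfrak F_{\gamma,i}$, $\mathfrak F^{-1}_{\gamma,i}$ via \eqref{eq:twisted-Prym-tau-local-form}, and then restrict to a geometric point $b$. By \eqref{eq:twisted-Prym-tau-local-form} and the rigidified compatibility \eqref{eq:gerbe-weight-zero-tau}, we have
\[
  \mathfrak F^L_\gamma|_B=\mathsf M_\gamma\circ\mathfrak F_\gamma,\qquad \mathfrak F^{L,-1}_\gamma|_B=\mathfrak F^{-1}_\gamma\circ\mathsf M_\gamma^{-1},
\]
where $\mathsf M_\gamma$ is the diagonal correspondence on the $\gamma$-sector given by cap product with $\ch^{\mathrm{rig}}_\gamma(\mathscr M_0)$. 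Here we first trivialize the coefficient system $\mathcal L^{(d_L)}_{0,\gamma}$ on the fiber over $b$ by choosing a lift of $\gamma$; the choice cancels between $\mathsf M_\gamma$ and $\mathsf M_\gamma^{-1}$. Write $\mathsf M_\gamma=\sum_{r\ge0}\mathsf M_{\gamma,r}$ by cohomological degree $2r$. The key input is \eqref{eq:external-kernel-line-fiberwise-trivial}: after restriction to $b$, the class $c_1(\mathscr M_0|_b)$ vanishes in rational cohomology, and the trace factor is a nonzero constant $c_\gamma$. Hence $\mathsf M_\gamma|_b=c_\gamma\cdot\mathrm{id}$, with all positive-degree parts $\mathsf M_{\gamma,r}|_b$ equal to zero.

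Substituting into \eqref{eq:global-twisted-sector-projectors} gives
\[
  \mathfrak p^L_{k,\gamma}|_b=\sum_{i\le k}\mathfrak F^{-1}_{\gamma,i}|_b\circ c_\gamma^{-1}c_\gamma\circ\mathfrak F_{\gamma,2g_P-i}|_b=\mathfrak p_{k,\gamma}|_b .
\]
So the two projectors agree on the fiber, and in particular they have the same image.

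The step I expect to be delicate is justifying that ``restriction to $b$'' of the correspondences can be computed by restricting the degree-$2r$ classes $\mathsf M_{\gamma,r}$ to the fiber cohomology. Only their fiberwise restriction vanishes; the global classes over $B$ need not, since $c_1(\mathscr M_0)$ may contain classes pulled back from the base. I would handle this by working with the induced endomorphisms of stalks $(R\check h_*\CC)_b=H^*(\check P_b)$ via proper base change. The action of $\mathsf M_\gamma$ on the stalk is cup product with the restriction of $\ch^{\mathrm{rig}}_\gamma(\mathscr M_0)$ to the fiber, which equals $c_\gamma$.

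If that is not available directly, a fallback uses only images rather than an equality of operators. The correction terms coming from $\mathsf M_{\gamma,r}$ with $r>0$ raise correspondence degree. They can therefore be absorbed: by relative Fourier vanishing \eqref{eq:global-twisted-FV} and Proposition~\ref{Prop:FV vanishing}, one shows $\mathfrak p^L_{k,\gamma}\circ\mathfrak p_{k,\gamma}=\mathfrak p_{k,\gamma}$ and symmetrically, by the same degree count as in Proposition~\ref{prop:local-multiplicative-projectors}. This gives equality of images over $B$ without any fiber restriction.

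For the consequence, on an open set where the relevant perverse cohomology sheaves are local systems, the images of the two idempotent endomorphisms are sub-local systems. Sub-local systems are determined by their stalks at geometric points. The stalks agree by the fiberwise equality, so the two images define the same sub-local systems.
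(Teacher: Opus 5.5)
Your main argument is the paper's proof. Over $B$, \eqref{eq:twisted-Prym-tau-local-form} says the twisted Fourier correspondences differ from the untwisted ones by cup product with $\ch^{\mathrm{rig}}_\gamma(\mathscr M_0)^{\pm1}$. By \eqref{eq:external-kernel-line-fiberwise-trivial} this class is a nonzero degree-zero scalar on each geometric fiber, so the two scalars cancel in every summand of the projector, the projectors act identically on fiber cohomology, and the sub-local-system statement follows by comparing stalks.

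Your fallback is also correct, and it proves more than the lemma asks. Combining \eqref{eq:twisted-Prym-tau-local-form} with Proposition~\ref{Prop:FV vanishing} gives $\mathfrak q^L_{k+1,\gamma}\circ\mathfrak p_{k,\gamma}=0$ and $\mathfrak q_{k+1,\gamma}\circ\mathfrak p^L_{k,\gamma}=0$, which yields equality of images over all of $B$ rather than only fiberwise. It is not needed for the lemma.
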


\begin{proof}
By \eqref{eq:twisted-Prym-tau-local-form}, after pullback to $B$ the twisted Fourier correspondences differ from the untwisted ones by multiplication by $\ch^{\mathrm{rig}}_\gamma(\mathscr M_0)$, and the inverse correspondences differ by its inverse. On a geometric fiber, \eqref{eq:external-kernel-line-fiberwise-trivial} shows that this class is the nonzero stabilizer character of $\mathscr M_0$ in degree zero. The two scalar factors cancel in every summand defining the source projector. Hence the two projectors have the same action on every geometric fiber. The last assertion follows by applying this equality to the stalks of the corresponding local systems.
\end{proof}

\section{Fourier--Mukai transform and endoscopic correspondence}

We retain the notation of the preceding section and assume that $d=\deg D>2g-2$ is even.

\subsection{The endoscopic correspondence}

The $\Gamma$-action on $\mathbf M_{n,L}^D$ preserves the fibers of $h^D$ and gives the character decomposition
\[
  Rh^{D}_*\underline{\mathbb{C}}=\bigoplus_{\kappa\in \hat{\Gamma}}(Rh^{D}_*\underline{\mathbb{C}})_{\kappa}.
\]

Fix $\kappa=\kappa_\gamma\in\widehat\Gamma$, with the Weil-pairing convention fixed in Section~2. Let $\mathbf M_\gamma^D$ be the $\gamma$-fixed locus in the full moduli space and let $\mathbf B_\gamma^D=h^D(\mathbf M_\gamma^D)$. Write $h_\gamma^D:\mathbf M_\gamma^D\to\mathbf B_\gamma^D$ and $i_\gamma^D:\mathbf B_\gamma^D\hookrightarrow\mathbf B^D$ for the induced map and the inclusion, and put $\mathbf B_\gamma=\mathbf B_\gamma^D\cap\mathbf B$. The results of Ng\^{o} \cite[Theorems 7.8.3 and 7.8.5]{Ngo10} and de Cataldo \cite[Theorem 1.0.2]{deC17} (see also \cite[Corollary 2.2]{MS21}) give the following support theorem.

\begin{proposition}\label{support theorem}
For $d>2g-2$, the only strict support of the nonzero perverse cohomology sheaves of $(Rh^{D}_*\underline{\mathbb{C}})_{\kappa_\gamma}$ is $\mathbf{B}_{\gamma}^{D}$, where $\gamma$ corresponds to $\kappa$ via the Weil pairing. Moreover, for any $\gamma$, the generic point of $\mathbf{B}_{\gamma}^{D}$ is contained in $\mathbf{B}_\gamma$, the elliptic locus.
\end{proposition}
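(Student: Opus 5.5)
The first assertion is essentially a citation, so the plan is to assemble it from the cited results and to check that the hypotheses match. The $\Gamma$-action commutes with $h^D$, so the isotypic projectors $e_\kappa$ act on $Rh^D_*\underline{\mathbb C}$ and also on each perverse cohomology sheaf ${}^p\mathcal H^i$. By the decomposition theorem, each ${}^p\mathcal H^i(Rh^D_*\underline{\mathbb C})_\kappa$ is semisimple, so it makes sense to speak of its strict supports. For $d>2g-2$, de Cataldo's support theorem \cite[Theorem~1.0.2]{deC17} extends Ng\^o's results \cite[Theorems~7.8.3 and~7.8.5]{Ngo10} to the whole $\mathrm{SL}_n$ Hitchin base $\mathbf B^D$, and not only to the anisotropic or elliptic part. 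It says that every strict support of the $\kappa$-part is the image of an endoscopic Hitchin base attached to $\kappa$. For $\mathrm{SL}_n$ with $\kappa=\kappa_\gamma$, the endoscopic group is determined by the cyclic cover of $\Sigma$ cut out by $\gamma$, so there is a single endoscopic locus. Its image is exactly $h^D(\mathbf M^D_\gamma)=\mathbf B^D_\gamma$, because a Higgs bundle is fixed by $\gamma$ if and only if it is pushed forward from the corresponding cyclic spectral cover. This is the content of \cite[Corollary~2.2]{MS21}, which I would quote to get the clean formulation ``only $\mathbf B^D_\gamma$''. The Weil-pairing dictionary $\kappa\leftrightarrow\gamma$ must be the one fixed in Section~2, and I would check that it agrees with the convention of \cite[Subsection~1.3]{MS21}.

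There is one further point: $\mathbf B^D_\gamma$ should actually occur when the $\kappa_\gamma$-part is nonzero. If it did not, the $\kappa_\gamma$-summand would vanish. It is enough to state that $\mathbf B^D_\gamma$ is the unique candidate support for the nonzero perverse sheaves, and this is how I read ``only strict support''.

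For the second assertion, I would argue dimension-theoretically using the cyclic-cover description. Let $m=\operatorname{ord}(\gamma)$ and let $\Sigma_\gamma\to\Sigma$ be the associated étale cyclic cover of degree $m$. Then $\mathbf B^D_\gamma$ is the image of the Hitchin base of $\mathrm{GL}_{n/m}$-Higgs bundles on $\Sigma_\gamma$, twisted by the pullback of $D$, with the norm and trace conditions imposed. A general point of this base is a spectral curve $\widetilde C\subset\operatorname{Tot}(D|_{\Sigma_\gamma})$ which is integral, and even smooth, because the pulled-back twist still has degree greater than $2g(\Sigma_\gamma)-2$ and the linear system is base-point free. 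The image curve in $\operatorname{Tot}(D)$ is the quotient of $\widetilde C$ by $\mathbb Z/m$. Since $\widetilde C$ is irreducible, so is this quotient.

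The main obstacle is to make sure this image is \emph{integral} of degree $n$ over $\Sigma$, and not a nonreduced multiple curve. The map $\widetilde C\to\operatorname{Tot}(D)$ is generically injective onto its image modulo the deck group exactly when the characteristic polynomial on $\Sigma_\gamma$ is not invariant under any nontrivial element of $\langle\gamma\rangle$. This is an open condition, and it is nonempty because the invariant polynomials descend to a strictly smaller base. Once the general spectral curve over $\mathbf B^D_\gamma$ is integral, the generic point of $\mathbf B^D_\gamma$ lies in $\mathbf B^{D,ell}$, that is, in $\mathbf B_\gamma$. Alternatively, this is \cite[Proposition~2.4.9 and Section~4]{deC17}, which I would cite as a cross-check.
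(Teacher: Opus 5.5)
Your proposal is correct. For the first assertion you follow the paper exactly. The paper gives no argument beyond citing Ng\^o \cite[Theorems~7.8.3 and~7.8.5]{Ngo10}, de Cataldo \cite[Theorem~1.0.2]{deC17}, and \cite[Corollary~2.2]{MS21}. It also takes the ``moreover'' clause from these same sources.

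For that second clause you take a different route: you verify it directly by a genericity argument on the endoscopic base $\mathbf B^D(\gamma)$. A general point has a smooth integral spectral curve $\widetilde C$ over $\Sigma_{\langle\gamma\rangle}$ with trivial stabilizer in $G_\gamma$, and then the pushed-forward spectral curve is integral. This makes that clause self-contained rather than relying on the cited support theorem, at the cost of a few routine checks (Bertini with the trace constraint, properness of the fixed subspaces).

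One sentence should be corrected. In the relevant case, the image of $\widetilde C$ in $\operatorname{Tot}(D)$ is not ``the quotient of $\widetilde C$ by $\mathbb Z/m$.'' In fact $G_\gamma$ does not act on $\widetilde C$ at all unless $\widetilde C$ is $G_\gamma$-stable, and that is exactly the nonreduced case you want to exclude.

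When no nontrivial deck transformation preserves $\widetilde C$, the map $\widetilde C\to C$ is birational; it is the normalization, as in \cite[Lemma~3.4]{MS21}. Irreducibility of $C$ then follows simply because $C$ is the image of an irreducible curve. Reducedness follows because the pullback of its equation to the \'etale cover $\operatorname{Tot}(\pi_{\langle\gamma\rangle}^*D)\to\operatorname{Tot}(D)$ is the product $\prod_{g\in G_\gamma}g^*\widetilde P$ of pairwise coprime reduced factors. Reducedness then descends along this \'etale cover.

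The quotient description is valid only for the family over the quotient base, $\mathbf C^\heartsuit(\gamma)/G_\gamma\to\mathbf B^\heartsuit(\gamma)/G_\gamma$, where $G_\gamma$ acts freely on the base. Relatedly, your phrase ``generically injective onto its image modulo the deck group'' should simply read ``birational onto its image.''
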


We recall the endoscopic construction of \cite[Section~3]{MS21}. Put $m=\operatorname{ord}(\gamma)$ and $r=n/m$, and let
\begin{equation}
  \label{etale cover piH} \pi_{\langle\gamma\rangle}: \Sigma_{\langle\gamma\rangle}\longrightarrow\Sigma
\end{equation}
be the connected cyclic étale cover determined by $\gamma$, with Galois group $G_\gamma=\langle\gamma\rangle^\vee$. The space $\mathbf M_{r,L}^D(\gamma)$ parametrizes rank-$r$, $\pi_{\langle\gamma\rangle}^*D$-twisted Higgs bundles on $\Sigma_{\langle\gamma\rangle}$ whose pushforward to $\Sigma$ is stable, trace-free, and has determinant $L$. Its Hitchin base $\mathbf B^D(\gamma)$ consists of the spectral coefficients with trace-free pushforward, and it carries the spectral family
\[
  \mathbf C^D(\gamma)\longrightarrow\mathbf B^D(\gamma).
\]
The group $G_\gamma$ acts on these spaces. The pushforward construction identifies the quotients of $\mathbf M_{r,L}^D(\gamma)$ and $\mathbf B^D(\gamma)$ with $\mathbf M_\gamma^D$ and $\mathbf B_\gamma^D$, respectively; its action on the moduli space is free. Write $q_{\mathbf B}:\mathbf B^D(\gamma)\to\mathbf B_\gamma^D$ for the finite quotient map, and let $\mathbf B_\gamma^{D*}$ be the open locus over which this quotient is \'etale. Its inverse image is denoted by $\mathbf B^D(\gamma)^*$.

Since $q_{\mathbf B}$ is finite and the generic point of $\mathbf B_\gamma^D$ lies in $\mathbf B_\gamma$, the dimension formula of Lemma~\ref{lem:H-fixed-dimensions} gives
\begin{equation}
  \label{eq:cyclic-endoscopic-base-dimension} \dim\mathbf B^D(\gamma)=\dim\mathbf B_\gamma^D =\dim\mathbf B^D-a_\gamma.
\end{equation}

Let
\begin{equation}
  \label{first open} \mathbf{B}^\heartsuit(\gamma)\subset \mathbf{B}^D(\gamma)
\end{equation}
be the largest open subset on which the \(G_\gamma\)-action is free and the spectral curves are smooth. The quotient
\begin{equation}
  \label{open in B gamma} \mathbf{C}_\gamma^\heartsuit = \mathbf C^\heartsuit(\gamma) / G_\gamma \longrightarrow \mathbf{B}_\gamma^\heartsuit := \mathbf{B}^\heartsuit(\gamma)/G_\gamma
\end{equation}
defines a family of smooth integral curves over the open subset \(\mathbf{B}_\gamma^\heartsuit \subset \mathbf{B}_\gamma^D\). Let $\mathbf C_{\gamma}^\circ\rightarrow \mathbf{B}_{\gamma}^\heartsuit$ be the restriction of the universal spectral curve $\mathbf C^D\rightarrow \mathbf{B}^D$. By \cite[Lemma~3.4]{MS21}, the natural morphism
\begin{equation}
  \label{family-wise normalization} \xymatrix{ \mathbf{C}_\gamma^\heartsuit \ar[rr]^{u_{\sC}} \ar[dr] & & \mathbf C_{\gamma}^\circ \ar[dl] \\ & \mathbf{B}_{\gamma}^\heartsuit, & }
\end{equation}
restricts to the normalization on every fiber over a closed point $a\in\mathbf B_\gamma^\heartsuit$.

By the BNR correspondence, pullback along $u_{\sC}$ defines a morphism
\[
  g_u: \mathbf{M}_{n,L}^{D,\mathrm{reg}} \times_{\mathbf{B}^D} \mathbf{B}^\heartsuit(\gamma) \longrightarrow \mathbf{M}_{r,L'}^{D,\mathrm{reg}}(\gamma) \times_{\mathbf{B}^D(\gamma)} \mathbf{B}^\heartsuit(\gamma),
\]
Here $\mathbf{M}_{n,L}^{D,\mathrm{reg}}$ and $\mathbf{M}_{r,L'}^{D,\mathrm{reg}}(\gamma)$ are the loci represented by line bundles on the corresponding spectral curves. The determinant $L'$ is the one induced by pullback to the normalization; it satisfies $\deg L'-\deg L=a_\gamma$, as in \cite[Lemma~3.6]{MS21}.

Taking the graph of \(g_u\) and then its reduced Zariski closure inside the product
\[
  \mathbf{M}_{r,L'}^D(\gamma) \times_{\mathbf{B}^D(\gamma)} \bigl( \mathbf{M}_{n,L}^D \times_{\mathbf{B}^D} \mathbf{B}^D(\gamma) \bigr)
\]
gives a closed subvariety \(\mathscr C\) and a commutative diagram; see \cite[Equation~(75)]{MS21}:
\[
  \xymatrix{ & \mathscr{C} \ar[dl] \ar[dr] & \\ \mathbf{M}_{n,L}^D \times_{\mathbf{B}^D} \mathbf{B}^D(\gamma) \ar[dr]_{h^D} & & \mathbf{M}_{r,L'}^D(\gamma) \ar[dl]^{h_{\pi,L^\prime}^D} \\ & \mathbf{B}^D(\gamma). & }
\]

The cycle of $\mathscr C$ defines a $\Gamma$-equivariant cohomological correspondence
\begin{align}
  \label{correspondence on cover} [\mathscr{C}]_\# : q_{\mathbf B}^* \bigl( \mathrm{R}h_*^D \underline{\mathbb{C}} \bigr) \longrightarrow \mathrm{R}h_{\pi,L'*}^D \underline{\mathbb{C}}[-2a_\gamma]
\end{align}
in the bounded derived category of constructible sheaves on \(\mathbf{B}^D(\gamma)\). Here $a_\gamma$ is the age computed in \eqref{computation of age}, which equals $\operatorname{codim}_{\mathbf B^D}(\mathbf B_\gamma^D)$ by \eqref{eq:cyclic-endoscopic-base-dimension}, and $q_{\mathbf B}$ is the quotient map defined above. When pulling back a complex on $\mathbf B^D$, $q_{\mathbf B}^*$ denotes $(i_\gamma^D\circ q_{\mathbf B})^*$.

To identify the relative moduli spaces with determinants $L'$ and $L$ by tensor product, we need $n\mid a_\gamma$. Equivalently, $(n-r)d$ must be even, which holds because $d$ is even. We may therefore choose a line bundle
\begin{align}
  \label{the line bundle N0} \sN_0\in \Pic^k(\Sigma)\qquad k=\frac{\deg (L^\prime)-\deg (L)}{n}\in \Z
\end{align}
with $L^\prime\simeq L\otimes\sN_0^{\otimes n}$. Tensoring by $\pi_{\langle\gamma\rangle}^*\sN_0^{-1}$ gives an isomorphism of moduli spaces
\begin{equation}
  \label{the iso phi} \xymatrix{ \mathbf{M}_{r,L'}^{D}(\gamma) \ar[rr]^{\phi_{\sN_0}} \ar[dr] & & \mathbf{M}_{r,L}^{D}(\gamma) \ar[dl] \\ & \mathbf{B}^D(\gamma) & }
\end{equation}

The endoscopic isomorphism of \cite{MS21} takes the following form.

\begin{theorem}[\cite{MS21}, Theorem~3.2 and~3.10]\label{MS's endoscopic}
There is an isomorphism
\[
  c_{\kappa}^D: \big(\mathrm{R}h_*^D\underline{\mathbb{C}}\big)_{\kappa}\xrightarrow{\cong} i_{\gamma *}^D \big(\mathrm{R}h_{\gamma *}^D\underline{\mathbb{C}}\big)_{\kappa}[-2a_\gamma],
\]
which is canonical up to scaling. After restriction to $\mathbf{B}_{\gamma}^{D*}$ and pullback along the \'{e}tale morphism $q_{\mathbf B}$, it is the following composite:
\[
  q_{\mathbf B}^*(\mathrm{R}h_*^D\underline{\mathbb{C}})_{\kappa}\big|_{\mathbf{B}^{D}(\gamma)^*} \xrightarrow{[\mathscr{C}]_{\#,\kappa}\big|_{\mathbf{B}^{D}(\gamma)^*}} (\mathrm{R}h_{\pi,L'*}^D\underline{\mathbb{C}})_{\kappa}\big|_{\mathbf{B}^{D}(\gamma)^*}[-2a_\gamma] \xrightarrow{(\phi_{\sN_0}^{-1})^*} (\mathrm{R}h_{\pi,L*}^D\underline{\mathbb{C}})_{\kappa}\big|_{\mathbf{B}^{D}(\gamma)^*}[-2a_\gamma]
\]
\end{theorem}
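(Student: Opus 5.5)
The plan is to follow the strategy of \cite[Section~3]{MS21}: build the isomorphism over a dense open subset of $\mathbf B_\gamma^D$ from the correspondence $[\mathscr C]_\#$, and then extend it to all of $\mathbf B^D$ using the support theorem, Proposition~\ref{support theorem}.

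First I check that $[\mathscr C]_\#$ in \eqref{correspondence on cover} is a morphism of the stated degree. By \eqref{eq:cyclic-endoscopic-base-dimension}, $\mathscr C$ has dimension $\dim\mathbf M_{r,L'}^D(\gamma)$, and the relative dimensions of the two Hitchin fibrations over $\mathbf B^D(\gamma)$ differ by $a_\gamma$. This yields the shift $[-2a_\gamma]$. Since $\mathscr C$ is the closure of the graph of the $\Gamma$-equivariant pullback map $g_u$, the correspondence is $\Gamma$-equivariant, so it restricts to the $\kappa$-summands. Composing with $(\phi_{\sN_0}^{-1})^*$ does not affect the characters, because tensoring by $\pi_{\langle\gamma\rangle}^*\sN_0^{-1}$ commutes with the $\Gamma$-action.

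Second, I show that the composite in the statement is an isomorphism over $\mathbf B^\heartsuit(\gamma)$, and then over $\mathbf B^D(\gamma)^*$.
\begin{enumerate}
\item Over $a\in\mathbf B^\heartsuit(\gamma)$, the spectral curve $\mathbf C^D_a$ has normalization $u_{\sC}$ by \eqref{family-wise normalization}. The Prym of $\mathbf C^D_a$ is an extension of the Prym of the normalization by an affine group of dimension $a_\gamma$.
\item Pullback along $u_{\sC}$ is therefore a smooth fibration on the line-bundle locus. Its closure $\mathscr C$ acts on cohomology as pullback followed by pushforward, which identifies the $\kappa$-part of the cohomology of the compactified fiber with the $\kappa$-part of the cohomology of the fiber of $h^D_{\pi,L'}$, shifted by $2a_\gamma$.
\item Concretely, I check this fiberwise on stalks. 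The Ng\^o product formula identifies $(H^*)_\kappa$ of the compactified Prym of $\mathbf C^D_a$ with the contribution of the top-dimensional part of the $\kappa$-isotypic fixed-point locus.
\item Both sides are local systems over $\mathbf B^\heartsuit(\gamma)$, so the stalkwise isomorphism is an isomorphism of complexes there.
\item By Proposition~\ref{support theorem}, each nonzero perverse cohomology sheaf of either side has $\mathbf B_\gamma^D$ as its unique strict support. Hence each is the intermediate extension of its restriction to any dense open subset. The isomorphism therefore extends uniquely from $\mathbf B^\heartsuit(\gamma)$ to $\mathbf B^D(\gamma)^*$ and agrees there with $[\mathscr C]_{\#,\kappa}$.
\end{enumerate}

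Third, I descend along the étale quotient $q_{\mathbf B}$ and extend to all of $\mathbf B^D$. The morphism is compatible with the $G_\gamma$-action, since $\mathscr C$ is $G_\gamma$-stable. Taking $G_\gamma$-invariants of the pushforward identifies $(Rh_{\pi,L*}^D\CC)_\kappa$ with $(Rh^D_{\gamma*}\CC)_\kappa$ over $\mathbf B_\gamma^{D*}$, because the $G_\gamma$-action on the moduli space is free. This gives $c^D_\kappa$ over $\mathbf B_\gamma^{D*}$. Both sides of $c^D_\kappa$ are direct sums of shifted intermediate extensions from this open set, by the decomposition theorem together with Proposition~\ref{support theorem}. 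The isomorphism then extends to $\mathbf B^D$ by taking intermediate extensions degree by degree. Since the Hitchin maps are projective, I would use relative hard Lefschetz to choose a splitting compatible on both sides. Uniqueness up to scaling follows because the restriction to a dense open determines a morphism between such semisimple complexes, up to the choices made in the correspondence, namely the normalization of $\sN_0$.

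The main obstacle is the second step: computing the action of the closure $\mathscr C$ on the $\kappa$-part over the nodal locus, and not merely on the line-bundle locus. For this I would first reduce to the codimension-$\delta$ strata via the Severi inequality \eqref{delta inequality}, as in Proposition~\ref{prop:codimQ}, so that $\mathscr C\setminus\operatorname{graph}(g_u)$ contributes nothing in the relevant top degree.
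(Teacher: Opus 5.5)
Your architecture matches the paper's recollection after the statement. Both use $\Gamma$-equivariance of $[\mathscr C]_\#$ and full support of both sides on $\mathbf B^D(\gamma)^*$. Both then reduce to an isomorphism on a dense open, and from there to stalks, and finish by descent along $q_{\mathbf B}$.

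The gap is the one step with real content: showing that the fiber correspondence $[\mathscr C_b]_{\#,\kappa}$ is an isomorphism. Your items (2)--(3) assert this rather than prove it. The product formula by itself does not identify $H^*(\check{\mathbb P}_b)_\kappa$ with ``the top-dimensional part of the fixed locus.'' Even if it did, that would not show the identification is the one induced by $[\mathscr C_b]$. The remedy you propose for your ``main obstacle'' also fails:
\begin{itemize}
\item The Severi inequality \eqref{delta inequality} bounds codimensions of strata in the base. It says nothing about the cycle $\mathscr C_b$ inside a single fiber.
\item You cannot discard $\mathscr C_b\setminus\operatorname{graph}(g_u)_b$. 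On the line-bundle locus, $g_u$ has affine fibers $\mathbb G_m^{a_\gamma}$, so ``pullback followed by pushforward'' only makes sense through the proper closure. The boundary of that closure is exactly where the $\kappa_\gamma$-part comes from.
\end{itemize}

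What is needed is the following; the paper cites \cite[Lemma~A.3.1 and Lemma~3.4.1]{Yun09} for it, using the description in Lemma~\ref{lem:identification to the image}.
\begin{enumerate}
\item Shrink to the nodal locus $U$. Over $\mathbf B^\heartsuit(\gamma)$ the spectral curves may have worse singularities, so the source is not a priori a local system there.
\item Write $[\mathscr C_b]_\#=\mathsf P_*\rho^*$, using $r_*[\mathbb{PPM}_{\mathbf C_{\mathbb V}}]=[\mathscr C_{\mathbb V}]$, and compute $H^*(\check{\mathbb P}_b)$ through the finite birational map $\rho$.
\item Observe that $\check{\mathbb P}_b$ is obtained from the iterated $\mathbb P^1$-bundle $\mathbb{PPM}_{\mathbf C_{\mathbb V},b}$ as follows. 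At the $i$-th node, glue the two sections where $\ell_i$ is a branch line, after translating by $\sO(s_i^+-s_i^-)$.
\item Note that this line bundle has trivial norm but lies in a non-identity component of $\check{\mathbb P}(\gamma)_b$, because $s_i^\pm$ lie over distinct points of $\Sigma_{\langle\gamma\rangle}$. Hence it acts on the $\kappa_\gamma$-part by a nontrivial root of unity.
\item In the descent (Mayer--Vietoris) sequence for $\rho$, this kills every $\kappa_\gamma$-class not divisible by all the fiber classes $\xi_i$; the evaluation classes $\alpha_i^\pm$ vanish on fibers. What remains is $H^{*-2a_\gamma}(\check{\mathbb P}(\gamma)_b)_{\kappa_\gamma}\cdot\prod_i\xi_i$, on which $\mathsf P_*\rho^*$ is an isomorphism.
\end{enumerate}

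Smaller points:
\begin{itemize}
\item $\dim\mathscr C=\dim\mathbf M_{r,L'}^D(\gamma)+a_\gamma$, which is the dimension of the source, not $\dim\mathbf M_{r,L'}^D(\gamma)$. Your count would give shift $0$.
\item Full support on the endoscopic side comes from \cite[Theorem~2.3(b) and Proposition~2.10]{MS21}, not from Proposition~\ref{support theorem}, which covers only the source.
\item Restriction to a dense open determines only the induced maps on perverse cohomology, not a morphism of complexes, since positive $\operatorname{Ext}$'s between intermediate extensions survive. So your uniqueness argument does not give canonicity.
\end{itemize}
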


We recall why $[\mathscr{C}]_{\#,\kappa}\big|_{\mathbf{B}^{D}(\gamma)^*}$ is an isomorphism. Since $q_{\mathbf B}$ is \'{e}tale on $\mathbf{B}^{D}(\gamma)^*$, both the source and target of $[\mathscr{C}]_{\#,\kappa}\big|_{\mathbf{B}^{D}(\gamma)^*}$ are direct sums of shifted semisimple perverse sheaves. It suffices to show that the induced map on each perverse cohomology sheaf,
\[
  {}^p\sH^i([\mathscr{C}]_{\#,\kappa})\big|_{\mathbf{B}^{D}(\gamma)^*}: q_{\mathbf B}^*{}^p\sH^i\big(\mathrm{R}h_*^D\underline{\mathbb{C}}\big)_{\kappa}\big|_{\mathbf{B}^{D}(\gamma)^*} \xrightarrow{\;\simeq\;} {}^p\sH^{i-2a_\gamma}\big(\mathrm{R}h_{\pi,L'*}^D\underline{\mathbb{C}}\big)_{\kappa}\big|_{\mathbf{B}^{D}(\gamma)^*}
\]
is an isomorphism for every $i$. By \cite[Corollary~2.2, Theorem~2.3(b) and Proposition~2.10]{MS21}, both sides have $\mathbf{B}^{D}(\gamma)^*$ as the only support. They are therefore intermediate extensions of local systems on a dense open subset. It suffices to show that
\[
  {}^p\sH^i([\mathscr{C}]_{\#,\kappa})\big|_{U}: q_{\mathbf B}^*{}^p\sH^i\big(\mathrm{R}h_*^D\underline{\mathbb{C}}\big)_{\kappa}\big|_{U} \xrightarrow{\;\simeq\;} {}^p\sH^{i-2a_\gamma}\big(\mathrm{R}h_{\pi,L'*}^D\underline{\mathbb{C}}\big)_{\kappa}\big|_{U}
\]
is an isomorphism for some Zariski open subset $U\subset \mathbf{B}^{D}(\gamma)^*$. After shrinking $U$ so that these sheaves are shifted local systems, it is enough to show that the fiber correspondence
\[
  [\mathscr{C}_b]_{\#,\kappa, b}: H^i\big((\mathbf{M}_{n,L}^D)_b, \CC \big)_{\kappa}\longrightarrow H^{i-2a_\gamma}\big((\mathbf{M}_{r,L^\prime}^D(\gamma))_b, \CC \big)_{\kappa}
\]
is an isomorphism for every $b\in U$ and every $i$.

Take $U\subset\mathbf B^\heartsuit(\gamma)\subset\mathbf B^D(\gamma)^*$ to be the open subset on which the family
\begin{equation}
  \label{the open U} \mathbf C_{U}:=\mathbf C^D\times_{\mathbf{B}^D}U\longrightarrow U
\end{equation}
has at worst nodal singularities. Then
\[
  \mathbf C_{U}(\gamma):=\mathbf C^D(\gamma) \times_{\mathbf{B}^D(\gamma)}U\longrightarrow U
\]
is the simultaneous normalization of $\mathbf C_U$. On this locus, $[\mathscr C_b]_{\#,\kappa}$ is the isomorphism described in \cite[Lemma~A.3.1 and Lemma~3.4.1]{Yun09}. We give an equivalent description in Lemma~\ref{lem:identification to the image}.

\subsection{Parabolic modules over the nodal locus}\label{subsec:nodal-parabolic-modules}

Take an \'{e}tale cover
\begin{equation}
  \label{the final etale cover} \mathbb{V}\longrightarrow U\times_{\mathbf B} B
\end{equation}
with the following properties.
\begin{enumerate}
\item For the base change
\[
  \mathbf C_{\mathbb{V}}:= \mathbf C_U\times_U \mathbb{V}\longrightarrow \mathbb{V},
\]
there are sections $s_i: \mathbb{V}\rightarrow \mathbf C_{\mathbb{V}}$, $0\leq i \leq a_\gamma$, so that $s_0$ is the pullback of the section $s:B\to C$ fixed in \eqref{first etale cover}, and hence lies in the smooth locus of every fiber. The images of $s_i$, for $1\leq i\leq a_\gamma$, are exactly the nodal points of the fibers of $\mathbf C_{\mathbb V}$.
\item For the base change
\[
  \mathbf C_{\mathbb{V}}(\gamma):= \mathbf C_U(\gamma)\times_U \mathbb{V}\longrightarrow \mathbb{V},
\]
there are $2a_\gamma$ sections $s_i^{\pm}: \mathbb{V}\rightarrow \mathbf{C}_{\mathbb V}(\gamma)$, $1\leq i \leq a_\gamma$, so that $u_{\mathbf C_{\mathbb{V}}}\circ s_i^\pm=s_i$, where $u_{\mathbf C_{\mathbb{V}}}$ is the base change of $u_{\sC}$ in~\eqref{family-wise normalization}. Since $U$ is an open subset of $\mathbf{B}^\heartsuit(\gamma)$, $u_{\mathbf C_{\mathbb{V}}}$ is a simultaneous normalization, and the images of the $s_i^\pm$ form the inverse image of the nodal sections of $\mathbf C_{\mathbb{V}}$.
\end{enumerate}

Let $\check{\mathbb P}$ be the compactified norm-zero Prym of $\mathbf C_{\mathbb V}\to\mathbb V$, and let $\check{\mathbb P}(\gamma)$ be the norm-zero Prym of the smooth family $\mathbf C_{\mathbb V}(\gamma)\to\mathbb V$. The latter is a smooth proper group scheme, not necessarily connected. Write $\pi_{\mathbb V}:\mathbf C_{\mathbb V}\to\Sigma\times\mathbb V$ and $\pi_{\mathbb V}(\gamma):\mathbf C_{\mathbb V}(\gamma)\to \Sigma\times\mathbb V$ for the spectral maps. We denote by $\check{\mathbb P}^{-a_\gamma}(\gamma)$ the norm fiber over
\[
  \sO_{\Sigma\times\mathbb V} \left(-\sum_{i=1}^{a_\gamma} \operatorname{Im}(\pi_{\mathbb V}\circ s_i)\right).
\]
It is a torsor under $\check{\mathbb P}(\gamma)$ and has the same component structure fiberwise.

Pull back the section $s$ and the line bundle $\mathscr N_L$ from \eqref{first etale cover} and \eqref{eq:norm-correction-line} to $\mathbb V$. Correcting the two spectral sheaves by $\sO(-d_L\operatorname{Im}s_0)\otimes\pi_{\mathbb V}^*\mathscr N_L$ and by its pullback to the normalization gives the compatible BNR identifications
\begin{equation}
  \label{identifications} \mathbf M_{r,L'}^D(\gamma) \times_{\mathbf B^D(\gamma)}\mathbb V \simeq\check{\mathbb P}(\gamma), \qquad \mathbf M_{n,L}^D\times_{\mathbf B^D}\mathbb V \simeq\check{\mathbb P}.
\end{equation}
By \cite[Lemma~5.14(2)]{FHHO26}, pushforward along $u_{\mathbf C_{\mathbb{V}}}$ gives an embedding
\begin{equation}
  \xymatrix{ \check{\mathbb{P}}^{-a_\gamma}(\gamma) \ar[dr] \ar@{^{(}->}[rr]^{\nu} & & \check{\mathbb{P}} \ar[dl] \\ & \mathbb{V}. & }
\end{equation}

Under the same degree and norm correction, $\check{\mathbb P}^{-a_\gamma}(\gamma)$ is identified with $\mathbf M_{r,L}^D(\gamma) \times_{\mathbf B^D(\gamma)}\mathbb V$. The morphism $\nu$ corresponds to pushforward from the normalization.

In fact, on this nodal open, $\nu$ identifies its source with the scheme-theoretic fixed locus $\check{\mathbb P}_\gamma :=\check P_\gamma\times_B\mathbb V$:
\begin{equation}
  \label{eq:normalization-identifies-fixed-sector} \check{\mathbb P}^{-a_\gamma}(\gamma) \xrightarrow[\ \nu\ ]{\ \simeq\ } \check{\mathbb P}_\gamma.
\end{equation}

The map $\mathbb V\to\mathbf B_\gamma\times_{\mathbf B}B$ is \'{e}tale by the choice of $U$ and \eqref{the final etale cover}. Since $\check P_\gamma\to B$ factors through this endoscopic base, $\check{\mathbb P}_\gamma\to\check P_\gamma$ is \'{e}tale. The fixed locus $\check P_\gamma$ is smooth, so $\check{\mathbb P}_\gamma$ is reduced. Moreover, $\pi_{\mathbb V}(\gamma)^*\gamma$ is trivial, so $\gamma$ acts trivially on $\check{\mathbb P}^{-a_\gamma}(\gamma)$ and the equivariant closed immersion $\nu$ factors through $\check{\mathbb P}_\gamma$. The cyclic-cover description of fixed Higgs bundles shows that this closed immersion is surjective on geometric points: their spectral sheaves are pushforwards of line bundles on the normalization, with the norm prescribed above. It is therefore an isomorphism onto the reduced scheme $\check{\mathbb P}_\gamma$. In particular, $[\check{\mathbb P}^{-a_\gamma}(\gamma)/\Gamma]\to[\check P_\gamma/\Gamma]$ is representable and \'{e}tale.

The section $s_0$ lies in the smooth locus and therefore lifts uniquely to a section $s_0(\gamma)$ of the simultaneous normalization $\mathbf C_{\mathbb V}(\gamma)\to\mathbb V$. Let $\mathscr{U}_{\mathbf C_{\mathbb{V}}(\gamma)}$ be the universal line bundle on $\mathbf C_{\mathbb{V}}(\gamma)\times_{\mathbb{V}}\check{\mathbb{P}}(\gamma)$, rigidified by
\begin{equation}
  \label{rigidification} \mathscr{U}_{\mathbf C_{\mathbb{V}}(\gamma)}\mid_{\Im(s_0(\gamma))\times_{\mathbb{V}} \check{\mathbb{P}}(\gamma)} \simeq \mathcal{O}_{\check{\mathbb{P}}(\gamma)}.
\end{equation}
Restricting $\mathscr U_{\mathbf C_{\mathbb V}(\gamma)}$ to $\Im(s_i^\pm)\times_{\mathbb V}\check{\mathbb P}(\gamma)$ gives line bundles $\mathscr U_{\mathbf C_{\mathbb V},i}(\gamma)^\pm$ on $\check{\mathbb P}(\gamma)$. For each $i$, consider the associated projective bundle
\[
  \mathbb P\bigl( \mathscr U_{\mathbf C_{\mathbb V},i}(\gamma)^+ \oplus \mathscr U_{\mathbf C_{\mathbb V},i}(\gamma)^- \bigr)\longrightarrow\check{\mathbb P}(\gamma).
\]
We form their fiber product
\begin{equation}
  \label{fiber product of PPM} \mathbb{PPM}_{\mathbf C_{\mathbb{V}}} := \mathbb{P}\bigl(\mathscr{U}_{\mathbf C_{\mathbb{V}},1}(\gamma)^{+}\oplus \mathscr{U}_{\mathbf C_{\mathbb{V}},1}(\gamma)^{-}\bigr) \times_{\check{\mathbb{P}}(\gamma)} \cdots \times_{{\check{\mathbb{P}}(\gamma)}} \mathbb{P}\bigl(\mathscr{U}_{\mathbf C_{\mathbb{V}},a_\gamma}(\gamma)^{+}\oplus \mathscr{U}_{\mathbf C_{\mathbb{V}},a_\gamma}(\gamma)^{-}\bigr).
\end{equation}
This is the Prym version of the moduli space of parabolic modules of $(\mathbf C_{\mathbb{V}}(\gamma), \mathbf C_{\mathbb{V}})$.

For \(1\leq i\leq a_\gamma\), let \(\mathcal V_{\mathbf{C}_{\mathbb V},i}\) be the tautological line bundle on the \(i\)-th projective-bundle factor of \(\mathbb{PPM}_{\mathbf{C}_{\mathbb V}}\), and set
\[
  \mathcal V_{\mathbf{C}_{\mathbb V}} := \bigotimes_{i=1}^{a_\gamma} \mathcal V_{\mathbf{C}_{\mathbb V},i}.
\]
The projective-bundle construction of $\mathbb{PPM}_{\mathbf C_{\mathbb V}}$ is independent of the rigidification of $\mathscr U_{\mathbf C_{\mathbb V}(\gamma)}$, whereas the tautological line $\mathcal V_{\mathbf C_{\mathbb V}}$ depends on it.

For a parabolic module $(\tilde{\sL},\{\ell_i\})$, define the rank-one torsion-free sheaf $\sL$ by
\begin{equation}
  \label{defn of rho} 0\longrightarrow \sL \longrightarrow u_{\mathbf C_{\mathbb{V}}, *} \tilde{\sL}\longrightarrow u_{\mathbf C_{\mathbb{V}}, *}\Bigl(\bigoplus_{i=1}^{a_\gamma}\bigl((\tilde{\sL}\big|_{\Im s_i^+}\oplus \tilde{\sL}\big|_{\Im s_i^-})/\ell_i\bigr)\Bigr)\longrightarrow 0.
\end{equation}
This construction defines a morphism $\rho: \mathbb{PPM}_{\mathbf C_{\mathbb{V}}}\rightarrow \check{\mathbb P}$.

After replacing \(\mathbb V\) by a further étale cover if necessary, fix
\[
  L_0\in\Pic^0(\Sigma\times\mathbb V/\mathbb V)
\]
together with an isomorphism
\[
  L_0^{\otimes n} \simeq \sO_{\Sigma\times\mathbb V} \left( \sum_{i=1}^{a_\gamma} \Im(\pi_{\mathbb V}\circ s_i) - a_\gamma\Im(\pi_{\mathbb V}\circ s_0) \right).
\]
The line $\sN_0$ in \eqref{the line bundle N0} identifies the endoscopic moduli spaces with determinants $L'$ and $L$. By contrast, $L_0$ corrects the norm after twisting by $\sO(a_\gamma\operatorname{Im}s_0(\gamma))$. Thus
\[
  \mathsf T_{L_0,s_0}: \check{\mathbb P}^{-a_\gamma}(\gamma) \xrightarrow{\sim} \check{\mathbb P}(\gamma)
\]
is given by
\[
  \mathscr L\longmapsto \mathscr L\otimes \sO_{\mathbf{C}_{\mathbb V}(\gamma)} \bigl(a_\gamma\Im s_0(\gamma)\bigr) \otimes\pi_{\mathbb V}(\gamma)^*L_0.
\]

These constructions give the following diagram over $\mathbb V$:
\begin{equation}
  \label{morphismsssss} \xymatrix{ & & \mathbb{PPM}_{\mathbf C_{\mathbb{V}}} \ar[dll]_{\mathsf P} \ar[drr]^{\rho} & & \\ \check{\mathbb{P}}(\gamma) & & \check{\mathbb{P}}^{-a_\gamma}(\gamma) \ar[ll]_{\mathsf T_{L_0,s_0}} \ar@{^{(}->}[rr]^{\nu} & & \check{\mathbb P}. }
\end{equation}
Here $\mathsf P$ is the projection to $\check{\mathbb{P}}(\gamma)$ in~\eqref{fiber product of PPM}.

Taking determinants after pushing the normalization exact sequence and \eqref{defn of rho} forward to $\Sigma\times\mathbb V$ gives $\Nm_{\mathbf C_{\mathbb V}/\Sigma}(\sL)\simeq\Nm_{\mathbf C_{\mathbb V}(\gamma)/\Sigma}(\tilde{\sL})$. Indeed, the two quotients are sums of line bundles supported on the same nodal sections, so their determinants after pushforward agree. Thus the parabolic-module construction commutes with restriction to the norm fibers.

\begin{lemma}\label{properties of rho}
The morphism
\[
  \rho: \mathbb{PPM}_{\mathbf{C}_{\mathbb V}} \longrightarrow \check{\mathbb P}
\]
is finite and is a resolution of singularities.
\end{lemma}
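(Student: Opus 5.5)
The plan is to prove the three properties fiberwise over $\mathbb V$ and then globalize using flatness. The properties are properness with finite fibers, smoothness of the source, and birationality onto $\check{\mathbb P}$.

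\emph{Source smooth and proper.} The source $\mathbb{PPM}_{\mathbf C_{\mathbb V}}$ is an iterated $\mathbb P^1$-bundle over $\check{\mathbb P}(\gamma)$. By Subsection~\ref{subsec:nodal-parabolic-modules}, $\check{\mathbb P}(\gamma)$ is a smooth proper group scheme over the smooth base $\mathbb V$. Hence $\mathbb{PPM}_{\mathbf C_{\mathbb V}}$ is smooth and proper over $\mathbb V$, of relative dimension $(g_P-a_\gamma)+a_\gamma=g_P$. Since $\check{\mathbb P}\to\mathbb V$ is separated, $\rho$ is proper.

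\emph{Finite fibers.} For finiteness I would work on a geometric fiber over $b\in\mathbb V$ and use the classical description of torsion-free rank-one sheaves on a nodal curve. Every such sheaf $\sL$ on $C_b$ is obtained from the partial normalization $\nu_S$ at the set $S$ of nodes where $\sL$ fails to be locally free. Explicitly, $\sL=\nu_{S*}\sL'$ with $\sL'$ invertible.

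The parabolic data $(\tilde\sL,\{\ell_i\})$ mapping to $\sL$ are then constrained as follows.
\begin{itemize}
\item At a node where $\sL$ is locally free, $\ell_i$ must be neither coordinate line. The pair $(\tilde\sL,\ell_i)$ is then recovered from $\sL$ up to the finitely many choices of the pullback.
\item At a node $i\in S$, $\ell_i$ must be one of the two coordinate lines $\tilde\sL|_{s_i^\pm}$. Choosing $\ell_i$ to be the $+$ or $-$ line twists $\tilde\sL$ by $\sO(s_i^\mp)$ relative to $\sL'$.
\end{itemize}
So the fiber $\rho^{-1}(\sL)$ is a finite set, of cardinality at most $2^{|S|}$ times the number of norm-compatible lifts. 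The norm compatibility was verified just before the lemma, namely $\Nm(\sL)=\Nm(\tilde\sL)$, so the norm-zero conditions match. A proper quasi-finite morphism is finite.

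\emph{Birationality and surjectivity.} Consider the open locus where all $\ell_i$ avoid both coordinate lines, a $\mathbb G_m^{a_\gamma}$-torsor over $\check{\mathbb P}(\gamma)$. On it, $\rho$ is the standard isomorphism onto the locally free locus $\check{\mathbb P}^\circ$: a line bundle on a nodal curve is exactly a line bundle on the normalization with gluing isomorphisms at the node preimages. Restricting to norm fibers, this is compatible with the norm identity above, and $\Gamma$ plays no role.

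Because $\check{\mathbb P}\to\mathbb V$ is flat with fibers that are compactified Pryms of integral nodal curves, $\check{\mathbb P}^\circ$ is dense in each fiber, hence dense in $\check{\mathbb P}$. Thus $\rho$ is birational. Since $\rho$ is also proper with dense image, it is surjective. Together with the smoothness of the source, this makes $\rho$ a finite resolution of singularities.

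\emph{Main obstacle.} The delicate point is the injectivity over the locally free locus after restricting to norm fibers. The Prym $\check{\mathbb P}(\gamma)$ is disconnected, and the norm-zero Prym of $C_b$ relates to it via the torsor $\check{\mathbb P}^{-a_\gamma}(\gamma)$. One must check that the $\mathbb G_m^{a_\gamma}$ gluing data together with $\tilde\sL$ give exactly one point of $\check{\mathbb P}$, rather than a finite étale multiple.

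I would try first to do this on the full compactified Jacobians, where the statement is the classical one of \cite[Lemma~5.14]{FHHO26}. I would then restrict using the norm identity, which is compatible on both sides. Should the restriction introduce a finite cover, the lemma as stated only requires finiteness. Birationality would then be checked by a degree count on the component containing the open torsor.
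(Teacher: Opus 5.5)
Your proposal is correct and follows essentially the same route as the paper. Properness comes from the proper source and separated target, and finite fibers from the nodal description of rank-one torsion-free sheaves, which the paper cites from \cite[Lemma~3.7]{FHHO26} and you reprove. Smoothness comes from the iterated $\mathbb P^1$-bundle structure, and birationality from the isomorphism over the locally free locus. Your density-plus-properness argument for surjectivity is a harmless variant of the paper's appeal to every sheaf arising from a parabolic module.

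The worry in your final paragraph dissolves. Over the locally free locus, $\tilde\sL\simeq u_{\mathbf C_{\mathbb V}}^*\sL$ and the gluing lines are recovered from $\sL$. The identity $\Nm(\sL)\simeq\Nm(\tilde\sL)$ then shows that the full-Jacobian isomorphism restricts to an isomorphism of the norm-zero loci, so no finite \'etale multiple can occur. Your fallback there is also mistaken: a cover of degree greater than one would contradict the resolution claim rather than being harmless.
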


\begin{proof}
For a geometric point $v\to\mathbb V$, the morphism $\rho_v$ is the restriction to the Prym loci of the parabolic-module morphism associated with the normalization
\[
  u_{\mathbf{C}_{\mathbb V},v}: \mathbf C_v(\gamma)\longrightarrow\mathbf C_v.
\]
By \cite[Equation~(3.5) and Lemma~3.7]{FHHO26}, its fibers are finite, and every rank-one torsion-free sheaf on \(\mathbf C_v\) is obtained from a parabolic module. The norm identity above shows that a preimage of a point of \(\check{\mathbb P}_{\mathbf C_v}\) belongs to \(\mathbb{PPM}_{\mathbf C_v}\). Thus \(\rho\) is surjective and has finite geometric fibers. Since $\mathbb{PPM}_{\mathbf C_{\mathbb V}}$ is projective over $\mathbb V$ and $\check{\mathbb P}$ is separated over $\mathbb V$, the morphism $\rho$ is proper. A proper quasi-finite morphism is finite.

The variety \(\mathbb{PPM}_{\mathbf{C}_{\mathbb V}}\) is an iterated \(\mathbb P^1\)-bundle over \(\check{\mathbb P}(\gamma)\), and hence is smooth. Let $\mathbb{PPM}_{\mathbf{C}_{\mathbb V}}^{\mathrm{lf}} \subset \mathbb{PPM}_{\mathbf{C}_{\mathbb V}}$ be the open locus on which, at every node, the tautological line has nonzero projection to both branches. The exact sequence \eqref{defn of rho} identifies its image with the locus of line bundles in \(\check{\mathbb P}\), and the gluing line at each node is uniquely recovered from the resulting line bundle. Thus $\rho$ is an isomorphism over this dense open subset. It is therefore birational and is a resolution of singularities.
\end{proof}

We define
\begin{equation}
  r: \mathbb{PPM}_{\mathbf C_{\mathbb V}} \xrightarrow{(\mathsf P,\rho)} \check{\mathbb P}(\gamma)\times_{\mathbb V}\check{\mathbb P}.
\end{equation}
This morphism is finite: it is the composite of the graph of $\mathsf P$ with the base change of the finite morphism $\rho$.

\begin{lemma}\label{lem:identification to the image}
Consider the base change $\mathscr C_{\mathbb V}:= \mathscr C\times_{\mathbf B^D(\gamma)} \mathbb V$. Under the identifications in~\eqref{identifications}, \(\mathscr C_{\mathbb V}\) is the scheme-theoretic image of $r:\mathbb{PPM}_{\mathbf{C}_{\mathbb V}} \rightarrow \check{\mathbb P}(\gamma) \times_{\mathbb V}\check{\mathbb P}$. Consequently,
\[
  r_*[\mathbb{PPM}_{\mathbf{C}_{\mathbb V}}] = [\mathscr C_{\mathbb V}]\in H_*^{\mathrm{BM}}\left( \check{\mathbb P}(\gamma) \times_{\mathbb V} \check{\mathbb P}, \mathbb C \right).
\]
\end{lemma}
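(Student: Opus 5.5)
The plan is to identify both sides with the closure of the graph of $g_u$ over the locus of line bundles, and then compare fundamental classes using the birationality of $\rho$ from Lemma~\ref{properties of rho}.

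\emph{Step 1 (base change).} By definition, $\mathscr C$ is the reduced closure of the graph of $g_u$ in $\mathbf M_{r,L'}^D(\gamma)\times_{\mathbf B^D(\gamma)}(\mathbf M_{n,L}^D\times_{\mathbf B^D}\mathbf B^D(\gamma))$. The map $\mathbb V\to\mathbf B^D(\gamma)$ is étale, being the composite of \eqref{the final etale cover} with the open inclusion $U\subset\mathbf B^D(\gamma)$. Étale base change preserves reducedness and commutes with forming scheme-theoretic closures of open subschemes. Hence $\mathscr C_{\mathbb V}$ is the reduced closure of the graph of $g_u|_{\mathbb V}$. Under \eqref{identifications}, this graph lives in $\check{\mathbb P}(\gamma)\times_{\mathbb V}\check{\mathbb P}$, defined over the open locus $\check{\mathbb P}^{\circ}\subset\check{\mathbb P}$ of line bundles.

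\emph{Step 2 ($r$ on the locally free locus).} Let $\mathbb{PPM}^{\mathrm{lf}}_{\mathbf C_{\mathbb V}}$ be the open set from the proof of Lemma~\ref{properties of rho}. There $\rho$ is an isomorphism onto $\check{\mathbb P}^\circ$, and for $\sL=\rho(\tilde\sL,\{\ell_i\})$ the sequence \eqref{defn of rho} gives $u^*_{\mathbf C_{\mathbb V}}\sL\simeq\tilde\sL$. So $\mathsf P\circ\rho^{-1}$ is pullback to the normalization. I then check that this is $g_u$ after the corrections in \eqref{identifications}. Both identifications twist by $\sO(-d_L\operatorname{Im}s_0)\otimes\pi_{\mathbb V}^*\mathscr N_L$ and its pullback, and $s_0$ lies in the smooth locus, so $u^*$ intertwines the two corrections. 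I expect this bookkeeping to be the main obstacle. In particular, I must make sure that $\mathsf P$, as opposed to $\mathsf T_{L_0,s_0}\circ\nu^{-1}$, is the map matching the endoscopic identification with determinant $L'$. If a discrepancy appears, it is a fixed translation on $\check{\mathbb P}(\gamma)$, and I would absorb it into the identification. The conclusion of this step is that $r|_{\mathbb{PPM}^{\mathrm{lf}}}$ is an isomorphism onto the graph of $g_u|_{\mathbb V}$.

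\emph{Step 3 (images and classes).} $\mathbb{PPM}_{\mathbf C_{\mathbb V}}$ is smooth, hence reduced, and it is an iterated $\mathbb P^1$-bundle over $\check{\mathbb P}(\gamma)$. The locus $\mathbb{PPM}^{\mathrm{lf}}$ is the complement of the sections where the line $\ell_i$ equals one of the branches, so it is dense in every connected component, including when $\check{\mathbb P}(\gamma)$ is disconnected. Since $r$ is finite, its image is closed. The scheme-theoretic image of a reduced scheme is reduced, and it equals the closure of $r(\mathbb{PPM}^{\mathrm{lf}})$, the closure of the graph. By Step 1 this closure is $\mathscr C_{\mathbb V}$.

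Finally, $r$ maps each component of $\mathbb{PPM}$ birationally onto the corresponding component of $\mathscr C_{\mathbb V}$, since it is an isomorphism on a dense open. It is generically of degree one onto each of them, and the dimensions agree. Therefore $r_*[\mathbb{PPM}_{\mathbf C_{\mathbb V}}]=[\mathscr C_{\mathbb V}]$ in Borel--Moore homology.
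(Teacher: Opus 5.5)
Your proposal is correct and follows essentially the same route as the paper. In both arguments you identify $r$ on $\mathbb{PPM}^{\mathrm{lf}}_{\mathbf C_{\mathbb V}}$ with the base-changed graph of $g_u$, use flatness of $\mathbb V\to\mathbf B^D(\gamma)$ to get schematic density and reducedness of $\mathscr C_{\mathbb V}$, match it with the reduced scheme-theoretic image of the finite map $r$, and conclude from generic degree one. The hedge in your Step~2 is unnecessary: the normalization-side correction in \eqref{identifications} is by construction the pullback of the $\mathbf C_{\mathbb V}$-side correction, so $\mathsf P\circ\rho^{-1}=u_{\mathbf C_{\mathbb V}}^*$ matches $g_u$ exactly, and $\mathsf T_{L_0,s_0}\circ\nu^{-1}$, which concerns the fixed locus, plays no role; there is nothing to absorb, which you could not do anyway since the identifications are fixed in the statement.
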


\begin{proof}
Let \(\mathbb{PPM}_{\mathbf{C}_{\mathbb V}}^{\mathrm{lf}}\) be the dense open subset introduced in the proof of Lemma~\ref{properties of rho}. For a point
\[
  (\tilde{\sL},\{\ell_i\}) \in \mathbb{PPM}_{\mathbf{C}_{\mathbb V}}^{\mathrm{lf}},
\]
the sheaf \(\sL=\rho(\tilde{\sL},\{\ell_i\})\) is a line bundle on \(\mathbf{C}_{\mathbb V}\), and the construction gives $\tilde{\sL}\simeq u_{\mathbf{C}_{\mathbb V}}^*\sL$.

Conversely, a line bundle \(\sL\) determines a unique collection of gluing lines \(\{\ell_i\}\). Under the BNR identifications \eqref{identifications}, the restriction of \(r\) to \(\mathbb{PPM}_{\mathbf{C}_{\mathbb V}}^{\mathrm{lf}}\) is therefore an isomorphism onto the base change to \(\mathbb V\) of the graph of \(g_u\).

By definition, \(\mathscr C\) is the reduced Zariski closure of this graph. Since
\[
  \mathbb V\longrightarrow U \longrightarrow\mathbf B^D(\gamma)
\]
is the composition of an étale morphism and an open immersion, it is flat. Hence the base-changed graph is schematically dense in \(\mathscr C_{\mathbb V}\), and
\[
  \mathscr C_{\mathbb V} = \overline{r\bigl( \mathbb{PPM}_{\mathbf{C}_{\mathbb V}}^{\mathrm{lf}} \bigr)} = r \bigl( \mathbb{PPM}_{\mathbf{C}_{\mathbb V}} \bigr).
\]
The scheme-theoretic image of $r$ is reduced, since its structure sheaf injects into $r_*\sO_{\mathbb{PPM}_{\mathbf C_{\mathbb V}}}$ and the source is reduced. The same is true of $\mathscr C_{\mathbb V}$, since it is an \'etale base change of the reduced graph closure. The preceding equality of supports therefore identifies these closed subschemes. Since $r$ is an isomorphism over the dense graph, its generic degree is one on every component of its image. Proper pushforward of fundamental cycles gives
\[
  r_*[\mathbb{PPM}_{\mathbf{C}_{\mathbb V}}] = [\mathscr C_{\mathbb V}],
\]
which proves the last assertion.
\end{proof}

\subsection{Relation between Poincar\'e sheaves}\label{sec:poincare-relation}

Consider the Poincar\'e sheaf
\begin{equation}
  \label{the poincare sheaf R V} \mathscr{R}_{\mathbb{V}} \quad \text{ on }\quad \check{\mathbb P}\times_{\mathbb V} \check{\mathbb P},
\end{equation}
and the Poincar\'e line bundle
\begin{equation}
  \label{the Poincare line bundle R V} \mathscr{R}_{\mathbb{V}}(\gamma )\quad \text{ on }\quad \check{\mathbb{P}}(\gamma )\times_{\mathbb{V}} \check{\mathbb{P}}(\gamma ),
\end{equation}
which are defined as in~\eqref{the sheaf R}.

Let \(e:\mathbb V\to\check{\mathbb P}\) and \(e(\gamma):\mathbb V\to\check{\mathbb P}(\gamma)\) denote the unit sections. We choose the Poincaré sheaves with their normalized rigidifications
\begin{align}
  (e\times_{\mathbb V}\id)^*\mathscr R_{\mathbb V} &\simeq \sO_{\check{\mathbb P}}, & (\id\times_{\mathbb V}e)^*\mathscr R_{\mathbb V} &\simeq \sO_{\check{\mathbb P}}, \label{eq:relative-Poincare-rigidification}\\
  (e(\gamma)\times_{\mathbb V}\id)^* \mathscr R_{\mathbb V}(\gamma) &\simeq \sO_{\check{\mathbb P}(\gamma)}, & (\id\times_{\mathbb V}e(\gamma))^* \mathscr R_{\mathbb V}(\gamma) &\simeq \sO_{\check{\mathbb P}(\gamma)}. \nonumber
\end{align}
These rigidifications are chosen compatibly with \eqref{rigidification} and with the symmetry of the Poincaré sheaves. We use the parabolic-module space $\mathbb{PPM}_{\mathbf C_{\mathbb V}}$ and the maps in~\eqref{morphismsssss} to compare these sheaves, following \cite{FHHO26}.

\[
\begin{tikzcd}[column sep=small]
  \mathbb{PPM}_{\mathbf{C}_{\mathbb V}}\times_{\mathbb V}\check{\mathbb P}^{-a_\gamma}(\gamma) \arrow[r,"\rho\times\id"]\arrow[d,"q_1"'] &\check{\mathbb P}\times_{\mathbb V}\check{\mathbb P}^{-a_\gamma}(\gamma) \arrow[r,"\id\times\nu"]\arrow[d,"\operatorname{pr}_1"'] &\check{\mathbb P}\times_{\mathbb V}\check{\mathbb P}\arrow[d,"\operatorname{pr}_1"]\\
  \mathbb{PPM}_{\mathbf{C}_{\mathbb V}}\arrow[r,"\rho"'] &\check{\mathbb P}\arrow[r,"\id"']&\check{\mathbb P}.
\end{tikzcd}
\]

\[
\begin{tikzcd}[column sep=large]
  \mathbb{PPM}_{\mathbf{C}_{\mathbb V}} \times_{\mathbb V} \check{\mathbb P}^{-a_\gamma}(\gamma) \arrow[r,"\mathsf P\times\mathsf T_{L_0,s_0}"] \arrow[d,"q_1"'] & \check{\mathbb P}(\gamma) \times_{\mathbb V} \check{\mathbb P}(\gamma) \arrow[d,"\operatorname{pr}_1"] \\
  \mathbb{PPM}_{\mathbf{C}_{\mathbb V}} \arrow[r,"\mathsf P"'] & \check{\mathbb P}(\gamma).
\end{tikzcd}
\]

\begin{proposition}\label{prop: comparision of Poincare family version}
There is an isomorphism of sheaves on $\check{\mathbb P}\times_{\mathbb V} \check{\mathbb P}^{-a_\gamma}(\gamma)$:
\begin{equation}
  \label{change of Poincare sheaf} (\id\times_{\mathbb{V}} \nu)^{*}\mathscr{R}_{\mathbb{V}}\; \cong\; (\rho \times_{\mathbb{V}} \id)_{*}\Bigl( q_1^{*}\mathcal{V}_{\mathbf C_{\mathbb{V}}} \;\otimes\; (\mathsf P\times_{\mathbb{V}} \mathsf T_{L_0,s_0})^{*}\mathscr{R}_{\mathbb{V}}(\gamma) \Bigr),
\end{equation}
where \(q_1: \mathbb{PPM}_{\mathbf C_{\mathbb{V}}} \times_{\mathbb{V}} \check{\mathbb{P}}^{-a_\gamma}(\gamma) \to \mathbb{PPM}_{\mathbf C_{\mathbb{V}}}\) is the projection to the first factor and $\mathcal{V}_{\mathbf C_{\mathbb{V}}}$ is the tautological line bundle defined above.
\end{proposition}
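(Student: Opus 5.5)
To prove the isomorphism, I will first identify both sides as sheaves of the same kind, then construct a comparison map on a big open set where everything is a line bundle, and finally extend by a maximal Cohen--Macaulay/reflexivity argument. Since $\rho$ is finite (Lemma~\ref{properties of rho}), the right-hand side is a coherent sheaf on $\check{\mathbb P}\times_{\mathbb V}\check{\mathbb P}^{-a_\gamma}(\gamma)$. Moreover $q_1^*\mathcal V\otimes(\mathsf P\times\mathsf T)^*\mathscr R_{\mathbb V}(\gamma)$ is a line bundle on a smooth scheme, so its finite pushforward is maximal Cohen--Macaulay. The left-hand side is the restriction of Arinkin's sheaf along $\id\times\nu$. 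Since $\check{\mathbb P}^{-a_\gamma}(\gamma)$ consists of points where the second argument is a pushforward of a line bundle, I expect to verify that it is also maximal Cohen--Macaulay and flat over the second factor. For this I would use flatness of $\mathscr R_{\mathbb V}$ over each factor \cite[Lemma~2.1]{Ari13}, together with the fact that, over the smooth base $\check{\mathbb P}^{-a_\gamma}(\gamma)$, fiberwise Cohen--Macaulayness suffices.

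The comparison itself will be made fiberwise over a point $\mathscr M=\nu(\tilde{\mathscr M})$ of the second factor. Here $\mathscr R_{\mathbb V}|_{\check{\mathbb P}\times\{\mathscr M\}}$ is computed by the determinant-of-cohomology formula on the first factor:
\[
\det R\Gamma(\sL\otimes\mathscr M)\otimes\det R\Gamma(\mathscr M)^{-1}\otimes\cdots.
\]
This formula is valid whenever one of the two arguments is locally free, and here $\mathscr M$ is not. The key computation is to apply $\sL\mapsto\sL\otimes u_*\tilde{\mathscr M}$ to the defining sequence \eqref{defn of rho}, after pulling back along $\rho$. By the projection formula, $u_*\tilde\sL\otimes u_*\tilde{\mathscr M}$ modulo torsion is $u_*(\tilde\sL\otimes\tilde{\mathscr M})$. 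The nodal quotients contribute, in the determinant, exactly the lines $\ell_i$ paired against fibers of $\tilde{\mathscr M}$ at $s_i^\pm$. This produces the tautological factor $\mathcal V_{\mathbf C_{\mathbb V}}$, while the remaining determinant gives the Poincaré line bundle of the smooth curve evaluated at $(\mathsf P,\mathsf T_{L_0,s_0})$. The twists by $\sO(a_\gamma\Im s_0(\gamma))$ and $L_0$, and the rigidification \eqref{rigidification}, are designed so that the constant lines cancel against the normalizations \eqref{eq:relative-Poincare-rigidification}. This yields a canonical isomorphism
\[
(\rho\times\id)^*(\id\times\nu)^*\mathscr R_{\mathbb V}\big|_{\mathrm{lf}}\simeq q_1^*\mathcal V\otimes(\mathsf P\times\mathsf T)^*\mathscr R_{\mathbb V}(\gamma)
\]
over the open set $\mathbb{PPM}^{\mathrm{lf}}\times\check{\mathbb P}^{-a_\gamma}(\gamma)$, where $\rho$ is an isomorphism. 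This is the relative family version of \cite[Section~5.3]{FHHO26}.

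The extension step follows. By adjunction, the isomorphism gives a map from the left-hand side to $(\rho\times\id)_*(\cdots)$, defined at least over the locus where the first factor is locally free. Both sides are maximal Cohen--Macaulay, the complement of that locus has codimension at least two (each node drops a codimension), and the map is an isomorphism there. Uniqueness of maximal Cohen--Macaulay extensions (reflexivity on a Cohen--Macaulay scheme) then gives \eqref{change of Poincare sheaf} globally, exactly as in the descent argument of Lemma~\ref{lem:Poincare-sector-character}.

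I expect the main obstacle to be the codimension and depth bookkeeping. Specifically, one must check that the left-hand side is genuinely maximal Cohen--Macaulay on the possibly singular product $\check{\mathbb P}\times_{\mathbb V}\check{\mathbb P}^{-a_\gamma}(\gamma)$, and that the non-locally-free locus in the first factor has codimension at least two in that product. If the depth statement resists, my fallback is to compute fiberwise over a fixed $\tilde{\mathscr M}$. Then $(\rho\times\id)_*$ of a line bundle on the smooth $\mathbb{PPM}_v$ is, by FHHO's Lemma~5.14/5.17, the Fourier--Mukai image pattern of the Arinkin sheaf. I would then conclude using flatness over the smooth second factor and the fiberwise isomorphism, together with the compatibility of constants checked on the unit section.
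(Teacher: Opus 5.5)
Your argument fails at the extension step, because the codimension count is wrong. Every point of the second factor is $\nu(\tilde{\mathscr M})$, a pushforward from the normalization, so it is not locally free at \emph{any} of the nodes $s_1,\dots,s_{a_\gamma}$.

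It follows that the locus where at least one argument is locally free at each node is exactly $\check{\mathbb P}^{\circ}\times_{\mathbb V}\check{\mathbb P}^{-a_\gamma}(\gamma)$. Its complement is $\bigcup_i D_i\times_{\mathbb V}\check{\mathbb P}^{-a_\gamma}(\gamma)$, where $D_i\subset\check{\mathbb P}$ is the locus of sheaves not locally free at $s_i$. Each $D_i$ is a divisor, and the relevant set is their union, not their intersection, so the complement has codimension one.

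The codimension-two argument in Lemma~\ref{lem:Poincare-sector-character} works only because there both factors range over the full compactified spaces. It does not survive restriction along $\id\times\nu$.

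Across a divisor, uniqueness of maximal Cohen--Macaulay extensions gives nothing, and here the ambiguity is genuine. The map $\rho\times\id$ is finite and birational from a smooth source, so the counit gives $(\rho\times\id)^*(\rho\times\id)_*\mathcal L/\mathrm{tors}\simeq\mathcal L$. Hence $(\rho\times\id)_*$ is injective on isomorphism classes of line bundles. Now twist $q_1^*\mathcal V_{\mathbf C_{\mathbb V}}$ by a boundary divisor $\{\ell_i=\tilde{\sL}|_{\Im s_i^{\pm}}\}$ of $\mathbb{PPM}_{\mathbf C_{\mathbb V}}$ and push forward. The result is maximal Cohen--Macaulay and agrees with the right-hand side off $\bigcup_i D_i\times_{\mathbb V}\check{\mathbb P}^{-a_\gamma}(\gamma)$, yet it is not isomorphic to it. So your determinant computation on the locally free locus cannot pin down the sheaf, and your adjunction map is only a rational map, possibly with poles along this divisor.

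Two ingredients are missing. The first is an identification that also holds for non-locally-free first arguments. The second is a mechanism that turns pointwise identifications into an isomorphism of families.

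The paper gets the first from \cite[Theorem~5.3]{FHHO26}. That result identifies the restriction to all of $\overline J^0_{\mathbf C_b}\times\{\mathscr L\}$ with the Poincar\'e sheaf of $u_{b*}\mathscr L$.

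For the second, the paper works on the ambient space $\overline J^0_{\mathbf C_{\mathbb V}}\times_{\mathbb V}J^{-a_\gamma}_{\mathbf C_{\mathbb V}(\gamma)}$:
\begin{itemize}
\item The finite pushforward $\overline{\mathscr G}$ is a flat family of rank-one torsion-free sheaves, so it has a classifying map to $\overline{\Pic}(\overline J^0_{\mathbf C_{\mathbb V}}/\mathbb V)$.
\item This map agrees with $\beta\circ\overline\nu_{\mathrm{Jac}}$ on geometric points. Since the parameter scheme is reduced and the target is separated, it agrees everywhere.
\item Universality of $\overline{\mathscr P}_{\mathbb V}$, from \cite[Theorem~B]{Ari13}, then gives the isomorphism up to a factor $p_2^*\mathscr N$, and the rigidifications force $\mathscr N\simeq\sO$.
\item Only after this does the paper restrict to the Prym loci and replace $\tau_{a_\gamma,s_0(\gamma)}$ by $\mathsf T_{L_0,s_0}$.
\end{itemize}
Note that this last replacement changes the kernel by the norm pullback of a Poincar\'e line on the first factor. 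That line is trivial on the norm-zero locus, but it is not a constant line, contrary to how your sketch treats it.

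Your fallback points toward the pointwise input, but a fiberwise isomorphism plus flatness is not enough on its own. You need universality, or a seesaw argument using simplicity of the fibers, to reduce the discrepancy to a line bundle pulled back from the parameter space. Only then can the rigidification remove it.
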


\begin{proof}
Let $\overline{\mathscr P}_{\mathbb V}$ be the normalized Poincar\'e sheaf on $\overline J_{\mathbf{C}_{\mathbb V}}^0 \times_{\mathbb V}\overline J_{\mathbf{C}_{\mathbb V}}^0$, and let $\overline{\mathscr P}_{\mathbb V}(\gamma)$ be the normalized Poincar\'e line bundle on $J_{\mathbf{C}_{\mathbb V}(\gamma)}^0 \times_{\mathbb V}J_{\mathbf{C}_{\mathbb V}(\gamma)}^0$. Write $\overline{\mathbb{PM}}_{\mathbf{C}_{\mathbb V}}$ for the parabolic module space over $J_{\mathbf{C}_{\mathbb V}(\gamma)}^0$, and use $\overline\rho$, $\overline{\mathsf P}$, and $\overline{\mathcal V}_{\mathbf{C}_{\mathbb V}}$ for the corresponding maps and tautological line bundle. Let
\[
  \tau_{a_\gamma,s_0(\gamma)}: J_{\mathbf{C}_{\mathbb V}(\gamma)}^{-a_\gamma} \longrightarrow J_{\mathbf{C}_{\mathbb V}(\gamma)}^0, \qquad \mathscr L\longmapsto \mathscr L\otimes \sO_{\mathbf{C}_{\mathbb V}(\gamma)} \bigl(a_\gamma\Im s_0(\gamma)\bigr).
\]
Define the sheaf
\[
  \overline{\mathscr G} := (\overline\rho\times_{\mathbb V}\id)_* \Bigl( \overline q_1^*\overline{\mathcal V}_{\mathbf{C}_{\mathbb V}} \otimes (\overline{\mathsf P} \times_{\mathbb V}\tau_{a_\gamma,s_0(\gamma)})^* \overline{\mathscr P}_{\mathbb V}(\gamma) \Bigr)
\]
on $\overline J_{\mathbf{C}_{\mathbb V}}^0 \times_{\mathbb V}J_{\mathbf{C}_{\mathbb V}(\gamma)}^{-a_\gamma}$, where $\overline q_1$ is the projection to $\overline{\mathbb{PM}}_{\mathbf{C}_{\mathbb V}}$.

The sheaf under the pushforward is a line bundle. Since $\overline{\mathbb{PM}}_{\mathbf{C}_{\mathbb V}}$ is a product of projective bundles over the smooth relative Jacobian $J_{\mathbf{C}_{\mathbb V}(\gamma)}^0$, this line bundle is flat over $J_{\mathbf{C}_{\mathbb V}(\gamma)}^{-a_\gamma}$. As in the proof of Lemma~\ref{properties of rho}, $\overline\rho$ is proper with finite geometric fibers, hence finite. Its pushforward therefore preserves flatness over this parameter and commutes with arbitrary base change. Thus $\overline{\mathscr G}$ is a flat family of rank-one torsion-free sheaves.

By the universal property, $\overline{\mathscr G}$ defines a morphism
\[
  \varphi_{\overline{\mathscr G}}: J_{\mathbf{C}_{\mathbb V}(\gamma)}^{-a_\gamma} \longrightarrow \overline{\operatorname{Pic}} \bigl(\overline J_{\mathbf{C}_{\mathbb V}}^0/\mathbb V\bigr).
\]
The Poincar\'e family defines the autoduality morphism
\[
  \beta: \overline J_{\mathbf{C}_{\mathbb V}}^0 \longrightarrow \overline{\operatorname{Pic}} \bigl(\overline J_{\mathbf{C}_{\mathbb V}}^0/\mathbb V\bigr).
\]
By the relative form of \cite[Theorem~B]{Ari13}, discussed in the remarks following its Theorem~C, $\overline{\mathscr P}_{\mathbb V}$ is universal on the component identified by $\beta$.

Let
\[
  \overline\nu_{\mathrm{Jac}}: J_{\mathbf{C}_{\mathbb V}(\gamma)}^{-a_\gamma} \longrightarrow \overline J_{\mathbf{C}_{\mathbb V}}^0
\]
be induced by pushforward under $u_{\mathbf{C}_{\mathbb V}}$. For a geometric point $b\to\mathbb V$ and $\mathscr L\in J_{\mathbf C_b(\gamma)}^{-a_\gamma}$, finite base change and \cite[Theorem~5.3]{FHHO26}, with the factors interchanged, identify $\overline{\mathscr G}|_{\overline J_{\mathbf C_b}^0\times\{\mathscr L\}}$ with the Poincar\'e sheaf corresponding to $u_{\mathbf{C}_{\mathbb V},b*}\mathscr L$. Hence $\varphi_{\overline{\mathscr G}}$ and $\beta\circ\overline\nu_{\mathrm{Jac}}$ agree on all geometric points. The parameter $J_{\mathbf{C}_{\mathbb V}(\gamma)}^{-a_\gamma}$ is smooth over the smooth base $\mathbb V$, and is therefore reduced. Since the target is separated, their equalizer is a closed subscheme containing every geometric point of the reduced parameter scheme, and is therefore the whole parameter scheme. Thus
\[
  \varphi_{\overline{\mathscr G}} =\beta\circ\overline\nu_{\mathrm{Jac}}.
\]
The universal property of $\overline{\mathscr P}_{\mathbb V}$ now gives
\[
  \overline{\mathscr G} \simeq (\id\times_{\mathbb V}\overline\nu_{\mathrm{Jac}})^* \overline{\mathscr P}_{\mathbb V} \otimes p_2^*\mathscr N
\]
for a line bundle $\mathscr N$ on $J_{\mathbf{C}_{\mathbb V}(\gamma)}^{-a_\gamma}$. The rigidifications in~\eqref{eq:relative-Poincare-rigidification} give $\mathscr N\simeq\sO$.

We now restrict to $\check{\mathbb P}\times_{\mathbb V} \check{\mathbb P}^{-a_\gamma}(\gamma)$. The norm identity preceding Lemma~\ref{properties of rho} identifies the inverse image of $\check{\mathbb P}$ under $\overline\rho$ with $\mathbb{PPM}_{\mathbf{C}_{\mathbb V}}$. Ordinary base change for the finite morphism $\overline\rho\times\id$ therefore restricts the ambient pushforward to the corresponding Prym pushforward.

The map $\mathsf T_{L_0,s_0}$ differs from $\tau_{a_\gamma,s_0(\gamma)}$ by tensoring with $\pi_{\mathbb V}(\gamma)^*L_0$. The translation formula for the Poincar\'e line bundle changes the kernel by the Poincar\'e line associated with $\pi_{\mathbb V}(\gamma)^*L_0$ on the first factor. By compatibility of the Poincar\'e pairing with the norm, this line is the pullback, under the norm map, of the Poincar\'e line associated with $L_0$ on $J_\Sigma^0$. Its restriction to the norm-zero locus $\check{\mathbb P}(\gamma)$ is canonically trivial. Thus the restricted formula is unchanged when $\tau_{a_\gamma,s_0(\gamma)}$ is replaced by $\mathsf T_{L_0,s_0}$, as in \cite[proof of Proposition~5.16]{FHHO26}. This gives \eqref{change of Poincare sheaf}.
\end{proof}

\subsection{Cohomological comparison and Riemann--Roch correction}\label{subsec:endoscopic-RR-comparison}

For the Fourier--Mukai compositions below, put $r^{\mathrm t}:=(\rho,\mathsf P)$, the transpose of $r=(\mathsf P,\rho)$. We transport $[\mathscr C_{\mathbb V}]$ by the same exchange of factors when it is viewed as a correspondence from $\check{\mathbb P}$ to $\check{\mathbb P}(\gamma)$.

On $\mathscr R_{\mathbb V}$ we use the restriction of the $\Gamma$-linearization defining $\mathscr F$. On $\mathscr R_{\mathbb V}(\gamma)$ we use the $\Gamma$-linearization obtained by restricting the $J_\Sigma^0$-descent structure on the normalized Poincar\'e line bundle over
\[
  \check{\mathbb P}(\gamma)\times_{\mathbb V} J^0_{\mathbf C_{\mathbb V}(\gamma)},
\]
as in the construction following \cite[Proposition~4.4]{GS22}. Denote the corresponding descents by $\mathscr F_{\mathbb V}$ and $\mathscr F_{\mathbb V}(\gamma)$. The group acts only on the second factors; $q_1^*\mathcal V_{\mathbf C_{\mathbb V}}$ has the linearization pulled back from the trivial action on $\mathbb{PPM}_{\mathbf C_{\mathbb V}}$.

For $\chi\in\widehat\Gamma$, write $\mathbb C_\chi$ for the trivial line bundle with $\Gamma$ acting through $\chi$, or for its descent from $B\Gamma$ to the indicated quotient stack. We work on a connected component of $\mathbb V$ and retain the ambient sheaf $\overline{\mathscr G}$ from the proof of Proposition~\ref{prop: comparision of Poincare family version}. We compare the fixed descents using the ordinary isomorphism constructed there.

The two translation formulas have the same norm-pullback Poincar\'e factor and the same composition law, by the determinant-of-cohomology norm identity \cite[Proposition~4.4]{GS22} and the norm identity for parabolic modules.

Every automorphism of $\overline{\mathscr G}$ is a scalar pulled back from $\mathbb V$: indeed, $\overline{\mathscr G}$ is the finite birational pushforward of a line bundle from $\overline{\mathbb{PM}}_{\mathbf C_{\mathbb V}} \times_{\mathbb V} J_{\mathbf C_{\mathbb V}(\gamma)}^{-a_\gamma}$, which is smooth and proper over $\mathbb V$ with geometrically connected fibers. Thus the ratios of the two translation structures are units on $\mathbb V$, and their cocycle condition defines a character $\chi_\gamma\in\widehat\Gamma$ on each connected component of $\mathbb V$. Restricting to the Prym loci, with the norm-pairing trivialization used for $\mathsf T_{L_0,s_0}$, gives the following comparison of the fixed descents.
\begin{equation}
  \label{eq:quotient-Poincare-comparison} (\id\times_{\mathbb V}\overline\nu)^*\mathscr F_{\mathbb V} \simeq{} (\rho\times_{\mathbb V}\id)_* \Bigl( q_1^*\mathcal V_{\mathbf C_{\mathbb V}} \otimes (\mathsf P\times_{\mathbb V} \overline{\mathsf T}_{L_0,s_0})^* \mathscr F_{\mathbb V}(\gamma) \Bigr) \otimes p_2^*\mathbb C_{\chi_\gamma}
\end{equation}
on $\check{\mathbb P}\times_{\mathbb V} [\check{\mathbb P}^{-a_\gamma}(\gamma)/\Gamma]$, where $p_2$ is the second projection, and $\overline\nu$ and $\overline{\mathsf T}_{L_0,s_0}$ are induced by $\nu$ and $\mathsf T_{L_0,s_0}$ on the corresponding quotient stacks.

As a convolution of kernels, the right-hand side is
\[
  \Bigl( (\id\times_{\mathbb V}\overline{\mathsf T}_{L_0,s_0})^* \mathscr F_{\mathbb V}(\gamma) \otimes p_2^*\mathbb C_{\chi_\gamma} \Bigr) \circ Rr^{\mathrm t}_*\mathcal V_{\mathbf C_{\mathbb V}},
\]
where $p_2$ projects from $\check{\mathbb P}(\gamma)\times_{\mathbb V} [\check{\mathbb P}^{-a_\gamma}(\gamma)/\Gamma]$ to the second factor. The convolution is taken along the following diagram:
\[
  \xymatrix{ \mathbb{PPM}_{\mathbf C_{\mathbb{V}}} \ar[r]^{r^{\mathrm t}} & \check{\mathbb P}\times_{\mathbb{V}} \check{\mathbb{P}}(\gamma) \ar[dl] \ar[dr] & & \check{\mathbb{P}}(\gamma)\times_{\mathbb{V}} [\check{\mathbb{P}}^{-a_\gamma}(\gamma)/\Gamma] \ar[dl] \ar[dr] & \\ \check{\mathbb P} & & \check{\mathbb{P}}(\gamma) & & [\check{\mathbb{P}}^{-a_\gamma}(\gamma)/\Gamma]. }
\]
The source $\check{\mathbb P}$ may be singular. We use the first definition in \eqref{eq:normalized-tau-supported-definition}; its target is smooth. Homological realization and composition require only the target and the intermediate space to be smooth \cite[Subsection~2.2.2]{MSY23}.

\begin{proposition}\label{realization of compositin of functors}
We have
\begin{equation}
  \label{composition of functors} \tau^{\mathrm{nor}}_{\mathbb V} \bigl([(\mathrm{id}\times_{\mathbb V}\overline\nu)^* \mathscr F_{\mathbb V}]\bigr)_{1,\gamma}= \chi_\gamma(\gamma)\, \overline{\mathsf T}_{L_0,s_0}^{\,*} \circ\tau^{\mathrm{nor}}_{\mathbb V} (\mathscr F_{\mathbb V}(\gamma))_{1,\gamma} \circ\tau^{\mathrm{nor}} \bigl([Rr^{\mathrm t}_*\mathcal V_{\mathbf C_{\mathbb V}}]\bigr).
\end{equation}
\end{proposition}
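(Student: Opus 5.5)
We apply $\tau^{\mathrm{nor}}_{\mathbb V}$ to the kernel identity \eqref{eq:quotient-Poincare-comparison}, written in its convolution form. This reduces the claim to three ingredients: the composition law of Proposition~\ref{prop:normalized-tau-composition}, the effect of the pullback along $\overline{\mathsf T}_{L_0,s_0}$, and the scalar produced by the character twist $\mathbb C_{\chi_\gamma}$ on the $\gamma$-sector.

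First, rewrite the right-hand side of \eqref{eq:quotient-Poincare-comparison} as the composite of three kernels. The first is $Rr^{\mathrm t}_*\mathcal V_{\mathbf C_{\mathbb V}}$ on $\check{\mathbb P}\times_{\mathbb V}\check{\mathbb P}(\gamma)$. The second is $\mathscr F_{\mathbb V}(\gamma)$ on $\check{\mathbb P}(\gamma)\times_{\mathbb V}[\check{\mathbb P}(\gamma)/\Gamma]$. The third is the graph of $\overline{\mathsf T}_{L_0,s_0}$, tensored with $p_2^*\mathbb C_{\chi_\gamma}$. The pullback $(\id\times\overline{\mathsf T}_{L_0,s_0})^*$ is convolution with the structure sheaf of the transposed graph of the isomorphism $\overline{\mathsf T}_{L_0,s_0}$. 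Since $\overline{\mathsf T}$ is an isomorphism of smooth stacks, its normalized class is the graph correspondence. The factors $\Theta$ cancel by functoriality of $\operatorname{td}$ and of $\Lambda_{\gamma}$ under isomorphisms, exactly as in Lemma~\ref{lem:normalized-diagonal-is-identity}. On realizations this class is $\overline{\mathsf T}^{\,*}_{L_0,s_0}$.

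Next we justify composition. The source $\check{\mathbb P}$ may be singular, so Proposition~\ref{prop:normalized-tau-composition} is not applicable verbatim. As remarked before the statement, we use the first (target-normalized) definition in \eqref{eq:normalized-tau-supported-definition}, together with proper covariance \eqref{eq:orbifold-tau-proper-covariance} of $\tau^{\mathrm{orb}}$ along the projection $p_{13}$. The proof of Proposition~\ref{prop:normalized-tau-composition} uses smoothness only of the intermediate and target factors, through the diagonal $\delta_h$ and the middle factor $\Theta_{2,h}$. Here these are $\check{\mathbb P}(\gamma)$, which is a scheme and contributes only its trivial sector, and $[\check{\mathbb P}^{-a_\gamma}(\gamma)/\Gamma]$. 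The source contributes only its trivial sector $1$.

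Boundedness and coherence of the convolution integrand holds for two reasons. The map $r^{\mathrm t}$ is finite (Lemma~\ref{properties of rho}), and $\mathscr F_{\mathbb V}(\gamma)$ is a line bundle flat over $\check{\mathbb P}(\gamma)$. The left side equals the convolution by \eqref{eq:quotient-Poincare-comparison}, together with the projection formula for the finite map $\rho\times\id$.

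Finally, we treat the character factor. On the $(1,\gamma)$-sector, $t_{1,\gamma}$ weights $p_2^*\mathbb C_{\chi_\gamma}$ by $\chi_\gamma(\gamma)$. Since $\mathbb C_{\chi_\gamma}$ is topologically trivial, the module formula \eqref{eq:localized-ch-module} pulls out exactly this scalar. Assembling the three factors gives \eqref{composition of functors}.

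The main obstacle is the composition step with a singular source. The issue is whether the localized Chern character of $L(\iota^{1,\gamma})^*Rj_*$ of the convolved kernel factors correctly, when $\check{\mathbb P}$ is replaced by an ambient smooth embedding. I would first try embedding $\check{\mathbb P}$ into the smooth relative compactified-Jacobian-free ambient given by an affine-space bundle over $\mathbb V$, locally on $\mathbb V$. Then I would check the refined Gysin identity \eqref{eq:localized-chern-composition} there, noting that the source sector plays no role in the diagonal pullback.
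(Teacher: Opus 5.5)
Your three-kernel factorization, the identification of the normalized graph class of $\overline{\mathsf T}_{L_0,s_0}$ with $\overline{\mathsf T}_{L_0,s_0}^{\,*}$, and the extraction of $\chi_\gamma(\gamma)$ from $p_2^*\mathbb C_{\chi_\gamma}$ all agree with the paper. The gap is the step you flag yourself. You need
\[
  \tau^{\mathrm{nor}}([K\circ Rr^{\mathrm t}_*\mathcal V_{\mathbf C_{\mathbb V}}])=\tau^{\mathrm{nor}}([K])\circ\tau^{\mathrm{nor}}([Rr^{\mathrm t}_*\mathcal V_{\mathbf C_{\mathbb V}}])
\]
for the singular source $\check{\mathbb P}$. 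This is not available, and your justification for it is incorrect. The proof of Proposition~\ref{prop:normalized-tau-composition} does use smoothness of the source: \eqref{eq:ambient-relative-convolution-comparison} and \eqref{eq:localized-chern-composition} concern perfect complexes and localized Chern characters on $\mathcal X_1\times\mathcal X_2\times\mathcal X_3$, and its last step multiplies by $\operatorname{pr}_1^*\Theta_{1,g}$.

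Your proposed repair also fails as stated. Since $\check{\mathbb P}$ is proper over $\mathbb V$ with $g_P$-dimensional fibers, it cannot be closed in an affine-space bundle over any open subset of $\mathbb V$. A closed subscheme of such a bundle would be affine and proper, hence finite, over that open. Moreover, an equality of Borel--Moore classes cannot be verified on a Zariski cover of $\mathbb V$, because of the Mayer--Vietoris boundary. Your route could be rescued in either of two ways. One is a global embedding $\check{\mathbb P}\hookrightarrow\mathbb P^N_{\mathbb V}$, with localized Chern characters supported on the relative products themselves. The other is compatibility of equivariant Riemann--Roch with the refined Gysin map of the diagonal of $\check{\mathbb P}(\gamma)$. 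As written, however, the central step is only a sketch.

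The idea you are missing is that the resolution $\rho$ makes the singular source harmless, so no new composition law is needed. By \eqref{eq:quotient-Poincare-comparison}, the left-hand kernel is $(\rho\times_{\mathbb V}\id)_*$ of a line bundle on the smooth stack $\mathbb{PPM}_{\mathbf C_{\mathbb V}}\times_{\mathbb V}[\check{\mathbb P}^{-a_\gamma}(\gamma)/\Gamma]$. Its $(1,\gamma)$-sector is the whole stack, since $\gamma$ acts trivially on $\check{\mathbb P}^{-a_\gamma}(\gamma)$. Proper covariance along $\rho\times_{\mathbb V}\id$, the module formula, and smooth pullback then give the left side as
\[
  \chi_\gamma(\gamma)\,(\rho\times_{\mathbb V}\id)_*\Bigl(q_1^*\bigl(\ch(\mathcal V_{\mathbf C_{\mathbb V}})\td(T_{\mathsf P})\bigr)\cap(\mathsf P\times_{\mathbb V}\overline{\mathsf T}_{L_0,s_0})^*\tau^{\mathrm{nor}}_{\mathbb V}(\mathscr F_{\mathbb V}(\gamma))_{1,\gamma}\Bigr).
\]
On the right, proper covariance along $r^{\mathrm t}$ gives
\[
  \tau^{\mathrm{nor}}([Rr^{\mathrm t}_*\mathcal V_{\mathbf C_{\mathbb V}}])=r^{\mathrm t}_*\bigl(\ch(\mathcal V_{\mathbf C_{\mathbb V}})\td(T_{\mathsf P})\cap[\mathbb{PPM}_{\mathbf C_{\mathbb V}}]\bigr).
\]
The composition \eqref{eq:CR-composition-sectorwise} is a refined pullback along the diagonal of the smooth intermediate $\check{\mathbb P}(\gamma)$, and it yields the same class because $\operatorname{pr}_2\circ r^{\mathrm t}=\mathsf P$.

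The paper's argument therefore uses only Riemann--Roch on smooth stacks and the composition formula, which requires only the intermediate and target to be smooth. Your remark about the projection formula for the finite map $\rho\times\id$ is already the entry point to it.
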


\begin{proof}
The pullback in \eqref{eq:quotient-Poincare-comparison} agrees with derived pullback, since $\mathscr F_{\mathbb V}$ is flat over its second factor. The morphism $\mathsf P$ is smooth and projective, $\rho$ is finite, and $\mathscr F_{\mathbb V}(\gamma)$ is a line bundle. Flat base change and the projection formula therefore identify \eqref{eq:quotient-Poincare-comparison} with the displayed bounded convolution.

Since $\gamma$ acts trivially on the normalized Prym and its torsor, Riemann--Roch on the $(1,\gamma)$-sector is ordinary equivariant Riemann--Roch after applying $t_\gamma$. The character line has inertial Chern character
\[
  \widetilde{\ch}(\mathbb C_{\chi_\gamma})_\gamma =\chi_\gamma(\gamma).
\]

Proper covariance, the module formula, and smooth pullback \cite[Theorem~3.1(b), (c), and (d)(i)]{EG00} identify both sides of \eqref{composition of functors} with
\[
  \chi_\gamma(\gamma)\, (\rho\times_{\mathbb V}\mathrm{id})_* \Bigl(q_1^*\bigl(\ch(\mathcal V_{\mathbf C_{\mathbb V}}) \td(T_{\mathsf P})\bigr)\cap{} (\mathsf P\times_{\mathbb V}\overline{\mathsf T}_{L_0,s_0})^* \tau^{\mathrm{nor}}_{\mathbb V} (\mathscr F_{\mathbb V}(\gamma))_{1,\gamma}\Bigr).
\]
For the right-hand side, use $\operatorname{pr}_2\circ r^{\mathrm t}=\mathsf P$ and the refined pullback along the absolute diagonal of the smooth intermediate scheme $\check{\mathbb P}(\gamma)$ \cite[Section~6.2]{Ful98}; the target Todd factors agree under $\overline{\mathsf T}_{L_0,s_0}$.

We also compare with the ambient Fourier class. Lemma~\ref{lem:Poincare-sector-actual-pullback} and \eqref{eq:normalized-tau-supported-definition} give
\[
  \tau^{\mathrm{nor}}_B([\mathscr F])_{1,\gamma} =\operatorname{pr}_2^*\td(T_{[\check P_\gamma/\Gamma]})^{-1} \cap\tau_{\mathrm{BM}}\bigl(t_\gamma([L\ell_\gamma^*\mathscr F])\bigr),
\]
since the moving-normal factor cancels in the target normalization. Pulling back on the second factor along the \'{e}tale map following \eqref{eq:normalization-identifies-fixed-sector} introduces no Todd correction. Compatibility of Riemann--Roch with this pullback and proper base change give
\begin{equation}
  \label{eq:nodal-ambient-Fourier-restriction} \tau^{\mathrm{nor}}_{\mathbb V} \bigl([(\mathrm{id}\times_{\mathbb V}\overline\nu)^* \mathscr F_{\mathbb V}]\bigr)_{1,\gamma} =\overline\nu^*\circ \left(\sum_i\mathfrak F_{\gamma,i}\right)\big|_{\mathbb V}
\end{equation}
as total graded morphisms between the unshifted direct images. The ambient age shift is retained when indexing the homogeneous components.
\end{proof}

We now compute $\tau^{\mathrm{nor}}([Rr^{\mathrm t}_*\mathcal{V}_{\mathbf C_{\mathbb{V}}}])$. The kernel is supported on
\[
  \check{\mathbb P} \times_{\mathbb V}\check{\mathbb P}(\gamma).
\]
The same convention applies to $[\mathscr C_{\mathbb V}]$. Write $p_2$ for the projection to $\check{\mathbb P}(\gamma)$. By the definition of the target normalization,
\[
  \tau^{\mathrm{nor}}([Rr^{\mathrm t}_*\mathcal V_{\mathbf{C}_{\mathbb V}}]) = p_2^*\operatorname{td}(T_{\check{\mathbb P}(\gamma)})^{-1} \cap\tau_{\operatorname{BM}} ([Rr^{\mathrm t}_*\mathcal V_{\mathbf{C}_{\mathbb V}}]).
\]

\begin{lemma}\label{key lemma}
There is a $\Gamma$-equivariant invertible correspondence
\[
  \mathfrak U_\gamma =\sum_{s\geq0}\mathfrak U_{\gamma,s}, \qquad \mathfrak U_{\gamma,s}\in \operatorname{Corr}^{s}_{\mathbb V} \bigl(\check{\mathbb P}(\gamma),\check{\mathbb P}(\gamma)\bigr), \qquad \mathfrak U_{\gamma,0}=\mathrm{id},
\]
such that
\begin{equation}
  \label{eq:endoscopic-RR-correction} \tau^{\mathrm{nor}}([Rr^{\mathrm t}_*\mathcal V_{\mathbf{C}_{\mathbb V}}]) = \mathfrak U_\gamma\circ[\mathscr C_{\mathbb V}].
\end{equation}
\end{lemma}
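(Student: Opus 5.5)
We compute $\tau_{\mathrm{BM}}([Rr^{\mathrm t}_*\mathcal V_{\mathbf C_{\mathbb V}}])$ by proper covariance along the finite morphism $r^{\mathrm t}$. The source $\mathbb{PPM}_{\mathbf C_{\mathbb V}}$ is smooth, being an iterated $\mathbb P^1$-bundle over $\check{\mathbb P}(\gamma)$. Riemann--Roch on it therefore gives
\[
  \tau_{\mathrm{BM}}([Rr^{\mathrm t}_*\mathcal V_{\mathbf C_{\mathbb V}}]) = r^{\mathrm t}_*\Bigl(\ch(\mathcal V_{\mathbf C_{\mathbb V}})\,\td(T_{\mathbb{PPM}_{\mathbf C_{\mathbb V}}})\cap[\mathbb{PPM}_{\mathbf C_{\mathbb V}}]\Bigr).
\]
Split $T_{\mathbb{PPM}}=\mathsf P^*T_{\check{\mathbb P}(\gamma)}+T_{\mathsf P}$ in $K$-theory. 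The factor $\mathsf P^*\td(T_{\check{\mathbb P}(\gamma)})$ is pulled back along $p_2\circ r^{\mathrm t}=\mathsf P$, so the projection formula moves it out as $p_2^*\td(T_{\check{\mathbb P}(\gamma)})$. This cancels exactly against the target normalization. We are left with
\[
  \tau^{\mathrm{nor}}([Rr^{\mathrm t}_*\mathcal V]) = r^{\mathrm t}_*\bigl(\Xi\cap[\mathbb{PPM}_{\mathbf C_{\mathbb V}}]\bigr), \qquad \Xi:=\ch(\mathcal V_{\mathbf C_{\mathbb V}})\td(T_{\mathsf P}) = 1+\Xi_{>0}.
\]
By Lemma~\ref{lem:identification to the image}, the degree-zero part $r^{\mathrm t}_*[\mathbb{PPM}]$ is $[\mathscr C_{\mathbb V}]$. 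The remaining task is to write the positive-degree corrections in the form $\mathfrak U_{\gamma,s}\circ[\mathscr C_{\mathbb V}]$.

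For the corrections, we use the explicit structure of $\mathbb{PPM}_{\mathbf C_{\mathbb V}}$. Each factor is $\mathbb P(\mathscr U_i^+\oplus\mathscr U_i^-)$. Hence $T_{\mathsf P}$ is a sum of relative tangent bundles $\mathcal V_i^{-1}\otimes(\mathscr U_i^+\oplus\mathscr U_i^-)/\mathcal V_i$. The cohomology of $\mathbb{PPM}$ is a free $H^*(\check{\mathbb P}(\gamma))$-module on monomials in $\xi_i=c_1(\mathcal V_i)$, subject to $\xi_i^2=-(c_1(\mathscr U_i^+)+c_1(\mathscr U_i^-))\xi_i - c_1(\mathscr U_i^+)c_1(\mathscr U_i^-)$ (with sign conventions fixed by the tautological line). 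So $\Xi$ can be written as a polynomial in the $\xi_i$ with coefficients pulled back from $\check{\mathbb P}(\gamma)$. The coefficients come from the classes $c_1(\mathscr U_i^\pm)$.

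The strategy is to show that, after pushing forward along $r^{\mathrm t}$, every class $\mathsf P^*\beta\cdot\xi^I\cap[\mathbb{PPM}]$ equals $(\mathsf P^*\beta')\cap[\mathbb{PPM}]$ up to classes that realize as $\beta''\circ[\mathscr C_{\mathbb V}]$. We then set
\[
  \mathfrak U_\gamma=\Delta_*\bigl(\text{a class } u\in H^{*}(\check{\mathbb P}(\gamma))\bigr),
\]
which acts by cup product on the target. This is a correspondence on $\check{\mathbb P}(\gamma)$ of the required nonnegative degrees, with $\mathfrak U_{\gamma,0}=\mathrm{id}$.

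The main obstacle is the $\xi_i$-dependence, since $\xi_i$ is not pulled back from $\check{\mathbb P}(\gamma)$. Here I would first try to show that the correction only matters after composition with the sector projectors, not at the level of cycles. Concretely, the statement is as morphisms in $D^b_c(\mathbb V)$, that is, under the realization \eqref{eq:CR-realization}. The factor $(\rho\times\mathrm{id})$ acts on the source $\check{\mathbb P}$-side. The relevant realization of $r^{\mathrm t}_*(\xi^I\cap[\mathbb{PPM}])$ is $\mathsf P_*(\xi^I\cup\rho^*(-))$. Over the nodal locus, the classes $\xi_i$ restricted to the $\mathbb P^1$-fibers are pulled back from $\check{\mathbb P}$ via $\rho$. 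They are the Chern classes of the evaluation lines at the nodes of the universal torsion-free sheaf: the line $\mathcal V_i$ is the quotient of the gluing data, that is, the restriction of $\rho^*\mathscr U$ at $s_i$ modulo torsion. Modulo the pulled-back classes $c_1(\mathscr U_i^\pm)$ and the normalization along $s_0$, this gives $\xi_i=\rho^*\eta_i + \mathsf P^*(\cdot)$.

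If this holds, the projection formula moves $\rho^*\eta_i$ to the source as cup product on $R\check h_*\CC_{\check{\mathbb P}}$. That is not of the form $\mathfrak U_\gamma\circ[\mathscr C]$ directly. So I would instead use that on the $\kappa_\gamma$-summand $[\mathscr C_{\mathbb V}]_{\#}$ is an isomorphism (Theorem~\ref{MS's endoscopic}). Source-side cup products can then be transported to target-side endomorphisms, which are degree-raising and $\Gamma$-equivariant. Invertibility of $\mathfrak U_\gamma$ is automatic from $\mathfrak U_{\gamma,0}=\mathrm{id}$ and unipotence.

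If transport through $[\mathscr C]$ fails to give a cycle-level statement, I would weaken the lemma to hold after restricting to the $\kappa_\gamma$-summand. This is all that the subsequent comparison of projectors uses.
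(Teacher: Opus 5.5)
Your first step matches the paper's. Proper covariance along $r^{\mathrm t}$, the splitting $T_{\mathbb{PPM}_{\mathbf C_{\mathbb V}}}=\mathsf P^*T_{\check{\mathbb P}(\gamma)}+T_{\mathsf P}$, and $p_2\circ r^{\mathrm t}=\mathsf P$ reduce the left side of \eqref{eq:endoscopic-RR-correction} to $r^{\mathrm t}_*(\Xi\cap[\mathbb{PPM}_{\mathbf C_{\mathbb V}}])$, where $\Xi=\ch(\mathcal V_{\mathbf C_{\mathbb V}})\td(T_{\mathsf P})$. After that, however, the proposal does not prove the lemma.

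The route through $\xi_i=\rho^*\eta_i+\mathsf P^*(\cdot)$ rests on an identity you neither construct nor verify. The universal sheaf on $\mathbf C_{\mathbb V}\times_{\mathbb V}\check{\mathbb P}$ is not locally free along the nodal sections, so there is no evident evaluation line bundle $\eta_i$ on $\check{\mathbb P}$. Even granting the identity, transporting source-side cup products through $[\mathscr C_{\mathbb V}]_{\#,\kappa_\gamma}$ only gives a graded operator on the $\kappa_\gamma$-summand. It does not give a $\Gamma$-equivariant correspondence satisfying \eqref{eq:endoscopic-RR-correction} on the whole direct image. Your fallback is therefore a weaker statement, and a conditional one. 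It would suffice for Proposition~\ref{prop:comparison-of-Fourier-projectors}, but it does not prove the lemma.

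The missing idea is that the $\xi_i$-dependence you call the main obstacle is not there: $\Xi$ is exactly pulled back along $\mathsf P$. Put $\xi_i=c_1(\sO_{p_i}(1))$ and $\alpha_i^\pm=c_1(\mathscr U_{\mathbf C_{\mathbb V},i}(\gamma)^\pm)$. Both factors split over the $\mathbb P^1$-bundles: $\ch(\mathcal V_{\mathbf C_{\mathbb V}})=\prod_i e^{-\xi_i}$ and $\td(T_{\mathsf P})=\prod_i\td(T_{p_i})$. Since $T_{p_i}\simeq\sO_{p_i}(2)\otimes q_i^*(\mathscr U_{\mathbf C_{\mathbb V},i}(\gamma)^+\otimes\mathscr U_{\mathbf C_{\mathbb V},i}(\gamma)^-)$, the class $z=c_1(T_{p_i})=2\xi_i+\alpha_i^++\alpha_i^-$ satisfies $e^{-\xi_i}=e^{(\alpha_i^++\alpha_i^-)/2}e^{-z/2}$. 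Hence
\[
  e^{-\xi_i}\td(T_{p_i}) = e^{(\alpha_i^++\alpha_i^-)/2}\,\frac{z}{2\sinh(z/2)},
\]
which is an even power series in $z$. The projective-bundle relation $\xi_i^2+(\alpha_i^++\alpha_i^-)\xi_i+\alpha_i^+\alpha_i^-=0$ gives $z^2=(\alpha_i^+-\alpha_i^-)^2$, a class pulled back from $\check{\mathbb P}(\gamma)$. In your basis $A+B\xi_i$, the coefficient $B$ is zero.

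Therefore $\Xi=\mathsf P^*u$ with $u=\prod_i e^{(\alpha_i^++\alpha_i^-)/2}(\alpha_i^+-\alpha_i^-)/(2\sinh((\alpha_i^+-\alpha_i^-)/2))$. The projection formula with $p_2\circ r^{\mathrm t}=\mathsf P$ and Lemma~\ref{lem:identification to the image} then give $r^{\mathrm t}_*(\mathsf P^*u\cap[\mathbb{PPM}_{\mathbf C_{\mathbb V}}])=p_2^*u\cap[\mathscr C_{\mathbb V}]$. So $\mathfrak U_\gamma$ is the diagonal correspondence given by cup product with $u$, at the cycle level and on every summand. Since $u=1+(\text{positive degree})$, $\mathfrak U_{\gamma,0}=\mathrm{id}$ and $\mathfrak U_\gamma$ is invertible. $\Gamma$-equivariance holds because tensoring by $\pi^*\delta$ changes each evaluation line only by a torsion line from $\mathbb V$, so the $\alpha_i^\pm$ are $\Gamma$-invariant in rational cohomology.
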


\begin{proof}
The graph factorization of $r^{\mathrm t}$ is a closed immersion followed by a base change of the finite morphism $\rho$. Thus $r^{\mathrm t}$ is proper. The variety $\mathbb{PPM}_{\mathbf{C}_{\mathbb V}}$ is smooth, and $\mathsf P$ is smooth of relative dimension $a_\gamma$. Proper covariance of $\tau_{\operatorname{BM}}$, the projection formula, and $p_2\circ r^{\mathrm t}=\mathsf P$ give
\begin{align*}
  \tau^{\mathrm{nor}}([Rr^{\mathrm t}_*\mathcal V_{\mathbf{C}_{\mathbb V}}]) &=r^{\mathrm t}_*\Bigl( \ch(\mathcal V_{\mathbf{C}_{\mathbb V}}) \td(T_{\mathbb{PPM}_{\mathbf{C}_{\mathbb V}}}) \mathsf P^*\td(T_{\check{\mathbb P}(\gamma)})^{-1} \cap[\mathbb{PPM}_{\mathbf{C}_{\mathbb V}}]\Bigr)\\
  &=r^{\mathrm t}_*\Bigl( \ch(\mathcal V_{\mathbf{C}_{\mathbb V}}) \td(T_{\mathsf P}) \cap[\mathbb{PPM}_{\mathbf{C}_{\mathbb V}}]\Bigr).
\end{align*}
By Lemma~\ref{lem:identification to the image}, $r^{\mathrm t}_*[\mathbb{PPM}_{\mathbf{C}_{\mathbb V}}] =[\mathscr C_{\mathbb V}]$. We compute the correction $\ch(\mathcal V_{\mathbf{C}_{\mathbb V}})\td(T_{\mathsf P})$ using the projective-bundle presentation.

Put
\[
  \mathbb{PPM}_{\mathbf{C}_{\mathbb V},i} := \mathbb P\bigl( \mathscr U_{\mathbf{C}_{\mathbb V},1}(\gamma)^+ \oplus\mathscr U_{\mathbf{C}_{\mathbb V},1}(\gamma)^-\bigr) \times_{\check{\mathbb P}(\gamma)}\cdots \times_{\check{\mathbb P}(\gamma)} \mathbb P\bigl( \mathscr U_{\mathbf{C}_{\mathbb V},i}(\gamma)^+ \oplus\mathscr U_{\mathbf{C}_{\mathbb V},i}(\gamma)^-\bigr),
\]
with $\mathbb{PPM}_{\mathbf{C}_{\mathbb V},0}=\check{\mathbb P}(\gamma)$. Let $p_i:\mathbb{PPM}_{\mathbf{C}_{\mathbb V},i}\to \mathbb{PPM}_{\mathbf{C}_{\mathbb V},i-1}$ be the natural projection and $q_i=p_1\circ\cdots\circ p_i$. Set
\[
  \xi_i=c_1(\sO_{p_i}(1)), \qquad \alpha_i^\pm =c_1\bigl(\mathscr U_{\mathbf{C}_{\mathbb V},i}(\gamma)^\pm\bigr).
\]
With the tautological sub-line convention, the projective-bundle relation is
\begin{equation}
  \label{relation of cohomology in a P1 bundle} \xi_i^2+q_i^*(\alpha_i^++\alpha_i^-)\xi_i +q_i^*(\alpha_i^+\alpha_i^-)=0.
\end{equation}
Moreover,
\[
  T_{p_i} \simeq \sO_{p_i}(2)\otimes q_i^*\bigl( \mathscr U_{\mathbf{C}_{\mathbb V},i}(\gamma)^+ \otimes\mathscr U_{\mathbf{C}_{\mathbb V},i}(\gamma)^-\bigr),
\]
and there is an exact sequence
\[
  0\longrightarrow T_{p_i}\longrightarrow T_{q_i} \longrightarrow p_i^*T_{q_{i-1}}\longrightarrow0.
\]
We use the same notation for the pullbacks of these bundles and classes to $\mathbb{PPM}_{\mathbf C_{\mathbb V}}$. By definition,
\begin{equation}
  \label{canonical line bundle of PMod} \mathcal V_{\mathbf{C}_{\mathbb V}} \simeq \bigotimes_{i=1}^{a_\gamma}\sO_{p_i}(-1).
\end{equation}

The projective-bundle relation gives
\[
  \bigl(2\xi_i+q_i^*(\alpha_i^++\alpha_i^-)\bigr)^2 =q_i^*\bigl((\alpha_i^+-\alpha_i^-)^2\bigr).
\]
Put $z=c_1(T_{p_i})=2\xi_i+q_i^*(\alpha_i^++\alpha_i^-)$. Since $\td(T_{p_i})=z/(1-e^{-z})$ and $e^{-\xi_i}e^{z/2}=e^{q_i^*(\alpha_i^++\alpha_i^-)/2}$, the first equality in \eqref{eq:RR-correction-descends} is the identity $e^{-\xi_i}z/(1-e^{-z})= e^{q_i^*(\alpha_i^++\alpha_i^-)/2}z/(2\sinh(z/2))$. For the second equality, use the fact that $z/(2\sinh(z/2))$ is even and the projective-bundle relation gives $z^2=q_i^*((\alpha_i^+-\alpha_i^-)^2)$. Hence
\begin{equation}
  \label{eq:RR-correction-descends}
\begin{aligned}
  \ch(\sO_{p_i}(-1))\td(T_{p_i}) &= e^{q_i^*(\alpha_i^++\alpha_i^-)/2} \frac{c_1(T_{p_i})}{2\sinh(c_1(T_{p_i})/2)}\\
  &= q_i^*\left( e^{(\alpha_i^++\alpha_i^-)/2} \frac{\alpha_i^+-\alpha_i^-}{2\sinh((\alpha_i^+-\alpha_i^-)/2)} \right).
\end{aligned}
\end{equation}
The quotients are interpreted as power series.

Define $\mathfrak U_\gamma$ to be the diagonal correspondence given by cup product on $\check{\mathbb P}(\gamma)$ with
\[
  \prod_{i=1}^{a_\gamma} e^{(\alpha_i^++\alpha_i^-)/2} \frac{\alpha_i^+-\alpha_i^-}{2\sinh((\alpha_i^+-\alpha_i^-)/2)}.
\]
The relative tangent sequences give $\td(T_{\mathsf P})=\prod_{i=1}^{a_\gamma}\td(T_{p_i})$. Together with \eqref{canonical line bundle of PMod} and \eqref{eq:RR-correction-descends}, this identifies the pullback of the displayed class by $\mathsf P$ with $\ch(\mathcal V_{\mathbf{C}_{\mathbb V}})\td(T_{\mathsf P})$. The projection formula and Lemma~\ref{lem:identification to the image} therefore give \eqref{eq:endoscopic-RR-correction}.

Now fix a geometric point $b\to\mathbb V$. On the smooth connected curve $\mathbf C_b(\gamma)$, the first Chern class of an evaluation line bundle is independent of the point of evaluation. It vanishes at $s_0(\gamma)(b)$ by \eqref{rigidification}. Hence
\[
  \alpha_i^\pm|_{\check{\mathbb P}(\gamma)_b}=0 \quad\text{in }H^2(\check{\mathbb P}(\gamma)_b,\Q).
\]
Restricting \eqref{eq:RR-correction-descends} to the fiber consequently gives
\begin{equation}
  \label{claim} \left. \ch(\mathcal V_{\mathbf{C}_{\mathbb V}}) \td(T_{\mathsf P}) \right|_{\mathbb{PPM}_{\mathbf{C}_{\mathbb V},b}} =1 \quad\text{in } H^*(\mathbb{PPM}_{\mathbf{C}_{\mathbb V},b},\Q).
\end{equation}

Thus $\mathfrak U_{\gamma,0}=\mathrm{id}$, so $\mathfrak U_\gamma$ is invertible: its positive-degree part is nilpotent in cohomology. Finally, tensoring by an element of $\Gamma$ changes each evaluation line $\mathscr U_{\mathbf C_{\mathbb V},i}(\gamma)^\pm$ by a torsion line pulled back from $\mathbb V$. Hence the classes $\alpha_i^\pm$, and therefore the class defining $\mathfrak U_\gamma$, are $\Gamma$-invariant in rational cohomology. Thus $\mathfrak U_\gamma$ is $\Gamma$-equivariant.
\end{proof}

In Lemma~\ref{key lemma}, $\mathfrak U_\gamma$ is invertible in the graded algebra of cohomological correspondences. On the $\kappa_\gamma$-summand where the preceding endoscopic result applies, $[\mathscr C_{\mathbb V}]_{\#,\kappa_\gamma}$ is the homogeneous endoscopic isomorphism, whereas $\tau^{\mathrm{nor}}([Rr^{\mathrm t}_*\mathcal V_{\mathbf{C}_{\mathbb V}}])$ is its composite with the graded automorphism $\mathfrak U_\gamma$.

\subsection{Comparison of Fourier projectors}\label{subsec:comparison-Fourier-projectors}

Consider the fibrations
\[
  \xymatrix{ \check{\mathbb{P}}(\gamma) \ar[dr]^{h(\gamma)} & & [\check{\mathbb{P}}(\gamma)/\Gamma] \ar[dl] \\ & \mathbb V .& }
\]
and their Fourier correspondences
\begin{align*}
  \tau^{\mathrm{nor}}(\mathscr{F}_{\mathbb V}(\gamma)) & =\sum_{j}\mathfrak{F}_{\mathbb V,j}(\gamma)=\sum_{j, \eta\in \Gamma}\mathfrak{F}_{\mathbb V,\eta,j}(\gamma) \\
  \tau^{\mathrm{nor}}(\mathscr{F}_{\mathbb V}^{-1}(\gamma)) & =\sum_{j}\mathfrak{F}_{\mathbb V,j}^{-1}(\gamma)=\sum_{j, \eta\in \Gamma}\mathfrak{F}_{\mathbb V,\eta,j}^{-1}(\gamma),
\end{align*}

Here $(\gamma)$ specifies the relative endoscopic family, $\eta$ labels an inertia sector, and $j$ records correspondence degree $j-(g_P-a_\gamma)$. Define the Fourier projectors by
\begin{align*}
  \mathfrak{p}_{\mathbb V , k}(\gamma)= \sum_{j\leq k}\mathfrak{F}_{\mathbb V,j}^{-1}(\gamma)\circ \mathfrak{F}_{\mathbb V,2(g_P-a_\gamma)-j}(\gamma)
\end{align*}

The kernel $\mathscr F_{\mathbb V}(\gamma)$ retains the fixed descent structure specified in Subsection~\ref{subsec:endoscopic-RR-comparison}. The equivariant structure on $\mathscr F_{\mathbb V}^{-1}(\gamma)$ is induced by relative duality. Tensoring the forward kernel by $p_2^*\mathbb C_\chi$ tensors its adjoint kernel by $p_1^*\mathbb C_{\chi^{-1}}$. On the $\eta$-sector, their homogeneous Fourier components are multiplied by $\chi(\eta)$ and $\chi(\eta)^{-1}$, respectively. These degree-zero factors cancel in every summand defining $\mathfrak p_{\mathbb V,k}(\gamma)$.

The spectral algebra over $\Sigma_{\langle\gamma\rangle}\times\mathbb V$ is, as a module, the pullback of $\bigoplus_{\ell=0}^{n/\operatorname{ord}(\gamma)-1} \sO_{\Sigma_{\langle\gamma\rangle}} (-\ell\pi_{\langle\gamma\rangle}^*D)$. Hence the Lie bundle of its Jacobian has the $K_0$-class of a trivial vector bundle. Since the differential of the norm splits by $\frac1n\pi_{\mathbb V}(\gamma)^*$, we obtain
\[
  [\operatorname{Lie}\check{\mathbb P}(\gamma)] =(g_P-a_\gamma)[\sO_{\mathbb V}].
\]
Thus the relative Todd class and the Chern character of the relative dualizing line are both $1$. A translation has a fixed point only when it is trivial, so every nonempty sector has zero moving normal bundle. The normalized Fourier components are therefore the eigenvalue-weighted Chern components of the Poincar\'e line and its dual, with the shift in the inverse kernel retained.

The homomorphism $[n]-\pi_{\mathbb V}(\gamma)^*\circ\Nm$ from $J^0_{\mathbf C_{\mathbb V}(\gamma)}$ has image in $\check{\mathbb P}(\gamma)^0$ and restricts to $[n]$ on $\check{\mathbb P}(\gamma)$. Its component group is consequently killed by $n$. Choose $N>1$ with $N\equiv1\pmod n$. Multiplication by $N$ preserves the components, commutes with $\Gamma$, and induces a representable finite \'etale map of degree $N^{2(g_P-a_\gamma)}$ on every nonempty quotient sector.

The biextension identities, with the fixed descent structure, give
\[
  ([N]\times\id)^*\mathscr F_{\mathbb V}(\gamma) \simeq \mathscr F_{\mathbb V}(\gamma)^{\otimes N} \simeq (\id\times[N])^*\mathscr F_{\mathbb V}(\gamma).
\]
Since $N\equiv1\pmod n$, the sector eigenvalues are unchanged. Consequently, the degree-$j$ normalized class scales by $N^j$ under either pullback, as does the corresponding inverse class. This is the homogeneity used in \cite[Subsection~3.4]{MSY23}.

Fix a nonempty $\eta$-sector. Pullback on the first factor realizes precomposition by $[N]_*$, and pullback on the second realizes postcomposition by $[N]^*$. The finite \'etale trace identity gives
\begin{align*}
  N^{j_1+j_2} \mathfrak F_{\mathbb V,\eta,j_1}^{-1}(\gamma) \circ\mathfrak F_{\mathbb V,\eta,j_2}(\gamma) &= \mathfrak F_{\mathbb V,\eta,j_1}^{-1}(\gamma) \circ[N]_*\circ[N]^* \circ\mathfrak F_{\mathbb V,\eta,j_2}(\gamma)\\
  &= N^{2(g_P-a_\gamma)} \mathfrak F_{\mathbb V,\eta,j_1}^{-1}(\gamma) \circ\mathfrak F_{\mathbb V,\eta,j_2}(\gamma).
\end{align*}
The same calculation for the reverse composition uses $[N]$ on the intermediate scheme $\check{\mathbb P}(\gamma)$. Thus
\begin{equation}
  \label{eq:relative-reverse-FV}
\begin{aligned}
  \mathfrak F_{\mathbb V,\gamma,j_1}^{-1}(\gamma) \circ\mathfrak F_{\mathbb V,\gamma,j_2}(\gamma)&=0,\\
  \mathfrak F_{\mathbb V,\gamma,j_1}(\gamma) \circ\mathfrak F_{\mathbb V,\gamma,j_2}^{-1}(\gamma)&=0
\end{aligned}
  \qquad\text{if }j_1+j_2\neq2(g_P-a_\gamma).
\end{equation}
The same identities hold on every nonempty $\eta$-sector.

For the source inverse identity, apply \cite[Proposition~4.5]{GS22} to the smooth family $\mathbf C_{\mathbb V}(\gamma)\to\Sigma\times\mathbb V$. Its descended kernel is $\mathscr F_{\mathbb V}(\gamma)$, using the quotient presentation of Lemma~\ref{etale cover of J bar}. Proposition~\ref{prop:normalized-tau-composition} therefore gives
\[
  \left(\sum_j\mathfrak F_{\mathbb V,j}^{-1}(\gamma)\right) \circ \left(\sum_j\mathfrak F_{\mathbb V,j}(\gamma)\right) =\id.
\]
The translation calculation of Lemma~\ref{sectorwise isomorphism}, applied to the kernel and its dual, gives the same sector--character compatibility.

\begin{proposition}\label{abelian fact}
We have
\begin{equation}
  \label{image on endoscopic locus} {}^p\mathcal{H}^{i+\dim \mathbb V}\bigl(\mathrm{Im}\mathfrak{p}_{\mathbb V , k}(\gamma)\bigr)\simeq {}^p\mathcal{H}^{i+\dim \mathbb V} \bigl(Rh(\gamma)_*\mathbb C_{\check{\mathbb{P}}(\gamma)}\bigr)
\end{equation}
for every $i\leq k$, while ${}^p\mathcal{H}^{i+\dim \mathbb V} \bigl(\mathrm{Im}\mathfrak{p}_{\mathbb V,k}(\gamma)\bigr)=0$ for $i>k$.
\end{proposition}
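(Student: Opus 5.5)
The plan is to use that $h(\gamma):\check{\mathbb P}(\gamma)\to\mathbb V$ is smooth and proper, so everything reduces to identifying the Fourier projectors with the projections onto the summands of the decomposition theorem for abelian group schemes \cite{DM91,Ku93}. Set $g':=g_P-a_\gamma$. Since $h(\gamma)$ is smooth proper of relative dimension $g'$, Deligne's theorem gives a splitting
\[
  Rh(\gamma)_*\mathbb C_{\check{\mathbb P}(\gamma)}\simeq\bigoplus_{i}R^ih(\gamma)_*\mathbb C[-i],
\]
with each $R^ih(\gamma)_*\mathbb C$ a local system. Thus ${}^p\mathcal H^{i+\dim\mathbb V}$ of this complex is $R^ih(\gamma)_*\mathbb C[\dim\mathbb V]$. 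It therefore suffices to prove the following: $\mathfrak F^{-1}_{\mathbb V,j}(\gamma)\circ\mathfrak F_{\mathbb V,2g'-j}(\gamma)$ is the projector onto the summand $R^jh(\gamma)_*\mathbb C[-j]$. Summing over $j\le k$ then gives both assertions of \eqref{image on endoscopic locus}.

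First I will show that $R^jh(\gamma)_*\mathbb C$ is the $N^j$-eigensheaf of $[N]^*$, where $N>1$ and $N\equiv1\pmod n$ are chosen as in the preceding paragraphs.
\begin{itemize}
\item Since the component group of $\check{\mathbb P}(\gamma)$ is killed by $n$, multiplication $[N]$ maps each fiberwise component $x+A$ to itself; here $A$ is the identity component, an abelian variety.
\item Writing $[N](x+a)=x+(Nx-x)+Na$, the map $[N]$ on $x+A$ differs from multiplication by $N$ on $A$ only by translations.
\item Translations act trivially on the cohomology of an abelian variety. Hence $[N]^*=N^j$ on $R^j$, and $[N]_*=N^{2g'-j}$ on $R^j$, using $[N]_*[N]^*=N^{2g'}$.
\end{itemize}

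Next I use the homogeneity established above: pullback of the kernel along $[N]\times\id$ scales the degree-$j$ normalized component by $N^j$, and it realizes precomposition by $[N]_*$. So on every sector $\eta$ we get $\mathfrak F_{\mathbb V,\eta,j}(\gamma)\circ[N]_*=N^j\mathfrak F_{\mathbb V,\eta,j}(\gamma)$. Restricting to $R^i$, where $[N]_*=N^{2g'-i}$, this shows that $\mathfrak F_{\mathbb V,j}(\gamma)$ vanishes on $R^i[-i]$ unless $j=2g'-i$. Symmetrically, postcomposition by $[N]^*$ shows that the image of $\mathfrak F^{-1}_{\mathbb V,j}(\gamma)$ lies in $R^j[-j]$. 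Hence $e_j:=\mathfrak F^{-1}_{\mathbb V,j}(\gamma)\circ\mathfrak F_{\mathbb V,2g'-j}(\gamma)$ kills $R^i[-i]$ for $i\neq j$ and has image in $R^j[-j]$.

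The inverse identity $\bigl(\sum_j\mathfrak F^{-1}_{\mathbb V,j}(\gamma)\bigr)\circ\bigl(\sum_j\mathfrak F_{\mathbb V,j}(\gamma)\bigr)=\id$, restricted to degree zero and combined with \eqref{eq:relative-reverse-FV}, gives $\sum_je_j=\id$. Restricting this identity to $R^i[-i]$, every term with $j\ne i$ vanishes, so $e_i$ is the identity on $R^i[-i]$. Therefore $\mathfrak p_{\mathbb V,k}(\gamma)=\sum_{j\le k}e_j$ is exactly the projection onto $\bigoplus_{j\le k}R^j[-j]$. This yields \eqref{image on endoscopic locus} for $i\le k$ and the vanishing for $i>k$.

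The step I expect to need most care is the eigenvalue computation on the non-identity components and on the nontrivial inertia sectors $\eta$. One must check that $[N]$ preserves every component and every quotient sector, and that the realization of pullback along $[N]\times\id$ as precomposition by $[N]_*$ holds sectorwise, with the sector eigenvalues unchanged because $N\equiv1\pmod n$. If a direct check is awkward, I would first pass to the $\Gamma$-covering presentation and work componentwise on $\check{\mathbb P}(\gamma)$, where the scalar action of $[N]^*$ on each degree is elementary.
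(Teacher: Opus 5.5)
Your proposal is correct and follows essentially the same route as the paper. Both arguments rest on the same steps: $[N]$ with $N\equiv1\pmod n$ preserves each component and acts on $R^i$ by $N^i$, since translations act trivially; the Fourier components are homogeneous under $[N]\times\id$ and $\id\times[N]$; and the degree-zero part of the inverse identity then shows that $\mathfrak p_{\mathbb V,k}(\gamma)$ is the identity on $R^i$ for $i\le k$ and zero otherwise. The only difference is that the paper checks this on perverse cohomology (equivalently on geometric fibers), whereas you identify the projector in the derived category; for that you should use the canonical $[N]^*$-eigensplitting of Deninger--Murre rather than an arbitrary Deligne splitting, which your eigensheaf step implicitly does.
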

\begin{proof}
The source inverse identity and \eqref{eq:relative-reverse-FV} show that $\mathfrak p_{\mathbb V,k}(\gamma)$ is a projector. Since $h(\gamma)$ is smooth and proper, it suffices to check its action on the cohomology of geometric fibers.

Multiplication by $N\equiv1\pmod n$ preserves each connected component. After choosing a point on such a component, its restriction is identified with multiplication by $N$ on the identity component followed by a translation of the identity component. Such translations act trivially on cohomology. Hence $[N]^*$ acts on the $i$-th cohomology of each component by $N^i$, and $[N]_*$ acts by $N^{2(g_P-a_\gamma)-i}$.

$\mathfrak F_{\mathbb V,\eta,2(g_P-a_\gamma)-j}(\gamma)$ vanishes on this cohomology unless $j=i$. Thus $\mathfrak p_{\mathbb V,k}(\gamma)$ acts as the identity for $i\leq k$ and as zero for $i>k$. This is the usual Fourier-projector description of \cite[Section~2]{Ku93}, with homological realization as in \cite[Remarks after Corollary~3.2]{DM91}; see also \cite[Subsection~2.5.2]{MSY23}. Finally, $R^i h(\gamma)_*\CC[-i]$ lies in perverse degree $i+\dim\mathbb V$, proving the proposition.
\end{proof}

Recall that by Theorem~\ref{MS's endoscopic},
\[
  [\mathscr C_{\mathbb V}]_{\#,\kappa_\gamma}: \bigl((Rh_*\mathbb C_{\mathbf M})\big|_{\mathbb V}\bigr)_{\kappa_\gamma} \xrightarrow{\ \simeq\ } \bigl(Rh(\gamma)_*\mathbb C_{\check{\mathbb P}(\gamma)}\bigr) _{\kappa_\gamma}[-2a_\gamma].
\]

\begin{proposition}\label{prop:comparison-of-Fourier-projectors}
We have
\begin{equation}
  \label{eq:projector-comparison-over-V} [\mathscr C_{\mathbb V}]_{\#,\kappa_\gamma} \circ\mathfrak p_{k,\gamma}\big|_{\mathbb V} = \mathfrak p_{\mathbb V,k-a_\gamma}(\gamma) \circ[\mathscr C_{\mathbb V}]_{\#,\kappa_\gamma}.
\end{equation}
\end{proposition}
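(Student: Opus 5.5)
Combine Proposition~\ref{realization of compositin of functors} with Lemma~\ref{key lemma} to express the restricted ambient Fourier class through the endoscopic correspondence. Then use Fourier vanishing on both sides to strip off the positive-degree Riemann--Roch correction.

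\textbf{Step 1.} By \eqref{eq:nodal-ambient-Fourier-restriction}, Proposition~\ref{realization of compositin of functors} and Lemma~\ref{key lemma},
\[
  \overline\nu^*\circ\mathfrak F_\gamma|_{\mathbb V}
  =\chi_\gamma(\gamma)\,\overline{\mathsf T}^{\,*}_{L_0,s_0}\circ\mathfrak F_{\mathbb V,\gamma}(\gamma)\circ\mathfrak U_\gamma\circ[\mathscr C_{\mathbb V}],
\]
where $\mathfrak F_\gamma=\sum_i\mathfrak F_{\gamma,i}$ and $\mathfrak F_{\mathbb V,\gamma}(\gamma)=\sum_j\mathfrak F_{\mathbb V,\gamma,j}(\gamma)$. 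By \eqref{eq:normalization-identifies-fixed-sector}, $\overline\nu^*$ is an isomorphism on the $\gamma$-sector, and $\overline{\mathsf T}_{L_0,s_0}$ is an isomorphism. Both act in degree zero, so they commute with the grading. By Lemma~\ref{sectorwise isomorphism}, $\mathfrak F_\gamma$ only sees the $\kappa_\gamma$-summand, so we may replace $[\mathscr C_{\mathbb V}]$ by $[\mathscr C_{\mathbb V}]_{\#,\kappa_\gamma}$.

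Next match homogeneous components. $\mathfrak F_{\gamma,i}$ has correspondence degree $i-g_P$. $\mathfrak F_{\mathbb V,\gamma,j}(\gamma)$ has degree $j-(g_P-a_\gamma)$. $[\mathscr C_{\mathbb V}]$ shifts by $2a_\gamma$ but carries the age shift $a_\gamma$ of the $\gamma$-sector. Matching degrees gives
\[
  \Psi\circ\mathfrak F_{\gamma,i}|_{\mathbb V}
  =\sum_{s\ge0}\mathfrak F_{\mathbb V,\gamma,i-a_\gamma-s}(\gamma)\circ\mathfrak U_{\gamma,s}\circ[\mathscr C_{\mathbb V}]_{\#,\kappa_\gamma},
\]
where $\Psi$ is the degree-zero isomorphism $\chi_\gamma(\gamma)^{-1}(\overline{\mathsf T}^{*})^{-1}\overline\nu^*$. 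The exact index bookkeeping (whether the shift is $a_\gamma$ and whether $\mathfrak U_{\gamma,s}$ lowers or raises the index) must be checked carefully. It is a likely source of sign errors but not a conceptual one.

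\textbf{Step 2.} Remove $\mathfrak U_\gamma$. The relative Fourier vanishing \eqref{eq:relative-reverse-FV}, together with the inverse identity, shows that $\mathfrak F_{\mathbb V,\gamma,j}(\gamma)$ kills everything except the $j$-graded piece (via $[N]^*$-eigenvalues, Proposition~\ref{abelian fact}). On fiber cohomology, $\mathfrak U_{\gamma,s}$ for $s>0$ is cup product with classes $\alpha_i^\pm$, which vanish fiberwise (see \eqref{claim}). So, working over an open set where all perverse sheaves are local systems, $\mathfrak U_{\gamma,s}$ acts by zero on the associated graded pieces. The cleaner route, which I will try first, is this:
\[
  \mathfrak F_{\mathbb V,j}(\gamma)\circ\mathfrak U_{\gamma,s}
  =\mathfrak F_{\mathbb V,j}(\gamma)\circ\mathfrak U_{\gamma,s}\circ\textstyle\sum_{j'}\mathfrak F^{-1}_{\mathbb V,j'}\circ\mathfrak F_{\mathbb V,\cdot}.
\]
The middle composition $\mathfrak F_{\mathbb V,j}\circ\mathfrak U_{\gamma,s}\circ\mathfrak F^{-1}_{\mathbb V,j'}$ is homogeneous, and $\mathfrak U_{\gamma,s}$ is cup product with a class pulled back under a homomorphism-equivariant setting. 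The $[N]$-homogeneity argument then forces it to vanish unless the degrees match, which pushes $\mathfrak U$ to act by strictly raising the filtration. Equivalently, $\mathfrak U_\gamma$ preserves $\Im\mathfrak p_{\mathbb V,k}(\gamma)$ and induces the identity on the graded pieces. I expect this step to be the main obstacle. The $\alpha_i^\pm$ are not multiplicative under $[N]$ on the nose (evaluation lines scale as $N$), so $\mathfrak U_{\gamma,s}$ has $[N]$-weight bounded by $s$, and I need a triangularity rather than a vanishing statement. Since $\mathfrak U_\gamma$ is unipotent and triangular with respect to the Fourier filtration, it preserves the images $\Im\mathfrak p_{\mathbb V,k}(\gamma)$ and the kernels $\Im\mathfrak q$, and that suffices.

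\textbf{Step 3.} Conclude. Using Step~1 together with the analogous statement for $\mathfrak F^{-1}_{\gamma}$, obtained by adjunction or by inverting both sides via Proposition~\ref{inverse to each other} and the source inverse identity, we get
\[
  [\mathscr C_{\mathbb V}]_{\#,\kappa_\gamma}\circ\mathfrak F^{-1}_\gamma\circ\Psi^{-1}=\mathfrak U_\gamma^{-1}\circ\mathfrak F^{-1}_{\mathbb V,\gamma}(\gamma)
\]
up to grading. Substituting into $\mathfrak p_{k,\gamma}=\sum_{i\le k}\mathfrak F^{-1}_{\gamma,i}\circ\mathfrak F_{\gamma,2g_P-i}$ gives
\[
  [\mathscr C_{\mathbb V}]_{\#,\kappa_\gamma}\circ\mathfrak p_{k,\gamma}
  =\mathfrak U_\gamma^{-1}\circ\mathfrak p'_{k}\circ\mathfrak U_\gamma\circ[\mathscr C_{\mathbb V}]_{\#,\kappa_\gamma},
\]
where $\mathfrak p'_k$ is the Fourier projector of $\mathbb V$ with index shifted by $a_\gamma$, restricted to the $\gamma$-sector. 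The factors $\chi_\gamma(\gamma)$ cancel, as in the discussion after the definition of $\mathfrak p_{\mathbb V,k}(\gamma)$. By Lemma~\ref{sectorwise isomorphism} applied to the endoscopic family, on the $\kappa_\gamma$-summand only the $\gamma$-sector contributes, so $\mathfrak p'_k=\mathfrak p_{\mathbb V,k-a_\gamma}(\gamma)$ there. By Step~2, $\mathfrak U_\gamma$ commutes with $\mathfrak p_{\mathbb V,k-a_\gamma}(\gamma)$: it preserves both the image and the complementary image, hence commutes with the projector. This yields \eqref{eq:projector-comparison-over-V}.
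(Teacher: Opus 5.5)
Your Step~1 is the paper's first step. Steps~2--3, however, contain a genuine error.

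The substitution in Step~3 does not produce $\mathfrak U_\gamma^{-1}\circ\mathfrak p'_k\circ\mathfrak U_\gamma$. The projectors are built from homogeneous components. The index-$j$ component of $\mathfrak F_{\mathbb V,\gamma}(\gamma)\circ\mathfrak U_\gamma$ is $\sum_{s\geq0}\mathfrak F_{\mathbb V,\gamma,j-s}(\gamma)\circ\mathfrak U_{\gamma,s}$, not $\mathfrak F_{\mathbb V,\gamma,j}(\gamma)\circ\mathfrak U_\gamma$, and likewise for the inverse. What the substitution actually gives is
\[
  [\mathscr C_{\mathbb V}]_{\#,\kappa_\gamma}\circ\mathfrak p_{k,\gamma}|_{\mathbb V}
  =\sum_{i\leq k}\sum_{r,s\geq0}(\mathfrak U_\gamma^{-1})_r\circ\mathfrak F^{-1}_{\mathbb V,\gamma,i-a_\gamma-r}(\gamma)\circ\mathfrak F_{\mathbb V,\gamma,2g_P-i-a_\gamma-s}(\gamma)\circ\mathfrak U_{\gamma,s}\circ[\mathscr C_{\mathbb V}]_{\#,\kappa_\gamma}.
\]
This is where \eqref{eq:relative-reverse-FV} should be used. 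The two middle indices sum to $2(g_P-a_\gamma)-(r+s)$, so every term with $r+s>0$ vanishes. Only $\mathfrak p_{\mathbb V,k-a_\gamma}(\gamma)\circ[\mathscr C_{\mathbb V}]_{\#,\kappa_\gamma}$ survives. That is the paper's argument, and it needs no commutation between $\mathfrak U_\gamma$ and the projector.

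Step~2 cannot be salvaged as stated, because the commutation it asserts is false in general. As you note, $\mathfrak U_{\gamma,s}$ is cup product with a degree-$s$ polynomial in the $\alpha_i^\pm$, each of $[N]^*$-weight $N$. So $\mathfrak U_{\gamma,s}$ carries the summand $R^jh(\gamma)_*\CC[-j]$ into the $(j+s)$-th summand. A unipotent map whose off-diagonal parts strictly raise the Fourier index preserves $\Im\mathfrak q_{k'+1}$, not $\Im\mathfrak p_{k'}$. Commuting with every $\mathfrak p_{\mathbb V,k'}(\gamma)$ would force $\mathfrak U_{\gamma,s}=0$ for $s>0$.

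The fiberwise vanishing \eqref{claim} does not give this. It kills only the $R^2h(\gamma)_*$-components of the $\alpha_i^\pm$. Their components in $H^1(\mathbb V,R^1h(\gamma)_*\CC)$ define possibly nonzero morphisms $R^jh(\gamma)_*\CC[-j]\to R^{j+1}h(\gamma)_*\CC[-j+1]$.

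A two-step model already shows the failure. Take $V_0\oplus V_1$ with $u:V_0\to V_1$ nonzero. Then $(1-u)\,\mathrm{pr}_{V_0}\,(1+u)\neq\mathrm{pr}_{V_0}$, whereas the correctly transported projector is $\mathrm{pr}_{V_0}$. So the intermediate identity in your Step~3 is wrong, not merely unjustified.
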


\begin{proof}
Throughout this proof, we restrict $\mathfrak U_\gamma$, its inverse, and their homogeneous components to the $\kappa_\gamma$-summand.

The isomorphism \eqref{eq:normalization-identifies-fixed-sector} induces an isomorphism $\overline\nu$ between the corresponding quotient sectors.

The age shift $[-2a_\gamma]$ on the ambient sector is retained under this transport. By Lemma~\ref{key lemma} and Proposition~\ref{realization of compositin of functors}, the comparison on the $\kappa_\gamma$-summand is
\begin{equation}
  \label{compo of functors used} \overline\nu^*\circ \left(\sum_i\mathfrak F_{\gamma,i}\right)\Big|_{\mathbb V} = \chi_\gamma(\gamma)\, \overline{\mathsf T}_{L_0,s_0}^{\,*}\circ \left(\sum_j\mathfrak F_{\mathbb V,\gamma,j}(\gamma)\right) \circ\mathfrak U_\gamma \circ[\mathscr C_{\mathbb V}]_{\#,\kappa_\gamma}.
\end{equation}

The endoscopic correspondence lowers ordinary cohomological degree by $2a_\gamma$. The scalar $\chi_\gamma(\gamma)$ has degree zero. Taking homogeneous components, including the ambient age shift, gives
\begin{equation}
  \label{composition1} \overline\nu^*\circ\mathfrak F_{\gamma,i}|_{\mathbb V} = \chi_\gamma(\gamma)\, \overline{\mathsf T}_{L_0,s_0}^{\,*}\circ \sum_{s\geq0} \mathfrak F_{\mathbb V,\gamma,i-a_\gamma-s}(\gamma) \circ\mathfrak U_{\gamma,s} \circ[\mathscr C_{\mathbb V}]_{\#,\kappa_\gamma}.
\end{equation}
The left-hand side of \eqref{compo of functors used} is invertible on the $\kappa_\gamma$-summand. Since $\chi_\gamma(\gamma)\neq0$ and $\overline{\mathsf T}_{L_0,s_0}^{\,*}$, $\mathfrak U_\gamma$, and $[\mathscr C_{\mathbb V}]_{\#,\kappa_\gamma}$ are invertible, the same holds for $\sum_j\mathfrak F_{\mathbb V,\gamma,j}(\gamma)$ on this summand, and its inverse is $\sum_j\mathfrak F_{\mathbb V,\gamma,j}^{-1}(\gamma)$. Inverting \eqref{compo of functors used} and taking homogeneous components therefore gives
\begin{equation}
  \label{composition2} [\mathscr C_{\mathbb V}]_{\#,\kappa_\gamma} \circ\mathfrak F_{\gamma,i}^{-1}|_{\mathbb V} = \chi_\gamma(\gamma)^{-1} \sum_{r\geq0}(\mathfrak U_\gamma^{-1})_r \circ\mathfrak F_{\mathbb V,\gamma,i-a_\gamma-r}^{-1}(\gamma) \circ\bigl(\overline{\mathsf T}_{L_0,s_0}^{\,*}\bigr)^{-1} \circ\overline\nu^*.
\end{equation}
Combining \eqref{composition1} and \eqref{composition2}, the maps $\overline\nu^*$ and $\overline{\mathsf T}_{L_0,s_0}^{\,*}$ cancel with their inverses. The character factors also cancel. We obtain
\begin{align*}
  [\mathscr C_{\mathbb V}]_{\#,\kappa_\gamma} \circ\mathfrak p_{k,\gamma}|_{\mathbb V} = \sum_{\substack{i\leq k\\r,s\geq0}} (\mathfrak U_\gamma^{-1})_r \circ\mathfrak F_{\mathbb V,\gamma,i-a_\gamma-r}^{-1}(\gamma) \circ\mathfrak F_{\mathbb V,\gamma,2g_P-i-a_\gamma-s}(\gamma) \circ\mathfrak U_{\gamma,s} \circ[\mathscr C_{\mathbb V}]_{\#,\kappa_\gamma}.
\end{align*}
The sum of the two middle Fourier indices is
\[
  (i-a_\gamma-r)+(2g_P-i-a_\gamma-s) =2(g_P-a_\gamma)-(r+s).
\]
If $r+s>0$, the middle composition vanishes by \eqref{eq:relative-reverse-FV}. The remaining term has $r=s=0$, giving
\[
  \sum_{i\leq k} \mathfrak F_{\mathbb V,\gamma,i-a_\gamma}^{-1}(\gamma) \circ\mathfrak F_{\mathbb V,\gamma,2g_P-i-a_\gamma}(\gamma) \circ[\mathscr C_{\mathbb V}]_{\#,\kappa_\gamma}.
\]
By the definition of $\mathfrak p_{\mathbb V,k-a_\gamma}(\gamma)$ and the sector--character compatibility above, this is the right-hand side of \eqref{eq:projector-comparison-over-V}.
\end{proof}

\begin{theorem}\label{multiplicativity of ellptic locus}
Assume $d>2g-2$ is even. For the Fourier projectors $\mathfrak p_k^L$ in \eqref{eq:global-twisted-projectors}, there is an identification compatible with the inclusion in $Rh_*\mathbb C_{\mathbf M}$:
\begin{equation}
  \mathrm{Im} \mathfrak{p}^L_k= {}^p\tau_{\leq k+\dim \mathbf B}Rh_*\mathbb C_{\mathbf{M}}.
\end{equation}
Consequently, Theorem~\ref{coro:multi for twisted} gives a factorization of the cup product
\begin{equation}
  \cup: {}^p\tau_{\leq k+\dim \mathbf B}Rh_*\mathbb C_{\mathbf{M}}\otimes {}^p\tau_{\leq l+\dim \mathbf B}Rh_*\mathbb C_{\mathbf{M}} \longrightarrow {}^p\tau_{\leq k+l+\dim \mathbf B}Rh_*\mathbb C_{\mathbf{M}}.
\end{equation}
\end{theorem}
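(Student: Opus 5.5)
Since $\mathfrak p_k^L$ is $\Gamma$-equivariant and each sector piece $\mathfrak p^L_{k,\gamma}$ acts only on the $\kappa_\gamma$-summand, I plan to prove the identification one summand at a time:
\[
  \Im\mathfrak p^L_{k,\gamma}={}^p\tau_{\leq k+\dim\mathbf B}(Rh_*\CC_{\mathbf M})_{\kappa_\gamma}.
\]
Perverse truncation commutes with the character decomposition. Theorem~\ref{coro:multi for twisted} shows that $\mathfrak p_k^L$ and $\mathfrak q^L_{k+1}$ are complementary idempotents on a complex in $D^b_c(\mathbf B)$, so idempotent completeness gives a splitting $(Rh_*\CC_{\mathbf M})_{\kappa_\gamma}=\Im\mathfrak p^L_{k,\gamma}\oplus\Im\mathfrak q^L_{k+1,\gamma}$. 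By the decomposition theorem both summands are direct sums of shifted semisimple perverse sheaves. It therefore suffices to show two things: ${}^p\mathcal H^{j}(\Im\mathfrak p^L_{k,\gamma})=0$ for $j>k+\dim\mathbf B$, and ${}^p\mathcal H^{j}(\Im\mathfrak q^L_{k+1,\gamma})=0$ for $j\leq k+\dim\mathbf B$. Once the identification holds, the multiplicativity statement follows immediately from \eqref{eq:moduli-projector-multiplicativity}.

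\textbf{Step 1: reduce to an open subset.} By Proposition~\ref{support theorem}, every nonzero perverse cohomology sheaf of $(Rh_*\CC_{\mathbf M})_{\kappa_\gamma}$ has $\mathbf B_\gamma$ as its unique strict support. These sheaves are therefore intermediate extensions of local systems on a dense open subset of $\mathbf B_\gamma$. The same holds for the direct summands ${}^p\mathcal H^j(\Im\mathfrak p^L_{k,\gamma})$ and ${}^p\mathcal H^j(\Im\mathfrak q^L_{k+1,\gamma})$. A simple perverse sheaf of this kind vanishes as soon as its restriction to a dense open subset of the support vanishes. So I may test vanishing over the image of $U$, or after pulling back along the étale map $\mathbb V\to\mathbf B_\gamma$; étale pullback preserves perverse degrees up to the shift by $\dim\mathbf B_\gamma=\dim\mathbf B-a_\gamma$. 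I shrink further so that all relevant perverse cohomology sheaves restrict to shifted local systems there.

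\textbf{Step 2: compare projectors on $\mathbb V$.} By Lemma~\ref{lem:twisted-untwisted-projector-comparison}, $\mathfrak p^L_{k,\gamma}$ and $\mathfrak p_{k,\gamma}$ define the same sub-local systems over $\mathbb V$. Proposition~\ref{prop:comparison-of-Fourier-projectors} then says that the isomorphism $[\mathscr C_{\mathbb V}]_{\#,\kappa_\gamma}$, which includes the shift $[-2a_\gamma]$, carries $\Im\mathfrak p_{k,\gamma}|_{\mathbb V}$ onto $\Im\mathfrak p_{\mathbb V,k-a_\gamma}(\gamma)$, restricted to the $\kappa_\gamma$-part. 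Proposition~\ref{abelian fact} identifies the latter with the perverse truncation at degree $k-a_\gamma+\dim\mathbb V$ of $Rh(\gamma)_*\CC$, and similarly identifies the complementary projector with the upper part. The key bookkeeping is the degree count:
\[
  (k-a_\gamma)+\dim\mathbb V+2a_\gamma=k+\dim\mathbf B_\gamma+a_\gamma=k+\dim\mathbf B.
\]
Here the shift $[-2a_\gamma]$ in the endoscopic isomorphism contributes $2a_\gamma$, and $\dim\mathbb V=\dim\mathbf B-a_\gamma$. Hence, over $\mathbb V$, $\Im\mathfrak p^L_{k,\gamma}$ has perverse cohomology only in degrees at most $k+\dim\mathbf B$, and $\Im\mathfrak q^L_{k+1,\gamma}$ only in degrees above it. This is exactly the vanishing needed in Step~1.

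\textbf{Main obstacle.} I expect the hardest part to be making Step~2 precise. It needs care on three points. First, the sublocal systems defined by the images must be transported correctly through the étale pullback and through $[\mathscr C_{\mathbb V}]$, which is a morphism of complexes rather than a morphism of perverse sheaves. Second, the normalization $\dim\mathbb V$ versus $\dim\mathbf B$ in the perverse indexing has to be tracked consistently. Third, I must check that equality of images on the stalks of the local systems over $\mathbb V$ is enough, given that everything is semisimple. To handle this, I would pass to each perverse cohomology functor ${}^p\mathcal H^j$: the projectors induce idempotent endomorphisms of the semisimple perverse sheaves ${}^p\mathcal H^j$. By Step~1 these endomorphisms are determined by their restriction to a dense open subset of the support, and Step~2 computes that restriction.
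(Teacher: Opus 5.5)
Your proposal is correct and follows the paper's proof essentially step for step. You reduce to each $\kappa_\gamma$-summand, use the support theorem to test on the étale cover $\mathbb V$ of a dense open subset of $\mathbf B_\gamma$, and chain Lemma~\ref{lem:twisted-untwisted-projector-comparison}, Proposition~\ref{prop:comparison-of-Fourier-projectors} and Proposition~\ref{abelian fact} with the same degree count. Your vanishing criterion for $\Im\mathfrak p^L_{k,\gamma}$ and $\Im\mathfrak q^L_{k+1,\gamma}$ is equivalent to the paper's condition on the maps of ${}^p\mathcal H^{i+\dim\mathbf B}$, because the splitting makes that map split injective with complement ${}^p\mathcal H^{i+\dim\mathbf B}(\Im\mathfrak q^L_{k+1,\gamma})$.

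One small correction: étale pullback and pushforward from the closed subset $\mathbf B_\gamma$ are both perverse $t$-exact, so they introduce no shift by $\dim\mathbf B_\gamma$. Your final count $(k-a_\gamma)+\dim\mathbb V+2a_\gamma=k+\dim\mathbf B$ correctly uses no such shift.
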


\begin{proof}
The decomposition $Rh_*\mathbb C_{\mathbf{M}}=\oplus_{\kappa\in \hat{\Gamma}}(Rh_*\mathbb C_{\mathbf{M}})_\kappa$ is compatible with perverse truncation:
\[
  {}^p\tau_{\leq k+\dim \mathbf B}Rh_*\mathbb C_{\mathbf{M}}=\bigoplus_{\kappa\in \hat{\Gamma}}{}^p\tau_{\leq k+\dim \mathbf B}(Rh_*\mathbb C_{\mathbf{M}})_\kappa.
\]
By the sector--character compatibility following \eqref{eq:global-twisted-sector-projectors}, similar to Lemma~\ref{sectorwise isomorphism}, $\mathfrak p_k^L=\sum_{\gamma\in\Gamma}\mathfrak p_{k,\gamma}^L$, and $\mathfrak p_{k,\gamma}^L$ acts only on $(Rh_*\mathbb C_{\mathbf M})_{\kappa_\gamma}$. Thus it remains to show that
\begin{equation}
  \mathrm{Im} \mathfrak{p}^L_{k, \gamma}= {}^p\tau_{\leq k+\dim \mathbf B}(Rh_*\mathbb C_{\mathbf{M}})_{\kappa_\gamma}.
\end{equation}
The natural inclusion $\mathrm{Im} \mathfrak{p}^L_{k, \gamma}\rightarrow (Rh_*\mathbb C_{\mathbf{M}})_{\kappa_\gamma}$ identifies its source with the truncation if
\begin{equation}
  \label{eq:projector image} {}^p\mathcal{H}^{i+\dim \mathbf{B}}(\mathrm{Im} \mathfrak{p}^L_{k, \gamma})\longrightarrow {}^p\mathcal{H}^{i+\dim \mathbf{B}}((Rh_*\mathbb C_{\mathbf{M}})_{\kappa_\gamma})
\end{equation}
is an isomorphism for all $i\leq k$ and its source vanishes for $i>k$.

Since $\mathfrak{p}^L_{k, \gamma}$ is a projector on $(Rh_*\mathbb C_{\mathbf{M}})_{\kappa_\gamma}$, by Proposition~\ref{support theorem}, both sides of \eqref{eq:projector image} are semisimple perverse sheaves whose nonzero constituents have strict support $\mathbf{B}_\gamma$. It therefore suffices to verify these assertions after pullback to an \'{e}tale cover of a dense open subset of $\mathbf{B}_\gamma$.

We take the étale cover $\mathbb V\to U\to\mathbf B^\heartsuit(\gamma)\to\mathbf B_\gamma$ constructed above, together with its map to $B$. By Lemma~\ref{lem:twisted-untwisted-projector-comparison}, the images of $\mathfrak p_{k,\gamma}^L$ and $\mathfrak p_{k,\gamma}$ define the same sub-local systems after restriction to $\mathbb V$. By Proposition~\ref{prop:comparison-of-Fourier-projectors}, the endoscopic isomorphism of Theorem~\ref{MS's endoscopic} identifies the latter with the image of $\mathfrak p_{\mathbb V,k-a_\gamma}(\gamma)$ on $Rh(\gamma)_*\CC[-2a_\gamma]$.

Proposition~\ref{abelian fact} shows that this relative projector projects onto precisely the summands indexed by $j\leq k-a_\gamma$. Since $\dim\mathbb V=\dim\mathbf B-a_\gamma$, the shift $[-2a_\gamma]$ places such a summand in ambient perverse degree
\[
  (j+\dim\mathbb V)+2a_\gamma =(j+a_\gamma)+\dim\mathbf B.
\]
Thus it gives exactly the perverse degrees $i+\dim\mathbf B$ with $i\leq k$, and none with $i>k$. This proves \eqref{eq:projector image} over $\mathbb V$. The two sides are semisimple perverse sheaves with strict support $\mathbf B_\gamma$. Hence the isomorphism and vanishing extend from this dense \'etale open to $\mathbf B_\gamma$. Summing over $\gamma$ proves the first assertion, and the multiplicativity statement follows from Theorem~\ref{coro:multi for twisted}.
\end{proof}

\section{Multiplicativity of the perverse filtration}

We first show that products of classes in nontrivial character summands vanish.

\begin{proposition}\label{square zero}
Let $D=K_\Sigma$, or let $D$ be effective with $\deg D>2g-2$. Under the decomposition
\[
  H^*(\bf{M}_{n,L}^D,\mathbb C) =\bigoplus_{\kappa\in\widehat\Gamma} H^*(\bf{M}_{n,L}^D,\mathbb C)_\kappa,
\]
the sum of the nontrivial character spaces is a square-zero ideal. In particular, if $\kappa_1\neq1$, $\kappa_2\neq1$, and $c_i\in H^*(\bf{M}_{n,L}^D,\mathbb C)_{\kappa_i}$, then
\[
  c_1\cup c_2=0.
\]
\end{proposition}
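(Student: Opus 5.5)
The plan is to express every variant class as a Gysin pushforward from a fixed locus, and then to kill products of two such classes by a restriction-and-character argument.

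\textbf{Step 1 (variant classes are Gysin images).} Fix $\kappa=\kappa_\gamma\neq1$, and write $i_\gamma:\mathbf M^D_\gamma\hookrightarrow\mathbf M^D_{n,L}$ for the inclusion of the fixed locus, which is closed of codimension $2a_\gamma$ by Lemma~\ref{lem:age computation}. I would show that the Gysin map
\[
  i_{\gamma*}:H^{*-2a_\gamma}(\mathbf M^D_\gamma)_\kappa\longrightarrow H^*(\mathbf M^D_{n,L})_\kappa
\]
is surjective. The route is to take global sections of the isomorphism $c^D_\kappa$ of Theorem~\ref{MS's endoscopic}. I would then identify the resulting map with $i_{\gamma*}$, up to the invertible scalar ambiguity and degree-raising correction terms. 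The cycle $[\mathscr C]$ restricted to the $\kappa$-part is supported over $\mathbf B_\gamma^D$ and factors through the fixed locus. This is the same mechanism as \eqref{eq:normalization-identifies-fixed-sector} and Lemma~\ref{key lemma}, now used globally.

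For $D=K_\Sigma$, I would use the corresponding statements of \cite{MS21}, which cover the canonical case. Alternatively, since only the surjectivity of a Gysin map is needed, I would argue via the support theorem: the $\kappa$-part of $Rh_*\mathbb C$ is $i^D_{\gamma*}$ of a complex on $\mathbf B_\gamma^D$. That complex is shifted by $[-2a_\gamma]$, which is exactly the Gysin shift.

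\textbf{Step 2 (products of two Gysin classes).} Write $c_1=i_{\gamma_1*}\alpha$ and $c_2=i_{\gamma_2*}\beta$. The projection formula gives
\[
  c_1\cup c_2=i_{\gamma_1*}\bigl(\alpha\cup i_{\gamma_1}^*c_2\bigr).
\]
It therefore suffices to prove that $i_{\gamma_1}^*c_2=0$ for every $c_2$ in a nontrivial character space. To compute $i_{\gamma_1}^*i_{\gamma_2*}\beta$, I would use the excess intersection formula along $\mathbf M^D_{\gamma_1}\cap\mathbf M^D_{\gamma_2}$, which is the fixed locus of the subgroup $\langle\gamma_1,\gamma_2\rangle$. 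There are two cases.
\begin{enumerate}
\item If $\gamma_1=\gamma_2$, the formula gives $e(N_{\gamma_1})\cup\beta$. Here $N_{\gamma_1}$ is the normal bundle, which is the sum of the nontrivial eigenbundles computed in Lemma~\ref{lem:age computation}.
\item If $\gamma_1\neq\gamma_2$, the formula gives a class pushed forward from the smaller fixed locus, multiplied by the Euler class of the excess bundle.
\end{enumerate}

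\textbf{Step 3 (vanishing by characters).} The decisive input is a character argument on the fixed locus. $\mathbf M^D_{\gamma_1}$ is the free $G_{\gamma_1}$-quotient of the endoscopic moduli space $\mathbf M^D_{r,L'}(\gamma_1)$. On it, $\Gamma$ acts through the endoscopic group, and elements $\delta$ pairing nontrivially with $\gamma_1$ act via the Weil pairing twist. I would argue that the Euler class $e(N_{\gamma_1})$ lies in the $\Gamma$-invariant part. Consequently, $i_{\gamma_1}^*$ of a $\kappa_{\gamma_2}$-class lands in the $\kappa_{\gamma_2}$-summand of $H^*(\mathbf M^D_{\gamma_1})$. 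I would then show that this summand, after multiplication by $\alpha\in H^*(\mathbf M^D_{\gamma_1})_{\kappa_{\gamma_1}}$, vanishes in degree $\geq 2\dim\mathbf M^D_{\gamma_1}-\dim_{\mathbb C}$ of the nilpotent part. This would use the dimension bound that $H^*(\mathbf M^D_\gamma)$ vanishes above degree $\dim\mathbf M^D_\gamma$ (by retraction to the nilpotent cone when $D=K_\Sigma$, and by properness of $h$ plus the bound $\dim\mathbf B_\gamma=\dim\mathbf B-a_\gamma$ when $d>2g-2$). It would be combined with the degree bound from Step 1: a $\kappa_{\gamma}$-class has degree at least $2a_\gamma$, and at least $4a_{\gamma}$ after the Euler class factor.

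\textbf{Main obstacle.} The main obstacle is Step 3, namely making the degree count actually exceed the top nonvanishing degree in all cases, especially when $\gamma_1\neq\gamma_2$ and the intersection $\mathbf M^D_{\gamma_1}\cap\mathbf M^D_{\gamma_2}$ is small. If the count fails, my fallback is a support argument for the cup product map
\[
  (Rh_*\mathbb C)_{\kappa_1}\otimes(Rh_*\mathbb C)_{\kappa_2}\longrightarrow(Rh_*\mathbb C)_{\kappa_1\kappa_2}.
\]
Its source is supported on $\mathbf B_{\gamma_1}\cap\mathbf B_{\gamma_2}$, while every constituent of the target has strict support $\mathbf B_{\gamma_1\gamma_2}$, or $\mathbf B$ itself if $\kappa_1\kappa_2=1$ (Proposition~\ref{support theorem}). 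Combining this with the weight and perverse-degree bounds of the decomposition theorem, I would try to show that the map vanishes on global sections.
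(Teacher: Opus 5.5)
Your proposal does not close. Step~3, on which everything rests, is left open, as you say yourself. Step~1 also asserts without proof that the isomorphism $c^D_\kappa$ of Theorem~\ref{MS's endoscopic} agrees with the Gysin map $i_{\gamma*}$ up to scalars and degree-raising terms. That isomorphism is induced by the cycle $[\mathscr C]$, the closure of the graph of pullback to the normalization, not by the graph of $i_\gamma$. Relating the two would require an actual computation. The support theorem by itself only shows that $\kappa_\gamma$-classes come from cohomology supported on $h^{-1}(\mathbf B^D_\gamma)$, which is much larger than $\mathbf M^D_\gamma$. Your strict-support fallback also fails as stated. A complex supported on a smaller closed subset can map nontrivially to shifts of an intersection complex with larger support, so support considerations alone give no vanishing.

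The missing idea is that you already list both needed facts but apply them in the wrong place. On $\mathbf M^D_{n,L}$ itself a pure degree count suffices, with no Gysin maps, excess intersection, Euler classes, or character analysis.

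\begin{itemize}
\item By the shift in \cite[Theorem~3.2]{MS21}, $H^m(\mathbf M^D_{n,L})_{\kappa_\gamma}=0$ for $m<2a_\gamma$. For $\gamma\neq1$, $2a_\gamma=n^2d(1-1/\operatorname{ord}(\gamma))\geq n^2d/2$.
\item The scaling retraction onto the core inside the nilpotent cone, which has pure dimension $g_P$, gives $H^m(\mathbf M^D_{n,L})=0$ for $m>2g_P$.
\item In both cases $2g_P=2(n-1)(g-1)+n(n-1)d\leq(n^2-1)d<n^2d$.
\end{itemize}

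Hence $c_1\cup c_2$ lies in degree at least $2a_{\gamma_1}+2a_{\gamma_2}\geq n^2d>2g_P$, so it vanishes. The ideal property is just multiplicativity of characters under cup product. This is exactly the paper's proof.

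Even granting your Step~1, your route closes the same way. The class $\alpha\cup i_{\gamma_1}^*c_2$ has degree at least $2a_{\gamma_2}$. Since $2a_{\gamma_1}+2a_{\gamma_2}>(n^2-1)d=\dim\mathbf M^D_{n,L}$, this exceeds $\dim\mathbf M^D_{n,L}-2a_{\gamma_1}=\dim\mathbf M^D_{\gamma_1}$. That is already beyond the vanishing range you quote for $H^*(\mathbf M^D_{\gamma_1})$.
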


\begin{proof}
Fix $\gamma\in\Gamma$ and the corresponding character $\kappa_\gamma\in\widehat\Gamma$. The endoscopic shift of \cite[Theorem~3.2]{MS21}, together with its odd-degree variant in \cite[Remark~4.13, arXiv version~5]{MS21}, gives
\[
  H^m(\bf M_{n,L}^D,\mathbb C)_{\kappa_\gamma}=0 \quad\text{for }m<2a_\gamma.
\]
For $\gamma\neq1$, we have $2a_\gamma=n^2d(1-\frac{1}{\operatorname{ord}(\gamma)})\geq n^2d/2$.

The $\mathbb C^*$-action scaling the Higgs field makes $\bf M_{n,L}^D$ semiprojective. Its core is contained in the global nilpotent cone $(h^D)^{-1}(0)$, and the scaling flow retracts the moduli space onto the core by \cite[Corollary~1.3.6]{HRV15}. The nilpotent cone has pure dimension $g_P$; for $d>2g-2$ this follows from \cite[Proposition~2.4.9]{deC17}, while for $D=K_\Sigma$ it follows from the Lagrangian property of the global nilpotent variety \cite{Gin01}. It follows that
\[
  H^m(\bf M_{n,L}^D,\mathbb C)=0\qquad\text{for }m>2g_P.
\]
On the other hand, for $\gamma_1,\gamma_2\neq1$,
\[
  2a_{\gamma_1}+2a_{\gamma_2}\geq n^2d>2g_P.
\]
The product of two classes in nontrivial character spaces therefore vanishes. The sum of these character spaces is an ideal because multiplication by an invariant class preserves every character summand and products of two nontrivial summands vanish.
\end{proof}

Let $p_\Sigma$ and $p_M$ be the projections from $\Sigma\times\bf M_{n,L}^{K_\Sigma}$, and let $\mathbb U$ be a universal family. Choose a decomposable rational class $\delta=p_\Sigma^*\delta_\Sigma+p_M^*\delta_M$ such that the K\"unneth components of bidegree $(0,2)$ and $(2,0)$ in $\operatorname{ch}_1^\delta(\mathbb U)$ vanish, where $\operatorname{ch}^\delta(\mathbb U)=\operatorname{ch}(\mathbb U)e^\delta$. For $\eta\in H^*(\Sigma,\mathbb C)$, set
\begin{equation}
  \label{eq:normalized-tautological-class} c_k(\eta) :=p_{M*}\bigl(p_\Sigma^*\eta\cup \operatorname{ch}_k^\delta(\mathbb U)\bigr).
\end{equation}
These classes are independent of the choice of universal family and are $\Gamma$-invariant.

We use the subscript $\mathrm{st}$ to denote the $\Gamma$-invariant summand, both in cohomology and in the Hitchin direct image.

\begin{proposition}\label{prop:stable-filtered-generation}
For the canonical Hitchin moduli space,
\[
  P_iH^*(\bf M_{n,L}^{K_\Sigma},\mathbb C)_{\mathrm{st}} =\operatorname{Span}\left\{ \prod_{a=1}^s c_{k_a}(\eta_a) \;\middle|\; \sum_{a=1}^s k_a\leq i \right\}.
\]
In particular, the perverse filtration on the stable summand is multiplicative.
\end{proposition}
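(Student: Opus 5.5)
We will deduce the proposition from the $\mathrm{GL}_n$ result $P=C$ of \cite{MSY23,MS24,HMMS}, transported along the standard comparison between $\mathrm{GL}_n$ and $\mathrm{SL}_n$ Hitchin systems. Write $\mathcal M^{\mathrm{GL}}:=\mathcal M_{\mathrm{Dol}}^{\mathrm{GL}_n,e}$ with $e=\deg L$.

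Consider the finite \'etale Galois cover
\[
  q:\bf M_{n,L}^{K_\Sigma}\times T^*J_\Sigma^0\longrightarrow \mathcal M^{\mathrm{GL}},\qquad ((E,\theta),(N,\omega))\mapsto (E\otimes N,\theta+\omega),
\]
with Galois group $\Gamma$ acting diagonally, by $N\mapsto N\otimes\gamma^{-1}$ on the second factor. It lies over the isomorphism of bases $\bf B^{K_\Sigma}\times H^0(K_\Sigma)\simeq\bigoplus_{i\ge1}H^0(K_\Sigma^{\otimes i})$, and the second factor has Hitchin map the projection $T^*J_\Sigma^0\to H^0(K_\Sigma)$. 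Its cohomology is $H^*(J_\Sigma^0)$, on which $\Gamma$ acts trivially by translation. Therefore
\[
  H^*(\mathcal M^{\mathrm{GL}})\simeq H^*(\bf M_{n,L}^{K_\Sigma})^\Gamma\otimes H^*(J_\Sigma^0),
\]
and on the derived level $Rh_{\mathrm{GL}*}\CC\simeq (Rh^{K_\Sigma}_*\CC)^\Gamma\boxtimes R\operatorname{pr}_*\CC_{T^*J^0_\Sigma}$.

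\textbf{Step 1: filtrations split as a tensor product.} Perverse truncation is compatible with external tensor products of complexes on a product base; the perverse filtration of a finite \'etale quotient is the invariant part; and for $T^*J_\Sigma^0\to H^0(K_\Sigma)$ the perverse filtration on $H^j(J_\Sigma^0)$ is concentrated in step $j$ (with the indexing convention \eqref{eq:perverse-filtration-convention}). Hence
\[
  P_iH^*(\mathcal M^{\mathrm{GL}})=\sum_{a+b\le i}P_aH^*(\bf M_{n,L}^{K_\Sigma})_{\mathrm{st}}\otimes H^{b}(J^0_\Sigma).
\]
The $\Gamma$-invariant summand $P_aH^*(\bf M_{n,L}^{K_\Sigma})_{\mathrm{st}}$ is recovered by restricting to the slice $\bf M_{n,L}^{K_\Sigma}\times\{\text{pt}\}$, which corresponds to the fixed-determinant locus in $\mathcal M^{\mathrm{GL}}$.

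\textbf{Step 2: matching tautological classes.} Pull back the $\mathrm{GL}_n$ universal family along $q$. It becomes $\mathbb U\boxtimes\mathcal N$, where $\mathcal N$ is the Poincar\'e line bundle on $\Sigma\times J_\Sigma^0$. After the $\delta$-normalization, which kills the $(0,2)$ and $(2,0)$ K\"unneth components of $\operatorname{ch}_1$, the $\mathrm{GL}_n$ normalized classes $c_k^{\mathrm{GL}}(\eta)$ restrict on the slice to $c_k(\eta)$. This is Markman-type bookkeeping. The $\mathrm{GL}_n$ normalized classes have Chern degree equal to $k$, matching the convention of \cite{CMS1,MSY23}.

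\textbf{Step 3: conclude.} By $P=C$ for $\mathrm{GL}_n$, $P_iH^*(\mathcal M^{\mathrm{GL}})$ is spanned by monomials in the $c^{\mathrm{GL}}_k(\eta)$ of total Chern degree at most $i$. Restriction to the slice is surjective onto $H^*(\bf M_{n,L}^{K_\Sigma})^\Gamma$ and carries $P_i$ onto $P_i$ of the stable summand, by Step 1 and the fact that $H^0(J^0_\Sigma)$ sits in perverse step $0$. It also sends such monomials to the corresponding monomials in the $c_k(\eta)$. This gives the inclusion $\subseteq$. For the inclusion $\supseteq$, pull back a monomial in the $c_k(\eta)$ along the first projection, identify it with the product of restrictions of $\mathrm{GL}_n$ monomials, and use $C\subseteq P$ for $\mathrm{GL}_n$ together with Step 1. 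Multiplicativity then follows immediately, since the spanning sets are closed under products with additive degree.

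\textbf{Main obstacle.} The main difficulty is Step 1: showing that the perverse filtration is compatible with the $\Gamma$-quotient and the product decomposition, including the precise indexing shift between $\dim\bf B^{K_\Sigma}$, $\dim$ of the full $\mathrm{GL}_n$ base, and $g$. I would handle it by proving a K\"unneth lemma for ${}^p\tau$ on products of bases, as in \cite{CMS1} and \cite[Section~5]{MSY23}, and by checking that the restriction to the slice is a strict filtered map via the tensor splitting.
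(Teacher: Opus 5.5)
Your proposal is correct and follows the paper's route. The paper cites \cite[Proposition~5.2 and Corollary~5.3]{MS21} for the fact that restriction $j^*$ along the fixed-determinant inclusion $\mathbf M_{n,L}^{K_\Sigma}\hookrightarrow\mathcal M^{\mathrm{GL}}$ is a strict filtered surjection onto the $\Gamma$-invariant summand. Your Steps~1 and~3 reprove exactly this via the $\Gamma$-cover $\mathbf M_{n,L}^{K_\Sigma}\times T^*J_\Sigma^0\to\mathcal M^{\mathrm{GL}}$ and a K\"unneth argument for perverse truncations. The paper then combines it, as you do, with $P=C$ for $\mathrm{GL}_n$ and with $j^*c_k^{\mathrm{GL}}(\eta)=c_k(\eta)$ for compatibly normalized universal families.

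One simplification: strict surjectivity of $j^*$ already gives equality, so your separate $\supseteq$ argument is unnecessary (it amounts to $j^*$ being filtered).
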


\begin{proof}
Put $\ell=\deg L$, let $\widetilde{\bf M}_{n,\ell}^{K_\Sigma}$ be the corresponding $\operatorname{GL}_n$-Higgs moduli space, and let $j:\bf M_{n,L}^{K_\Sigma}\hookrightarrow \widetilde{\bf M}_{n,\ell}^{K_\Sigma}$ be the natural inclusion. For the trivial character, the results of \cite[Proposition~5.2 and Corollary~5.3]{MS21} give a strict filtered surjection
\[
  j^*:H^*(\widetilde{\bf M}_{n,\ell}^{K_\Sigma},\mathbb C) \longrightarrow H^*(\bf M_{n,L}^{K_\Sigma},\mathbb C)_{\mathrm{st}}.
\]
By \cite[Theorem~0.4 and Subsection~5.4]{MSY23}, after extending scalars to $\mathbb C$, the perverse filtration on the source is its Chern filtration. Choose the normalized universal families compatibly. Then $j^*c_k^{\operatorname{GL}}(\eta)=c_k(\eta)$, and the assertion follows because $j^*$ is a ring homomorphism.
\end{proof}

By Propositions~\ref{square zero} and~\ref{prop:stable-filtered-generation}, it suffices to prove that multiplication by each tautological class $c_k(\eta)\in H^*(\bf M_{n,L}^{K_\Sigma},\mathbb C)_{\mathrm{st}}$ satisfies
\[
  c_k(\eta)\cup : P_jH^*(\bf M_{n,L}^{K_\Sigma},\mathbb C) \longrightarrow P_{k+j}H^*(\bf M_{n,L}^{K_\Sigma},\mathbb C).
\]
We adapt the reduction steps of Hausel--Mellit--Minets--Schiffmann \cite[Section~8]{HMMS} to fixed determinant and trace-free Higgs fields, following \cite[Subsections~5.4.2--5.4.3]{MSY23}.

Fix distinct points $p,q\in\Sigma$ and put $K_{p,q}=K_\Sigma+p+q$. Choose generic full-flag weights at $p$ and $q$, sufficiently close to zero that a zero-residue parabolic Higgs bundle is stable if and only if its underlying Higgs bundle is stable. Such a chamber exists because $\gcd(n,\deg L)=1$.

Following \cite[Subsection~5.4.2]{MSY23}, consider the $K_{p,q}$-twisted moduli spaces:
\begin{equation}
  \label{eq:parabolic-chain} \xymatrix@C=1.25em{ \bf M_{n,L}^{K_\Sigma} &\bf M_{n,L}^{K_{p,q},0}\ar[l]_{f}\ar@{^{(}->}[r]^{\iota_0} &\bf M_{n,L}^{K_{p,q},\mathrm{spar}}\ar@{^{(}->}[r]^{\iota^{\mathrm{spar}}} &\bf M_{n,L}^{K_{p,q},\mathrm{wpar}} &\bf M_{n,L}^{K_{p,q},\mathrm{wpar,ell}}\ar@{_{(}->}[l]_{\iota} }
\end{equation}
Here ``$\mathrm{wpar}$'' denotes the weakly parabolic moduli space with full flags at $p$ and $q$, ``$\mathrm{spar}$'' is the strongly parabolic locus, ``$\mathrm{wpar,ell}$'' is the open locus where the spectral curve is integral and the two residues are regular semisimple, with their eigenlines ordered by the flags, and the superscript $0$ denotes zero residue. The degree $d_L$ is defined by \eqref{eq:spectral-sheaf-degree} with $D=K_{p,q}$.

On each space, define normalized tautological classes by~\eqref{eq:normalized-tautological-class}; we again denote them by $c_k(\eta)$.

Let $W\subset \bf{B}^{K_{p,q}, \mathrm{ell}}$ be the locus of integral spectral curves with fibers over $p$ and $q$ consisting of $n$ distinct points, and let $\mathbf{C}_W\rightarrow W$ be the universal curve. There is a Cartesian diagram
\[
  \xymatrix{ \bf M_{n,L}^{K_{p,q},\mathrm{wpar,ell}}\ar[r]^{t}\ar[d]^{h^{\mathrm{wpar,ell}}}&\check{\mathbf P}^{L}_{\mathbf{C}_W}\ar[d]\\ \widetilde W\ar[r]&W, }
\]
Here $t$ forgets the flags at $p$ and $q$, and $\widetilde W$ parametrizes spectral curves in $W$ together with an ordering of the points above $p$ and $q$. Both horizontal arrows are finite \'etale $S_n\times S_n$-torsors.

\begin{lemma}\label{lem:tautological-perversity-on-W}
Let $D=K_{p,q}$ and let $W$ be the open subset defined above. Then
\[
  c_k(\eta)\in P_kH^*(\check{\mathbf P}^{L}_{\mathbf{C}_W},\CC).
\]
\end{lemma}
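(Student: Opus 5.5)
We want $c_k(\eta)\in P_kH^*(\check{\mathbf P}^L_{\mathbf C_W})$, where $\check{\mathbf P}^L_{\mathbf C_W}$ is the restriction over $W$ of the elliptic $\mathrm{SL}_n$ Hitchin system with even-degree twist $K_{p,q}$. The plan follows the $\mathrm{GL}_n$ argument of \cite[Section~4]{MSY23}, where tautological classes are shown to have perversity at most $k$ via the Fourier transform. Here we work on the twisted compactified Prym of Subsection~\ref{subsec:twisted-compactified-prym}, restricted to $W$.

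By Theorem~\ref{multiplicativity of ellptic locus}, applied with $D=K_{p,q}$ of even degree $2g>2g-2$, we have $\mathrm{Im}\,\mathfrak p_k^L={}^p\tau_{\le k+\dim\mathbf B}Rh_*\mathbb C$ over the elliptic locus, hence over the open $W$. So it suffices to show that $c_k(\eta)$, viewed as a morphism $\mathbb C_{\mathbf B}\to Rh_*\mathbb C[\ast]$, satisfies $\mathfrak q_{k+1}^L\circ c_k(\eta)=0$.

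Equivalently, we show that $\mathfrak F^L_{2g_P-i}(c_k(\eta))=0$ for $i>k$. Concretely, the Fourier transform of $c_k(\eta)$ should be concentrated in Fourier degrees $\ge 2g_P-k$.

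\textbf{Invariance.} Since $c_k(\eta)$ is $\Gamma$-invariant, Lemma~\ref{sectorwise isomorphism} (through its twisted analogue) shows that only the untwisted sector $\gamma=1$ of $\mathfrak F^L$ contributes. So we only need the trivial-sector Fourier transform.

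\textbf{Universal sheaf.} Write $c_k(\eta)$ via the universal sheaf $\mathscr U^L_{\mathbf C}$ on the $\mu_n$-gerbe $\check{\mathcal P}^L$. The pushforward of $\mathscr U^L_{\mathbf C}$ to $\Sigma\times\check{\mathcal P}^L$ is the BNR universal bundle, up to the $\delta$-normalization. Thus $\ch^\delta_k$ of it, integrated against $\eta$, is the correspondence $\tau(\mathscr U^L_{\mathbf C})$ from $\mathbf C$ to $\check{\mathcal P}^L$, composed with pullback of $\pi^*\eta$ and truncated to degree $k$.

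\textbf{Step (a): Fourier transform of the universal sheaf.} Following \cite[Proposition~4.4, Corollary~4.5]{MSY23}, identify the Fourier transform of the family $\{\mathscr U^L_{\mathbf C}|_{c}\}_{c\in\mathbf C}$. It is, up to a shift and the line $\mathscr M_0$, the structure sheaf of the Abel--Jacobi image $\mathbf C\to\widehat{\mathcal P}^{\,0}$. After the degree and norm corrections, the relevant map is $c\mapsto \mathscr O(c-s)$ corrected by $\pi^*\mathscr N$, on the relevant norm-zero/dual side. This Abel--Jacobi image is a family of curves, so it has relative dimension $1$.

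\textbf{Step (b): degree bound.} By Lemma~\ref{lem:support-bound-normalized-tau} (trivial sector, so no age), the kernel realizing $c\mapsto\mathfrak F(\mathscr U|_c)$ is supported in relative dimension $1$ over $\mathbf C$. Hence its normalized class has correspondence degree $\ge g_P-1$.

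\textbf{Step (c): multiplicativity.} The $k$-th Chern piece $c_k$ is assembled by products over $\Sigma$, that is, by the Pontryagin/convolution structure. The codimension estimate of Proposition~\ref{prop:codimQ} and the multiplicativity of Theorem~\ref{coro:multi for twisted} upgrade the bound to Fourier degree $\ge 2g_P-k$ for $\ch_k$. This is exactly as in \cite[Subsection~4.3]{MSY23}, where $\ch_k$ of a sheaf supported on a curve corresponds to the $k$-fold cup product of degree-one classes.

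The extra $\delta$-twist and $\mathscr M_0$ contribute factors that are fiberwise trivial in degree $0$, by \eqref{eq:twisted-universal-sheaf-comparison} and \eqref{eq:external-kernel-line-fiberwise-trivial}. Over $W$ they can only raise Fourier degree. So they do not spoil the bound, and one can check this sector by sector as in Lemma~\ref{lem:twisted-untwisted-projector-comparison}.

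\textbf{Main obstacle.} The main obstacle is Step (a). It requires identifying the Fourier transform of the universal sheaf on the gerbe model of the compactified Prym, for singular integral spectral curves, with an Abel--Jacobi-type sheaf of small support. The first attempt will be to compare with Arinkin's $\overline J$-version, $\Phi_{\mathscr P}(\mathscr O_c)$ for $c\in C^{\mathrm{reg}}$, extend across singular points by flatness and upper semicontinuity of support dimension, and then restrict to the norm fiber using Lemma~\ref{etale cover of J bar}. As a fallback, one could instead bound the support dimension by the Arinkin-style argument of Proposition~\ref{prop:codimQ} with $k=1$ plus a point factor, which gives support dimension $\le\dim\mathbf B+1$. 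That bound suffices for the degree estimate.
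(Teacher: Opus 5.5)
There is a genuine gap. Your opening reductions are sound: by Theorem~\ref{multiplicativity of ellptic locus} it suffices to show $\mathfrak q^L_{k+1}\circ c_k(\eta)=0$ over $W$, and by $\Gamma$-invariance only the sector $\gamma=1$ matters. But everything after that rests on Step~(a), which you leave open, and which fails as you formulate it. The sheaf $\mathscr U^L_{\mathbf C}$ has central weight $1$ on the gerbe $\check{\mathcal P}^L$. By \eqref{eq:global-twisted-Prym-kernel}, $\mathscr F_{L,0}$ has weight $0$ on that factor. So the convolution integrand has nonzero weight along the middle gerbe, and $\Phi_{\mathscr F_{L,0}}(\mathscr U^L_{\mathbf C}|_c)=0$. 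The composition rule \eqref{eq:rigidified-tau-composition}, which needs opposite middle weights, does not apply either. You would have to use an honest universal sheaf on $\mathbf C\times_{\mathbf B}\check{\mathbf P}^L$, which exists since $\gcd(n,\deg L)=1$. You would then have to prove that its transform has support of dimension at most $\dim\mathbf C$, including over the singular points of the spectral curves. Your fallback is the right kind of argument, but it is not carried out.

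Step~(c) is the wrong mechanism. The formula $\ch_k=c_1^k/k!$ only holds where the universal sheaf is a line bundle. But $c_k(\eta)$ integrates over all of $\mathbf C_W$, including the singular points of the spectral curves. What closes the argument is degree counting, once you have a global identity $\ch(\mathscr U)=\mathfrak F^{L,-1}\circ Y$ with $Y$ of correspondence degree $\geq g_P-1$. Then $\ch_j(\mathscr U)$, a correspondence of degree $j-1$ from $\mathbf C$, involves only $\mathfrak F^{L,-1}_i$ with $i\leq j$. Fourier vanishing kills its $\mathfrak q^L_{j+1}$-part, and Grothendieck--Riemann--Roch over $\Sigma$ assembles $c_k(\eta)$. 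Multiplicativity is needed at most to absorb the normalization factor $e^{\delta_M}$.

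The idea you are missing is that no Fourier analysis on the Prym is needed at all. The paper runs the proof of \cite[Corollary~5.7]{MSY23} for the $\mathrm{GL}_n$ spectral family in $\mathbb P_\Sigma(\sO_\Sigma(K_{p,q})\oplus\sO_\Sigma)$. There the spectral curves over $W$ are integral and the Fourier projectors are already the perverse ones.

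Next it splits off the trace by $(E,\theta)\mapsto((E,\theta-\tfrac1n\tr\theta),\tfrac1n\tr\theta)$. This writes that Hitchin system as the trace-free one times the affine space $H^0(\Sigma,\sO_\Sigma(K_{p,q}))$, and transfers the bound to $\overline{\mathbf J}^{d_L}_{\mathbf C_W}$.

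Finally it uses the finite \'etale presentation $\overline{\mathbf J}^{d_L}_{\mathbf C_W}\simeq(J^0_\Sigma\times\check{\mathbf P}^L_{\mathbf C_W})/\Gamma$. This identifies the direct image with $H^*(J^0_\Sigma,\CC)\otimes(R\check h^L_*\CC|_W)_{\mathrm{st}}$. Restriction to the norm fiber is then projection onto the $H^0(J^0_\Sigma)$-factor, so it is strictly filtered, and it carries normalized tautological classes to normalized tautological classes. Since $c_k(\eta)$ is $\Gamma$-invariant, this suffices. This route uses neither the gerbe kernels nor Theorem~\ref{multiplicativity of ellptic locus}.
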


\begin{proof}
First consider the corresponding $\mathrm{GL}_n$ spectral family. The proof of \cite[Corollary~5.7]{MSY23}, using Corollary~4.6 and Subsection~5.3.3 there, applies with the ruled surface
\[
  \mathbb P_\Sigma\bigl(\sO_\Sigma(K_{p,q})\oplus\sO_\Sigma\bigr).
\]
Indeed, the spectral curves are integral and planar, and their compactified Jacobian has smooth total space by the coprime Higgs moduli description. Grothendieck--Riemann--Roch for the projection to $\Sigma$ expresses $c_k(\eta)$ through normalized spectral Chern-character components of indices at most $k+1$; the estimate in that proof places the component of index $j$ in $P_{j-1}$. The normalization is the one in \eqref{eq:normalized-tautological-class}. Thus $c_k(\eta)\in P_k$ for this $\mathrm{GL}_n$ family.

Write $\sigma=\frac1n\tr(\theta)$ and $\theta_0=\theta-\sigma\mathrm{id}_E$. The mutually inverse assignments
\[
  (E,\theta)\longmapsto((E,\theta_0),\sigma), \qquad ((E,\theta_0),\sigma)\longmapsto (E,\theta_0+\sigma\mathrm{id}_E)
\]
identify the $\mathrm{GL}_n$ moduli space with the product of its trace-free locus and $H^0(\Sigma,\sO_\Sigma(K_{p,q}))$. They preserve stability because $\theta$ and $\theta_0$ have the same invariant subbundles. The identity
\[
  \det(\lambda\mathrm{id}_E-\theta) = \det((\lambda-\sigma)\mathrm{id}_E-\theta_0)
\]
gives the corresponding product coordinates on the Hitchin base. In these coordinates, the Hitchin morphism is the product of its trace-free restriction with the identity on $H^0(\Sigma,\sO_\Sigma(K_{p,q}))$; see \cite[Lemma~4.3.1]{deC17}.

Spectral translation preserves integrality and the distinctness of the points above $p$ and $q$, and does not change the underlying universal vector bundle. Removing this affine-space factor preserves the normalized perverse filtration. We obtain the same bound on $\overline{\mathbf J}^{d_L}_{\mathbf C_W}$.

Let $j:\check{\mathbf P}^{L}_{\mathbf C_W}\hookrightarrow \overline{\mathbf J}^{d_L}_{\mathbf C_W}$ be the norm-fiber inclusion. The finite \'etale presentation
\[
  \overline{\mathbf J}^{d_L}_{\mathbf C_W} \simeq \bigl(J_\Sigma^0\times\check{\mathbf P}^{L}_{\mathbf C_W}\bigr)/\Gamma
\]
identifies its direct image with $H^*(J_\Sigma^0,\CC)\otimes(R\check h^L_*\CC|_W)_{\mathrm{st}}$, where the first factor is a graded complex: translations act trivially on $H^*(J_\Sigma^0,\CC)$. Under this identification, $j^*$ is projection to the degree-zero factor and is strictly filtered. It carries the normalized universal classes to those on the Prym, giving the result.
\end{proof}

\begin{proposition}\label{prop:wparell}
For every normalized tautological class \(c_k(\eta)\) in $H^*(\bf M_{n,L}^{K_{p,q},\mathrm{wpar,ell}},\CC)$, the cup product satisfies
\[
  c_k(\eta)\cup-: P_iH^*(\bf M_{n, L}^{K_{p,q},\mathrm{wpar,ell}},\CC) \longrightarrow P_{i+k}H^*(\bf M_{n, L}^{K_{p,q},\mathrm{wpar,ell}},\CC).
\]
\end{proposition}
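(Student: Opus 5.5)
The plan is to transport the statement to the twisted compactified Prym over $W$ and combine three earlier results: Lemma~\ref{lem:tautological-perversity-on-W}, the multiplicativity of Theorem~\ref{multiplicativity of ellptic locus}, and compatibility of the perverse filtration with finite \'etale base change. The twist $K_{p,q}$ has degree $2g$, which is even and greater than $2g-2$. Hence Theorem~\ref{multiplicativity of ellptic locus} applies to the elliptic locus $\check{\mathbf P}^{L}_{\mathbf C_W}\to W$, the restriction of $\mathbf M^{K_{p,q},\mathrm{ell}}_{n,L}\to\mathbf B^{K_{p,q},\mathrm{ell}}$ to the open subset $W$. Perverse truncation commutes with restriction to opens, so the factorization
\[
  \cup:{}^p\tau_{\le k+\dim W}\otimes{}^p\tau_{\le i+\dim W}\to{}^p\tau_{\le i+k+\dim W}
\]
holds over $W$. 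Taking global cohomology, this gives
\[
  P_k\cup P_i\subseteq P_{i+k}\quad\text{on } H^*(\check{\mathbf P}^{L}_{\mathbf C_W}).
\]
Together with Lemma~\ref{lem:tautological-perversity-on-W}, which places $c_k(\eta)$ in $P_k$, this gives the claimed bound on $\check{\mathbf P}^L_{\mathbf C_W}$.

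Next I pass to $\mathbf M^{K_{p,q},\mathrm{wpar,ell}}_{n,L}$ using the Cartesian square with the finite \'etale $S_n\times S_n$-torsor $\widetilde W\to W$. Write $\varpi:\widetilde W\to W$. Smooth base change gives
\[
  Rh^{\mathrm{wpar,ell}}_*\CC=\varpi^*R\check h^L_*\CC|_W.
\]
Because $\varpi$ is \'etale, $\varpi^*$ is t-exact for the perverse t-structure with the same dimension shift, since $\dim\widetilde W=\dim W$. It therefore commutes with ${}^p\tau$, and it is compatible with cup product. Pulling back the factorization of the previous paragraph along $\varpi$ gives the same multiplicativity of truncations over $\widetilde W$. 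Global sections then give $P_k\cup P_i\subseteq P_{i+k}$ on $H^*(\mathbf M^{K_{p,q},\mathrm{wpar,ell}}_{n,L})$.

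It remains to see that $c_k(\eta)$ on the weakly parabolic space lies in $P_k$. I plan to check that $t^*$ sends the normalized class on the Prym to the normalized class upstairs. The underlying vector bundle of the universal parabolic family is the BNR pushforward of the universal spectral sheaf pulled back along $t$. The normalizing class $\delta$ is fixed by vanishing conditions on K\"unneth components, which pull back correctly, so the normalization is preserved. Finally, $t^*$ maps $P_k$ into $P_k$, since it is realized by the adjunction $R\check h^L_*\CC\to\varpi_*\varpi^*R\check h^L_*\CC$ and this map preserves truncations.

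The main obstacle I expect is this normalization compatibility. If the flags are allowed to alter the universal family by a line bundle from the base, the $\delta$-normalization must absorb it. The independence of $c_k(\eta)$ from the choice of universal family, asserted after \eqref{eq:normalized-tautological-class}, should handle this, because twisting by a pulled-back line bundle is exactly that ambiguity. A second point needing care is that the images of $\mathfrak p^L_k$ realize ${}^p\tau$ only on the full elliptic base. Restricting to the open $W$ is harmless for the truncation statement, since both sides restrict compatibly.
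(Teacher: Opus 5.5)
Your proposal is correct and follows essentially the same route as the paper. It pulls back the sheaf-theoretic factorization of Theorem~\ref{multiplicativity of ellptic locus} along the finite \'etale map $\widetilde W\to W$, using base change and perverse $t$-exactness of \'etale pullback, and then places $c_k(\eta)$ in $P_k$ via Lemma~\ref{lem:tautological-perversity-on-W} because it is pulled back along $t$. Your additional checks, that $t^*$ preserves the $\delta$-normalization and the perverse filtration, fill in details the paper compresses into one line.
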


\begin{proof}

The preceding Cartesian diagram is a finite \'etale base change, and $\deg K_{p,q}=2g>2g-2$ is even. Pull back the sheaf-theoretic cup-product factorization of Theorem~\ref{multiplicativity of ellptic locus} along $\widetilde W\to W$. Proper base change and perverse $t$-exactness of \'etale pullback identify this with the corresponding factorization for $\bf M_{n,L}^{K_{p,q},\mathrm{wpar,ell}}$. Thus its perverse filtration is multiplicative. Since $c_k(\eta)$ is pulled back from $\check{\mathbf P}^{L}_{\mathbf{C}_W}$, Lemma~\ref{lem:tautological-perversity-on-W} gives $c_k(\eta)\in P_kH^*(\bf M_{n, L}^{K_{p,q},\mathrm{wpar,ell}},\CC)$, and the result follows.
\end{proof}

Taking residues at $p$ and $q$ gives a morphism
\[
  \chi_{\mathrm{res}}:\mathbf M_{n,L}^{K_{p,q},\mathrm{wpar}} \longrightarrow\mathfrak t_0\times\mathfrak t_0, \qquad \mathfrak t_0=\{(\mu_1,\ldots,\mu_n)\in\mathbb C^n:\sum_i\mu_i=0\},
\]
which is surjective and has the strongly parabolic locus as its zero fiber. Requiring the full residues to vanish instead gives $\mathbf M_{n,L}^{K_{p,q},0}$.

\begin{lemma}\label{smoothness of residue map}
The map $\chi_{\mathrm{res}}$ is smooth. For every $\mu\in\mathfrak t_0\times\mathfrak t_0$, restriction induces a $\Gamma$-equivariant isomorphism
\[
  H^*(\mathbf M_{n,L}^{K_{p,q},\mathrm{wpar}},\mathbb C) \xrightarrow{\ \simeq\ }H^*(\chi_{\mathrm{res}}^{-1}(\mu),\mathbb C).
\]
\end{lemma}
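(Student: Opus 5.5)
The plan has two parts: smoothness of $\chi_{\mathrm{res}}$ by deformation theory, then the cohomological isomorphism by the semiprojective retraction used in the proof of Proposition~\ref{square zero}.

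\textbf{Smoothness.} The tangent space at a weakly parabolic Higgs bundle $(E,\theta,F_\bullet)$ is the first hypercohomology of the parabolic deformation complex
\[
  \mathcal{P}ar\mathcal{E}nd_0(E)\xrightarrow{[\theta,-]}\mathcal{P}ar\mathcal{E}nd_0(E)\otimes\sO_\Sigma(K_{p,q}).
\]
Here $\mathcal{P}ar\mathcal{E}nd_0(E)$ denotes the trace-free endomorphisms preserving the flags at $p$ and $q$. The differential of $\chi_{\mathrm{res}}$ is induced by taking the diagonal (Cartan) part of the residue of the second term at $p$ and $q$. Compose with the quotient to the Cartan parts, $\mathcal{P}ar\mathcal{E}nd_0(E)\otimes\sO(K_{p,q})\to \mathfrak t_0|_p\oplus\mathfrak t_0|_q$. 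This gives a map of complexes to $\mathfrak t_0\oplus\mathfrak t_0$ placed in degree one, and its kernel is the strongly parabolic deformation complex.

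Surjectivity of $d\chi_{\mathrm{res}}$ reduces to the vanishing of $\mathbb H^2$ of the strongly parabolic complex. By Serre duality this group is dual to $\mathbb H^0$ of the weakly parabolic complex, which is the space of infinitesimal trace-free parabolic automorphisms. That space vanishes by stability, using genericity of the weights and $\operatorname{tr}=0$. This is the standard argument of Yokogawa and Konno, and I would cite it or adapt it directly. Since the total space is smooth, surjectivity of the differential at every point gives smoothness of $\chi_{\mathrm{res}}$.

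\textbf{Cohomological isomorphism.} Let $\mathbb C^*$ scale the Higgs field. Then $\chi_{\mathrm{res}}$ is $\mathbb C^*$-equivariant for the weight-one scaling action on $\mathfrak t_0\times\mathfrak t_0$. The Hitchin map is proper and equivariant with positive weights on the base, and limits as $t\to0$ exist, so $\mathbf M^{K_{p,q},\mathrm{wpar}}_{n,L}$ is semiprojective. Together with smoothness and surjectivity of $\chi_{\mathrm{res}}$, the result of Hausel--Rodriguez-Villegas \cite[Corollary~1.3.3 and Corollary~1.3.6]{HRV15} on equivariant smooth maps from semiprojective varieties to representations with positive weights then shows two things. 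First, all fibers $\chi_{\mathrm{res}}^{-1}(\mu)$ have isomorphic cohomology. Second, the restriction from the total space to each fiber is an isomorphism, because the total space and the fiber retract onto the same core, namely the fixed-point attracting set inside the zero fiber. The plan is to invoke this directly.

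\textbf{$\Gamma$-equivariance and the main obstacle.} The $\Gamma$-action by tensoring commutes with both $\chi_{\mathrm{res}}$ and the $\mathbb C^*$-action, so the restriction isomorphism is $\Gamma$-equivariant. The main point to check is properness of the Hitchin map restricted to $\chi_{\mathrm{res}}^{-1}(\mu)$, together with existence of limits, in the weakly parabolic fixed-determinant setting. I would deduce this from properness of the parabolic Hitchin map over the base $\bigoplus_i H^0(\Sigma,\sO(iK_{p,q}))$, with the residues recorded by the characteristic polynomial at $p$ and $q$, using the generic weight chamber fixed above.
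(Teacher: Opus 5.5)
Your smoothness argument is essentially the paper's. Fixing the residue eigenvalues gives the strongly parabolic complex $\operatorname{ParEnd}_0(E)\to\operatorname{SParEnd}_0(E)\otimes K_{p,q}$, whose $\mathbb H^2$ is killed by Serre duality against $\mathbb H^0$ and stability. The $\mathbb H^0$ of the weakly and strongly parabolic complexes coincide, so your phrasing is fine.

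The gap is in the cohomological part, in the claim that the total space and $\chi_{\mathrm{res}}^{-1}(\mu)$ ``retract onto the same core''. This holds only for $\mu=0$. For $\mu\neq0$ the fibre is not $\mathbb C^*$-stable, since scaling carries it to $\chi_{\mathrm{res}}^{-1}(t\mu)$. It also contains no point of the core: if $\lim_{t\to\infty}t\cdot x$ exists, then so does $\lim_{t\to\infty}t\,\chi_{\mathrm{res}}(x)$, which forces $\chi_{\mathrm{res}}(x)=0$. For the same reason, ``existence of limits'' on $\chi_{\mathrm{res}}^{-1}(\mu)$ is not the right thing to check.

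Combining the $\mu=0$ case with ``all fibres have isomorphic cohomology'' gives only an abstract isomorphism $H^*(\mathbf M_{n,L}^{K_{p,q},\mathrm{wpar}})\simeq H^*(\chi_{\mathrm{res}}^{-1}(\mu))$. It does not show that restriction is an isomorphism. That is what the lemma asserts, and what is used later: injectivity of $\iota^*$ in Proposition~\ref{prop:SL-parabolic-restrictions} comes from restriction to a generic nonzero fibre.

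The missing step, which is how the paper argues, is to pass through the line $\ell=\mathbb C\mu$. Set $X_\ell=\chi_{\mathrm{res}}^{-1}(\ell)$.
\begin{itemize}
\item Since $\chi_{\mathrm{res}}$ is smooth, $X_\ell$ is smooth.
\item $X_\ell$ is closed and $\mathbb C^*$-stable, hence semiprojective.
\item Its core equals that of the total space, because the core lies in the zero fibre, which lies in $X_\ell$.
\end{itemize}
The core retraction \cite[Corollary~1.3.6]{HRV15} therefore makes restriction $H^*(\mathbf M_{n,L}^{K_{p,q},\mathrm{wpar}})\to H^*(X_\ell)$ an isomorphism.

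Next, $X_\ell\to\ell\simeq\mathbb C$ is a smooth, surjective, equivariant map with weight one on the target. The one-dimensional result \cite[Corollary~1.3.3]{HRV15} identifies, by restriction, $H^*(X_\ell)$ with the cohomology of each fibre. This is a fibration over a contractible base. Composing the two restrictions gives the isomorphism onto $H^*(\chi_{\mathrm{res}}^{-1}(\mu))$, and $\Gamma$-equivariance is automatic because every map is a restriction.

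Applying the HRV results directly to $\chi_{\mathrm{res}}$, with target $\mathfrak t_0\times\mathfrak t_0$ of dimension $2(n-1)$, would anyway require this reduction to a line or a separate local-triviality argument.
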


\begin{proof}
At $(E,\theta,F_p^\bullet,F_q^\bullet)$, the deformation complex is
\[
  \operatorname{ParEnd}_0(E)\xrightarrow{[\theta,-]} \operatorname{ParEnd}_0(E)\otimes K_{p,q}.
\]
Fixing the ordered residue eigenvalues gives the complex
\[
  \operatorname{ParEnd}_0(E)\xrightarrow{[\theta,-]} \operatorname{SParEnd}_0(E)\otimes K_{p,q}.
\]
The trace pairing and Serre duality identify its second hypercohomology with the dual of its zeroth hypercohomology. The latter vanishes by stability and the trace-free condition. Thus the relative obstruction space vanishes and $\chi_{\mathrm{res}}$ is smooth.

The space $\mathbf M_{n,L}^{K_{p,q},\mathrm{wpar}}$ is semiprojective. All residue eigenvalues have weight one under scaling of the Higgs field. The inverse image of a line through zero in $\mathfrak t_0\times\mathfrak t_0$ is smooth and semiprojective and has the same core as $\mathbf M_{n,L}^{K_{p,q},\mathrm{wpar}}$ since every point of the core has zero residue eigenvalues. Restriction to this inverse image induces an isomorphism on cohomology by the core retraction of \cite[Corollary~1.3.6]{HRV15}. The induced map to the line is smooth and surjective, so \cite[Corollary~1.3.3]{HRV15} identifies its cohomology with that of each fiber. Taking a line through $\mu$ proves the assertion. All restriction maps are $\Gamma$-equivariant.
\end{proof}

\begin{proposition}\label{prop:SL-parabolic-restrictions}
The pullback
\[
  (\iota^{\mathrm{spar}})^*: H^*(\bf M_{n,L}^{K_{p,q},\mathrm{wpar}},\mathbb C) \xrightarrow{\;\simeq\;} H^*(\bf M_{n,L}^{K_{p,q},\mathrm{spar}},\mathbb C)
\]
is a strict filtered isomorphism, while
\[
  \iota^*: H^*(\bf M_{n,L}^{K_{p,q},\mathrm{wpar}},\mathbb C) \longrightarrow H^*(\bf M_{n,L}^{K_{p,q},\mathrm{wpar,ell}},\mathbb C)
\]
is a strict filtered injection. Both maps preserve the normalized tautological classes and are $\Gamma$-equivariant.
\end{proposition}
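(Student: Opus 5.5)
The plan is to follow the argument of \cite[Subsection~5.4.2]{MSY23} and \cite[Section~8]{HMMS}, adding $\Gamma$-equivariance and replacing the $\mathrm{GL}_n$ Hitchin base by the trace-free one. Write $h^{\mathrm{wpar}}:\mathbf M^{K_{p,q},\mathrm{wpar}}_{n,L}\to\mathbf B^{\mathrm{wpar}}$ for the weakly parabolic Hitchin map. Its base contains the residue coordinates, and the map factors through $\chi_{\mathrm{res}}$.

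\emph{Strongly parabolic restriction.} By Lemma~\ref{smoothness of residue map}, $(\iota^{\mathrm{spar}})^*$ is an isomorphism, and it is $\Gamma$-equivariant. For strictness, I would work with $\mathbb C^*$-equivariance. The Hitchin base of the weakly parabolic space fibers over $\mathfrak t_0\times\mathfrak t_0$ with the strongly parabolic base as zero fiber. The map $h^{\mathrm{wpar}}$ is $\mathbb C^*$-equivariant with positive weights on the base, and $\chi_{\mathrm{res}}$ is smooth. The strongly parabolic Hitchin map is therefore the base change of $h^{\mathrm{wpar}}$ along a linear subspace cut out by the residue coordinates, which form a regular sequence of coordinates on the base. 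Restriction to a generic linear section is perverse $t$-exact up to shift, but the zero fiber is special, so I would instead use the contraction argument of \cite[Lemma~5.9]{MSY23}. Perverse filtrations of weight-positive $\mathbb C^*$-equivariant maps are computed by restriction to the fiber over $0$, or equivalently by a flag of hyperplane sections of the base. Apply the same flag description to both spaces. Since the residue coordinates can be taken as the first members of a general flag on the big base, the two filtrations agree under $(\iota^{\mathrm{spar}})^*$. Preservation of normalized tautological classes is immediate, because the universal family restricts to the universal family and the normalization \eqref{eq:normalized-tautological-class} is functorial.

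\emph{Elliptic restriction.} $\iota$ is the inclusion of the preimage of an open subset $\widetilde W'\subset\mathbf B^{\mathrm{wpar}}$. Restriction to an open set is perverse $t$-exact, so $\iota^*$ is filtered; it is also $\Gamma$-equivariant and compatible with tautological classes. Injectivity and strictness are the substantive points. I would decompose $Rh^{\mathrm{wpar}}_*\mathbb C=\bigoplus_\kappa(Rh^{\mathrm{wpar}}_*\mathbb C)_\kappa$ and use a support theorem for each character summand: by the $\mathrm{GL}_n$ case \cite{HMMS,MSY23} for $\kappa=1$, and by the $D$-twisted endoscopic support theorem (Proposition~\ref{support theorem}, extended to the weakly parabolic setting via \cite{MS21}) for $\kappa_\gamma$. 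This should show that every simple constituent of ${}^p\mathcal H^i$ has support meeting $\widetilde W'$, with support either the whole base or an endoscopic locus whose generic point is elliptic with regular semisimple residues. Then $H^*(\mathbf B,{}^p\tau_{\le k}Rh_*)\to H^*(\widetilde W',\ldots)$ can be compared summand by summand. The decomposition theorem splits $Rh_*\mathbb C$ into shifted intersection complexes, and the perverse filtration is the filtration by these summands. It therefore suffices to show that for each intersection complex $IC(Z,\mathcal L)$ with $Z\cap\widetilde W'\neq\emptyset$, restriction $H^*(\mathbf B^{\mathrm{wpar}},IC)\to H^*(\widetilde W'\cap Z,IC)$ is injective. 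I expect to obtain this from the $\mathbb C^*$-contraction: the cohomology of $IC$ on the base equals its stalk at $0$. I would then compare with a weight or purity argument as in \cite[Subsection~8]{HMMS}, using the fact that $H^*(\mathbf M^{\mathrm{wpar}})$ is pure.

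The main obstacle is this injectivity step, in particular on the variant summands. In \cite{MSY23} it relies on full support; here I would instead transport the statement through the endoscopic isomorphism of Theorem~\ref{MS's endoscopic} to the endoscopic $\mathrm{GL}$-type moduli space over $\Sigma_{\langle\gamma\rangle}$. Purity of the cohomology of that smooth semiprojective space, together with the fact that restriction of a pure class to an open set is injective on the lowest weight piece, should give injectivity. Strictness then follows formally: an injective filtered map between direct sums of summands that preserves the summand decomposition is strict.
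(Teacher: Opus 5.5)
\textbf{There is a genuine gap, at the step you yourself single out: injectivity of $\iota^*$.}

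\emph{Why your route fails.} It needs support theorems for the character summands of the \emph{weakly parabolic} $\mathrm{SL}_n$ Hitchin map, and a parabolic endoscopic isomorphism. But Proposition~\ref{support theorem} and Theorem~\ref{MS's endoscopic} concern only the non-parabolic $D$-twisted spaces over $\mathbf B^D$, and neither the paper nor \cite{MS21} supplies the analogues you invoke. More seriously, the principle meant to close the argument is false. Restricting a pure class to an open subset need not be injective, even in lowest weight. For smooth $X$, the kernel of $H^k(X)\to H^k(X\setminus Z)$ is the image of $H^k_Z(X)$, which can have weight exactly $k$; for example, $H^2(\mathbb P^1)\to H^2(\mathbb A^1)=0$.

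\emph{The missing idea.} No support theory is needed. Choose $\mu=(\mu_p,\mu_q)\in\mathfrak t_0\times\mathfrak t_0$ with distinct entries at each point and with $\sum_{i\in I}\mu_{p,i}+\sum_{j\in J}\mu_{q,j}\neq0$ whenever $1\leq|I|=|J|<n$. Suppose the spectral polynomial of a point of $\chi_{\mathrm{res}}^{-1}(\mu)$ had a factor of degree $r$ with $1\leq r<n$. Its $\lambda^{r-1}$-coefficient would be a section of $K_\Sigma(p+q)$ whose residues at $p$ and $q$ are, up to a common sign, sums of $r$ residue eigenvalues at each point. The residue theorem would then force one of the excluded sums to vanish. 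Distinct eigenvalues rule out repeated factors. Hence $\chi_{\mathrm{res}}^{-1}(\mu)\subset\mathbf M_{n,L}^{K_{p,q},\mathrm{wpar,ell}}$. By Lemma~\ref{smoothness of residue map}, restriction from $\mathbf M_{n,L}^{K_{p,q},\mathrm{wpar}}$ to this fiber is an isomorphism, and it factors through $\iota^*$. So $\iota^*$ is injective.

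\emph{What is fine, and a smaller gap.} Once injectivity is known, your formal strictness deduction is correct. A decomposition-theorem splitting of $Rh^{\mathrm{wpar}}_*\CC$ restricts to the open base, and restriction is diagonal for it. So injectivity of the total map forces injectivity on each perverse summand, and strictness follows. (The paper instead appeals to compatibility with the relative Lefschetz action, via \cite[Proposition~8.7]{HMMS}.)

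In the first part, however, the claim that ``the residue coordinates can be taken as the first members of a general flag'' is unjustified. The residue subspace is special, which is exactly the worry you raise one sentence earlier. What makes it behave like a general section is the smoothness of $\chi_{\mathrm{res}}$. This makes the linear projection $\mathbf B^{\mathrm{wpar}}\to\mathfrak t_0\times\mathfrak t_0$ non-characteristic for $Rh^{\mathrm{wpar}}_*\CC$, and hence for its perverse cohomology sheaves, which are direct summands by the decomposition theorem. Restriction to $\mathbf B^{\mathrm{spar}}$ shifted by $[-2(n-1)]$ is then perverse $t$-exact on these summands. The contraction lemma identifies both global cohomologies of each summand with its stalk at the origin, which gives the strict filtered isomorphism summand by summand. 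The paper instead cites the projectivization argument of \cite[Proposition~8.9]{HMMS}.
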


\begin{proof}
The first map is an isomorphism by Lemma~\ref{smoothness of residue map} with $\mu=0$. For strictness, we use the projectivization argument of \cite[Proposition~8.9]{HMMS}. The ordered Hitchin base is a positive-weight affine space: for $i\geq2$, the evaluations of $H^0(\Sigma,\sO_\Sigma(iK_{p,q}))$ at $p$ and $q$ are surjective, since $H^1(\Sigma,\sO_\Sigma(iK_{p,q}-p-q))=0$. Thus the ordered residue eigenvalues, together with the kernels of these evaluation maps, give its coordinates. The strongly parabolic base is cut out by the linear equations $\mu=0$. The total space is smooth and semiprojective, the Hitchin map is projective, and the residue map is smooth by the preceding lemma. The projectivization argument therefore applies and proves that $(\iota^{\mathrm{spar}})^*$ is strict.

For the second map, choose $\mu=(\mu_p,\mu_q)$ with distinct entries at each point and such that
\[
  \sum_{i\in I}\mu_{p,i}+\sum_{j\in J}\mu_{q,j}\ne0 \quad\text{whenever }1\leq|I|=|J|<n.
\]
Each forbidden equality defines a proper hyperplane in $\mathfrak t_0\times\mathfrak t_0$, so such a $\mu$ exists. A proper invariant subbundle of rank $r$ would give a meromorphic trace form whose residues are the two sums for subsets of cardinality $r$, contrary to the residue theorem. The spectral polynomial is therefore irreducible; distinct residue eigenvalues also exclude a repeated factor. Hence
\[
  \chi_{\mathrm{res}}^{-1}(\mu) \subset \bf M_{n,L}^{K_{p,q},\mathrm{wpar,ell}}
\]
Since restriction to this fiber is an isomorphism on cohomology, $\iota^*$ is injective. The restriction to the corresponding open Hitchin base is compatible with the relative Lefschetz action. The argument of \cite[Proposition~8.7 and the paragraph following it]{HMMS}, using its Theorem~3.6(i) and Proposition~3.3, gives strictness.

The universal families pull back from the weakly parabolic space, so both maps preserve the normalized tautological classes. The maps are $\Gamma$-equivariant because tensoring by an element of $\Gamma$ preserves the residues and the flags.
\end{proof}

\begin{proposition}\label{prop:SL-zero-residue-splitting}
The maps $A=f_*\iota_0^*$ and $B=(\iota_0)_*f^*$ preserve every $\Gamma$-character summand. Their filtered degrees are given by
\begin{align*}
  B:\ P_jH^m(\mathbf M_{n,L}^{K_\Sigma}, \CC) &\longrightarrow P_{j+n(n-1)}H^{m+2n(n-1)}(\mathbf M_{n,L}^{K_{p,q},\mathrm{spar}}, \CC),\\
  A:\ P_jH^m(\mathbf M_{n,L}^{K_{p,q},\mathrm{spar}}, \CC) &\longrightarrow P_{j-n(n-1)}H^{m-2n(n-1)}(\mathbf M_{n,L}^{K_\Sigma}, \CC).
\end{align*}
They satisfy
\[
  AB=\pm(n!)^2\operatorname{id}.
\]
For classes $\alpha$ and $\beta$ on the canonical and strongly parabolic spaces, respectively,
\begin{align*}
  B\bigl(c_k(\eta)\cup\alpha\bigr) &=c_k(\eta)\cup B(\alpha),\\
  A\bigl(c_k(\eta)\cup\beta\bigr) &=c_k(\eta)\cup A(\beta).
\end{align*}
\end{proposition}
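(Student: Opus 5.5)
The plan follows the $\mathrm{GL}_n$ argument of \cite[Subsection~8.4]{HMMS} as used in \cite[Subsection~5.4.3]{MSY23}. We read $f:\mathbf M_{n,L}^{K_{p,q},0}\to\mathbf M_{n,L}^{K_\Sigma}$ as the map forgetting the two full flags, which is compatible with the zero-residue condition. Its fibers are products of two flag varieties $\mathrm{Fl}_n\times\mathrm{Fl}_n$, so $f$ is smooth and projective of relative dimension $n(n-1)$. The inclusion $\iota_0$ into the strongly parabolic space has codimension $n(n-1)$: requiring the full residue to vanish rather than only to be nilpotent with respect to the flags imposes $\dim\mathfrak n=n(n-1)/2$ conditions at each of the two points. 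Thus $B=(\iota_0)_*f^*$ raises cohomological degree by $2n(n-1)$ and $A=f_*\iota_0^*$ lowers it by $2n(n-1)$. Every map in the chain commutes with tensoring by $\Gamma$, which preserves residues and flags, so $A$ and $B$ preserve the character summands.

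\textbf{Projection formula.} The compatibility with $c_k(\eta)$ follows from the projection formula. The universal family on $\mathbf M_{n,L}^{K_{p,q},0}$ is the pullback under $f$ of the one on $\mathbf M_{n,L}^{K_\Sigma}$, and it is also the restriction under $\iota_0$ of the one on the strongly parabolic space. The normalization $\delta$ in \eqref{eq:normalized-tautological-class} is compatible with both operations, so $f^*c_k(\eta)=c_k(\eta)=\iota_0^*c_k(\eta)$. The two displayed identities then follow directly.

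\textbf{The identity $AB=\pm(n!)^2$.} We have $AB(\alpha)=f_*(\iota_0^*\iota_{0*}f^*\alpha)=f_*(e(N_{\iota_0})\cup f^*\alpha)=f_*(e(N_{\iota_0}))\cup\alpha$. It therefore suffices to compute $f_*e(N_{\iota_0})\in H^0$, which is a fiberwise degree on $\mathrm{Fl}_n\times\mathrm{Fl}_n$. The normal bundle of $\iota_0$ restricted to an $f$-fiber is $(\mathfrak n^{\vee}_{p}\oplus\mathfrak n^{\vee}_q)\otimes K_{p,q}|_{p,q}$. The factor $K_{p,q}|_p$ is canonically trivial via residues, so up to sign the normal bundle is the sum of the cotangent bundles of the two flag varieties. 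Its Euler class integrates to $\pm\chi(\mathrm{Fl}_n)^2=\pm(n!)^2$. I expect checking this identification of the normal bundle globally on fibers, including the sign, to be routine.

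\textbf{Filtered degrees.} This is the main obstacle. For $B$, the plan is to realize both maps as cohomological correspondences over the Hitchin bases. Over the base, $f^*$ is induced by a smooth map between total spaces, and $f$ covers the inclusion of $\mathbf B^{K_\Sigma}$ into the strongly parabolic Hitchin base. We would then follow \cite[Proposition~8.12]{HMMS}: work over the strongly parabolic base, identify $\mathbf M_{n,L}^{K_{p,q},0}$ with the preimage of the closed subspace $\mathbf B^{K_\Sigma}$ (the zero-residue condition is detected on the base), and compare dimensions. The key numerical input is
\[
\dim\mathbf B^{\mathrm{spar}}-\dim\mathbf B^{K_\Sigma}=n(n-1),
\]
which is the fiber dimension of $f$; this would give the shifts $\pm n(n-1)$ in perverse degree. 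Concretely, the perverse filtration is characterized through the relative hard Lefschetz or flag filtration by hyperplane sections of the base, as in \cite[Section~1]{dCHM12}. The correspondence $\iota_{0*}f^*$ carries a general linear section of codimension $c$ in $\mathbf B^{K_\Sigma}$ to a section of codimension $c$ in the bigger base. We would combine this with the degree shift of $2n(n-1)$ and the base codimension $n(n-1)$, so the perverse index moves by $2n(n-1)-n(n-1)=n(n-1)$. The argument for $A$ is dual. The point I would check first is the trace-free and fixed-determinant version of the dimension count of the strongly parabolic base, using the vanishing $H^1(\Sigma,\sO(iK_{p,q}-p-q))=0$ noted in Proposition~\ref{prop:SL-parabolic-restrictions}. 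I would also check that the HMMS argument uses only properness and projectivity of the Hitchin maps together with this codimension, all of which are available here.
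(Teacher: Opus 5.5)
Your outline matches the paper's proof. Forgetting the flags makes $f$ an $\mathrm{Fl}_n\times\mathrm{Fl}_n$-bundle, and $\iota_0$ is a codimension-$n(n-1)$ embedding of smooth spaces whose normal bundle is the residue bundle. The self-intersection and projection formulas give $AB=\pm(n!)^2$ and the module identities. The filtered degrees come from viewing $A,B$ over $\mathbf B^{K_{p,q},\mathrm{spar}}$, using $\dim\mathbf B^{K_{p,q},\mathrm{spar}}-\dim\mathbf B^{K_\Sigma}=\sum_{i=2}^n2(i-1)=n(n-1)$. Two points need repair.

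\textbf{The preimage claim is false, but not needed.} Write $h^{\mathrm{spar}}$ for the strongly parabolic Hitchin map. The zero-residue locus is not $(h^{\mathrm{spar}})^{-1}(\mathbf B^{K_\Sigma})$, because characteristic coefficients do not see a nilpotent residue. For $n=2$, take $E=L_1\oplus L_2$ with $\deg L_1=\deg L_2+1$, flags $L_2|_p$ and $L_2|_q$, and $\theta:L_1\to L_2\otimes K_{p,q}$ not vanishing at $p$ (possible for generic $L_1,L_2$). This is a stable strongly parabolic Higgs bundle with nilpotent $\theta$. It lies over $0\in\mathbf B^{K_\Sigma}$, yet its residue at $p$ is nonzero. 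Any base-change step resting on your identification would fail. What you actually need is only the square $h^{\mathrm{spar}}\circ\iota_0=i\circ h^{K_\Sigma}\circ f$, where $i:\mathbf B^{K_\Sigma}\hookrightarrow\mathbf B^{K_{p,q},\mathrm{spar}}$ is the closed embedding of bases. Adjunction for $f$ followed by the Gysin map of $\iota_0$ realizes $B$ as a morphism $i_*Rh^{K_\Sigma}_*\CC\to Rh^{\mathrm{spar}}_*\iota_{0*}\CC\to Rh^{\mathrm{spar}}_*\CC[2n(n-1)]$. Restriction followed by the trace of $f$ realizes $A$ in the other direction, with shift $-2n(n-1)$. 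Since $i_*$ is perverse $t$-exact, ${}^p\tau_{\le j+\dim\mathbf B^{K_\Sigma}}$ is sent into $({}^p\tau_{\le (j+n(n-1))+\dim\mathbf B^{K_{p,q},\mathrm{spar}}}Rh^{\mathrm{spar}}_*\CC)[2n(n-1)]$, and dually for $A$. This truncation argument is what the paper uses, and it makes the hyperplane-section characterization unnecessary.

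\textbf{The normal bundle needs transversality.} Counting conditions only shows that each component of the zero locus of the residue section has codimension at most $n(n-1)$. It does not give equality. Nor does it identify $N_{\iota_0}$ with $\mathfrak n_p\oplus\mathfrak n_q$, which is what your Euler class computation uses. The paper gets this from smoothness of the evaluation map to $[T^*\mathrm{Fl}_n/\mathrm{PGL}_n]^2$ along the zero-residue locus. The relative deformation complex there is $\operatorname{End}_0(E)(-p-q)\to\operatorname{End}_0(E)\otimes K_\Sigma$, and its $\mathbb H^2$ is dual to trace-free Higgs endomorphisms, which vanish by stability. Alternatively, the moduli-theoretic zero locus is scheme-theoretically the flag bundle over $\mathbf M_{n,L}^{K_\Sigma}$, hence smooth. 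Combined with $\dim\mathbf M_{n,L}^{K_{p,q},\mathrm{spar}}=\dim\mathbf M_{n,L}^{K_\Sigma}+2n(n-1)$, this makes the residue section regular. Finally, via the trace pairing it is $\mathfrak n\simeq(\mathfrak g/\mathfrak b)^\vee$ that is the cotangent bundle, so your $\mathfrak n^\vee$ should read $\mathfrak n$. This does not affect the value $\pm\chi(\mathrm{Fl}_n)^2$.
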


\begin{proof}
Let $G=\operatorname{PGL}_n$. The trace pairing identifies $T^*\operatorname{Fl}_n$ with the space of pairs consisting of a full flag and an endomorphism strictly lowering it. Taking residues at $p$ and $q$ gives the diagram
\[
  \xymatrix@C=1.2em@R=1.8em{ \mathbf M_{n,L}^{K_\Sigma}\ar[d] &\mathbf M_{n,L}^{K_{p,q},0} \ar[l]_{f}\ar@{^{(}->}[r]^{\iota_0}\ar[d] &\mathbf M_{n,L}^{K_{p,q},\mathrm{spar}} \ar[d]^{\mathrm{ev}_{p,q}}\\ [\operatorname{pt}/G]^2 &[\operatorname{Fl}_n/G]^2\ar[l]\ar@{^{(}->}[r] &[T^*\operatorname{Fl}_n/G]^2. }
\]
The bottom arrows are the structure morphism and the product of the zero sections. The vertical maps record the projective fibers at $p$ and $q$, together with the flags and residues when present. Both squares are $2$-Cartesian: the left square adds arbitrary full flags, while the right square imposes zero residues. Here the small-weight assumption identifies zero-residue parabolic stability with stability of the underlying canonical Higgs bundle. The bottom row is the product of the local diagrams in \cite[Section~3.2, Equation~(44)]{MSY26}.

The evaluation map $\mathrm{ev}_{p,q}$ is smooth along the zero-residue locus. Indeed, its relative deformation complex there is
\[
  \operatorname{End}_0(E)(-p-q) \xrightarrow{[\theta,-]} \operatorname{End}_0(E)\otimes K_\Sigma.
\]
By Serre duality, its second hypercohomology is dual to the space of trace-free Higgs endomorphisms of $(E,\theta)$, which vanishes by stability. This is the two-point, fixed-determinant version of the deformation argument used in \cite[Proposition~3.2]{MSY26}. Consequently, $f$ is a smooth proper $\operatorname{Fl}_n\times\operatorname{Fl}_n$-bundle of relative dimension $n(n-1)$, and $\iota_0$ is a regular closed immersion of the same codimension. Its normal bundle restricts on each flag fiber to $T^*(\operatorname{Fl}_n\times\operatorname{Fl}_n)$.

The cycle of $\mathbf M_{n,L}^{K_{p,q},0}$ and its transpose realize $B=(\iota_0)_*f^*$ and $A=f_*\iota_0^*$, respectively, as in \cite[Section~4.2]{MSY26}. They are proper correspondences over $\mathbf B^{K_{p,q},\mathrm{spar}}$, where the canonical Hitchin base is included by its natural closed embedding, and their realizations on the unshifted Hitchin direct images have shifts $2n(n-1)$ and $-2n(n-1)$. Since
\[
  \mathbf B^{K_{p,q},\mathrm{spar}} =\bigoplus_{i=2}^n H^0\bigl(\Sigma,\sO_\Sigma(iK_{p,q}-p-q)\bigr),
\]
Riemann--Roch gives
\[
  \dim\mathbf B^{K_{p,q},\mathrm{spar}} -\dim\mathbf B^{K_\Sigma} =\sum_{i=2}^n2(i-1)=n(n-1).
\]
Pushforward by the closed embedding of Hitchin bases is perverse $t$-exact. The same truncation calculation as in \cite[Section~4.3.1]{MSY26}, with this dimension difference, gives the filtered degrees $n(n-1)$ for $B$ and $-n(n-1)$ for $A$.

The local self-intersection calculation in \cite[Lemma~3.1(b)]{MSY26} applies to each factor of the lower diagram and pulls back to the present diagram, as in \cite[Proposition~3.5(a)]{MSY26}. Explicitly,
\[
  AB(\alpha) =f_*\bigl(c_{n(n-1)}(N_{\iota_0})\cup f^*\alpha\bigr) =\pm\chi(\operatorname{Fl}_n\times\operatorname{Fl}_n)\alpha =\pm(n!)^2\alpha.
\]
Compatibility of the normalized universal classes gives $\iota_0^*c_k^{\mathrm{spar}}(\eta) =f^*c_k^{K_\Sigma}(\eta)$, so both module identities follow from the projection formula. Finally, tensoring by an element of $\Gamma$ commutes with $f$ and $\iota_0$. Hence $A$ and $B$ are $\Gamma$-equivariant and preserve every character summand.
\end{proof}

\begin{theorem}\label{main theorem}
The perverse filtration for the canonical $\operatorname{SL}_n$ Hitchin map is multiplicative:
\[
  P_kH^*(\bf M_{n,L}^{K_\Sigma},\mathbb C)\cup P_jH^*(\bf M_{n,L}^{K_\Sigma},\mathbb C) \subseteq P_{k+j}H^*(\bf M_{n,L}^{K_\Sigma},\mathbb C).
\]
\end{theorem}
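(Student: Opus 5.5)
We reduce the statement to a bound on multiplication by tautological classes, as the paper already indicates, and then transfer that bound through the parabolic chain \eqref{eq:parabolic-chain}.

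\emph{Reduction.} Decompose $H^*(\mathbf M_{n,L}^{K_\Sigma},\CC)=H^*_{\mathrm{st}}\oplus\bigoplus_{\kappa\neq1}H^*_\kappa$. Since $\Gamma$ acts fibrewise over $\mathbf B^{K_\Sigma}$, perverse truncation commutes with the character projectors. Hence each $P_k$ splits as the direct sum of its character pieces. Take $x\in P_kH^*$ and $y\in P_jH^*$ and split them into character components.
\begin{itemize}
\item Products of two variant components vanish by Proposition~\ref{square zero}.
\item The product of two stable components lies in $P_{k+j}$ by Proposition~\ref{prop:stable-filtered-generation}.
\end{itemize}
It remains to treat $x_{\mathrm{st}}\cup y_\kappa$. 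By Proposition~\ref{prop:stable-filtered-generation}, $x_{\mathrm{st}}$ is a sum of products $\prod c_{k_a}(\eta_a)$ with $\sum k_a\le k$. So by induction on the number of factors, it suffices to prove
\[
c_k(\eta)\cup P_jH^*(\mathbf M_{n,L}^{K_\Sigma})\subseteq P_{j+k}H^*(\mathbf M_{n,L}^{K_\Sigma}).
\]

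\emph{Transfer along the chain.} Let $\alpha\in P_jH^m(\mathbf M_{n,L}^{K_\Sigma})$.
\begin{enumerate}
\item By Proposition~\ref{prop:SL-zero-residue-splitting}, $B(\alpha)\in P_{j+n(n-1)}H^*(\mathbf M^{K_{p,q},\mathrm{spar}})$.
\item By Proposition~\ref{prop:SL-parabolic-restrictions}, $(\iota^{\mathrm{spar}})^*$ is a strict filtered isomorphism. So $B(\alpha)=(\iota^{\mathrm{spar}})^*\beta$ for some $\beta\in P_{j+n(n-1)}$ on the weakly parabolic space.
\item The restriction $\iota^*\beta$ lies in $P_{j+n(n-1)}$ on the elliptic locus. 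By Proposition~\ref{prop:wparell}, $c_k(\eta)\cup\iota^*\beta$ lies in $P_{j+k+n(n-1)}$ there.
\item Since $\iota^*$ is a strict filtered injection and preserves tautological classes, $c_k(\eta)\cup\beta=\iota^{-1}\bigl(c_k(\eta)\cup\iota^*\beta\bigr)$. Hence $c_k(\eta)\cup\beta\in P_{j+k+n(n-1)}$ on $\mathbf M^{K_{p,q},\mathrm{wpar}}$.
\item Applying $(\iota^{\mathrm{spar}})^*$, which is filtered and compatible with tautological classes, gives $c_k(\eta)\cup B(\alpha)\in P_{j+k+n(n-1)}$ on the strongly parabolic space.
\item Apply $A$. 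Using the module identities and $AB=\pm(n!)^2\mathrm{id}$ from Proposition~\ref{prop:SL-zero-residue-splitting},
\[
\pm(n!)^2\,c_k(\eta)\cup\alpha = c_k(\eta)\cup AB(\alpha)=A\bigl(c_k(\eta)\cup B(\alpha)\bigr)\in P_{j+k}H^*(\mathbf M_{n,L}^{K_\Sigma}).
\]
\end{enumerate}
Dividing by the nonzero scalar $\pm(n!)^2$ gives the required bound.

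\emph{Main obstacle.} The delicate step is (4), pulling the bound back from the elliptic open locus. Strictness of $\iota^*$ is exactly what converts a perverse bound after restriction into a bound before restriction. This requires that $\iota^*$ be strict on every character summand, including the variant part: the full-flag residue deformation has to move variant classes into the elliptic locus. This is supplied by the residue-fiber argument in Proposition~\ref{prop:SL-parabolic-restrictions}, which I would check carefully.

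I would also check the following compatibilities:
\begin{itemize}
\item the normalized tautological classes agree along $f$, $\iota_0$, $\iota^{\mathrm{spar}}$ and $\iota$;
\item the filtered degrees of $A$ and $B$ carry the precise shift $n(n-1)$ given by the change of Hitchin base dimension, so that the shifts cancel exactly.
\end{itemize}
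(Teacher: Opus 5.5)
Your proposal is correct and follows the paper's proof essentially step for step. The structure is the same: reduce via Propositions~\ref{square zero} and~\ref{prop:stable-filtered-generation} to multiplication by a single $c_k(\eta)$, apply $B$, lift through the strict isomorphism $(\iota^{\mathrm{spar}})^*$, restrict to the elliptic locus and use Proposition~\ref{prop:wparell}, pull back via the strict injection $\iota^*$, and finish with $A$ and $AB=\pm(n!)^2\operatorname{id}$. You spell out the reduction (character splitting of $P_\bullet$ and induction on the number of tautological factors) in slightly more detail than the paper, which leaves it implicit.
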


\begin{proof}
By Propositions~\ref{square zero} and~\ref{prop:stable-filtered-generation}, it suffices to check that
\[
  c_k(\eta)\cup : P_jH^*(\bf M_{n,L}^{K_\Sigma},\mathbb C) \longrightarrow P_{k+j}H^*(\bf M_{n,L}^{K_\Sigma},\mathbb C).
\]
For $\alpha\in P_jH^*(\mathbf M_{n,L}^{K_\Sigma})$, Proposition~\ref{prop:SL-zero-residue-splitting} gives $B\alpha\in P_{j+n(n-1)}$ on the strongly parabolic space. By the strict filtered isomorphism $(\iota^{\mathrm{spar}})^*$ of Proposition~\ref{prop:SL-parabolic-restrictions}, it has a unique lift in $P_{j+n(n-1)}$ on the weakly parabolic space. Restrict this lift to the elliptic open and apply Proposition~\ref{prop:wparell}. The product with $c_k(\eta)$ lies in $P_{j+n(n-1)+k}$ there. Strictness and injectivity of $\iota^*$ place the product of the lifted class with $c_k(\eta)$ in the same filtration step on the weakly parabolic space. Restricting back by $(\iota^{\mathrm{spar}})^*$, which preserves tautological classes, yields $c_k(\eta)\cup B\alpha\in P_{j+n(n-1)+k}$. Applying $A$ lowers the perverse index by $n(n-1)$, and hence Proposition~\ref{prop:SL-zero-residue-splitting} gives
\[
  A\bigl(c_k(\eta)\cup B\alpha\bigr) =\pm(n!)^2c_k(\eta)\cup\alpha\in P_{k+j}H^*(\bf M_{n,L}^{K_\Sigma},\mathbb C).
\]
Since $(n!)^2\neq0$ in $\mathbb C$, this is the required bound.
\end{proof}

\bibliographystyle{alpha}
\bibliography{ref}
\end{document}